\newcommand{\seq}{\colonequals}
\theoremstyle{definition}
\newtheorem{Def}{Definition}[section]
\newtheorem{Rem}[Def]{Remark}
\newtheorem{Ex}[Def]{Example}
\theoremstyle{plain}
\newtheorem{Thm}[Def]{Theorem}
\newtheorem{Lem}[Def]{Lemma}
\newtheorem{Prop}[Def]{Proposition}
\newtheorem{Cor}[Def]{Corollary}
\numberwithin{equation}{section}
\newcommand{\C}{\mathbb{C}}
\newcommand{\kk}{\Bbbk}
\newcommand{\Cc}{\mathcal{C}}
\newcommand{\g}{\mathfrak{g}}
\newcommand{\Z}{\mathbb{Z}}
\newcommand{\modcat}{\text{-}\mathsf{mod}}
\newcommand{\udmod}{\text{-}\underline{\mathsf{mod}}}
\newcommand{\gmod}{\text{-}\mathsf{gmod}}
\newcommand{\vdim}{\underline{\dim}\,}
\newcommand{\Ext}{\mathop{\mathrm{Ext}}\nolimits}
\newcommand{\End}{\mathop{\mathrm{End}}\nolimits}
\newcommand{\Hom}{\mathop{\mathrm{Hom}}\nolimits}
\newcommand{\Ker}{\mathop{\mathrm{Ker}}}
\newcommand{\Ima}{\mathop{\mathrm{Im}}}
\newcommand{\Q}{\mathbb{Q}}
\newcommand{\KP}{\mathsf{KP}}
\newcommand{\pprime}{\prime \prime}
\newcommand{\M}{\mathfrak{M}}
\newcommand{\Mg}{\mathfrak{M}^{\bullet}}
\newcommand{\Mreg}{\mathfrak{M}_{0}^{\bullet \mathrm{reg}}}
\newcommand{\G}{\mathbb{G}}
\newcommand{\cP}{\mathsf{P}}
\newcommand{\cQ}{\mathsf{Q}}
\newcommand{\cR}{\mathsf{R}}
\newcommand{\KK}{\mathbb{K}}
\newcommand{\hR}{\widehat{R}}
\newcommand{\hH}{\widehat{H}}
\newcommand{\tZ}{\widetilde{\mathfrak{Z}}}
\newcommand{\B}{\mathcal{B}}
\newcommand{\F}{\mathcal{F}}
\newcommand{\mm}{\mathfrak{m}}
\newcommand{\rr}{\mathfrak{r}}
\newcommand{\hV}{\widehat{V}}
\newcommand{\Zg}{Z^{\bullet}}
\newcommand{\cZ}{\mathcal{Z}}
\newcommand{\OO}{\mathbb{O}}
\newcommand{\Oo}{\mathcal{O}}
\newcommand{\hK}{\widehat{K}}
\newcommand{\dD}{\mathcal{D}}
\newcommand{\ind}{\mathop{\mathsf{ind}}}
\newcommand{\tC}{\widetilde{C}}
\newcommand{\tc}{\widetilde{c}}
\newcommand{\cH}{\mathtt{H}}
\newcommand{\tDel}{\widetilde{\Delta}}
\newcommand{\rep}{\mathop{\mathsf{rep}}\nolimits}
\newcommand{\Mm}{\mathcal{M}}
\newcommand{\tLam}{\widetilde{\Lambda}}
\newcommand{\sV}{\mathsf{V}}
\newcommand{\sa}{\mathsf{a}}
\newcommand{\sx}{\mathsf{x}}
\newcommand{\fO}{\mathfrak{O}}
\newcommand{\ud}{\underline}
\numberwithin{equation}{section}
\title[Graded quiver varieties and 
normalized R-matrices]
    {Graded quiver varieties and singularities of 
normalized R-matrices for fundamental modules}
\subjclass[2010]{Primary: 17B37, Secondary: 17B67.}
\date{\today}
\keywords{Quantum loop algebra, R-matrix, Graded quiver variety, Generalized quantum affine Schur-Weyl duality}
 \author{Ryo Fujita}
 \address[R.~Fujita]{Research Institute for Mathematical Sciences, Kyoto University, Oiwake-Kitashirakawa, Sakyo, Kyoto, 606-8502, Japan \& Institut de Math\'{e}matiques de Jussieu-Paris Rive Gauche, Universit\'{e} de Paris, F-75013, Paris, France}
\email{rfujita@kurims.kyoto-u.ac.jp}
\begin{document}

\begin{abstract}
We present a simple unified formula expressing the denominators of the normalized 
$R$-matrices between the fundamental modules 
over the quantum loop algebras of type $\mathsf{ADE}$.
It has an interpretation in terms of representations of Dynkin quivers
and can be proved in a unified way using geometry of the graded quiver varieties. 
As a by-product, we obtain a geometric interpretation of 
Kang-Kashiwara-Kim's generalized quantum affine Schur-Weyl duality
functor when it arises from a family of the fundamental modules. 
We also study several cases 
when the graded quiver varieties are
isomorphic to unions of the graded nilpotent orbits of type $\mathsf{A}$.
\end{abstract}

\maketitle

\tableofcontents

\section{Introduction}
\label{Sec:intro}

\subsection{}
For a complex finite-dimensional simple Lie algebra $\g$, 
we can consider its (untwisted) quantum loop algebra $U_{q}(L\g)$ as a certain quantum affinization of 
the universal enveloping algebra $U(\g)$.
It is a Hopf algebra defined over the field $\kk = \overline{\Q(q)}$, where $q$ is the generic quantum parameter.
The structure of the monoidal abelian category $\Cc$ 
of finite-dimensional $U_{q}(L\g)$-modules
is much more complicated than that of $U(\g)$.
Indeed, the category $\Cc$ is neither semisimple as an abelian category,
nor braided as a monoidal category.
It has been studied by many researchers in connection with various research topics such as quantum integrable systems, 
combinatorics and cluster algebras.

The normalized $R$-matrices are constructed as intertwining operators
between tensor products of 
(relatively generic) simple objects of the category $\Cc$,
satisfying the quantum Yang-Baxter equation.
They can be seen as matrix-valued rational functions
in the spectral parameters, whose singularities strongly reflect the structure of tensor product modules (cf.~\cite{AK97, Kashiwara02}).
Thus, the singularities of normalized $R$-matrices
carry some important information
on the monoidal structure of $\Cc$.  

\subsection{A unified denominator formula}
In this paper, 
we focus on the normalized $R$-matrices 
between the fundamental modules 
over $U_{q}(L\g)$ 
associated with $\g$ of type $\mathsf{ADE}$.
Note that every simple object of $\Cc$
is obtained as a head of a suitably ordered tensor product of the fundamental modules. 
Thus, studying tensor products of the fundamental modules
can be thought of a first step toward a better understanding
of the monoidal structure of the whole category $\Cc$.

From now on, we assume that $\g$ is of type $\mathsf{ADE}$. Let $I$ be the set of Dynkin indices 
and $(c_{ij})_{i,j \in I}$ the Cartan matrix of $\g$.
For each $i \in I$, the $i$-th fundamental module
$V_{i}(a)$ is a simple object of $\Cc$,
which has
 a canonical highest weight vector $v_{i}$
and depends on a non-zero scalar $a \in \kk^{\times}$ called the spectral parameter.
Making the spectral parameters formal, for each $(i,j) \in I^{2}$, the normalized $R$-matrix $R_{ij}(z_2/z_1)$ 
is defined to be the unique
$U_{q}(L\g) \otimes \kk(z_{1}, z_{2})$-linear isomorphism 
$$
R_{ij}(z_{2}/z_{1}) \colon V_{i}(z_{1}) \otimes V_{j}(z_{2}) \to V_{j}(z_{2}) \otimes V_{i}(z_{1})
$$ 
satisfying the condition $R_{ij}(z_{2}/z_{1})(v_{i}\otimes v_{j}) = v_{j}\otimes v_{i}$.
Since the normalized $R$-matrix $R_{ij}(z_2/z_1)$ only rationally depends on the ratio $u = z_{2}/z_{1}$ of the spectral parameters,
one can consider its denominator $d_{ij}(u) \in \kk[u]$.
Explicit computations of these denominators $d_{ij}(u)$ have been accomplished in the separate works 
by Date-Okado~\cite{DO94} for type $\mathsf{A}$, 
by Kang-Kashiwara-Kim~\cite{KKK15} for type $\mathsf{D}$,
and by Oh-Scrimshaw~\cite{OS19, OS19c} for type $\mathsf{E}$. 
Note that
these computations relied on case-by-case arguments, which 
also required a use of computer  particularly for type $\mathsf{E}$.

The main theorem of this paper asserts that
these denominators $d_{ij}(u)$ can be expressed in a simple unified formula. 

\begin{Thm}[= Theorem~\ref{Thm:main}]
\label{Thm:imain}
For each $(i,j) \in I^{2}$, we have
$$
d_{ij}(u)=
\prod_{\ell=1}^{h-1}(u-q^{\ell+1})^{\tc_{ij}(\ell)},
$$
where $h$ is the Coxeter number of $\g$ and 
$\tc_{ij}(\ell)$ is the coefficient of $z^{\ell}$
in the formal expansion at $z=0$
of the $(i,j)$-entry of the inverse of 
the quantum Cartan matrix 
$\left( \frac{z^{c_{ij}} - z^{-c_{ij}}}{z-z^{-1}}\right)_{i,j \in I}$.
\end{Thm}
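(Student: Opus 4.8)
The plan is to convert the order of vanishing of $d_{ij}$ at each point $u=q^{\ell+1}$ into the geometry of Nakajima's graded quiver varieties, and then to extract the coefficient $\tc_{ij}(\ell)$ from the dimension formulae of those varieties, which are quadratic in the quantum Cartan matrix. First I would fix an orientation $Q$ of the Dynkin diagram and work inside the monoidal subcategory $\Cc_{\Z}\subset\Cc$ whose simple objects have all $\ell$-weights supported on a single $q^{\Z}$-coset of spectral parameters. By the work of Nakajima and of Varagnolo--Vasserot, this category is realized geometrically: its simple modules arise as IC sheaves on the affine graded quiver varieties $\Mg_{0}(w)$, ordered tensor products of fundamental modules (the standard modules) are the constant sheaves on the smooth varieties $\Mg(v,w)$, and the transition between the two bases of $K(\Cc_{\Z})$ is governed by the IC stalks of $\Mg_{0}(w)$---Nakajima's analogues of Kazhdan--Lusztig polynomials. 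Placing $V_{i}(q^{p})$ and $V_{j}(q^{r})$ in this picture, with combined framing $W=W_{i}(p)\oplus W_{j}(r)$, I would express the pole order of $R_{ij}$ at the corresponding value of $u$ as a local cohomological invariant of $\Mg_{0}(W)$---the dimension of a transversal slice along its deepest relevant stratum.

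\textbf{Geometry of the two-fundamental case.} The heart of the argument is the evaluation of that local invariant. Recall that $R_{ij}(u)$ is the unique intertwiner $V_{i}(z_{1})\otimes V_{j}(z_{2})\to V_{j}(z_{2})\otimes V_{i}(z_{1})$ of standard modules normalized by $R_{ij}(v_{i}\otimes v_{j})=v_{j}\otimes v_{i}$, and $d_{ij}(u)$ is the monic polynomial of least degree for which $d_{ij}(u)R_{ij}(u)$ specializes everywhere; it vanishes at $u=q^{\ell+1}$ to the extent that these two standard modules acquire common lower composition factors there. By Nakajima's description this defect is measured by the IC stalks---equivalently the transversal-slice cohomology---of $\Mg_{0}(W)$, in particular by which graded dimension vectors $v$ its strata carry. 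For two \emph{fundamental} framings one checks that $\Mg_{0}(W)$ is tame: it is smooth, or its strata are arranged so that the relevant IC stalks are trivial, whence the pole order is an honest dimension. Under the quiver-variety/quiver-representation dictionary attached to $Q$, this dimension is that of a $\Hom$ or $\Ext^{1}$ space between the indecomposable $\kk Q$-modules (or shifted objects of $D^{b}(\rep Q)$) indexed by the Auslander--Reiten positions of $V_{i}(q^{p})$ and $V_{j}(q^{r})$---the promised reading of the formula "in terms of representations of Dynkin quivers"---and $\ell$ records exactly the combinatorial distance between those two Auslander--Reiten vertices.

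\textbf{Extracting the inverse quantum Cartan matrix.} It remains to identify this dimension with $\tc_{ij}(\ell)$. The graded dimension vectors $v$ relevant to the pole, together with the framing $w$, are constrained by Nakajima's dimension formula for graded quiver varieties; recording the $\Z$-grading as a formal variable $z$, these constraints take the shape $\sum_{j}C(z)_{ij}\,(\cdot)_{j}=w_{i}(z)$ with $C(z)=\left(\frac{z^{c_{ij}}-z^{-c_{ij}}}{z-z^{-1}}\right)_{i,j}$, so the relevant data is $C(z)^{-1}w(z)$; since a single fundamental framing $w(z)$ is a monomial, this is just one column of $C(z)^{-1}$, whose $z^{\ell}$-coefficients are the $\tc_{ij}(\ell)$. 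The same output has two familiar descriptions: $C(z)^{-1}$ is the transition matrix between the $Y$- and $A$-monomials in Frenkel--Reshetikhin's $q$-character formalism (so the pole order counts the $A$-monomials separating the highest monomials of $V_{i}(q^{p})$ and $V_{j}(q^{r})$), and, on the quiver side, $C(z)^{-1}=((z+z^{-1})\mathrm{Id}-A)^{-1}$ ($A$ the Dynkin-diagram adjacency matrix) encodes Hom-dimensions in the mesh category of the repetition quiver $\Z Q$. I would pin down the match by verifying that both descriptions satisfy $\sum_{j}C(z)_{ij}(\cdot)_{jk}=\delta_{ik}$ as generating functions in $z$. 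Combining the three steps gives $\mathrm{ord}_{u=q^{\ell+1}}d_{ij}(u)=\tc_{ij}(\ell)$; the cutoff $1\le\ell\le h-1$ reflects the bounded width of the $q$-characters of fundamental modules, which confines the poles of $R_{ij}$ to $u\in\{q^{2},\dots,q^{h}\}$, so the product formula follows, recovering as a check the case-by-case tables of Date--Okado, Kang--Kashiwara--Kim and Oh--Scrimshaw.

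\textbf{Main obstacle.} The principal difficulty is the passage in the second paragraph: isolating exactly which cohomological invariant of $\Mg_{0}(W)$ computes the pole order of the \emph{normalized} R-matrix---as opposed to the universal or renormalized R-matrices, and with correct bookkeeping of the normalization $R_{ij}(v_{i}\otimes v_{j})=v_{j}\otimes v_{i}$---and then proving that for two fundamental framings the varieties $\Mg_{0}(W)$ are genuinely tame along the strata that matter, so that no nontrivial IC stalk pollutes the dimension count. In the favourable situations where $\Mg_{0}(W)$ is (a union of) graded type-$\mathsf{A}$ nilpotent orbits the tameness is transparent, but handling all cases uniformly is where the work lies. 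Once this dictionary is secured, the geometric input (Nakajima's theory) and the combinatorial output (Auslander--Reiten theory of $\Z Q$, or the elementary expansion of $C(z)^{-1}$) are standard, and what remains is the bookkeeping matching spectral parameters with Auslander--Reiten coordinates; as a by-product the same analysis should realize the Kang--Kashiwara--Kim Schur--Weyl functor attached to a family of fundamental modules geometrically, the type of its source quiver Hecke algebra being precisely the one encoded by the $d_{ij}$.
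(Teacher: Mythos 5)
Your overall plan --- reduce to the two-fundamental framing $W=W_{(i,p)}\oplus W_{(j,r)}$, read the pole order off the graded quiver variety $\Mg_{0}(W)$, and convert the answer into $\tc_{ij}(\ell)$ via Auslander--Reiten combinatorics --- is the same skeleton as the paper's argument, but the two steps that carry all the weight are missing. First, your claim that for two fundamental framings "one checks that $\Mg_{0}(W)$ is tame" is not something one checks directly: what the paper uses is Keller--Scherotzke's theorem (through Leclerc--Plamondon's identification $\Mg_{0}(W)\cong\rep_{W}(\Lambda)$), which shows that $\Mg_{0}(W)$ is literally an affine space of dimension $d_{x,y}=\dim\Ext^{1}_{\dD_{Q}}(\cH_{Q}(j,r),\cH_{Q}(i,p))$; this is also what forces the answer to be an $\Ext^{1}$ (rather than some Hom) and ties $\ell$ to the AR coordinates. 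Likewise, the passage to $\tc_{ij}(\ell)$ is not made through Nakajima's dimension formula or the $Y$/$A$-monomial transition matrix, as you suggest, but through Hernandez--Leclerc's formula $\tc_{ij}(\ell)=\bigl(\tau^{(\ell+\xi_{i}-\xi_{j}-1)/2}(\gamma_{i}),\varpi_{j}\bigr)$ combined with the Chari--Kashiwara constraints on where $d_{ij}$ can vanish; your generating-function sketch might be repairable, but as written it does not establish the parity conditions or the cutoff $1\le\ell\le h-1$ (bounded width of $q$-characters is not by itself a proof that poles lie in $\{q^{2},\dots,q^{h}\}$).

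Second, and more seriously, the step you yourself flag as the "main obstacle" --- which invariant of $\Mg_{0}(W)$ computes the pole order of the \emph{normalized} $R$-matrix --- is exactly the content of the paper's proof, and your surrogate (pole order equals the dimension of a transversal slice, with IC stalks assumed trivial) is not the mechanism that works: the pole order is not an IC-stalk or slice invariant. The paper instead realizes the completed tensor products $\hV(x,y)$ and $\hV(y,x)$ as completed equivariant $K$-groups of the $T$-fixed loci of Nakajima's tensor-product varieties, namely $\Mg(W)\times_{E}E$ and $\Mg(W)\times_{E}\{0\}$ with $E=\Mg_{0}(W)\cong\C^{d_{x,y}}$, and identifies a specific intertwiner with the convolution $(-)*[\Oo_{\{0\}}]$. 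The Koszul resolution of the origin in the affine space $E$ gives $[\Oo_{\{0\}}]=\bigl(1-q^{-r}z_{2}/q^{-p}z_{1}\bigr)^{d_{x,y}}[\Oo_{E}]$, so this intertwiner equals $(z_{2}/z_{1}-q^{r-p})^{d_{x,y}}\widehat{R}_{x,y}$ up to scalar, giving pole order at most $d_{x,y}$; the complementary bound comes from showing the specialized map is nonzero, via the decomposition theorem, because $\mathscr{L}^{\bullet}_{W}$ contains a shift of the constant sheaf on the affine space $E$ as a direct summand. Without this (or an equivalent) construction, your proposal does not prove the theorem: it reduces it to the very problem you acknowledge as open.
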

Note that the quantum Cartan matrix has appeared several times as a key combinatorial ingredient in the study of the category $\Cc$.
For example, it already appeared in 
the work of 
Frenkel-Reshetikhin~\cite{FR99}, which introduced 
the notion of $q$-characters
for finite-dimensional $U_{q}(L\g)$-modules.   

\subsection{An interpretation by quiver representations}
An advantage of our denominator formula is that
it admits an interpretation in terms of representations of 
a Dynkin quiver $Q$ of type $\g$. 
To describe it, we need additional notation.
Let us choose an $I$-tuple $(\epsilon_{i})_{i \in I} \in \{ 0,1\}^{I}$
such that $\epsilon_{i} \neq \epsilon_{j}$
whenever 
$c_{ij}=-1$.
Then we define
an infinite quiver 
$\Delta = (\Delta_{0}, \Delta_{1})$
by
\begin{align*}
\Delta_{0} &\seq \{(i,p) \in I \times \Z \mid p- \epsilon_{i} \in 2\Z \}, \\
\Delta_{1} &\seq \{(i,p) \to (j, p+1) \mid 
(i, p), (j, p+1) \in \Delta_{0}, \; c_{ij}=-1
\}.
\end{align*}
For instance, when $\g$ is of type $\mathsf{D}_{5}$,
the quiver $\Delta$ looks like: 
$$
\begin{xy}
\def\objectstyle{\scriptstyle}
\ar@{->} (0,0) *++!R{\cdots} *\cir<2pt>{}="20";
(5,3) *\cir<2pt>{}="31", 
\ar@{->} "20";(5,-3) *\cir<2pt>{}="11", 
\ar@{->} "20";(5,6) *\cir<2pt>{}="41", 
\ar@{->} (0,-6) *++!R{\cdots} *\cir<2pt>{}="00";"11",
\ar@{->} "11"; (10,-6) *\cir<2pt>{}="02",
\ar@{->} "31"; (10,0) *\cir<2pt>{}="22",
\ar@{->} "11"; "22",
\ar@{->} "41"; "22",
\ar@{->} "22";(15,-3) *\cir<2pt>{}="13",
\ar@{->} "02"; "13",
\ar@{->} "22";(15,3) *\cir<2pt>{}="33",
\ar@{->} "22";(15,6) *\cir<2pt>{}="43",
\ar@{->} "13"; (20,0) *\cir<2pt>{}="24",
\ar@{->} "13"; (20,-6) *\cir<2pt>{}="04",
\ar@{->} "33"; "24",
\ar@{->} "43"; "24",
\ar@{->} "24";(25,-3) *\cir<2pt>{}="15",
\ar@{->} "04";"15",
\ar@{->} "24";(25,3) *\cir<2pt>{}="35",
\ar@{->} "24";(25,6) *\cir<2pt>{}="45",
\ar@{->} "15"; (30,0) *\cir<2pt>{}="26",
\ar@{->} "15"; (30,-6) *\cir<2pt>{}="06",
\ar@{->} "35"; "26",
\ar@{->} "45"; "26",
\ar@{->} "26";(35,-3) *\cir<2pt>{}="17",
\ar@{->} "06";"17",
\ar@{->} "26";(35,3) *\cir<2pt>{}="37",
\ar@{->} "26";(35,6) *\cir<2pt>{}="47",
\ar@{->} "17"; (40,0) *\cir<2pt>{}="28",
\ar@{->} "17"; (40,-6) *\cir<2pt>{}="08",
\ar@{->} "37"; "28",
\ar@{->} "47"; "28",
\ar@{->} "28";(45,-3) *\cir<2pt>{}="19",
\ar@{->} "08";"19",
\ar@{->} "28";(45,3) *\cir<2pt>{}="39",
\ar@{->} "28";(45,6) *\cir<2pt>{}="49",
\ar@{->} "19"; (50,0) *\cir<2pt>{}="210",
\ar@{->} "19"; (50,-6) *\cir<2pt>{}="010",
\ar@{->} "39"; "210",
\ar@{->} "49"; "210",
\ar@{->} "210";(55,-3) *\cir<2pt>{}="111",
\ar@{->} "010";"111",
\ar@{->} "210";(55,3) *\cir<2pt>{}="311",
\ar@{->} "210";(55,6) *\cir<2pt>{}="411",
\ar@{->} "111"; (60,0) *\cir<2pt>{}="212",
\ar@{->} "111"; (60,-6) *\cir<2pt>{}="012",
\ar@{->} "311"; "212",
\ar@{->} "411"; "212",
\ar@{->} "212";(65,-3) *\cir<2pt>{}="113",
\ar@{->} "012";"113",
\ar@{->} "212";(65,3) *\cir<2pt>{}="313",
\ar@{->} "212";(65,6) *\cir<2pt>{}="413",
\ar@{->} "113"; (70,0) *\cir<2pt>{}="214",
\ar@{->} "113"; (70,-6) *\cir<2pt>{}="014",
\ar@{->} "313"; "214",
\ar@{->} "413"; "214",
\ar@{->} "214";(75,-3) *\cir<2pt>{}="115",
\ar@{->} "014";"115",
\ar@{->} "214";(75,3) *\cir<2pt>{}="315",
\ar@{->} "214";(75,6) *\cir<2pt>{}="415",
\ar@{->} "115"; (80,0) *\cir<2pt>{}="216",
\ar@{->} "115"; (80,-6) *\cir<2pt>{}="016",
\ar@{->} "315"; "216",
\ar@{->} "415"; "216",
\ar@{->} "216";(85,-3) *\cir<2pt>{}="117",
\ar@{->} "016";"117",
\ar@{->} "216";(85,3) *\cir<2pt>{}="317",
\ar@{->} "216";(85,6) *\cir<2pt>{}="417",
\ar@{->} "117"; (90,0)  *\cir<2pt>{}="218",
\ar@{->} "117"; (90,-6) *\cir<2pt>{}="018",
\ar@{->} "317"; "218",
\ar@{->} "417"; "218",
\ar@{->} "218";(95,-3) *\cir<2pt>{}="119",
\ar@{->} "018";"119",
\ar@{->} "218";(95,3)  *\cir<2pt>{}="319",
\ar@{->} "218";(95,6)  *\cir<2pt>{}="419",
\ar@{->} "119"; (100,0)  *++!L{\cdots} *\cir<2pt>{}="220",
\ar@{->} "119"; (100,-6) *++!L{\cdots.} *\cir<2pt>{}="020",
\ar@{->} "319"; "220",
\ar@{->} "419"; "220",
\end{xy}
$$

It was shown by Happel~\cite{Happel87, Happel88}
that the quiver $\Delta$ is isomorphic to 
the Auslander-Reiten quiver of the bounded derived category  
$\dD_{Q} \seq D^{b}(\C Q \modcat)$ of representations of the Dynkin quiver $Q$. 
In particular, there is a nice bijection $\cH_{Q}$
from the vertex set $\Delta_{0}$ to 
the set of isomorphism classes of indecomposable objects of $\dD_{Q}$.

An intimate connection between  
the Auslander-Reiten quiver of $\dD_{Q}$ and 
the category $\Cc$ was originally observed by
Hernandez-Leclerc~\cite{HL15}.
In that paper, it was shown that the integers $\tc_{ij}(\ell)$ can be expressed as
Euler-Poincar\'e characteristics
of suitable pairs of indecomposable objects of $\dD_{Q}$.
Using this interpretation, one can see that the following assertion 
is equivalent to Theorem~\ref{Thm:imain}.

\begin{Thm}[= Theorem~\ref{Thm:main2}]
\label{Thm:imain2}
For any $(i, p), (j, r) \in \Delta_{0}$, 
the pole order of the normalized $R$-matrix $R_{ij}(u)$ 
at $u = q^{r}/q^{p}$
 is equal to 
$
\dim \Ext_{\dD_{Q}}^{1}(\cH_{Q}(j,r), \cH_{Q}(i,p)).
$
\end{Thm}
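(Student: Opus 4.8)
The plan is to prove the pole‑order statement of Theorem~\ref{Thm:imain2} geometrically; by Hernandez--Leclerc's Euler‑form description of the integers $\tc_{ij}(\ell)$ \cite{HL15} it is equivalent to the denominator formula of Theorem~\ref{Thm:imain}, so it is enough to compute the pole order of $R_{ij}(u)$ at $u=q^{r}/q^{p}$ for every pair $(i,p),(j,r)\in\Delta_{0}$. The input is Nakajima's construction of finite‑dimensional $U_{q}(L\g)$-modules from graded quiver varieties: the fundamental module $V_{i}(q^{p})$ is the standard module carried by the (equivariant $K$-theory, or the Borel--Moore homology, of the) graded quiver variety $\Mg$ whose framing is concentrated at the vertex $(i,p)$, and because the corresponding affine quotient is a point this standard module is already simple. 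Making the spectral parameters formal, the tensor product $V_{i}(z_{1})\otimes V_{j}(z_{2})$ is the standard module $M(W)$ attached to the two‑strand framing $W$ of weights $z_{1}$ at $i$ and $z_{2}$ at $j$; the geometry of the affine graded quiver variety $\mathfrak{M}^{\bullet}_{0}(W)$ relative to its two strands produces natural $U_{q}(L\g)\otimes\kk(z_{1},z_{2})$-morphisms relating $M(W)$ to $V_{i}(z_{1})\otimes V_{j}(z_{2})$ and to $V_{j}(z_{2})\otimes V_{i}(z_{1})$.

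The core of the argument is to realize the normalized R‑matrix geometrically. After the normalization fixing the highest‑weight vectors, $R_{ij}(z_{2}/z_{1})$ should be, up to sign, the composite on the generic locus of the two factorization morphisms above — equivalently, a transition function between two natural trivializations of the family $M(W)$ over $\kk(z_{1},z_{2})$. This composite is an isomorphism over $\kk(z_{1},z_{2})$, but it fails to specialize exactly at those $u_{0}=z_{2}/z_{1}$ for which $\mathfrak{M}^{\bullet}_{0}(W)$ acquires a new positive‑dimensional stratum, i.e. for which $V_{i}(z_{1})\otimes V_{j}(u_{0}z_{1})$ becomes reducible. By definition the pole order at $u_{0}$ is the order of the root $u_{0}$ in $d_{ij}(u)$, i.e. the order of vanishing at $u_{0}$ of the minimal polynomial clearing the pole; the plan is to compute this order on the geometric side as the order of vanishing, along the new stratum, of the equivariant Euler class governing the trivializations. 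Using Nakajima's description of $\Hom$-spaces between standard modules as convolution algebras on fibre products of graded quiver varieties, this order of vanishing is rewritten as the dimension of a single Hecke‑correspondence fibre over a generic point of the new stratum — a space of extensions between the two relevant simple constituents.

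It remains to evaluate this dimension in terms of $\dD_{Q}$. Here I would invoke the type‑$\mathsf{ADE}$ dictionary between graded quiver varieties and representations of the Dynkin quiver $Q$: the affine graded quiver variety $\mathfrak{M}^{\bullet}_{0}(W)$ and its resolution can be described through quiver Grassmannians and representation varieties of the path algebra $\C Q$, in the spirit of Nakajima's and Hernandez--Leclerc's work; under this description the extension‑space dimension from the previous step becomes exactly $\dim\Ext_{\dD_{Q}}^{1}(\cH_{Q}(j,r),\cH_{Q}(i,p))$, the index matching being provided by Happel's bijection $\cH_{Q}$ between $\Delta_{0}$ and the indecomposables of $\dD_{Q}$ \cite{Happel87, Happel88}. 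Together with \cite{HL15} this closes the loop and establishes both theorems. The low‑rank situations in which $\mathfrak{M}^{\bullet}_{0}(W)$ is a union of graded nilpotent orbit closures of type $\mathsf{A}$ furnish an explicit cross‑check against the denominators of \cite{DO94, KKK15, OS19, OS19c}.

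The hard part will be to extract the pole \emph{order}, and not merely the pole locus. This forces one to keep track of the multiplicities attached to the graded quiver varieties that occur — IC stalks, codimensions of strata, dimensions of Hecke fibres, dimensions of $\Ext^{1}$-spaces — to verify that they do not depend on the order in which the two framing strands are taken, and to carry out the shift bookkeeping (a suspension $[1]$ in $\dD_{Q}$ translates $p$ by the Coxeter number $h$). I expect the cleanest organization to be a two‑sided estimate: an upper bound ``pole order $\le\dim\Ext_{\dD_{Q}}^{1}(\cH_{Q}(j,r),\cH_{Q}(i,p))$'', obtained by bounding the cokernel of the renormalized morphism $d_{ij}(u)R_{ij}(u)$ in the geometric model, together with a matching lower bound obtained by producing sufficiently many non‑split extensions inside the reducible specialization of $V_{i}(z_{1})\otimes V_{j}(z_{2})$.
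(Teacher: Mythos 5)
Your outline does follow the same broad strategy as the paper (rank-one framings at the two vertices $x=(i,p)$, $y=(j,r)$; the $R$-matrix realized as a convolution operator on completed equivariant $K$-groups; the pole factor coming from an equivariant Euler class; a dictionary back to $\dD_{Q}$), but two of the load-bearing steps are left in a form that would not go through as stated. First, the space whose dimension gives the exponent is not a ``Hecke-correspondence fibre over a generic point of a new stratum'': it is the whole affine quotient $\Mg_{0}(W)$ for $W=W_{x}\oplus W_{y}$ with both pieces one-dimensional. That this variety is an affine space of dimension exactly $d_{x,y}=\dim\Ext^{1}_{\dD_{Q}}(\cH_{Q}(y),\cH_{Q}(x))$ is precisely the Keller--Scherotzke theorem (Corollary~\ref{Cor:KS} via Proposition~\ref{Prop:LP} and Theorem~\ref{Thm:KS}); the Hernandez--Leclerc quiver-Grassmannian/representation-variety description you invoke only covers monomials lying in a single slice $\Cc_{Q}$, and pairs with, say, $r-p=h$ fall outside every such slice, so without the Keller--Scherotzke input your identification of the multiplicity space is not pinned down. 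Once that identification is in place, the exponent comes out cleanly: the Koszul resolution of the origin of $E=\Mg_{0}(W)\cong\C^{d_{x,y}}$ shows that convolution with $[\Oo_{\{0\}}]$ equals $(1-q^{-r}z_{2}/q^{-p}z_{1})^{d_{x,y}}\widehat{R}_{x,y}$ up to a unit, the two ``trivializations'' being the completed $K$-groups of Nakajima's tensor-product varieties $\Mg(W)\times_{E}E$ and $\Mg(W)\times_{E}\{0\}$ (Lemma~\ref{Lem:Grea}).

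The genuine gap is your lower bound. Producing non-split extensions inside the specialized tensor product $V_{i}(q^{p})\otimes V_{j}(q^{r})$ only detects reducibility, i.e.\ pole order at least one; nothing in the proposal converts information about the submodule structure of the specialization into the statement that the pole order is at least $d_{x,y}$, and indeed the relation between that structure and the exact order is exactly what is not available a priori (for $d_{x,y}\ge 2$ the tensor product is not governed by a single extension). The paper's replacement for this step is a geometric non-vanishing statement: the convolution operator $(-)*[\Oo_{\{0\}}]$, which is $(1-q^{-r}z_{2}/q^{-p}z_{1})^{d_{x,y}}\widehat{R}_{x,y}$ up to a unit and is defined over the lattice, remains nonzero after the non-equivariant specialization. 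This is proved by transporting the operator to a Yoneda product with the adjunction morphism $\iota_{!}\iota^{!}\ud{\kk}_{E}\to\ud{\kk}_{E}$ and using that, since $E$ is an affine space, the push-forward $\mathscr{L}^{\bullet}_{W}$ of the constant sheaf along $\pi^{\bullet}$ contains a shifted constant sheaf $\ud{\kk}_{E}$ as a direct summand (Theorem~\ref{Thm:stratification}~\eqref{Thm:stratification:IC}), so the product with the identity component is nonzero. Unless you supply an argument of this kind (or some other mechanism bounding the pole order from below), your plan establishes the pole locus and the upper bound ``order $\le\dim\Ext^{1}$'' but not the equality claimed in Theorem~\ref{Thm:imain2}.
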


This yields the following interesting corollary.

\begin{Cor}[= Corollary~\ref{Cor:main2}] 
For any $(i,p), (j,r) \in \Delta_{0}$, the following conditions are mutually equivalent:
\begin{itemize}
\item The tensor product $V_{i}(q^{p}) \otimes V_{j}(q^{r})$ is irreducible;
\item $V_{i}(q^{p}) \otimes V_{j}(q^{r}) \cong V_{j}(q^{r}) \otimes V_{i}(q^{p})$ as $U_{q}(L\g)$-modules;
\item $\Ext_{\dD_{Q}}^{1}(\cH_{Q}(i,p), \cH_{Q}(j,r)) = 0$ and 
$\Ext_{\dD_{Q}}^{1}(\cH_{Q}(j,r), \cH_{Q}(i,p)) = 0$.
\end{itemize}
\end{Cor}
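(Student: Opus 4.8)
The plan is to deduce the corollary from Theorem~\ref{Thm:imain2} together with the standard criterion, phrased in terms of the denominators of the normalized $R$-matrices, for the irreducibility of a tensor product of fundamental modules. First I would recall that the renormalized $R$-matrix $\mathbf{r}_{ij}(z_2/z_1)\seq d_{ij}(z_2/z_1)\,R_{ij}(z_2/z_1)$ is a genuine $U_q(L\g)$-homomorphism $V_i(z_1)\otimes V_j(z_2)\to V_j(z_2)\otimes V_i(z_1)$ whose specialization at $(z_1,z_2)=(a,b)$ is non-zero, by the minimality of $d_{ij}$, and is an isomorphism if and only if $d_{ij}(b/a)\neq 0$. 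By the general theory of $R$-matrices for fundamental (more generally, good) modules (Akasaka--Kashiwara, Kashiwara), $V_i(a)\otimes V_j(b)$ is irreducible if and only if both $\mathbf{r}_{ij}$ specialized at $(a,b)$ and $\mathbf{r}_{ji}$ specialized at $(b,a)$ are isomorphisms; taking $a=q^p$ and $b=q^r$, the first condition of the corollary is thus equivalent to the conjunction $d_{ij}(q^r/q^p)\neq 0$ and $d_{ji}(q^p/q^r)\neq 0$.

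Next I would translate these non-vanishing statements into vanishings of $\Ext^1$-groups. Since $d_{ij}(u)$ equals, up to a non-zero scalar, the product of the factors $(u-u_0)$ raised to the pole order of $R_{ij}(u)$ at $u_0$, the condition $d_{ij}(q^r/q^p)\neq 0$ means exactly that $R_{ij}(u)$ has no pole at $u=q^r/q^p$, which by Theorem~\ref{Thm:imain2} is equivalent to $\Ext^1_{\dD_Q}(\cH_{Q}(j,r),\cH_{Q}(i,p))=0$. Applying Theorem~\ref{Thm:imain2} once more, with the pair $(j,r),(i,p)$ in the role of $(i,p),(j,r)$, the condition $d_{ji}(q^p/q^r)\neq 0$ is equivalent to $\Ext^1_{\dD_Q}(\cH_{Q}(i,p),\cH_{Q}(j,r))=0$. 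Combined with the first paragraph, this gives the equivalence of the first and third conditions.

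It remains to incorporate the second condition. The implication ``irreducible $\Rightarrow$ isomorphic'' is immediate from the commutativity of the Grothendieck ring $K(\Cc)$ (Frenkel--Reshetikhin): one always has $[V_i(q^p)\otimes V_j(q^r)]=[V_j(q^r)\otimes V_i(q^p)]$, so if one side is simple then, the classes of simples being a $\Z$-basis, the other side is a simple module isomorphic to it. For the converse I would first note that every fundamental module $V_i(a)$ is real, which follows from our formula: Theorem~\ref{Thm:imain} gives $d_{ii}(1)\neq 0$ since $q^{\ell+1}\neq 1$ for $1\leq\ell\leq h-1$, hence $V_i(a)\otimes V_i(a)$ is irreducible. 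The desired implication then follows from the general fact (Kang--Kashiwara--Kim--Oh) that for a real simple module $M$ and a simple module $N$ the tensor product $M\otimes N$ is irreducible if and only if $M\otimes N\cong N\otimes M$. The one point requiring care throughout --- and the only genuine obstacle --- is a consistent bookkeeping of conventions: one must check that the pole order of $R_{ij}(u)$ at $u=q^r/q^p$ appearing in Theorem~\ref{Thm:imain2} is precisely the multiplicity of the factor $(u-q^r/q^p)$ in the minimal denominator $d_{ij}(u)$ entering the irreducibility criterion, and keep track of which of $d_{ij}$, $d_{ji}$, evaluated at which ratio of spectral parameters, governs which of the two $\Ext^1$-spaces; the fact that these two ratios are mutually inverse, while $d_{ij}(u)$ is in general not invariant under $u\mapsto u^{-1}$, is exactly why both $\Ext^1$-spaces must occur. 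Once this matching is fixed, the corollary follows formally.
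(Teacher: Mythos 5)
Your overall route is the same as the paper's: the paper obtains the corollary by combining the known three-way criterion for tensor products of fundamental modules (Theorem~\ref{Thm:tensor}) with Theorem~\ref{Thm:main2}, and your second paragraph carries out exactly the required translation, correctly matching $d_{ij}(q^r/q^p)\neq 0$ with $\Ext^1_{\dD_Q}(\cH_Q(j,r),\cH_Q(i,p))=0$ and $d_{ji}(q^p/q^r)\neq 0$ with $\Ext^1_{\dD_Q}(\cH_Q(i,p),\cH_Q(j,r))=0$. Your third paragraph re-derives the equivalence with the commutation condition (via commutativity of $K(\Cc)$ for one direction and realness of fundamental modules plus the Kang--Kashiwara--Kim--Oh criterion for the other); this is sound but redundant, since that equivalence is already part of the cited criterion (the ``in particular'' clause of Theorem~\ref{Thm:tensor}), which is what the paper simply invokes.

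There is, however, one step in your first paragraph that is false as stated: the claim that the specialization of $d_{ij}(z_2/z_1)R_{ij}(z_2/z_1)$ at $(a,b)$ ``is an isomorphism if and only if $d_{ij}(b/a)\neq 0$.'' Nonvanishing of $d_{ij}(b/a)$ alone only guarantees that $R_{ij}(b/a)$ is well defined and nonzero, with image the simple head of $V_i(a)\otimes V_j(b)$ (Theorem~\ref{Thm:tensor}~(1)); it is an isomorphism only when \emph{both} $d_{ij}(b/a)\neq 0$ and $d_{ji}(a/b)\neq 0$. For instance, for $\g=\mathfrak{sl}_2$ and $b/a=q^{-2}$ one has $d_{11}(q^{-2})\neq 0$ while $V_1(a)\otimes V_1(b)$ is reducible, so the specialized map is not surjective. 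Consequently your derivation of the implication ``$d_{ij}(b/a)\neq 0$ and $d_{ji}(a/b)\neq 0$ $\Rightarrow$ irreducible'' rests on this faulty single-matrix biconditional. The repair is immediate: the combined statement you actually need --- irreducibility $\iff$ commutation $\iff$ $d_{ij}(b/a)\neq 0$ and $d_{ji}(a/b)\neq 0$ --- is precisely the standard Akasaka--Kashiwara-type criterion recorded as Theorem~\ref{Thm:tensor}, and citing it directly (as the paper does) both fixes this step and renders your third paragraph unnecessary.
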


\subsection{Graded quiver varieties}
In this paper, we give a unified proof of Theorem~\ref{Thm:imain2}
(and hence Theorem~\ref{Thm:imain})
without using a computer.
Instead, we use geometry of the graded quiver varieties.

The graded quiver varieties were originally defined
by Nakajima~\cite{Nakajima01} as suitable 
torus fixed loci of the usual Nakajima quiver varieties, which provide a useful geometric setting to study 
finite-dimensional $U_{q}(L\g)$-modules 
when $\g$ is of type $\mathsf{ADE}$.
Given a finite-dimensional $\Delta_{0}$-graded $\C$-vector space 
$W = \bigoplus_{x \in \Delta_{0}}W_{x}$,
one can associate the graded quiver variety
$\Mg_{0}(W)$,
which is an affine complex algebraic variety
equipped with an action of the group 
$G_{W} = \prod_{x \in \Delta_{0}}GL(W_{x})$.

Our proof of Theorem~\ref{Thm:imain2} is based on the following beautiful result obtained by Keller-Scherotzke~\cite{KS16}
in their categorical study of the graded quiver varieties.
It also generalizes an important result
by Hernandez-Leclerc~\cite[Section 9]{HL15}.
Let $\Gamma = (\Gamma_{0}, \Gamma_{1})$
be another infinite quiver with the vertex set $\Gamma_{0} \seq \Delta_{0}$, whose arrow set $\Gamma_{1}$ is given by
the following condition:
for each $x,y \in \Delta_{0}$
the number of arrows from $x$ to $y$ 
is equal to $\dim \Ext^{1}_{\dD_{Q}}(\cH_{Q}(x), \cH_{Q}(y))$.
With this notation,
Keller-Scherotzke's theorem tells us that, for each $\Delta_{0}$-graded vector space $W$,
there exists a $G_{W}$-equivariant closed embedding of varieties
\begin{equation}
\label{Eq:iKS}
\Mg_{0}(W) \hookrightarrow \rep_{W}(\Gamma),
\end{equation}
where $\rep_{W}(\Gamma)$ denotes 
the affine space parametrizing representations of 
the quiver $\Gamma$ realized on $W$. 

In the special case when 
$W = W_{(i,p)} \oplus W_{(j,r)}$ 
for some $(i,p), (j,r) \in \Delta_{0}$ with $p \le r$
and $\dim W_{(i,p)} = \dim W_{(j,r)} =1$,
the above embedding \eqref{Eq:iKS} becomes an isomorphism.
Namely, the graded quiver variety $\Mg_{0}(W)$
in this case
is just an affine space whose dimension is equal to 
$\dim \Ext^{1}_{\dD_{Q}}(\cH_{Q}(j,r), \cH_{Q}(i,p))$.
This simple situation enables us to prove
Theorem~\ref{Thm:imain2} 
by using Nakajima's theory~\cite{Nakajima01} and a standard technique in geometric representation theory.

\subsection{Generalized quantum affine Schur-Weyl duality}
In the paper~\cite{KKK18}, Kang-Kashiwara-Kim
gave a general construction of a monoidal functor $\mathscr{F}_{J}$, 
called the generalized quantum affine Schur-Weyl duality functor,
associated with a given family $\{ \sV_{j} \}_{j \in J}$ of real simple objects of $\Cc$.
It connects the category $\Cc$ with a category of modules over 
the symmetric quiver Hecke algebra $H_{J}$
associated with a quiver $\Gamma_{J}$
determined by the singularities of normalized $R$-matrices
between the simple objects in $\{ \sV_{j} \}_{j \in J}$. 
The quiver Hecke algebras are $\Z$-graded algebras 
introduced by
Khovanov-Lauda~\cite{KL09} 
and by Rouquier~\cite{Rouquier08} independently
to establish a categorification
of the half of the quantized enveloping algebra
associated with a general symmetrizable Kac-Moody algebra.
In this sense, 
the quiver Hecke algebra $H_{J}$
is a generalization of
the affine Hecke algebra of type $\mathsf{A}$, and
Kang-Kashiwara-Kim's construction can be thought of a generalization of the usual quantum affine Schur-Weyl duality between $U_{q}(L\mathfrak{sl}_{n})$ and the affine Hecke algebra of type $\mathsf{A}$.

In the subsequent works by   
Kang, Kashiwara, Kim, Oh, Park and 
Scrimshaw \cite{KKK15, KKKO15, KKKO16, KKO19, KO19, OS19, KKOP19},
many interesting examples of the functor $\mathscr{F}_{J}$ are constructed.
In these nice examples, the functor $\mathscr{F}_{J}$ induces an isomorphism of Grothendieck rings between a category of finite-dimensional $H_{J}$-modules (or its suitable modification),
and a certain monoidal subcategory $\Cc_{J}$ of $\Cc$.

In this paper,  
we give a geometric interpretation of 
the functor $\mathscr{F}_{J}$ 
whenever it arises from a family $\{ \sV_{j}\}_{j \in J}$
of fundamental modules of type $\mathsf{ADE}$.
More precisely, we realize the  bimodule corresponding to the functor $\mathscr{F}_J$
via the equivariant $K$-theory of the graded quiver varieties,
mimicking
Ginzburg-Reshetikhin-Vasserot's geometric realization 
of the usual quantum affine Schur-Weyl duality~\cite{GRV94}.
This is a generalization of 
the author's previous result~\cite{Fujita18}.
A key fact in our construction  is that the quiver $\Gamma_{J}$  defining the quiver Hecke algebra $H_{J}$ 
is identical to a full subquiver of the quiver $\Gamma$
that appeared in Keller-Scherotzke's theorem above.  
This is a direct consequence of Theorem~\ref{Thm:imain2}
and explains the appearance of the quiver Hecke algebra $H_{J}$ from a geometric point of view. 
 
\subsection{Type $\mathsf{A}$ subquivers and graded nilpotent orbits} 
As an example of the above construction, 
with a given subquiver $Q^{\prime}$ of a Dynkin quiver $Q$ which is isomorphic to a quiver of type $\mathsf{A}$ with monotone orientation,
we associate a specific family 
$\{ \sV_{j} \}_{j \in \Z}$ of fundamental modules
labeled by the set of integers $\Z$.
We prove that the associated quiver $\Gamma_{J}$
is of type $\mathsf{A}_{\infty}$ with monotone orientation, and 
the corresponding graded quiver varieties are isomorphic 
to unions of graded nilpotent orbits of type $\mathsf{A}$.
Moreover, we show that the associated functor $\mathscr{F}_{J}$
induces an isomorphism of Grothendieck rings 
between a certain localization $\mathcal{T}_{N}$ of the module category of $H_{J}$ constructed in~\cite{KKK18}
and the monoidal full subcategory $\Cc_{\dD_{Q^{\prime}}}$ of $\Cc$
generated by the fundamental modules $V_{i}(q^{p})$ such that 
$\cH_{Q}(i,p) \in \dD_{Q^{\prime}} \subset \dD_{Q}$.
In some special cases of type $\mathsf{AD}$,
the associated functors $\mathscr{F}_{J}$ coincide with the ones studied in \cite{KKK18, KKO19, KKOP19}.  
Recently, Kashiwara-Kim-Oh-Park~\cite{KKOP19} proved 
that the localized category $\mathcal{T}_{N}$ gives a monoidal categorification 
of a certain cluster algebra of infinite rank.  
Therefore, we conclude that 
our monoidal category $\Cc_{\dD_{Q^{\prime}}}$ always 
inherits the same cluster structure 
from the category $\mathcal{T}_{N}$ 
via the monoidal functor $\mathscr{F}_{J}$.  

\subsection{Remark}
Note that 
explicit computations of the denominators $d_{ij}(u)$
for the other non-symmetric affine types have been also accomplished in the separate works by
Akasaka-Kashiwara~\cite{AK97} for type $\mathsf{C}$,
by Oh~\cite{Oh15} for type $\mathsf{B}$ and for doubly-twisted type $\mathsf{AD}$,
and by Oh-Scrimshaw~\cite{OS19, OS19c} for all the remaining cases.
Unfortunately, our geometric approach using the graded quiver varieties
is applicable only 
to the cases of untwisted type $\mathsf{ADE}$ (i.e.~symmetric affine types).   
At this moment, it is unclear whether there is
an analogous geometric approach to compute
the denominators $d_{ij}(u)$ for the non-symmetric types.

\subsection{Organization}
This paper is organized as follows.
In Section~\ref{Sec:denominator},
we recall some known facts about 
the representation theory of the quantum loop algebras 
$U_{q}(L\g)$ of type $\mathsf{ADE}$
and state our main theorem.
In Section~\ref{Sec:quiver}, 
we present an interpretation of our denominator formula
in terms of representations of Dynkin quivers.
After reviewing the graded quiver varieties in Section~\ref{Sec:Nakajima},
we give a geometric proof of our denominator formula in Section~\ref{Ssec:pf}.
In Section~\ref{Ssec:simplepole}, we add a remark on the case when the normalized $R$-matrix has a simple pole.     
Section~\ref{Sec:SW} is devoted to a study of the generalized quantum affine Schur-Weyl duality. 
In Section~\ref{Ssec:ginterpret}, we give a geometric interpretation
of the functor $\mathscr{F}_{J}$
when it arises from a family of fundamental modules.
Finally, we study some examples where the graded quiver varieties are isomorphic to 
unions of graded nilpotent orbits of type $\mathsf{A}$ 
in Section~\ref{Ssec:gnilp}.   

\subsection{Acknowledgments} 
The author is grateful to Se-jin Oh
for his interest in this paper and 
for answering the author's questions on his papers. 
The author was supported by
Grant-in-Aid for JSPS 
Research Fellow (No.~18J10669), and by JSPS Overseas Research Fellowships during the revision.

\subsection{Overall convention}
Working over a base field $\mathbb{F}$,
we often write $\otimes$ (resp.~$\Hom$, $\dim$) 
instead of $\otimes_{\mathbb{F}}$ (resp.~$\Hom_{\mathbb{F}}$, $\dim_{\mathbb{F}}$) 
suppressing the symbol $\mathbb{F}$
for simplicity. 
For an algebra $A$ over a field $\mathbb{F}$,
we denote by $A \modcat$ 
the category of left $A$-modules
which are finite-dimensional over $\mathbb{F}$.
We denote by $A^{\mathrm{op}}$ (resp.~$A^{\times}$)
the opposite algebra (resp.~the multiplicative group of invertible elements) of $A$.

\section{A unified denominator formula}   
\label{Sec:denominator}
In this section, we recall some known facts
on representation theory 
of the quantum loop algebras of type $\mathsf{ADE}$
and state our main theorem.  

\subsection{Notation}
\label{Ssec:Notation}
  
Throughout this paper, 
we fix a finite-dimensional complex simple Lie algebra $\g$
of type $\mathsf{A}_{n}\, (n \in \Z_{\ge 1})$, 
$\mathsf{D}_{n}\, (n \in \Z_{\ge 4})$, 
or $\mathsf{E}_{n}\, (n=6,7,8)$.
Let $I\seq\{1, 2, \ldots, n\}$ be the set of Dynkin indices.
The Cartan matrix of $\g$ 
is denoted by $(c_{ij})_{i,j \in I}$.
We write $i \sim j$ if $c_{ij}=-1$. 

Let $\cP^{\vee} = \bigoplus_{i \in I} \Z h_{i}$ 
be the coroot lattice of $\g$.
The fundamental weights $\{ \varpi_{i}\}_{i \in I}$ 
form a basis of  the weight lattice 
$\cP = \Hom_{\Z}(\cP^{\vee}, \Z)$
which is dual to $\{h_{i} \}_{i \in I}$.
Let $\alpha_{i} = \sum_{j \in I} c_{ij} \varpi_{j}$ 
be the $i$-th simple root and 
$\cQ = \bigoplus_{i \in I} \Z \alpha_{i} \subset \cP$ 
be the root lattice.
We put $\cP^{+} = \sum_{i \in I}\Z_{\ge 0} \varpi_{i}$
and $\cQ^{+} = \sum_{i \in I} \Z_{\ge 0} \alpha_{i}$.
Denote by $(-,-)$ the symmetric bilinear form on $\cP \otimes_{\Z} \Q$
given by $(\alpha_{i}, \varpi_{j}) = \delta_{ij}$, or equivalently
$(\alpha_{i}, \alpha_{j}) = c_{ij}$.
Let $\mathsf{W}$ be the Weyl group of $\g$.
It is a finite group of 
linear transformations on $\cP$ 
generated by the simple reflections $\{r_{i}\}_{ i \in I}$
defined by
$r_{i}(\lambda) \seq \lambda - \lambda(h_{i}) \alpha_{i}$
for $\lambda \in \cP$. 
The set $\cR^{+}$ of positive roots is defined by 
$\cR^{+} = (\mathsf{W}\{ \alpha_{i}\}_{ i \in I}) \cap \cQ^{+}$.
Let $h \seq 2 |\mathsf{R}^{+}|/n$ be the Coxeter number of $\g$.

We fix an $I$-tuple $\epsilon = (\epsilon_{i})_{i \in I} \in \{0, 1\}^{I}$ such that
$\epsilon_{i} \neq \epsilon_{j}$ whenever $i \sim j$.
We refer to such an $\epsilon$ as a {\em parity function}. 
Note that we have only two possible choices of $\epsilon$ and 
the difference does not affect the main results of this paper.

\subsection{Quantum loop algebra}
\label{Ssec:qloop}

Let $q$ be an indeterminate and 
$\kk \seq \overline{\Q(q)}$ be the algebraic closure of the field $\Q(q)$
of rational functions in $q$ with rational coefficients
inside the ambient field $\bigcup_{m \in \Z_{>0}} \overline{\Q}(q^{1/m})$.

\begin{Def}
\label{Def:qloop}
The quantum loop algebra 
$U_{q}(L\g)$ 
associated with $\g$
is defined as a $\kk$-algebra with the generators:
$$
\left\{ x^{+}_{i,r} , x^{-}_{i,r} \mid i \in I, r \in \Z \right\}\cup 
\left\{ q^{y} \mid y \in \cP^{\vee} \right\} \cup
\left\{ h_{i, m} \mid i \in I, m \in \Z \setminus \{0\} \right\}$$
satisfying the following relations:
$$
q^{0} = 1, \quad
q^{y} q^{y^{\prime}} = q^{y+y^{\prime}}, \quad
[q^{y}, h_{i,m}] = [h_{i,m}, h_{j, l}] = 0, \quad
q^{y} x^{\pm}_{i,r} q^{-y} = q^{\pm \alpha_{i}(y)} x^{\pm}_{i,r}, 
$$
$$
(z-q^{\pm c_{ij}}w)\phi_{i}^{\varepsilon}(z) x_{j}^{\pm}(w)
= (q^{\pm c_{ij}}z - w)x_{j}^{\pm}(z) \phi_{i}^{\varepsilon}(w), 
$$
$$
(z - q^{\pm c_{ij}}w)x_{i}^{\pm}(z) x_{j}^{\pm}(w) 
=
(q^{\pm c_{ij}}z-w)
x_{j}^{\pm}(w) x_{i}^{\pm}(z),   
$$
$$
[x_{i}^{+}(z), x_{j}^{-}(w)] = 
\frac{\delta_{ij}}{q - q^{-1}} 
\left( \delta \left( \frac{w}{z} \right) \phi_{i}^{+}(w)
- \delta \left( \frac{z}{w} \right) \phi_{i}^{-}(z) \right), 
$$
\begin{multline}
\left\{ x_{i}^{\pm}(z_{1}) x_{i}^{\pm}(z_{2}) x_{j}^{\pm}(w)
-(q + q^{-1})x_{i}^{\pm}(z_{1}) x_{j}^{\pm}(w)x_{i}^{\pm}(z_{2})
+ x_{j}^{\pm}(w) x_{i}^{\pm}(z_{1}) x_{i}^{\pm}(z_{2}) 
\right\} \\
+
\{ z_{1} \leftrightarrow z_{2}\}
= 0
\qquad \text{if $i \sim j$},
\nonumber
\end{multline}
where $\varepsilon \in \{+, - \}$
and $\delta(z), x_{i}^{\pm}(z), \phi_{i}^{\pm}(z)$ 
are the formal series defined as follows:
$$
\delta(z) \seq \sum_{r=-\infty}^{\infty}z^{r},
\quad
x_{i}^{\pm}(z) \seq \sum_{r=-\infty}^{\infty}x_{i,r}^{\pm}z^{-r},
$$
$$
\phi_{i}^{\pm}(z) \seq q^{\pm h_{i}}
\exp \left( \pm (q-q^{-1}) 
\sum_{m=1}^{\infty}h_{i, \pm m} z^{\mp m} \right).
$$
In the last relation, the second term $\{ z_{1} \leftrightarrow z_{2}\}$
means the exchange of $z_{1}$ with $z_{2}$ in the first term.
\end{Def}

Let $\widehat{\g}$ be the (untwisted) affine Lie algebra
associated with $\g$. It is realized as 
$$
\widehat{\g} = L\g \oplus \C c \oplus \C d
$$ 
with a suitable Lie algebra structure,
where
$L \g \seq \g \otimes \C[z^{\pm 1}]$
is the loop algebra of $\g$,
$c$ is a central element and $d \seq z \frac{\mathrm{d}}{\mathrm{d}z}$ is
the degree operator. 
The derived subalgebra 
$
\widehat{\g}^{\prime} =
[\widehat{\g}, \widehat{\g}] = L\g \oplus \C c
$
is a central extension of the loop algebra $L\g$.
Let $U_{q}(\widehat{\g})$ be the quantized enveloping algebra of 
$\widehat{\g}$. This is a Hopf algebra over $\kk$
presented by the Chevalley type generators
$\{ e_{i}, f_{i} \mid i \in I \cup \{ 0 \} \} \cup 
\{ q^{y} \mid y \in 
\cP^{\vee} \oplus \Z c \oplus \Z d \}$
and the well-known relations.
The coproduct
$\triangle \colon U_{q}(\widehat{\g})
\to U_{q}(\widehat{\g}) \otimes U_{q}(\widehat{\g})$ is given
by:
$$
\triangle(e_{i}) = e_{i} \otimes q^{-h_{i}} + 1 \otimes e_{i}, \quad
\triangle(f_{i}) = f_{i} \otimes 1 + q^{h_{i}} \otimes f_{i}, \quad
\triangle(q^{y}) = q^{y} \otimes q^{y}  
$$
for $i \in I \cup \{0\}, y \in \cP^{\vee}\oplus \Z c \oplus \Z d$.
The subalgebra $U_{q}^{\prime}(\widehat{\g})$
generated by the generators
$\{ e_{i}, f_{i}, q^{\pm h_{i}} \mid i \in I \cup \{ 0\} \}$ 
is a Hopf subalgebra of $U_{q}(\widehat{\g})$, which
is regarded as a $q$-deformation of 
the universal enveloping algebra of $\widehat{\g}^{\prime}$.
By Beck~\cite{Beck94},
we have a $\kk$-algebra isomorphism
$U_{q}(L\g) \cong 
U_{q}^{\prime}(\widehat{\g}) / \langle q^{c} -1 \rangle$,
via which
the quantum loop algebra $U_{q}(L\g)$  
inherits a structure of Hopf algebra.
Actually this isomorphism depends on 
the choice of a function 
$o \colon I \to \{ \pm 1\}$ satisfying
$o(i) = -o(j)$ if $i \sim j$.
In this paper, we set $o(i) \seq (-1)^{\epsilon_{i}}$
by using the parity function $\epsilon$ 
we fixed in the last subsection.

\subsection{Simple and fundamental modules}
\label{Ssec:fundamental}

A $U_{q}(L\g)$-module is said to be 
of type $\mathbf{1}$ if, for each $i \in I$, the element $q^{h_{i}}$
acts on it as a semisimple linear operator whose eigenvalues belong to $q^{\Z}$.  
Let $\Cc$ denote the category of 
finite-dimensional $U_{q}(L\g)$-modules of type $\mathbf{1}$. 
The category $\Cc$ is a $\kk$-linear abelian monoidal category.

It is well-known that the simple modules of the category $\Cc$ are parametrized by 
so-called Drinfeld polynomials~\cite{CP95}, or equivalently by the dominant monomials~\cite{FR99}, 
which we recall here.  
Let $\Mm$ be the abelian (multiplicative) group freely generated by 
the symbols $\{Y_{i,a}\}_{(i,a) \in I \times \kk^{\times}}$ and
$\Mm^{+}$ be the submonoid of $\Mm$ generated by $\{Y_{i,a}\}_{(i,a) \in I \times \kk^{\times}}$.  
We refer to an element of $\Mm^{+}$ as a {\em dominant monomial}.

\begin{Thm}[{Chari-Pressley~\cite[Theorem 3.3]{CP95}}]
For each dominant monomial $m = \prod_{(i,a)} Y_{i,a}^{m_{i,a}}$, 
there exists a simple module $L(m) \in \Cc$ 
with a non-zero vector $v_{m} \in L(m)$ 
satisfying
$$
x_{i}^{+}(z)v_{m} = 0, \quad
\phi_{i}^{\pm}(z)v_{m} =
\left(
\prod_{a \in \kk^{\times}} 
\left(
\frac{q-q^{-1}az^{-1}}{1-az^{-1}}\right)^{m_{i,a}}
\right)^{\pm} v_{m}
$$
for each $i \in I$,
where $(-)^{\pm}$ denotes 
the formal expansion at $z^{\mp 1}=0$.
Such a vector $v_{m} \in L(m)$ is
unique up to $\kk^{\times}$.
Moreover, the correspondence 
$m \mapsto L(m)$
gives a bijection between the set $\Mm^{+}$
of dominant monomials and the set of
isomorphism classes of simple modules of $\Cc$. 
\end{Thm}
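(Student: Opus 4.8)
The plan is to run the standard pseudo-highest weight machinery for the Drinfeld (loop) presentation of $U_{q}(L\g)$, following Chari--Pressley~\cite{CP95}. First I would record a triangular decomposition $U_{q}(L\g) \cong U^{-}\otimes U^{0}\otimes U^{+}$, where $U^{\pm}$ is generated by the currents $x_{i}^{\pm}(z)$ and $U^{0}$ by the $q^{y}$ together with the $h_{i,m}$, equivalently by the Fourier modes of the mutually commuting series $\phi_{i}^{\pm}(z)$. Given $V \in \Cc$, finite-dimensionality makes the $q^{h_{i}}$-weight spaces finite-dimensional, so $V$ has a maximal weight $\lambda$; since $U^{+}$ strictly raises weight while $U^{0}$ is commutative and preserves weight spaces, $V_{\lambda}$ contains a common $U^{0}$-eigenvector $v$ with $x_{i}^{+}(z)v = 0$ for all $i$. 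Simplicity then forces $V = U^{-}v$, so every simple object of $\Cc$ is a pseudo-highest weight module.

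Next I would determine the pseudo-highest weight. For each $i \in I$ the elements $x_{i,r}^{\pm}$, $h_{i,m}$, $q^{\pm h_{i}}$ generate a copy of the quantum loop algebra of $\mathfrak{sl}_{2}$, and $U^{-}_{i}v$ is a finite-dimensional pseudo-highest weight module over it. The rank-one classification then forces the eigenvalue of $\phi_{i}^{\pm}(z)$ on $v$ to be the formal expansion at $z^{\mp 1} = 0$ of a rational function $\prod_{a}((q-q^{-1}az^{-1})/(1-az^{-1}))^{m_{i,a}}$ with finitely many $a \in \kk^{\times}$ and $m_{i,a} \in \Z_{\ge 0}$. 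Collecting these data over all $i$ produces a dominant monomial $m = \prod_{(i,a)}Y_{i,a}^{m_{i,a}} \in \Mm^{+}$; the condition that $V$ be of type $\mathbf{1}$ fixes the $q^{h_{i}}$-eigenvalue on $v$ to be $q^{(\mathrm{wt}\,m)(h_{i})}$ with $\mathrm{wt}\,m = \sum_{i,a}m_{i,a}\varpi_{i}$, so the datum is consistent. Set $L(m) \seq V$.

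For the uniqueness of $v_{m}$ and for injectivity of $m \mapsto L(m)$, I would introduce the universal pseudo-highest weight (Weyl) module $M(m) = U_{q}(L\g)/J_{m}$, with $J_{m}$ the left ideal generated by the raising modes $x_{i,r}^{+}$ and by $q^{y}, h_{i,m}$ minus the scalars prescribed by $m$. Its $\lambda$-weight space is one-dimensional, so the sum of all submodules with zero $\lambda$-component is the unique maximal proper submodule; hence $M(m)$ has a unique simple quotient, which therefore coincides with any simple module carrying pseudo-highest weight $m$. This proves that $L(m)$ is well defined up to isomorphism, that $m\mapsto L(m)$ is injective, and that $v_{m}$ is unique up to $\kk^{\times}$. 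For surjectivity one must show $L(m) \in \Cc$ for every $m$. This reduces to the fundamental case $m = Y_{i,a}$: the fundamental modules $V_{i}(a) = L(Y_{i,a})$ are finite-dimensional and of type $\mathbf{1}$ --- in type $\mathsf{A}$ via the evaluation homomorphisms $U_{q}(L\mathfrak{sl}_{n}) \to U_{q}(\mathfrak{gl}_{n})$, and in general by Chari--Pressley's construction. For a general $m = \prod_{(i,a)}Y_{i,a}^{m_{i,a}}$, fix an ordering of the pairs $(i,a)$ and form $T = \bigotimes_{(i,a)} V_{i}(a)^{\otimes m_{i,a}} \in \Cc$; a standard computation with the coproduct $\triangle$ shows that $v_{T} = \bigotimes v_{i,a}$ is a pseudo-highest weight vector of $T$ of pseudo-highest weight $m$. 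Thus $U_{q}(L\g)\,v_{T} \subseteq T$ is a finite-dimensional quotient of $M(m)$, and its unique simple quotient is $\cong L(m)$ by the preceding paragraph; hence $L(m) \in \Cc$. Together with injectivity, this gives the desired bijection.

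I expect the main obstacle to be the construction of the fundamental modules $V_{i}(a)$ and the proof that they are finite-dimensional: outside type $\mathsf{A}$ there is no evaluation homomorphism, and this is where the genuine content of the Chari--Pressley argument lies (alternatively, in the type $\mathsf{ADE}$ setting relevant to this paper, one may invoke Kashiwara's extremal weight modules or Nakajima's geometric realization). A secondary technical point is the rank-one input used in the second step --- the precise shape of the $\phi_{i}^{\pm}(z)$-eigenvalue on a finite-dimensional pseudo-highest weight vector over $U_{q}(L\mathfrak{sl}_{2})$ --- which itself rests on the Drinfeld-presentation relations and a careful analysis of the action of the $x_{i,r}^{\pm}$.
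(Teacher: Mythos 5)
The paper does not prove this result itself: it is quoted directly from Chari--Pressley \cite[Theorem 3.3]{CP95}, and your proposal is essentially a faithful reconstruction of that standard argument (triangular decomposition, pseudo-highest weight vector via common $U^{0}$-eigenvectors, rank-one reduction to $U_{q}(L\mathfrak{sl}_{2})$ to pin down the eigenvalue of $\phi_{i}^{\pm}(z)$, universal Weyl module for uniqueness/injectivity, and tensor products of fundamental modules for existence). You also correctly locate the real content --- finite-dimensionality of the fundamental modules outside type $\mathsf{A}$ --- so your outline matches the cited proof and is sound.
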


For each $(i, a) \in I \times \kk^{\times}$, we define an element $A_{i,a} \in \Mm$ by
$$
A_{i,a} \seq Y_{i, qa}Y_{i,q^{-1}a}\cdot\prod_{j\sim i} Y_{j,a}^{-1}.
$$
For $m, m^\prime \in \Mm$, we write $m \le m^{\prime}$
if $m^{\prime} m^{-1}$ is a monomial in $\{ A_{i,a} \}_{(i,a) \in I \times \kk^{\times}}$.
This defines a partial ordering on the set $\Mm^{+}$ of dominant monomials.

The simple modules
$L(Y_{i,a})$ corresponding to the degree $1$ dominant monomials $Y_{i,a} \in \Mm^{+}$, $(i,a) \in I \times \kk^{\times}$,
are called the {\em fundamental modules}.
The next theorem shows their importance in the monoidal category $\Cc$. 

\begin{Thm}[Frenkel-Reshetikhin~\cite{FR99}, Frenkel-Mukhin~\cite{FM01}, Nakajima~\cite{Nakajima01}]
\label{Thm:FR}
Let $K(\Cc)$ denote the Grothendieck ring of the monoidal abelian category $\Cc$.
\begin{enumerate}
\item \label{Thm:FR:1}
The ring $K(\Cc)$ is isomorphic to the polynomial ring $\Z[t_{i,a} \mid (i,a) \in I \times \kk^{\times}]$
in infinitely many variables, where the variable $t_{i,a}$ corresponds
to the class of the fundamental module $L(Y_{i,a})$.
In particular, $K(\Cc)$ is commutative;
\item \label{Thm:FR:2}
For each dominant monomial $m = \prod_{i,a} Y_{i,a}^{m_{i,a}} \in \Mm^{+}$, we have
$$
\prod_{i,a}[L(Y_{i,a})]^{m_{i,a}} = [L(m)] + \sum_{m^{\prime} \in \Mm^{+}, m^{\prime} \lneq m} c(m, m^{\prime}) [L(m^{\prime})]
$$ 
in the Grothendieck ring $K(\Cc)$, where $c(m, m^{\prime}) \in \Z_{\ge 0}.$
\end{enumerate}
\end{Thm}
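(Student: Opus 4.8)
The plan is to establish both assertions at once by means of the $q$-character homomorphism of Frenkel--Reshetikhin. For $V \in \Cc$ the commuting operators $\{\phi_{i}^{\pm}(z)\}_{i \in I}$ act by generalized eigenvalues which, by the Chari--Pressley description, are recorded by monomials in $\Mm$; writing $V_{m}$ for the associated generalized eigenspace (the $\ell$-weight space of $\ell$-weight $m$) one sets $\chi_{q}(V) \seq \sum_{m \in \Mm}(\dim V_{m})\,m$ in the group ring $\Z[\Mm]$. First I would verify that $\chi_{q}$ factors through a ring homomorphism $K(\Cc) \to \Z[\Mm]$: additivity on short exact sequences is immediate, while multiplicativity $\chi_{q}(V \otimes V') = \chi_{q}(V)\chi_{q}(V')$ follows from the fact that with respect to the coproduct $\triangle$ the Drinfeld generators $\phi_{i}^{\pm}(z)$ act upper-triangularly on $V \otimes V'$, so that the $\ell$-weights of the tensor product are exactly the products of those of $V$ and of $V'$ counted with multiplicity.

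Next I would record the triangularity of simple modules. Since $L(m)$ is generated over $U_{q}(L\g)$ by its highest $\ell$-weight vector $v_{m}$, every $\ell$-weight $m'$ of $L(m)$ satisfies $m' \le m$ in the order on $\Mm$ recalled above, and the $m$-eigenspace is one-dimensional; hence $\chi_{q}([L(m)]) = m + \sum_{m' \lneq m} d_{m,m'}\,m'$ with $d_{m,m'} \in \Z_{\ge 0}$. Because every object of $\Cc$ has finite length, $\{[L(m)]\}_{m \in \Mm^{+}}$ is a $\Z$-basis of $K(\Cc)$, and this triangular shape shows that their $q$-characters are $\Z$-linearly independent; thus $\chi_{q}$ is injective, and since $\Z[\Mm]$ is commutative so is $K(\Cc)$.

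For the remaining part of the polynomiality assertion and for the expansion formula, I would fix a dominant monomial $m = \prod_{i,a} Y_{i,a}^{m_{i,a}}$ and consider in $K(\Cc)$ the product $\prod_{i,a}[L(Y_{i,a})]^{m_{i,a}}$. As $K(\Cc)$ is commutative this product is the class of the genuine module $\bigotimes_{i,a} L(Y_{i,a})^{\otimes m_{i,a}}$ formed in any order, hence a $\Z_{\ge 0}$-combination of the classes $[L(m')]$. Applying $\chi_{q}$ and using multiplicativity together with the triangularity of each fundamental module, one gets $\chi_{q}\!\left(\prod_{i,a}[L(Y_{i,a})]^{m_{i,a}}\right) = m + (\text{monomials} \lneq m)$, so the coefficient of $[L(m)]$ is $1$ and every other $[L(m')]$ that occurs has $m' \lneq m$; this is precisely the claimed formula, with $c(m,m') \in \Z_{\ge 0}$. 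A standard finiteness argument (the weight map $Y_{i,a} \mapsto \varpi_{i}$ carries $\le$ into the root order on $\cP$, and only finitely many spectral parameters can occur below a fixed monomial) shows that $\{m' \in \Mm^{+} : m' \le m\}$ is finite, so downward induction along this poset turns the formula into an expression of each $[L(m)]$ as a $\Z$-polynomial in the $[L(Y_{i,a})]$; hence these classes generate $K(\Cc)$ as a ring. They are algebraically independent, since any polynomial relation pushed through $\chi_{q}$ would, in its top-degree part, yield a nontrivial vanishing $\Z$-linear combination of the pairwise distinct monomials $\prod_{i,a} Y_{i,a}^{m_{i,a}}$ in $\Z[\Mm]$. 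Therefore $t_{i,a} \mapsto [L(Y_{i,a})]$ extends to the desired isomorphism $\Z[t_{i,a} \mid (i,a) \in I \times \kk^{\times}] \xrightarrow{\sim} K(\Cc)$.

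The principal obstacle is the second step: proving that $\chi_{q}$ is a well-defined \emph{injective} ring homomorphism and that $\chi_{q}([L(m)])$ is triangular with leading term $m$. Multiplicativity requires a careful analysis of the interaction of the $\phi_{i}^{\pm}(z)$ with $\triangle$, and the triangularity and injectivity form the technical heart of Frenkel--Reshetikhin's theory, refined by the Frenkel--Mukhin algorithm for computing $\chi_{q}$ of simple modules; in the type $\mathsf{ADE}$ situation at hand these facts, together with the positivity $c(m,m') \ge 0$, can alternatively be deduced from Nakajima's geometric realization of the simple modules and their $q$-characters via graded quiver varieties.
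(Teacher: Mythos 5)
This theorem is not proved in the paper at all: part (1) is quoted from \cite{FR99} (Corollary 2 there), and part (2) is the Frenkel--Reshetikhin conjecture, cited as proved in \cite{FM01} (Theorem 4.1) and \cite{Nakajima01} (Proposition 5.2). Your outline follows the standard $q$-character route that underlies those references, and most of its scaffolding is sound: additivity and (granting it) multiplicativity of $\chi_q$, injectivity deduced from triangularity, the leading-coefficient argument giving part (2) with $c(m,m')\ge 0$ automatic because the left-hand side is the class of an actual tensor product, the finiteness of $\{m'\in\Mm^{+}\mid m'\le m\}$, and the downward induction plus top-term argument giving generation and algebraic independence for part (1).

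The genuine gap is the step you present as immediate: you claim that since $L(m)$ is generated by $v_m$, every $\ell$-weight $m'$ of $L(m)$ satisfies $m'\le m$, i.e.\ that all monomials of $\chi_q(L(m))$ lie in $m\,\Z[A_{i,a}^{-1}]$. Highest-$\ell$-weight generation only yields triangularity of ordinary $\cP$-weights (applying $x^{-}_{i,r}$ lowers the weight by $\alpha_i$); it says nothing elementary about how the generalized $\phi^{\pm}_i(z)$-eigenvalues move, and the refinement to the $A_{i,a}$-ordering is exactly the conjecture of Frenkel--Reshetikhin proved by Frenkel--Mukhin (via their algorithm) and by Nakajima (via graded quiver varieties). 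In other words, the heart of part (2) is smuggled in at this step, so read as a self-contained argument your proof is circular; read as a reduction to \cite{FM01,Nakajima01}, as your final paragraph concedes, it amounts to the same citation the paper itself makes. (Multiplicativity of $\chi_q$ is likewise a nontrivial result of \cite{FR99}, though your sketch of the triangular interaction of $\phi^{\pm}_i(z)$ with $\triangle$ is the right idea.)
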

\begin{proof}
\eqref{Thm:FR:1} is \cite[Corollary 2]{FR99}.
\eqref{Thm:FR:2} was originally conjectured by \cite{FR99} 
and proved by \cite[Theorem 4.1]{FM01} 
and \cite[Proposition 5.2]{Nakajima01} independently.
\end{proof}


\subsection{Normalized $R$-matrices and their denominators}

\begin{Def} \label{Def:affinization}
Let $M$ be a $U_{q}(L\g)$-module and
$z$ be a formal parameter.
We equip the free $\kk[z^{\pm 1}]$-module
$M[z^{\pm 1}] \seq M \otimes_{\kk} \kk[z^{\pm 1}]$
with a left $U_{q}(L\g)$-module structure by
\begin{align*}
x_{i, r}^{\pm}\cdot (v\otimes f(z)) &\seq x_{i,r}^{\pm}v \otimes z^{r} f(z), \\
q^{y} \cdot (v\otimes f(z)) &\seq q^{y}v \otimes f(z), \\
h_{i,m} \cdot (v\otimes f(z)) &\seq h_{i,m}v \otimes z^{m}f(z), 
\end{align*}
where $v \in M, f(z) \in \kk[z^{\pm 1}]$.
We refer to the resulting $U(L\g)\otimes_{\kk} \kk[z^{\pm 1}]$-module
$M[z^{\pm 1}]$
as the {\em affinization} of $M$.\footnote{In \cite{Kashiwara02}, the affinization is defined in terms of the Chevalley type generators of the algebra $U'_q(\widehat{\g})$.
One can easily see that it coincides with our affinization in Definition~\ref{Def:affinization} under the isomorphism $U_{q}(L\g) \cong 
U_{q}^{\prime}(\widehat{\g}) / \langle q^{c} -1 \rangle$ in \cite{Beck94}. } 
\end{Def}

To simplify the notation,
we denote
the {\em affinized fundamental module} and its generating vector by
$$
V_{i}[z^{\pm 1}] \seq L(Y_{i,1})[z^{\pm 1}], \quad
v_{i} \seq (v_{Y_{i,1}}) \otimes 1
$$
for each $i \in I$.
In addition, for any non-zero scalar $a \in \kk^{\times}$, we set
$$V_{i}(a) \seq V_{i}[z^{\pm 1}]/(z-a)V_{i}[z^{\pm 1}]$$
and denote by $v_{i}(a)$ the image of the vector $v_{i}$
under the canonical quotient map $V_{i}[z^{\pm 1}] \to V_{i}(a)$.
With this notation, we have
an isomorphism 
$V_{i}(a) \cong L(Y_{i,a})$
of $U_{q}(L\g)$-modules
via which the vector $v_{i}(a)$ corresponds to 
the vector $v_{Y_{i,a}}$.

\begin{Rem}
The affinized fundamental module $V_{i}[z^{\pm 1}]$
is known to be isomorphic to the following modules:
\begin{itemize}
\item the level zero extremal weight module of extremal weight $\varpi_{i}$
introduced by Kashiwara~\cite{Kashiwara94, Kashiwara02};
\item the global Weyl module of highest weight $\varpi_{i}$ 
introduced by Chari-Pressley~\cite{CP01};
\item the standard module associated with $\varpi_{i}$, 
realized via the equivariant $K$-theory of quiver varieties by Nakajima~\cite{Nakajima01}
(see Section~\ref{Ssec:Nakajimahom} below).
\end{itemize}
For a proof, see
\cite[Section 5]{Kashiwara02} and
\cite[Remark 2.15]{Nakajima04}. 
\end{Rem}

For each pair $(i,j) \in I^{2}$,
there is a unique $U_{q}(L\g) \otimes_{\kk} \kk[z_{1}^{\pm1}, z_{2}^{\pm1}]$-homomorphism
called the {\it normalized $R$-matrix} 
$$
R_{ij}
\colon V_{i}[z_{1}^{\pm 1}] \otimes V_{j}[z_{2}^{\pm 1}]
\to 
\kk(z_{2}/z_{1}) \otimes_{\kk[(z_{2}/z_{1})^{\pm 1}]}
\left( V_{j}[z_{2}^{\pm 1}] \otimes V_{i}[z_{1}^{\pm 1}] \right),
$$
satisfying the condition 
$R_{ij}
(v_{i}\otimes v_{j}) = v_{j} \otimes
v_{i}$ (see~\cite[Appendix A]{AK97} or~\cite[Section 8]{Kashiwara02}).
The {\em denominator} of the normalized $R$-matrix
$R_{ij}$ is a unique monic polynomial 
$d_{ij}(u) \in \kk[u]$ of the smallest degree 
among polynomials satisfying
$$
d_{ij}(z_{2}/z_{1}) R_{ij} \left(V_{i}[z_{1}^{\pm 1}] \otimes V_{j}[z_{2}^{\pm 1}]\right)
\subset 
1 \otimes
\left(V_{j}[z_{2}^{\pm 1}] \otimes V_{i}[z_{1}^{\pm 1}] \right).
$$

\begin{Rem}
\label{Rem:generalR}
In the same way, 
we can define the normalized $R$-matrix
$$
R_{M, M^{\prime}} \colon 
M[z_{1}^{\pm 1}] \otimes M^{\prime}[z_{2}^{\pm 1}]
\to \kk(z_{2}/z_{1}) \otimes_{\kk[(z_{2}/z_{1})^{\pm 1}]} 
\left( M^{\prime}[z_{2}^{\pm 1}] \otimes M[z_{1}^{\pm 1}] \right)
$$
and its denominator $d_{M, M^{\prime}}(u) \in \kk[u]$
for any simple modules $M, M^{\prime} \in \Cc$.
\end{Rem}

In the rest of this subsection, 
we recall some properties of the normalized $R$-matrices $R_{ij}$
and their denominators $d_{ij}(u)$ for future use.   
Let $a, b \in \kk^{\times}$ be non-zero scalars such that $d_{ij}(b/a) \neq 0$.
Then the normalized $R$-matrix $R_{ij}$ can be specialized to yield
a non-zero $U_{q}(L\g)$-homomorphism $R_{ij}(b/a) \colon V_{i}(a) \otimes V_{j}(b) \to V_{j}(b) \otimes V_{i}(a)$
which sends the vector $v_{i}(a) \otimes v_{j}(b)$ to the vector $v_{j}(b) \otimes v_{i}(a)$.

\begin{Thm}[\cite{AK97, Chari02, FM01, Kashiwara02, VV02}]
\label{Thm:tensor}
Let $i,j \in I$ and $a, b \in \kk^{\times}$.
\begin{enumerate}
\item \label{Thm:tensor1}
As a $U_{q}(L\g)$-module, $V_{i}(a) \otimes V_{j}(b)$ is generated
by the vector $v_{i}(a) \otimes v_{j}(b)$ if and only if $d_{ij}(b/a) \neq 0$. 
If this is the case, the module $V_{i}(a) \otimes V_{j}(b)$ has a simple head $\Ima(R_{ij}(b/a))$.
\item \label{Thm:tensor2}
Any non-zero $U_{q}(L\g)$-submodule of $V_{i}(a) \otimes V_{j}(b)$ 
contains the vector $v_{i}(a) \otimes v_{j}(b)$ if and only if $d_{ji}(a/b) \neq 0$.
If this is the case, the module $V_{i}(b)\otimes V_{i}(a)$ has a simple socle $\Ima(R_{ji}(a/b))$.
\end{enumerate}
In particular, the following conditions are mutually equivalent:
\begin{itemize}
\item The tensor product $V_{i}(a) \otimes V_{j}(b)$ is irreducible;
\item $V_{i}(a) \otimes V_{j}(b) \cong V_{j}(b) \otimes V_{i}(a)$ as $U_{q}(L\g)$-modules;
\item $d_{ij}(b/a) \neq 0$ and $d_{ji}(a/b) \neq 0$.
\end{itemize} 
\end{Thm}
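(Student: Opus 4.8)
The plan is to deduce the whole statement from the formalism of renormalized $R$-matrices, following Akasaka--Kashiwara~\cite{AK97} and Kashiwara~\cite{Kashiwara02}. Write $u\seq z_{2}/z_{1}$ and set $\widetilde{R}_{ij}(u)\seq d_{ij}(u)\,R_{ij}(u)$; by the definition of the denominator this is an honest $U_{q}(L\g)\otimes\kk[z_{1}^{\pm1},z_{2}^{\pm1}]$-homomorphism $V_{i}[z_{1}^{\pm1}]\otimes V_{j}[z_{2}^{\pm1}]\to V_{j}[z_{2}^{\pm1}]\otimes V_{i}[z_{1}^{\pm1}]$ with $\widetilde{R}_{ij}(u)(v_{i}\otimes v_{j})=d_{ij}(u)\,(v_{j}\otimes v_{i})$, and by minimality of $d_{ij}$ its matrix entries in any weight basis have no common nonconstant factor. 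Hence the specialization $\widetilde{R}_{ij}(b/a)\colon V_{i}(a)\otimes V_{j}(b)\to V_{j}(b)\otimes V_{i}(a)$ is a nonzero homomorphism for all $a,b\in\kk^{\times}$, and it kills $v_{i}(a)\otimes v_{j}(b)$ precisely when $d_{ij}(b/a)=0$. The structural input I would invoke is that affinized fundamental modules are good modules in the sense of Kashiwara~\cite{Kashiwara02} (see also Nakajima~\cite{Nakajima01}): in particular $\bigl(V_{i}[z_{1}^{\pm1}]\otimes V_{j}[z_{2}^{\pm1}]\bigr)\otimes_{\kk[u^{\pm1}]}\kk(u)$ is a simple $U_{q}(L\g)\otimes\kk(u)$-module with one-dimensional endomorphism ring. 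Since $R_{ji}(u^{-1})\circ R_{ij}(u)$ is then a $\kk(u)$-endomorphism fixing $v_{i}\otimes v_{j}$, it is the identity, and multiplying by denominators gives $\widetilde{R}_{ji}(u^{-1})\circ\widetilde{R}_{ij}(u)=d_{ij}(u)\,d_{ji}(u^{-1})\cdot\mathrm{id}$; specialized at $u=b/a$ this reads
\[
\widetilde{R}_{ji}(a/b)\circ\widetilde{R}_{ij}(b/a)=d_{ij}(b/a)\,d_{ji}(a/b)\cdot\mathrm{id}_{V_{i}(a)\otimes V_{j}(b)},
\]
together with the mirror identity on $V_{j}(b)\otimes V_{i}(a)$. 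I will also use the classical fact that every root of $d_{ij}(u)$ lies in $q^{\Z_{>0}}$, so that $d_{ij}(b/a)$ and $d_{ji}(a/b)$ cannot vanish simultaneously.

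For part~\ref{Thm:tensor1} the ``if'' direction is the technical core. For generic $u$ both $d_{ij}(u)$ and $d_{ji}(u^{-1})$ are nonzero, so $\widetilde{R}_{ij}(u)$ is invertible and, using simplicity over $\kk(u)$, $V_{i}[z_{1}^{\pm1}]\otimes V_{j}[z_{2}^{\pm1}]$ is generated by $v_{i}\otimes v_{j}$ over the fraction field. Thus the $U_{q}(L\g)\otimes\kk[z_{1}^{\pm1},z_{2}^{\pm1}]$-submodule $N$ generated by $v_{i}\otimes v_{j}$ has cokernel torsion over $\kk[u^{\pm1}]$, supported on a finite set $Z\subset\kk^{\times}$, and $V_{i}(a)\otimes V_{j}(b)$ is generated by $v_{i}(a)\otimes v_{j}(b)$ whenever $b/a\notin Z$. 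One then has to prove $Z\subseteq\{u:d_{ij}(u)=0\}$: if $d_{ij}(u_{0})\neq0$ then $R_{ij}$ is regular at $u_{0}$ and still sends $v_{i}\otimes v_{j}$ to $v_{j}\otimes v_{i}$, which lets one transfer the generation property between $V_{i}(a)\otimes V_{j}(b)$ and $V_{j}(b)\otimes V_{i}(a)$; making this rigorous (flatness of $N$ over the spectral-parameter ring, as in \cite{Kashiwara02, Chari02, VV02, FM01}) is the main obstacle. The ``only if'' direction is easy: if $d_{ij}(b/a)=0$ then $\widetilde{R}_{ij}(b/a)$ is nonzero and annihilates $v_{i}(a)\otimes v_{j}(b)$, so its kernel is a proper nonzero submodule containing the highest weight vector. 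Finally, when $d_{ij}(b/a)\neq0$ the cyclic module $V_{i}(a)\otimes V_{j}(b)$ has a unique simple quotient; being a highest weight quotient of weight $\varpi_{i}+\varpi_{j}$ it is isomorphic to $L(Y_{i,a}Y_{j,b})$, and since $R_{ij}(b/a)$ is a nonzero map out of this cyclic module its image (a nonzero submodule of $V_{j}(b)\otimes V_{i}(a)$) has head $L(Y_{i,a}Y_{j,b})$; combined with the socle description in part~\ref{Thm:tensor2} this identifies $\Ima R_{ij}(b/a)$ as the simple head.

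Part~\ref{Thm:tensor2} I would obtain either from part~\ref{Thm:tensor1} by duality --- the left and right duals of a fundamental module are again fundamental modules with shifted spectral parameters, and $d_{M^{*},N^{*}}$ equals $d_{N,M}$ up to the corresponding substitution, so dualizing the cyclicity criterion yields the cogeneration criterion and the simple socle --- or by running the mirror of the argument for part~\ref{Thm:tensor1}: if $d_{ji}(a/b)=0$ then $\widetilde{R}_{ji}(a/b)\colon V_{j}(b)\otimes V_{i}(a)\to V_{i}(a)\otimes V_{j}(b)$ is nonzero and kills $v_{j}(b)\otimes v_{i}(a)$, so its image is a nonzero submodule of $V_{i}(a)\otimes V_{j}(b)$ missing the highest weight vector, whereas for the converse one repeats the generic/specialization argument with ``generated by $v_{i}(a)\otimes v_{j}(b)$'' replaced by ``contains $v_{i}(a)\otimes v_{j}(b)$''.

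For the list of equivalences: if $d_{ij}(b/a)\neq0$ and $d_{ji}(a/b)\neq0$, then by the unitarity identity $\widetilde{R}_{ij}(b/a)$ is an isomorphism, so $V_{i}(a)\otimes V_{j}(b)\cong V_{j}(b)\otimes V_{i}(a)$, and $R_{ij}(b/a)$ is then invertible, so the simple head from part~\ref{Thm:tensor1} has the same dimension as $V_{i}(a)\otimes V_{j}(b)$, forcing $\rad=0$ and hence irreducibility. Conversely, if $V_{i}(a)\otimes V_{j}(b)$ is irreducible, then $v_{i}(a)\otimes v_{j}(b)$ generates it, so $d_{ij}(b/a)\neq0$ by part~\ref{Thm:tensor1}, and every nonzero submodule is the whole module and hence contains $v_{i}(a)\otimes v_{j}(b)$, so $d_{ji}(a/b)\neq0$ by part~\ref{Thm:tensor2}. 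Lastly, if $V_{i}(a)\otimes V_{j}(b)\cong V_{j}(b)\otimes V_{i}(a)$, then since the weight-$(\varpi_{i}+\varpi_{j})$ space on each side is one-dimensional the isomorphism carries $v_{i}(a)\otimes v_{j}(b)$ to a nonzero multiple of $v_{j}(b)\otimes v_{i}(a)$; therefore $V_{i}(a)\otimes V_{j}(b)$ is generated by its highest weight vector if and only if $V_{j}(b)\otimes V_{i}(a)$ is, i.e.\ $d_{ij}(b/a)\neq0\iff d_{ji}(a/b)\neq0$ by part~\ref{Thm:tensor1}, and since they cannot both vanish, both are nonzero. Throughout, the only genuinely delicate point is the containment $Z\subseteq\{d_{ij}=0\}$ in part~\ref{Thm:tensor1}, where the good-module properties of affinized fundamental modules are essential.
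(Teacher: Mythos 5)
The paper does not actually prove this statement: it is quoted from the literature, the cyclicity/cocyclicity criteria being a special case of Akasaka--Kashiwara's conjecture proved independently by Chari, Kashiwara and Varagnolo--Vasserot, and the identification of $\Ima(R_{ij}(b/a))$ as simple head/socle being \cite[Corollary 2.3]{AK97}. Measured against that, your write-up is a reasonable outline of the standard formal consequences, but it has a genuine gap exactly at the one non-trivial point, and you concede as much: the ``if'' direction of part~\eqref{Thm:tensor1} (and its mirror in part~\eqref{Thm:tensor2}), i.e.\ that $d_{ij}(b/a)\neq 0$ forces $V_{i}(a)\otimes V_{j}(b)$ to be generated by $v_{i}(a)\otimes v_{j}(b)$. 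Your soft argument --- generic cyclicity over $\kk(u)$, hence a torsion cokernel supported on a finite set $Z$, then ``transferring generation along $R_{ij}$'' to show $Z\subseteq\{d_{ij}=0\}$ --- does not close this: regularity of $R_{ij}$ at $u_{0}$ together with $v_{i}\otimes v_{j}\mapsto v_{j}\otimes v_{i}$ gives no control over the cokernel of the cyclic submodule at the specialization, and in fact this containment \emph{is} the content of the conjecture. The actual proofs require heavy input (Chari: braid group actions and Weyl-module techniques; Kashiwara: global bases and the theory of good modules; Varagnolo--Vasserot: quiver varieties), none of which is reproduced or replaced in your sketch, so as a self-contained proof it does not go through; as a proof by citation it reduces to the same references the paper invokes.

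Two smaller points. First, within the logical order of this paper the fact you use that all zeros of $d_{ij}(u)$ lie in $q^{\Z_{>0}}$ (so that $d_{ij}(b/a)$ and $d_{ji}(a/b)$ cannot vanish simultaneously) is Theorem~\ref{Thm:CK}, whose proof in the paper itself uses Theorem~\ref{Thm:tensor}~\eqref{Thm:tensor1}; if you want to avoid even an apparent circularity you should cite only \cite[Proposition 9.3]{Kashiwara02} (zeros lie in positive fractional powers of $q$), which suffices for the non-simultaneous-vanishing and is independent of the present theorem. Second, granting parts~\eqref{Thm:tensor1} and~\eqref{Thm:tensor2}, your derivation of the simple head/socle statements and of the three-way equivalence (via the unitarity identity $\widetilde{R}_{ji}(a/b)\circ\widetilde{R}_{ij}(b/a)=d_{ij}(b/a)\,d_{ji}(a/b)\cdot\mathrm{id}$ and the one-dimensionality of the top weight space) is correct and is essentially the argument of \cite[Corollary 2.3]{AK97}.
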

\begin{proof}
This is a special case of Akasaka-Kashiwara's conjecture~\cite{AK97},
which was proved by Chari~\cite{Chari02}, Kashiwara~\cite{Kashiwara02} and 
Varagnolo-Vasserot~\cite{VV02} independently. 
The irreducibility of $ \Ima(R_{ij}(b/a))$ and $\Ima(R_{ji}(a/b))$
was proved in~\cite[Corollary 2.3]{AK97}. 
Note that Frenkel-Mukhin~\cite{FM01} also proved the last assertion.
\end{proof}

\begin{Thm}[Chari~\cite{Chari02}, Kashiwara~\cite{Kashiwara02}]
\label{Thm:CK}
Let $i,j \in I$ and $a \in \kk$.
If $d_{ij}(a)=0$, we have
$a \in \{ q^{k} \in \kk^{\times} \mid k + \epsilon_{i} + \epsilon_{j} \in 2 \Z, k > 0\}$.
\end{Thm}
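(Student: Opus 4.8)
The plan is to reduce the assertion to a statement about $q$-characters of tensor products of fundamental modules, to exploit the rigid structure of the $q$-character of a fundamental module of type $\mathsf{ADE}$ in order to force $a$ to be a power of $q$ of the prescribed parity, and finally to obtain the sign condition $k>0$ from the normalization of the $R$-matrix. First I would note that if $d_{ij}(a)=0$ then, by Theorem~\ref{Thm:tensor}\,\eqref{Thm:tensor1} applied to $(V_{i}(1), V_{j}(a))$, the module $V_{i}(1)\otimes V_{j}(a)$ is not generated by $v_{i}(1)\otimes v_{j}(a)$; in particular it is reducible, so by Theorem~\ref{Thm:FR}\,\eqref{Thm:FR:2} there is a dominant monomial $m\lneq Y_{i,1}Y_{j,a}$ with $[L(m)]$ occurring in $[V_{i}(1)\otimes V_{j}(a)]$. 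Since the $q$-character map $\chi_{q}$ is an injective ring homomorphism, $m=m_{1}m_{2}$ where $m_{1}$ (resp.\ $m_{2}$) is a monomial occurring in $\chi_{q}(V_{i}(1))$ (resp.\ $\chi_{q}(V_{j}(a))$).

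The next step uses the structure of fundamental modules of type $\mathsf{ADE}$ (Frenkel--Mukhin; this also follows from Nakajima's description of $\chi_{q}$ via graded quiver varieties): $V_{i}(1)$ is \emph{special}, i.e.\ $Y_{i,1}$ is its only dominant monomial, and every variable $Y_{k,b}$ occurring with non-zero exponent in some monomial of $\chi_{q}(V_{i}(1))$ has $b=q^{s}$ with $s\equiv\epsilon_{i}+\epsilon_{k}\pmod 2$; after rescaling by $a$, the analogous statements hold for $\chi_{q}(V_{j}(a))$, with unique dominant monomial $Y_{j,a}$ and variables of the form $Y_{k,aq^{t}}$, $t\equiv\epsilon_{j}+\epsilon_{k}\pmod 2$. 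Now, since $m=m_{1}m_{2}$ is dominant but $m\ne Y_{i,1}Y_{j,a}$, at least one of $m_{1},m_{2}$ must be non-dominant (otherwise $m_{1}=Y_{i,1}$ and $m_{2}=Y_{j,a}$). Hence some variable $Y_{k,b}$ occurs with negative exponent in one of the two factors, and by dominance of the product it must be cancelled by a positive power of the same $Y_{k,b}$ in the other factor. Comparing the two possible shapes of $b$ forces $q^{s}=aq^{t}$ with $s\equiv\epsilon_{i}+\epsilon_{k}$ and $t\equiv\epsilon_{j}+\epsilon_{k}\pmod 2$, so $a=q^{k}$ with $k=s-t\equiv\epsilon_{i}+\epsilon_{j}\pmod 2$. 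This already gives $a\in q^{\Z}$ with the required parity.

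The hard part will be the sign condition $k>0$: the $q$-character argument above is symmetric under $q\leftrightarrow q^{-1}$ and so cannot detect the sign on its own. I would argue instead at the level of the $R$-matrix. Via the factorization of the universal $R$-matrix of $U_{q}(\widehat{\g})$ into its real-root parts, Cartan part and "imaginary"/abelian part (Beck, Damiani; cf.\ Frenkel--Reshetikhin and Kashiwara), the unnormalized $R$-matrix on $V_{i}[z_{1}^{\pm 1}]\otimes V_{j}[z_{2}^{\pm 1}]$ is a power series in $u=z_{2}/z_{1}$ that acts on $v_{i}\otimes v_{j}$ by an invertible scalar series with constant term $1$; the rational scalar factor one divides out to pass to $R_{ij}(u)$ contributes to $d_{ij}(u)$ only factors of the shape $(u-q^{c})$ with $c\in\Z_{>0}$ (these exponents being governed by the expansion at $z=0$ of $\frac{z^{c_{ij}}-z^{-c_{ij}}}{z-z^{-1}}$, which controls the abelian part). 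Consequently $R_{ij}(u)$ is regular at $u=0$ and every pole of $R_{ij}(u)$ lies in $q^{\Z_{>0}}$, which combined with the previous paragraph gives exactly $a\in\{q^{k}\mid k+\epsilon_{i}+\epsilon_{j}\in 2\Z,\ k>0\}$. In \cite{Chari02, Kashiwara02} this last step is instead obtained through a direct analysis of the cyclicity of the relevant tensor products, which is the route I would also be prepared to take if the $R$-matrix computation proves too delicate.
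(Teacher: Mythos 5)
Your proof splits the statement the same way the paper does---integrality and parity of the spectral parameter on one side, the sign condition $k>0$ on the other---but the first half follows a genuinely different route. The paper simply quotes Chari's cyclicity criterion \cite[Theorem 4.4]{Chari02} together with Theorem~\ref{Thm:tensor}~(1), whereas you rederive the parity statement from $q$-characters: reducibility of $V_i(1)\otimes V_j(a)$ from Theorem~\ref{Thm:tensor}~(1), existence of a composition factor $L(m)$ with $m\lneq Y_{i,1}Y_{j,a}$ from Theorem~\ref{Thm:FR}~(2), and then the cancellation argument using that fundamental modules are special (unique dominant monomial) and that every variable in $\chi_q(L(Y_{i,a}))$ has spectral parameter in $aq^{\Z}$ with the parity $\epsilon_i+\epsilon_k$. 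Granting those two standard facts (Frenkel--Mukhin~\cite{FM01}, Nakajima~\cite{Nakajima01}), this argument is correct and is a legitimate, more self-contained alternative to the citation of \cite{Chari02}; what it costs you is having to invoke the specialness of fundamental modules, which is itself a nontrivial theorem.

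The weak point is your primary route to $k>0$. The assertion that the poles of $R_{ij}(u)$ come only from the scalar factor by which the universal $R$-matrix is divided, and that these zeros sit in $q^{\Z_{>0}}$, is not justified as stated: the universal $R$-matrix acts on $V_i[z_1^{\pm1}]\otimes V_j[z_2^{\pm1}]$ only as a formal power series in $u=z_2/z_1$, and controlling where that series can be specialized (equivalently, whether the matrix part contributes extra poles) is exactly the nontrivial content of \cite[Proposition 9.3]{Kashiwara02}, which says the zeros of $d_{ij}$ have positive $q$-adic valuation, i.e.\ lie in $\bigcup_{m}q^{1/m}\overline{\Q}[\![q^{1/m}]\!]$. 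Indeed, if the denominator could be read off from the Frenkel--Reshetikhin scalar factor (whose exponents involve $\tC_{ij}$), the paper's main denominator formula would follow with no geometry at all, so this step cannot be waved through. Your declared fallback---citing \cite{Chari02, Kashiwara02} for this step---is precisely what the paper does: it combines $a=q^{k}$ with Kashiwara's valuation statement to force $k>0$. With that substitution your argument is complete and, in substance, parallel to the paper's.
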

\begin{proof}
Assume that $d_{ij}(a)=0$.
By \cite[Theorem 4.4]{Chari02} and Theorem~\ref{Thm:tensor}~(\ref{Thm:tensor1}) above,
we see that $a = q^{k}$ for some integer $k$ satisfying $k + \epsilon_{i} + \epsilon_{j} \in 2\Z$.
On the other hand, \cite[Proposition 9.3]{Kashiwara02} implies that $a \in \bigcup_{m \in \Z_{>0}} q^{1/m} \overline{\Q}[\![q^{1/m}]\!]$.
Therefore $k$ should be positive.  
\end{proof}

\begin{Rem}
In~\cite[Section 6]{Chari02}, Chari further computed 
all the zeros of $d_{ij}(u)$ by a type-by-type argument.
However, we do not use this fact in this paper.
\end{Rem}


\subsection{Main theorem} 
\label{Ssec:main}
Let $z$ be a formal parameter.
The {\it quantum Cartan matrix} $C(z) = (C_{ij}(z))_{i,j \in I}$ of $\g$ is defined by
$$
C_{ij}(z) \seq \begin{cases}
z+z^{-1} & (i=j);\\
c_{ij} & (i \neq j).
\end{cases}
$$ 
We regard $C(z)$ as an element of the group $GL_{n}( \Z(\!(z)\!) )$
and denote its inverse by $\tC(z) = (\tC_{ij}(z))_{i,j \in I}$.
The $(i,j)$-entry $\tC_{ij}(z) \in \Z(\!(z)\!)$ can be written as
$$
\tC_{ij}(z) = \sum_{\ell = 1}^{\infty} \tc_{ij}(\ell) z^{\ell}.
$$
In this way, we get a collection of integers $\{ \tc_{ij}(\ell) \}_{i,j \in I, \ell \ge 1}$.

Now we can state the main theorem of this paper.

\begin{Thm} 
\label{Thm:main}
For each pair $(i,j) \in I^{2}$, the denominator 
$d_{ij}(u) \in \kk[u]$ of the normalized $R$-matrix $R_{ij}$ is given by the following formula: 
\begin{equation} \label{Eq:denominator}
d_{ij}(u)=
\prod_{\ell=1}^{h-1}(u-q^{\ell+1})^{\tc_{ij}(\ell)},
\end{equation} 
where $h$ is the Coxeter number of $\g$. 
\end{Thm}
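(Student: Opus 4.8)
The plan is to prove Theorem~\ref{Thm:main} in the equivalent geometric form of Theorem~\ref{Thm:imain2}, i.e.\ to identify the pole order of $R_{ij}(u)$ at $u = q^{r}/q^{p} = q^{r-p}$ with $\dim \Ext^{1}_{\dD_{Q}}(\cH_{Q}(j,r), \cH_{Q}(i,p))$ for $(i,p),(j,r)\in\Delta_{0}$ with $p\le r$. (The case $p=r$ and the symmetry of the problem reduce everything to this range; when $p>r$ one swaps the roles of the two factors.) The first step is purely combinatorial: by Theorem~\ref{Thm:CK} the only possible poles of $R_{ij}(u)$ occur at $u=q^{k}$ with $k>0$ and $k+\epsilon_{i}+\epsilon_{j}\in 2\Z$, which are exactly the points $q^{r-p}$ indexed by $(i,p),(j,r)\in\Delta_{0}$ with $p\le r$; so it suffices to compute the pole order there. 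Then, using the Hernandez--Leclerc interpretation~\cite{HL15} of $\tc_{ij}(\ell)$ as an Euler form on $\dD_{Q}$ together with the fact that $\cH_{Q}(j,r),\cH_{Q}(i,p)$ with $p\le r$ have no higher $\Ext$ in the derived category, one checks that $\tc_{ij}(r-p-1)=\dim\Hom_{\dD_{Q}}(\cH_{Q}(j,r),\cH_{Q}(i,p)) - \dim\Ext^{1}_{\dD_{Q}}(\cH_{Q}(j,r),\cH_{Q}(i,p))$ and, via a standard vanishing argument for the AR quiver, that only the $\Ext^{1}$ term survives at the relevant point. This part I regard as routine bookkeeping with the quantum Cartan matrix and Happel's description of $\dD_{Q}$.

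The geometric heart of the argument is the identification of the pole order with a dimension of a graded quiver variety. Here I would take $W = W_{(i,p)}\oplus W_{(j,r)}$ with $\dim W_{(i,p)}=\dim W_{(j,r)}=1$ and invoke Nakajima's theory~\cite{Nakajima01}: the affinized fundamental modules $V_{i}[z^{\pm1}]$, $V_{j}[z^{\pm1}]$ are realized as standard modules via equivariant $K$-theory of the graded quiver varieties $\Mg(W_{(i,p)})$, $\Mg(W_{(j,r)})$, and the $R$-matrix between them is computed by a convolution/specialization over the fibre product, whose geometry is governed by $\Mg_{0}(W)$. Concretely, the pole order of $R_{ij}$ at the relevant spectral value should equal the dimension of $\Mg_{0}(W)$ (equivalently, the dimension of a suitable Tor or the codimension of a diagonal-type stratum), and by the special case of Keller--Scherotzke's embedding~\eqref{Eq:iKS} recalled in the excerpt, $\Mg_{0}(W)$ is in this rank-one-plus-rank-one situation an affine space of dimension exactly $\dim\Ext^{1}_{\dD_{Q}}(\cH_{Q}(j,r),\cH_{Q}(i,p))$. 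Chaining these identifications yields $\operatorname{(pole\ order)} = \dim\Mg_{0}(W) = \dim\Ext^{1}_{\dD_{Q}}(\cH_{Q}(j,r),\cH_{Q}(i,p))$, which is Theorem~\ref{Thm:imain2}; translating back through Hernandez--Leclerc gives formula~\eqref{Eq:denominator}.

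I would organize the write-up as: (1) reduce Theorem~\ref{Thm:main} to Theorem~\ref{Thm:imain2} using Theorem~\ref{Thm:CK} and the $\dD_{Q}$-reformulation of $\tc_{ij}(\ell)$; (2) recall precisely Nakajima's $K$-theoretic realization of $V_{i}[z^{\pm1}]$ and the construction of $R_{ij}$ as an intertwiner built from the geometry of $\Mg_{0}(W)$; (3) establish, by a standard support/transversality estimate in equivariant $K$-theory (à la Ginzburg's method, comparing the generic and special fibres), that the order of vanishing of the relevant determinant—hence the pole order of $R_{ij}$—equals $\dim\Mg_{0}(W)$ in the rank-$(1,1)$ case; (4) apply the Keller--Scherotzke isomorphism to evaluate $\dim\Mg_{0}(W) = \dim\Ext^{1}_{\dD_{Q}}(\cH_{Q}(j,r),\cH_{Q}(i,p))$; (5) conclude.

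The main obstacle I anticipate is step (3): making the passage from ``geometry of $\Mg_{0}(W)$'' to ``exact pole order of the normalized $R$-matrix'' rigorous. One direction—that the pole order is at most $\dim\Mg_{0}(W)$—should follow from the fact that $R_{ij}$ becomes regular and invertible away from the diagonal locus, i.e.\ from a dimension bound on the degeneracy stratum. The reverse inequality, that the pole is genuinely of that order (no unexpected cancellation), is more delicate: it requires knowing that the intertwiner degenerates maximally, which I expect to extract from the irreducibility of the head $\Ima(R_{ij}(q^{r-p}))$ in Theorem~\ref{Thm:tensor} together with a precise computation of a leading coefficient in the $K$-theoretic convolution—essentially showing a certain local intersection multiplicity equals the expected dimension because $\Mg_{0}(W)$ is (by Keller--Scherotzke) smooth, in fact an affine space. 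The smoothness in this rank-$(1,1)$ case is exactly what removes the need for case-by-case computation, so I would emphasize that point.
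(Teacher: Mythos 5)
Your outline coincides with the paper's strategy: reduce Theorem~\ref{Thm:main} to Theorem~\ref{Thm:main2} via Theorem~\ref{Thm:CK} and Proposition~\ref{Prop:Ext=tC}, then study the rank-$(1,1)$ graded quiver variety $\Mg_{0}(W)$, which Corollary~\ref{Cor:KS} identifies with an affine space $E$ of dimension $d_{x,y}=\dim\Ext^{1}_{\dD_{Q}}(\cH_{Q}(j,r),\cH_{Q}(i,p))$. However, your step (3) --- which you yourself flag as the main obstacle --- is where the actual content lies, and as described it is a heuristic rather than an argument. There is no ``determinant'' or degeneracy stratum available whose codimension computes the pole order; what the paper does instead is (a) identify the two tensor orderings $\hV(x,y)$ and $\hV(y,x)$ with the completed equivariant $K$-groups of the two fiber products $\Mg(W)\times_{E}E$ and $\Mg(W)\times_{E}\{0\}$ (the $T_{W}$-fixed loci of Nakajima's tensor product varieties, Lemma~\ref{Lem:Grea}), and (b) define a geometric intertwiner $\mathbf{r}=(-)*[\Oo_{\{0\}}]$ between them. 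The exact location and order of the pole then fall out of the Koszul resolution of the origin in $E$: since $G_{W}=(\C^{\times})^{2}$ acts on $E$ with weight $s_{1}s_{2}^{-1}$, one gets $[\Oo_{\{0\}}]=(1-q^{p-r}z_{2}/q^{-p+p}z_{1}\cdot q^{0})^{d_{x,y}}[\Oo_{E}]$, more precisely $\left(1-q^{-r}z_{2}/q^{-p}z_{1}\right)^{d_{x,y}}[\Oo_{E}]$ under the identification \eqref{Eq:RO}, whence $\mathbf{r}_{\KK}=\left(1-q^{-r}z_{2}/q^{-p}z_{1}\right)^{d_{x,y}}\widehat{R}_{x,y}$ by the uniqueness \eqref{Eq:uniqueness}. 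It is this matching of equivariant weights with spectral parameters, not a transversality or support estimate, that pins the pole at $u=q^{r-p}$ with exactly that multiplicity, and it gives the upper bound \eqref{Eq:Property1}. For the lower bound \eqref{Eq:Property2} (no cancellation), your proposed appeal to irreducibility of the head $\Ima(R_{ij})$ does not work as stated: at a pole the specialized $R$-matrix is not defined before renormalizing, and the correct renormalization is precisely what is to be proven. The paper instead shows $\bar{\mathbf{r}}\neq 0$ after reduction mod $\mm$: since $E=\Mg_{0}(W)$ is an affine space, Theorem~\ref{Thm:stratification}~\eqref{Thm:stratification:IC} forces a shift of the constant sheaf $\ud{\kk}_{E}$ to occur as a direct summand of the push-forward $\mathscr{L}^{\bullet}_{W}$, so the Yoneda product with the adjunction morphism $\eta\colon\iota_{!}\iota^{!}\ud{\kk}_{E}\to\ud{\kk}_{E}$ is non-zero. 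These two mechanisms are the missing ideas in your plan.

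A smaller point: the identity you assert in step (1), namely $\tc_{ij}(r-p-1)=\dim\Hom_{\dD_{Q}}(\cH_{Q}(j,r),\cH_{Q}(i,p))-\dim\Ext^{1}_{\dD_{Q}}(\cH_{Q}(j,r),\cH_{Q}(i,p))$, cannot be correct as written: for $r>p$ the Hom term vanishes and the sign comes out wrong, since $\tc_{ij}(\ell)\ge 0$ for $1\le\ell\le h-1$. The correct bookkeeping is Proposition~\ref{Prop:Ext=tC}, which uses Auslander--Reiten duality to rewrite $\Ext^{1}_{\dD_{Q}}(\cH_{Q}(j,r),\cH_{Q}(i,p))\cong\Hom_{\dD_{Q}}(\cH_{Q}(i,p),\cH_{Q}(j,r-2))$ and then, after passing to a sink-source orientation, identifies this Hom-dimension with the Euler form $\tc_{ij}(r-p-1)$; you would need this (or an equivalent statement) to make the reduction to Theorem~\ref{Thm:main2} precise.
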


Theorem~\ref{Thm:main} is equivalent to Theorem~\ref{Thm:main2} below,
whose proof is given later in Section~\ref{Ssec:pf} 
using geometry of the graded quiver varieties. 

\begin{Rem}
The RHS of the formula \eqref{Eq:denominator} 
is actually a polynomial because we have
$\tc_{ij}(\ell) \in \Z_{\ge 0}$ for $1 \le \ell \le h -1$
by Lemma~\ref{Lem:tC}~\eqref{Lem:tC:positive} below.
\end{Rem}

\begin{Rem}
\label{Rem:convention2}
Note that our denominator $d_{ij}(u)$
is different from the denominator 
$d_{V(\varpi_{i}), V(\varpi_{j})}(u)$,
which has been written by the same symbol $d_{ij}(u)$  
in the works of Kashiwara and his collaborators 
(e.g.~\cite{AK97, Kashiwara02, KKK18}).
Here $V(\varpi_{i})$ denotes the $i$-th fundamental module 
in the sense of Kashiwara~\cite{Kashiwara94},
which has a global crystal basis. 
It was shown by Nakajima~\cite{Nakajima04}
that Kashiwara's fundamental module
$V(\varpi_{i})$ is isomorphic to our
fundamental module $V_{i}(a_{i})$
with $a_{i} \seq (-1)^{\epsilon_{i}}(-q)^{1-h}$.
Moreover, we see 
in Proposition~\ref{Prop:HL} below that $\tc_{ij}(\ell) \neq 0$ 
only if $\ell + \epsilon_i + \epsilon_j + 1 \in 2\Z$. 
Therefore,
our denominator formula~\eqref{Eq:denominator} is equivalent to  
the formula
\begin{equation}
\label{Eq:denominatorK}
d_{V(\varpi_{i}), V(\varpi_{j})}(u) = \prod_{\ell=1}^{h-1}(u-(-q)^{\ell+1})^{\tc_{ij}(\ell)}.
\end{equation}
By making the values $\tc_{ij}(\ell)$ explicit,
we can check that the formula \eqref{Eq:denominatorK} certainly recovers the
known type-by-type denominator formulas
obtained in~\cite{DO94, KKK18, OS19, OS19c}.
However we do not use this fact in this paper.
\end{Rem}


\section{An interpretation by quiver representations}
\label{Sec:quiver}

In this section, we give an interpretation
of our denominator formula~\eqref{Eq:denominator}
in terms of homological properties
of representations of a Dynkin quiver of type $\g$.
We keep the notation from the previous section.

\subsection{Convention}
\label{Ssec:ConventionQ}

First, we fix our convention on quivers and their representations. 
A quiver $Q=(Q_{0}, Q_{1})$ is an oriented graph, 
consisting of the set $Q_{0}$ of vertices
and the set $Q_{1}$ of arrows. 
Here the sets $Q_{0}$ and $Q_{1}$ can be infinite. 
For an arrow $a \in Q_{1}$,
let  $a^{\prime}, a^{\pprime} \in Q_{0}$ denote its origin and goal
respectively. 
We always assume that the set $\{ a \in Q_{1} \mid a^{\prime} = x, a^{\pprime} = y \}$
is finite for each $x, y \in Q_{0}$.

We equip the vector space $\C Q_{0} \seq \bigoplus_{x \in Q_{0}} \C e_{x}$ with a structure of 
$\C$-algebra by $e_{x} \cdot e_{y} = \delta_{xy} e_{x}$. 
This is non-unital if $Q_{0}$ is infinite.
We equip the vector space $\C Q_{1}\seq\bigoplus_{a \in Q_{1}}\C a$ 
with a structure of $\C Q_{0}$-bimodule by setting $a \cdot e_{x} = \delta_{a^{\prime}, x} a$ and 
$e_{x} \cdot a = \delta_{x, a^{\pprime}} a$ for $x \in Q_{0}, a \in Q_{1}$. 
The path algebra $\C Q$ of $Q$ is defined to be the tensor algebra 
$T_{\C Q_{0}}(\C Q_{1}) \seq \bigoplus_{d \ge 0} (\C Q_{1})^{\otimes d}$, 
where tensor products are taken over $\C Q_{0}$.
Given a quotient algebra $A = \C Q / \mathcal{I}$  by 
an ideal $\mathcal{I} \subset \bigoplus_{d \ge 1} (\C Q_{1})^{\otimes d}$,
we denote by $A \modcat$
the $\C$-linear abelian category of
finite-dimensional
left $A$-modules $M$
satisfying 
$M = \bigoplus_{x \in Q_{0}} e_{x}M$.
For each vertex $x \in Q_{0}$,
we denote by 
$S_{x}$ the simple object of $A \modcat$ 
associated with $x$, 
i.e.~satisfying $\dim (e_{y} S_{x}) = \delta_{xy}$.
For a finite-dimensional $Q_{0}$-graded $\C$-vector space $V = \bigoplus_{x \in Q_{0}}V_{x}$,
we denote by $\rep_{V}(A)$ the variety of representations of the algebra $A$
realized on $V$. By definition, this is the closed subvariety of the affine space
$\rep_{V}(Q) \seq \prod_{a \in Q_{1}} \Hom_{\C}(V_{a^{\prime}}, V_{a^{\pprime}})$  
consisting of points $(f_{a})_{a\in Q_{1}}$ such that all the polynomials in the linear maps $f_{a}$
corresponding to elements in $\mathcal{I}$ vanish.

\subsection{Dynkin quiver}
\label{Ssec:Dynkin}

In this subsection, we fix 
a Dynkin quiver $Q=(Q_{0}, Q_{1})$ of type $\g$, 
i.e.~$Q_{0} \seq I =\{ 1, \ldots n\}$ 
and the arrow set $Q_{1}$ satisfies the condition  
$c_{ij} = 2 \delta_{ij} - \# \{ a \in Q_{1} \mid \{ a^{\prime}, a^{\pprime} \} = \{ i,j\} \}$
for each $i,j \in I$.
We write $i \to j$ if there is an arrow 
$a \in Q_{1}$
such that $a^{\prime} = i, a^{\pprime} = j$.
For $M \in \C Q \modcat$, we define its dimension vector 
by $\vdim (M) \seq \sum_{i \in I} \dim(e_{i}M) \alpha_{i} \in \mathsf{Q}^{+}$. 

By Gabriel's theorem~\cite{Gabriel72}, 
for each $\alpha \in \cR^{+}$, 
there exists an indecomposable object
$M_\alpha \in \C Q \modcat$ such that $\vdim (M_\alpha) = \alpha$
uniquely up to isomorphism.
The correspondence
$\alpha \mapsto M_{\alpha}$
gives a bijection between the set $\cR^{+}$ 
of positive roots and
the set of isomorphism classes of indecomposable objects 
of $\C Q \modcat$.
In particular, we have $S_{i} = M_{\alpha_{i}}$ for each $i \in I$. 

Let $\dD_{Q}$ denote the bounded derived category 
$D^{b}(\C Q \modcat)$ of the abelian category $\C Q \modcat$. 
The category $\dD_{Q}$ is a $\C$-linear triangulated category 
with Krull-Schmidt property. 
The category $\C Q \modcat$ is naturally identified with a full subcategory of $\dD_{Q}$
consisting of complexes concentrated on the cohomological degree $0$. 
We denote by $X[k]$ the cohomological degree shift of $X \in \dD_{Q}$ by $k \in \Z$.
Then the set $\{ M_{\alpha}[k] \mid \alpha \in \mathsf{R}^{+}, k \in \Z\}$
forms a complete collection of indecomposable objects of $\dD_{Q}$ (see~\cite[Lemma 4.1]{Happel87}).
Extending the definition of $\vdim$,
for each $X \in \dD_{Q}$, we define
its dimension vector $\vdim (X) \in \cQ$ by 
$$
\vdim (X) \seq \sum_{k \in \Z} (-1)^{k} \vdim H^{k}(X),
$$
where $H^{k}(X) \in \C Q \modcat$ denotes the $k$-th cohomology of $X$.
For $X, Y \in \dD_{Q}$, we define the Euler-Poincar\'e characteristic $\langle X, Y \rangle \in \Z$ by
$$
\langle X, Y \rangle \seq \sum_{k \in \Z} (-1)^{k} \dim \Ext^{k}_{\dD_{Q}}(X, Y),
$$
where $\Ext_{\dD_{Q}}^{k}(X, Y) \seq \Hom_{\dD_{Q}}(X, Y[k])$. 

\subsection{Happel's equivalence}
\label{Ssec:Happel}

Let $Q$ be a Dynkin quiver of type $\g$.
In this subsection, 
we recall the description of 
the full subcategory $\ind(\dD_{Q}) \subset \dD_{Q}$ consisting 
of indecomposable objects in $\dD_{Q}$ 
due to Happel~\cite{Happel87, Happel88} . 

Let 
$\xi = (\xi_{i})_{i \in I} \in \Z^{I}$ be an $I$-tuple of integers 
such that $\xi_{i} - \epsilon_{i} \in 2\Z$ and 
$\xi_{i} = \xi_{j} +1$ if $i \to j$.
Such an $I$-tuple $\xi$ is called a {\em height function} of $Q$ and 
determined up to a simultaneous shift by an even integer.
Choose a total ordering $I=\{i_{1}, i_{2}, \ldots , i_{n} \}$
satisfying $\xi_{i_1} \ge \xi_{i_2} \ge \cdots \ge \xi_{i_n}$
and consider the Coxeter element 
$\tau \seq r_{i_1}r_{i_2} \cdots r_{i_n} \in \mathsf{W}$.
The element $\tau$
depends only on $Q$ (independent from the choice of the above total ordering of $I$).
By an abuse of notation, we use the same symbol $\tau$ for 
the corresponding Coxeter functor, which is an auto-equivalence of $\dD_{Q}$.
Under this convention, we have
$
\vdim(\tau X) = \tau \vdim(X)
$
for any $X \in \dD_{Q}$.
For an indecomposable object $X \in \ind(\dD_{Q})$, 
its Coxeter transformation $\tau X$ coincides with
the Auslander-Reiten translation of $X$ (see~\cite[Lemma VII.5.8]{ASS06} for example).

For each $i \in I$, we define a positive root $\gamma_{i}$ 
to be the sum of simple roots $\alpha_{j}$
labeled by the vertices $j$ such that there exists an oriented path
in $Q$ from $j$ to $i$. 
Then the corresponding indecomposable representation $I_{i} \seq M_{\gamma_{i}}$ 
is an injective hull of the simple representation $S_{i}$ in $\C Q \modcat$. 
Note that we have 
$
\langle X, I_{i} \rangle = (\vdim(X), \varpi_{i})
$
for any $X \in \dD_{Q}$ and $i \in I$.

\begin{Def}
\label{Def:Delta}
We define 
an infinite quiver 
$\Delta = (\Delta_{0}, \Delta_{1})$
by
\begin{align*}
\Delta_{0} &\seq \{(i,p) \in I \times \Z \mid p- \epsilon_{i} \in 2\Z \}, \\
\Delta_{1} &\seq \{(i,p) \to (j, p+1) \mid 
(i, p), (j, p+1) \in \Delta_{0}, \; i \sim j
\}.
\end{align*}
\end{Def}

Let $\C(\Delta)$ denote
the $\C$-linear category whose set of objects is $\Delta_{0}$
and whose morphisms are generated by $\Delta_{1}$ satisfying 
the so-called {\em mesh relations}, i.e.~the sum of all paths from $(i, p)$ to $(i, p+2)$ 
vanishes for each $(i,p) \in \Delta_{0}$.
Note that the quiver $\Delta$ and the category $\C(\Delta)$
are independent from the choice of the Dynkin quiver $Q$ (depends only on $\g$). 
 
\begin{Thm}[Happel~\cite{Happel87, Happel88}]
\label{Thm:Happel}
For a Dynkin quiver $Q$ of type $\g$ with a height function $\xi$, 
there is an equivalence of $\C$-linear categories
$$
\cH_{Q} \colon \C(\Delta) \simeq \ind(\dD_{Q})
$$
satisfying
$
\cH_{Q}(i, p) = \tau^{(\xi_{i} - p)/2}(I_{i})
$
for each $(i, p) \in \Delta_{0}$.
\end{Thm}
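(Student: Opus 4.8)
The plan is to follow Happel's original strategy, which realizes $\dD_{Q}$ as the mesh category of its Auslander--Reiten quiver. The first step is to equip $\dD_{Q}$ with Auslander--Reiten theory. Since $\C Q$ is hereditary, $\dD_{Q}$ admits a Serre functor $\nu$, and one has the key identification $\nu \cong \tau \circ [1]$, where $\tau$ is the (derived) Coxeter functor; in particular $\tau$ is a triangle autoequivalence of $\dD_{Q}$. Serre duality then produces, for every indecomposable $Z \in \ind(\dD_{Q})$, an Auslander--Reiten triangle $\tau Z \to E_{Z} \to Z \to (\tau Z)[1]$, so that $\dD_{Q}$ carries a well-defined Auslander--Reiten quiver with translation $\tau$; and since $\tau$ is an autoequivalence there are no nonzero projectives or injectives, hence this quiver is a connected stable translation quiver.

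Next I would pin down the shape of the Auslander--Reiten quiver $\Gamma_{\dD_{Q}}$. The fully faithful exact embedding $\C Q\modcat \hookrightarrow \dD_{Q}$ realizes the (classically known, as $Q$ is Dynkin) Auslander--Reiten quiver of $\C Q\modcat$ as a full convex subquiver of $\Gamma_{\dD_{Q}}$; feeding this into Riedtmann's structure theorem for stable translation quivers, together with the Happel--Preiser--Ringel criterion, yields that $\Gamma_{\dD_{Q}} \cong \Z T$ with $T$ the Dynkin diagram underlying $Q$ and no nontrivial admissible identifications (the relevant count being that there are exactly $|I|$ many $\tau$-orbits, each infinite, since a positive power of $\tau$ fixing an indecomposable would force $\dD_{Q}$ to have only finitely many indecomposables, contradicting the presence of all shifts $M_{\alpha}[k]$). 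Fixing a height function $\xi$ of $Q$ provides an explicit isomorphism $\Z T \cong \Delta$ placing $I_{i}$ at the vertex $(i,\xi_{i})$ and making $\tau$ act by $(i,p) \mapsto (i,p-2)$; this is precisely the assignment $(i,p) \mapsto \tau^{(\xi_{i}-p)/2}(I_{i})$, and one verifies it gives a bijection between $\Delta_{0}$ and the set of isomorphism classes of objects of $\ind(\dD_{Q})$, i.e.\ every indecomposable of $\dD_{Q}$ is $\tau^{m}I_{i}$ for a unique pair $(i,m)$.

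Finally, I would upgrade this isomorphism of translation quivers to an equivalence of $\C$-linear categories. Choosing a representative irreducible morphism for each arrow of $\Delta$, the mesh relations hold automatically because they are exactly the relations read off from the Auslander--Reiten triangles, so the remaining task is to show that no further relations occur, i.e.\ that $\dD_{Q}$ is \emph{standard}. Since $Q$ is a tree, the translation quiver $\Delta \cong \Z T$ is simply connected, and as $\dD_{Q}$ has only finitely many indecomposables up to shift, standardness follows from the covering-theoretic results of Bongartz--Gabriel; this produces the desired equivalence $\cH_{Q} \colon \C(\Delta) \simeq \ind(\dD_{Q})$ satisfying $\cH_{Q}(i,p) = \tau^{(\xi_{i}-p)/2}(I_{i})$. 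I expect this last point --- passing from an isomorphism of Auslander--Reiten quivers to an equivalence of categories, i.e.\ the standardness of $\dD_{Q}$ --- to be the main obstacle, since it is the one step that is genuinely non-formal and relies on covering theory; the identification $\nu \cong \tau[1]$ of the first step, though standard, is no less indispensable, as everything downstream rests on it.
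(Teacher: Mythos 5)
The paper offers no proof of this statement at all---it simply cites \cite{Happel87, Happel88}---and your outline is essentially a reconstruction of Happel's own argument: Auslander--Reiten triangles via the Serre functor with $\nu \cong \tau[1]$, identification of the Auslander--Reiten quiver of $\dD_{Q}$ with $\Z T \cong \Delta$, and then standardness to upgrade the quiver isomorphism to an equivalence with the mesh category. Your sketch is correct, and the points you gloss (connectedness of the AR quiver, the fact that every indecomposable lies in the $\tau$-orbit of some $I_{i}$, which rests on finite representation type, and standardness over $\C$) are precisely the standard facts established in the cited references.
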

\begin{proof}
See \cite[Proposition 4.6]{Happel87} or \cite[Theorem 5.6]{Happel88}.
\end{proof}

\begin{Rem}
Although the equivalence $\cH_{Q}$ depends on the choice of the height function $\xi$,
this choice does not affect on the results in the present paper essentially and
hence we suppress it from the notation. 
In addition, 
the Euler-Poincar\'e characteristic 
$\langle \cH_{Q}(i,p), \cH_{Q}(j,r) \rangle$
does not depend on the choice of the Dynkin quiver $Q$
because,
for any two Dynkin quivers $Q$ and $Q^{\prime}$ of type $\g$,
we have a natural isomorphism
\begin{equation}
\label{Eq:QQ}
\Ext^{k}_{\dD_{Q}}(\cH_{Q}(i,p), \cH_{Q}(j,r)) 
\cong 
\Ext^{k}_{\dD_{Q^{\prime}}}(\cH_{Q^{\prime}}(i,p), \cH_{Q^{\prime}}(j,r))
\end{equation}
for any $(i,p), (j,r) \in \Delta_{0}$ and $k \in \Z$. 
\end{Rem}

\begin{Rem}
As explained in \cite[Section 6.5]{Gabriel80}, 
we have
\begin{equation}
\label{Eq:Nakayama}
\cH_{Q}(i, p)[1] = \cH_{Q}(i^{*}, p+h)
\end{equation}
for any $(i, p) \in \Delta_{0}$. 
Here $i \mapsto i^{*}$ is the involution on $I$ given by $\mathsf{w}_{0}(\alpha_{i}) = - \alpha_{i^{*}}$, 
where $\mathsf{w}_{0}$ denotes the longest element of the Weyl group $\mathsf{W}$.
\end{Rem}

\subsection{Quiver interpretation of quantum Cartan matrix}
\label{Ssec:tC}
In this subsection, we give an interpretation 
of the integers $\{\tc_{ij}(\ell)\}_{i,j \in I, \ell \ge 1}$ defined in Section~\ref{Ssec:main}
in terms of representations of a Dynkin quiver.    
Our discussion is based on
the following observation due to Hernandez-Leclerc. 

\begin{Prop}[Hernandez-Leclerc~\cite{HL15}]
\label{Prop:HL}
Take a Dynkin quiver $Q$ of type $\g$ together with a height function $\xi$
as in the previous subsection. 
Then, for any $i, j \in I$ and $\ell \in \Z_{\ge 1}$, we have
$$
\tc_{ij}(\ell) = 
\begin{cases}
\left(\tau^{(\ell + \xi_{i} - \xi_{j} -1)/2} (\gamma_{i}), \varpi_{j} \right) & \text{if $\ell + \epsilon_{i} + \epsilon_{j} +1 \in 2\Z$}; \\
0 & \text{otherwise}.
\end{cases}
$$
\end{Prop}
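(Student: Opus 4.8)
The statement is a purely combinatorial identity relating the Taylor coefficients $\tc_{ij}(\ell)$ of the inverse quantum Cartan matrix to Coxeter-translated pairings in the root lattice, so the plan is to compute the generating series $\tC_{ij}(z) = \sum_{\ell \ge 1} \tc_{ij}(\ell) z^\ell$ directly and recognize it as the generating series of the right-hand side. First I would observe that the defining relation $\sum_{k} C_{ik}(z) \tC_{kj}(z) = \delta_{ij}$ can be rewritten, using $C_{ik}(z) = (z + z^{-1})\delta_{ik} + c_{ik}(1 - \delta_{ik})$, as a recursion
\begin{equation*}
\tC_{ij}(z) + z^2 \tC_{ij}(z) + z \sum_{k \sim i} \tC_{kj}(z) = z\,\delta_{ij} \cdot z \cdot (\text{something}),
\end{equation*}
more precisely $\sum_{k} (z\,\delta_{ik} + z^{-1}\delta_{ik} + c_{ik}(1-\delta_{ik})) \tC_{kj}(z) = \delta_{ij}$; multiplying by $z$ and extracting the coefficient of $z^{\ell+1}$ gives the key recursion $\tc_{ij}(\ell+1) + \tc_{ij}(\ell-1) - \sum_{k \sim i} \tc_{kj}(\ell) = \delta_{ij}\delta_{\ell,0}$ (with the convention $\tc_{ij}(0) = 0$, $\tc_{ij}(1) = \delta_{ij}$). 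This is exactly the mesh-type recursion satisfied by the Auslander--Reiten translation.

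Next I would set up the right-hand side as a candidate solution. Define $b_{ij}(\ell)$ to be the proposed value, namely $(\tau^{(\ell + \xi_i - \xi_j - 1)/2}(\gamma_i), \varpi_j)$ when $\ell + \epsilon_i + \epsilon_j + 1 \in 2\Z$ and $0$ otherwise; note the parity condition is forced because $\xi_i - \epsilon_i, \xi_j - \epsilon_j \in 2\Z$, so the exponent is an integer precisely under that parity hypothesis. I would then verify three things: (i) the initial conditions $b_{ij}(\ell) = 0$ for $\ell \le 0$ and $b_{ij}(1) = \delta_{ij}$ — the vanishing for $\ell \le 0$ requires showing $\tau^{m}(\gamma_i)$ has a non-positive $\varpi_j$-pairing, or better, using the derived-category interpretation $(\tau^m \gamma_i, \varpi_j) = \langle \tau^m I_i, I_j\rangle = \dim\Hom_{\dD_Q}(\tau^m I_i, I_j) - \dim\Ext^1(\tau^m I_i, I_j)$ and the fact that for $m$ in the relevant range one of the two terms vanishes by the structure of $\ind(\dD_Q)$; (ii) the recursion: applying the identity $\mathrm{id} + \tau - \sum_{k \sim i}(\text{partial reflections})$... — here the cleanest route is to use that in the Grothendieck group the Coxeter transformation satisfies the "additivity on meshes" relation, equivalently $\gamma_i + \tau\gamma_i = \sum_{k \sim i}(\text{the AR mesh ending at } \tau^{?}I_i)$, which translates the mesh relation in $\C(\Delta)$ (Definition of the mesh category, used in Happel's theorem) into the desired three-term recursion after pairing with $\varpi_j$; (iii) uniqueness: a sequence satisfying the recursion and the initial conditions $\tc_{ij}(\ell) = 0$ for $\ell \le 0$ is uniquely determined, so matching initial data plus recursion forces $\tc_{ij}(\ell) = b_{ij}(\ell)$ for all $\ell$.

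The main obstacle I expect is step (ii), translating the numeric three-term recursion into a statement about Coxeter transformations of the roots $\gamma_i$. The subtlety is that $\tau = r_{i_1} \cdots r_{i_n}$ is a product of reflections in a fixed order adapted to the height function, and the recursion $\tc_{ij}(\ell+1) + \tc_{ij}(\ell-1) = \sum_{k \sim i} \tc_{kj}(\ell)$ mixes different indices $i$ and $k \sim i$ at shifted "times" $\ell \pm 1$ versus $\ell$; matching this against the mesh relation requires carefully tracking how the half-integer shifts $(\ell + \xi_i - \xi_j - 1)/2$ behave as $i$ varies over neighbors (where $\xi_k = \xi_i \pm 1$), so that all the $\gamma_k$'s appearing get pulled back to a common power of $\tau$ via the relation $\tau(I_i) = $ the term completing the mesh at $I_i$, i.e. the Auslander--Reiten sequence $0 \to \tau I_i \to \bigoplus_{k \sim i} (\tau^{?} I_k) \to I_i \to 0$ read in $K_0$. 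Since the paper cites Hernandez--Leclerc \cite{HL15} for this proposition, the cleanest writeup is to reduce to their computation: express $\tc_{ij}(\ell)$ via the known formula $\tC_{ij}(z) = \sum_\ell \langle$ suitable objects $\rangle z^\ell$ from \cite[Section 5]{HL15} (or re-derive it from the recursion as above) and then invoke Happel's equivalence $\cH_Q$ from Theorem~\ref{Thm:Happel} to identify $\tau^{(\xi_i - p)/2}(I_i)$ with $\cH_Q(i,p)$, at which point the asserted pairing formula is immediate from $\langle X, I_j \rangle = (\vdim X, \varpi_j)$ and the compatibility $\vdim(\tau X) = \tau\,\vdim(X)$.
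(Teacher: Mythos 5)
Your proposal is correct in substance, but it takes a genuinely different route from the paper: the paper does not reprove this identity at all --- its entire proof is a citation of Hernandez--Leclerc, plus the remark that the parity condition $\ell+\epsilon_{i}+\epsilon_{j}+1\in 2\Z$ is equivalent to the condition $\ell+\xi_{i}-\xi_{j}-1\in 2\Z$ appearing there --- whereas you sketch a self-contained derivation. Your derivation does go through: $C(z)\tC(z)=\mathrm{id}$ yields $\tc_{ij}(\ell+1)+\tc_{ij}(\ell-1)=\sum_{k\sim i}\tc_{kj}(\ell)$ for $\ell\ge 1$ with $\tc_{ij}(0)=0$ and $\tc_{ij}(1)=\delta_{ij}$; setting $b_{ij}(\ell):=(\vdim\cH_{Q}(i,\xi_{j}+1-\ell),\varpi_{j})$ when $(i,\xi_{j}+1-\ell)\in\Delta_{0}$ and $b_{ij}(\ell):=0$ otherwise (this is exactly the asserted right-hand side, by Theorem~\ref{Thm:Happel} together with $\vdim(\tau X)=\tau\vdim(X)$ and $\langle X,I_{j}\rangle=(\vdim X,\varpi_{j})$), the three-term recursion for $b$ at fixed $j$ is precisely additivity of $\vdim$ on the Auslander--Reiten mesh ending at $\cH_{Q}(i,\xi_{j}+2-\ell)$, and in the opposite parity all three terms vanish by convention, so uniqueness of solutions of the recursion forces $b=\tc$. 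What your route buys is a proof internal to the framework the paper already sets up (Happel's equivalence and the mesh relations), at the cost of redoing the Hernandez--Leclerc computation that the citation dispenses with in one line.

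The one place your sketch is imprecise is the seed of the induction. You need $b_{ij}(0)=0$ and $b_{ij}(1)=\delta_{ij}$, i.e.\ the vanishing of the full Euler characteristics $\langle\cH_{Q}(i,\xi_{j}+1),I_{j}\rangle$ and, for $i\neq j$, $\langle\cH_{Q}(i,\xi_{j}),I_{j}\rangle$; saying ``one of the two terms vanishes'' is not the right statement, and ``non-positive pairing'' would not suffice. The vanishing is nevertheless true and provable by the tools at hand: morphisms in the mesh category go along paths, which strictly increase the second coordinate, so $\Hom_{\dD_{Q}}(\cH_{Q}(i,\xi_{j}+\delta),I_{j})=\delta_{ij}\,\delta_{\delta,0}\,\C$ for $\delta\in\{0,1\}$, and by Auslander--Reiten (Serre) duality $\Ext^{1}_{\dD_{Q}}(\cH_{Q}(i,\xi_{j}+\delta),I_{j})$ is dual to $\Hom_{\dD_{Q}}(\cH_{Q}(j,\xi_{j}+2),\cH_{Q}(i,\xi_{j}+\delta))=0$; since height functions differ by at most the diameter of the Dynkin diagram ($\le h-2$), the object $\cH_{Q}(i,\xi_{j}+\delta)$ is a module or the first shift of one, and injectivity of $I_{j}$ rules out contributions in any other degree, so the Euler characteristic is indeed $\delta_{ij}\,\delta_{\delta,0}$. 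With that inserted, your argument is a complete proof of the proposition.
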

\begin{proof}
This is \cite[Proposition 2.1]{HL15}. 
Note that the condition $\ell + \epsilon_{i} + \epsilon_{j} +1 \in 2\Z$ here
is equivalent to the condition $\ell + \xi_{i} - \xi_{j} -1 \in 2\Z$ therein. 
\end{proof}

Thanks to Proposition~\ref{Prop:HL}, once we depict the Auslander-Reiten quiver of $\dD_{Q}$, 
we can easily compute the explicit values of the integers $\{\tc_{ij}(\ell)\}_{i,j \in I, \ell \ge 1}$.
See~\cite[Example 2.2]{HL15} for an example of such a computation.

\begin{Cor}
\label{Cor:HL}
For $(i, p), (j, r) \in \Delta_{0}$ with $r \ge p$, we have
$$
\left\langle \cH_{Q}(i, p), \cH_{Q}(j, r) \right\rangle = \tc_{ij}(r-p+1)
$$
for any Dynkin quiver $Q$ of type $\g$.
\end{Cor}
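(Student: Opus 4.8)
The plan is to derive Corollary~\ref{Cor:HL} directly from Proposition~\ref{Prop:HL} together with Happel's description of the indecomposable objects in $\dD_Q$ (Theorem~\ref{Thm:Happel}) and the pairing identities recorded at the end of Section~\ref{Ssec:Happel}. First I would fix a Dynkin quiver $Q$ of type $\g$ with a height function $\xi$, and recall from Theorem~\ref{Thm:Happel} that $\cH_Q(i,p) = \tau^{(\xi_i - p)/2}(I_i)$ and $\cH_Q(j,r) = \tau^{(\xi_j - r)/2}(I_j)$. Since $\tau$ is an auto-equivalence of $\dD_Q$, we have
$$
\langle \cH_Q(i,p), \cH_Q(j,r) \rangle = \langle \tau^{(\xi_i - p)/2}(I_i), \tau^{(\xi_j - r)/2}(I_j) \rangle = \langle \tau^{(\xi_i - p - \xi_j + r)/2}(I_i), I_j \rangle,
$$
using invariance of the Euler form under the auto-equivalence $\tau$.

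Next I would invoke the identity $\langle X, I_j \rangle = (\vdim(X), \varpi_j)$ valid for all $X \in \dD_Q$, noted in Section~\ref{Ssec:Happel}. Applying it with $X = \tau^{k}(I_i)$ where $k = (r - p + \xi_i - \xi_j)/2$, and using $\vdim(\tau X) = \tau \vdim(X)$ together with $\vdim(I_i) = \gamma_i$, we get
$$
\langle \cH_Q(i,p), \cH_Q(j,r) \rangle = \left( \tau^{(r-p+\xi_i - \xi_j)/2}(\gamma_i), \varpi_j \right).
$$
It remains to match this with the formula in Proposition~\ref{Prop:HL} for $\tc_{ij}(\ell)$ with $\ell = r - p + 1$: the exponent there is $(\ell + \xi_i - \xi_j - 1)/2 = (r - p + \xi_i - \xi_j)/2$, which agrees exactly. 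I would also need to check that the parity condition is satisfied, i.e.\ that $\ell + \epsilon_i + \epsilon_j + 1 \in 2\Z$ whenever $(i,p), (j,r) \in \Delta_0$: indeed $(i,p) \in \Delta_0$ forces $p - \epsilon_i \in 2\Z$ and similarly $r - \epsilon_j \in 2\Z$, so $\ell + \epsilon_i + \epsilon_j + 1 = (r-p+1) + \epsilon_i + \epsilon_j + 1 \equiv (\epsilon_j + \epsilon_i + 1) + \epsilon_i + \epsilon_j + 1 \equiv 0 \pmod 2$, so we are always in the first case of Proposition~\ref{Prop:HL}, where the nonzero formula applies. This handles the claim.

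The arguments are all routine bookkeeping with the Euler form and the Coxeter functor, so I do not anticipate any serious obstacle; the one point requiring a little care is the consistent tracking of the half-integer exponents (ensuring $(\xi_i - p)/2$, $(\xi_j - r)/2$, and their difference are genuine integers), which follows from $\xi_i - \epsilon_i \in 2\Z$, $p - \epsilon_i \in 2\Z$ and the analogous statements for $j, r$. Finally, the independence of $\langle \cH_Q(i,p), \cH_Q(j,r) \rangle$ from the choice of $Q$ — needed so that the statement ``for any Dynkin quiver $Q$'' is meaningful — is exactly the isomorphism~\eqref{Eq:QQ} recorded in the remark after Theorem~\ref{Thm:Happel}, so no extra work is required there.
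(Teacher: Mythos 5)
Your proposal is correct and follows essentially the same route as the paper's proof: rewrite both objects via Happel's description $\cH_{Q}(i,p)=\tau^{(\xi_i-p)/2}(I_i)$, use invariance of the Euler form under $\tau$ and the identity $\langle X, I_j\rangle=(\vdim(X),\varpi_j)$, and then match the resulting exponent with Proposition~\ref{Prop:HL} for $\ell=r-p+1\ge 1$. The extra parity and integrality checks you include are correct and only make explicit what the paper leaves implicit.
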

\begin{proof}
We compute as:
\begin{align*}
\left\langle \cH_{Q}(i, p), \cH_{Q}(j, r) \right\rangle &= 
\left\langle \tau^{(\xi_{i}-p)/2}(I_{i}), \tau^{(\xi_{j} -r)/2}(I_{j}) \right\rangle \\
&= \left\langle \tau^{((r-p+1) + \xi_{i} -\xi_{j} -1)/2}(I_{i}), I_{j} \right\rangle \\
&= \left( \tau^{((r-p+1) + \xi_{i} -\xi_{j} -1)/2}(\gamma_{i}), \varpi_{j} \right).
\end{align*}
Since $r-p+1 \ge 1$ by assumption, the RHS is equal to $\tc_{ij}(r-p+1)$ 
by Proposition~\ref{Prop:HL}.
\end{proof}

Here we record some basic properties of the integers $\{\tc_{ij}(\ell)\}_{i,j \in I, \ell \ge 1}$.

\begin{Lem}
\label{Lem:tC}
The integers $\{\tc_{ij}(\ell)\}_{i,j \in I, \ell \ge 1}$
satisfy the following properties:
\begin{enumerate}
\item \label{Lem:tC:symm} $\tc_{ij}(\ell) = \tc_{ji}(\ell);$
\item \label{Lem:tC:auto} $\tc_{ij}(\ell) = \tc_{\sigma(i), \sigma(j)}(\ell)$ for any automorphism $\sigma$ of the Dynkin diagram;
\item \label{Lem:tC:period} $\tc_{ij}(\ell) = \tc_{ij}(\ell + 2h);$
\item \label{Lem:tC:ARdual}$\tc_{ij}(\ell) = - \tc_{ij}(2 h - \ell)$ for $1 \le \ell \le 2 h-1;$
\item \label{Lem:tC:shift} $\tc_{ij}(\ell) = \tc_{ji^{*}}(h - \ell)$ for $1 \le \ell \le h -1;$
\item \label{Lem:tC:vanish} $\tc_{ij}(k h) = 0$ for any $k \in \Z_{\ge 1};$
\item \label{Lem:tC:positive} $\tc_{ij}(\ell) \ge 0$ if $1 \le \ell \le h-1;$
\item \label{Lem:tC:negative} $\tc_{ij}(\ell) \le 0$ if $h+1 \le \ell \le 2 h -1.$
\end{enumerate}
\end{Lem}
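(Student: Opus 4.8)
The plan is to deduce all eight properties from the explicit quiver interpretation provided by Proposition~\ref{Prop:HL} and Corollary~\ref{Cor:HL}, together with elementary facts about the Coxeter functor $\tau$ and the longest Weyl element $\mathsf{w}_0$. The key mechanism is that, by Corollary~\ref{Cor:HL}, for $r \ge p$ one has $\tc_{ij}(r-p+1) = \langle \cH_Q(i,p), \cH_Q(j,r)\rangle$, and the Euler form on the finite triangulated category $\dD_Q$ is very rigid: it is $\tau$-invariant, it satisfies a Serre-duality type relation $\langle X, Y\rangle = -\langle Y, \tau X\rangle$ coming from the Auslander-Reiten formula (equivalently, $\tau$ acts as the Serre functor shifted by $[-1]$), and it is periodic of period $h$ in the sense that $\tau^h \cong [-2]$ on $\ind(\dD_Q)$, by \eqref{Eq:Nakayama}. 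So most of the work is bookkeeping with these identities.

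Concretely, I would proceed as follows. \emph{Symmetry} \eqref{Lem:tC:symm}: since $K(\Cc)$ is commutative (Theorem~\ref{Thm:FR}\eqref{Thm:FR:1}) one expects $\tc_{ij}(\ell)=\tc_{ji}(\ell)$; purely combinatorially it follows because $C(z)$ is a symmetric matrix, hence so is $\tC(z)$. \emph{Diagram automorphism invariance} \eqref{Lem:tC:auto}: immediate from the fact that a diagram automorphism $\sigma$ permutes the rows and columns of $C(z)$ compatibly, hence of $\tC(z)$. \emph{Periodicity} \eqref{Lem:tC:period} and \emph{AR-duality} \eqref{Lem:tC:ARdual}: these are the crucial structural inputs and I would read them off from the generating-function identity $\tC(z) = C(z)^{-1}$ combined with the classical fact (Kac, or Frenkel--Reshetikhin) that $C(z)$, as a matrix over $\Z[z,z^{-1}]$, satisfies $\det C(z) = \prod(\text{cyclotomic-type factors})$ forcing $\tC_{ij}(z)$ to be a rational function whose Taylor coefficients obey the recursion $\tc_{ij}(\ell+2) + \tc_{ij}(\ell) = -\sum_{k\neq i} c_{ik}\,\tc_{kj}(\ell+1)$ coming from $C(z)\tC(z) = \mathrm{Id}$; this recursion together with the initial values $\tc_{ij}(0)=0$, $\tc_{ij}(1)=\delta_{ij}$ gives $\tc_{ij}(2h-\ell) = -\tc_{ij}(\ell)$ and hence the $2h$-periodicity. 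Alternatively, and more in the spirit of the paper, \eqref{Lem:tC:ARdual} is the AR-duality $\langle X,Y\rangle = -\langle Y,\tau X\rangle$ written in coordinates via \eqref{Eq:Nakayama}, and \eqref{Lem:tC:period} follows by applying it twice. \emph{Shift by Nakayama} \eqref{Lem:tC:shift}: apply $\cH_Q(i,p)[1] = \cH_Q(i^*, p+h)$ from \eqref{Eq:Nakayama} inside the Euler form and use that $[1]$ only flips the sign, i.e. $\langle X[1], Y\rangle = -\langle X, Y\rangle = \langle X, Y[1]\rangle$. \emph{Vanishing at multiples of $h$} \eqref{Lem:tC:vanish}: combine \eqref{Lem:tC:period} with $\tc_{ij}(h) = \tc_{ji^*}(0) = 0$ from \eqref{Lem:tC:shift} (with $\ell$ approaching $h$), or note $\tc_{ij}(h) = -\tc_{ij}(h)$ from \eqref{Lem:tC:ARdual} with $\ell = h$.

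Finally, the \emph{positivity} \eqref{Lem:tC:positive} and \emph{negativity} \eqref{Lem:tC:negative} are where the representation-theoretic input is essential, since they are not formal consequences of the matrix identities. For $1 \le \ell \le h-1$ I would use Corollary~\ref{Cor:HL}: $\tc_{ij}(\ell) = \langle \cH_Q(i,p), \cH_Q(j,p+\ell-1)\rangle$ for a suitable $(i,p)\in\Delta_0$, and then argue that both $\cH_Q(i,p)$ and $\cH_Q(j,p+\ell-1)$ can be taken to be genuine modules (objects of $\C Q \modcat$ concentrated in degree $0$) after choosing the Dynkin quiver $Q$ appropriately — this uses the freedom in \eqref{Eq:QQ} to change $Q$, and the fact that within one $\tau$-orbit segment of length $h$ the AR quiver of $\dD_Q$ meets $\C Q\modcat$. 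For two modules $M,N$ over a hereditary algebra one has $\langle M,N\rangle = \dim\Hom(M,N) - \dim\Ext^1(M,N)$, which is not obviously $\ge 0$; so instead I would invoke Proposition~\ref{Prop:HL} directly, which expresses $\tc_{ij}(\ell) = (\tau^{m}\gamma_i, \varpi_j)$ with $m = (\ell+\xi_i-\xi_j-1)/2 \ge 0$ in the relevant range, and $\tau^m\gamma_i$ for $0 \le m$ small enough is still a positive root (it is the dimension vector of an indecomposable module $\tau^m I_i$, which exists as long as we have not wrapped around the AR quiver), so the pairing with the fundamental weight $\varpi_j$ is a nonnegative integer. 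For \eqref{Lem:tC:negative}, apply \eqref{Lem:tC:shift} or \eqref{Lem:tC:ARdual} to reduce the range $h+1 \le \ell \le 2h-1$ to $1 \le 2h-\ell \le h-1$ and pick up the sign. The main obstacle, then, is making precise the claim that in the range $1 \le \ell \le h-1$ one may realize $\tc_{ij}(\ell)$ as a pairing $(\beta, \varpi_j)$ with $\beta$ a positive root (equivalently, that $\tau^m\gamma_i$ does not become $-$(positive root) before $m$ exceeds the bound) — this requires a careful but standard count of the $\tau$-orbit lengths in $\ind(\dD_Q)$ using that there are exactly $h$ AR-translates before the Nakayama shift \eqref{Eq:Nakayama} kicks in.
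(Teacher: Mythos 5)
Your overall route coincides with the paper's (Proposition~\ref{Prop:HL}/Corollary~\ref{Cor:HL} for the identification with Euler forms, Auslander--Reiten duality and the Nakayama-type relation \eqref{Eq:Nakayama} for the symmetries), but two steps are genuinely missing. First, for \eqref{Lem:tC:vanish} you only establish $\tc_{ij}(h)=0$ (via \eqref{Lem:tC:ARdual} at $\ell=h$). Since the periodicity \eqref{Lem:tC:period} has period $2h$, not $h$, this yields $\tc_{ij}(kh)=0$ only for odd $k$; the value $\tc_{ij}(2h)$ is outside the range of \eqref{Lem:tC:ARdual} and \eqref{Lem:tC:shift}, so it needs its own argument. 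The paper supplies one: when $\epsilon_i=\epsilon_j$ the parity condition in Proposition~\ref{Prop:HL} forces $\tc_{ij}(2h)=0$, and when $\epsilon_i\neq\epsilon_j$ one chooses the sink-source orientation with $i$ a source, so that $\gamma_i=\alpha_i$ and $\tc_{ij}(2h)=(\tau^{h}\gamma_i,\varpi_j)=(\alpha_i,\varpi_j)=0$. Second, for \eqref{Lem:tC:positive} the entire content is precisely the claim you defer as ``the main obstacle'': that $\tau^{m}(\gamma_i)$ remains a positive root for every $m$ in the relevant range, so that its pairing with $\varpi_j$ is nonnegative. A proof must actually carry this out; the paper does so by using \eqref{Eq:QQ} to pass to a sink-source orientation (which pins down $\xi_i-\xi_j$ and makes the exponent $(\ell-1)/2$, resp.\ $(\ell-2)/2$) and then invoking Gabriel's explicit description of the Auslander--Reiten quiver to see that $\gamma_i,\tau(\gamma_i),\dots,\tau^{\lfloor(h-2)/2\rfloor}(\gamma_i)$ (resp.\ up to $\lfloor(h-3)/2\rfloor$) are all positive roots. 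As written, \eqref{Lem:tC:positive}, and hence \eqref{Lem:tC:negative}, is not proved.

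Two smaller soft spots: your first suggested route to \eqref{Lem:tC:period} and \eqref{Lem:tC:ARdual} via the recursion from $C(z)\tC(z)=\mathrm{Id}$ is not self-contained --- to show $\ell\mapsto-\tc_{ij}(2h-\ell)$ solves the same initial value problem you would need the values near $2h$, which is what you are trying to prove --- so you should commit to the alternative you mention (the $\tau$-invariance of the Euler form together with $\tau^h=1$ and \eqref{Eq:Nakayama}), which is the paper's argument. And in \eqref{Lem:tC:shift}, applying \eqref{Eq:Nakayama} plus a sign flip under $[1]$ leaves the two objects in the wrong order for Corollary~\ref{Cor:HL} (the degree of the second entry becomes smaller); you must also swap them using AR duality in the form $\langle X,Y\rangle=\langle Y,\tau X[1]\rangle$, as the paper does, before the corollary applies.
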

\begin{proof}
\eqref{Lem:tC:symm} and \eqref{Lem:tC:auto} are immediate from the definition. 

Let us take $Q$ and $\xi$ as in Proposition~\ref{Prop:HL}.
\eqref{Lem:tC:period} is a direct consequence of Proposition~\ref{Prop:HL} and the well-known fact $\tau^{h} = 1$.

To prove \eqref{Lem:tC:ARdual}, we may assume $\ell + \epsilon_{i} + \epsilon_{j} +1 \in 2\Z$.
Then we can pick $(i, p), (j, r) \in \Delta_{0}$ such that $\ell = r-p+1$.
By Corollary~\ref{Cor:HL} and \eqref{Eq:Nakayama}, we have
$$\tc_{ij}(\ell) = \langle \cH_{Q}(i, p), \cH_{Q}(j, r) \rangle 
= \langle \cH_{Q}(i, p), \cH_{Q}(j, r-2h) \rangle$$
for any Dynkin quiver $Q$. 
Using the Auslander-Reiten duality $\langle X, Y \rangle = - \langle Y, \tau X \rangle$, $X, Y \in \dD_{Q}$,
the RHS is further computed as:
\begin{align*}
\langle \cH_{Q}(i, p), \cH_{Q}(j, r-2h) \rangle
&= - \langle \cH_{Q}(j, r - 2h), \tau \cH_{Q}(i, p) \rangle \\
&= - \langle \cH_{Q}(j, r - 2h), \cH_{Q}(i, p-2) \rangle.
\end{align*}
Since $(p-2)-(r-2h) = 2 h - 1 -  \ell \ge 0$ by assumption, 
the RHS of the last equation is equal to $-\tc_{ij}(2 h - \ell)$ again by Corollary~\ref{Cor:HL}. 
This proves \eqref{Lem:tC:ARdual}. 

Let us prove \eqref{Lem:tC:shift}. 
As before, we may assume $\ell + \epsilon_{i} + \epsilon_{j} +1 \in 2\Z$.
For a Dynkin quiver $Q$ and $(i, p), (j, r) \in \Delta_{0}$ with $r-p+1 = \ell$, we have
$\tc_{ij}(\ell) = \langle \cH_{Q}(i, p), \cH_{Q}(j, r) \rangle$ 
by Corollary~\ref{Cor:HL}.
Using $\langle X, Y \rangle = - \langle Y, \tau X \rangle
= \langle Y, \tau X[1] \rangle
$ and \eqref{Eq:Nakayama}, we further compute as:
$$
\langle \cH_{Q}(i, p), \cH_{Q}(j, r) \rangle 
= \langle \cH_{Q}(j, r), \tau \cH_{Q}(i, p)[1] \rangle 
= \langle \cH_{Q}(j, r), \cH_{Q}(i^{*}, p + h - 2) \rangle.   
$$ 
Since $(p+h-2)-r = (h-1) - \ell \ge 0$ by assumption, 
we get $\langle \cH_{Q}(j, r), \cH_{Q}(i^{*}, p + h - 2) \rangle = \tc_{ji^{*}}(h - \ell)$
again by Corollary~\ref{Cor:HL}. This proves \eqref{Lem:tC:shift}.

To prove \eqref{Lem:tC:vanish}, it suffices to check 
that $\tc_{ij}(h) = \tc_{ij}(2 h) = 0$
thanks to \eqref{Lem:tC:period}.
Specializing $\ell = h$ in \eqref{Lem:tC:ARdual}, 
we get $\tc_{ij}(h) = - \tc_{ij}(h)$ and hence $\tc_{ij}(h)=0$.
Let us verify $\tc_{ij}(2 h)=0$.    
When $\epsilon_{i} = \epsilon_{j}$, the number 
$2h + \epsilon_{i} + \epsilon_{j} +1$ is always odd. 
Therefore we have $\tc_{ij}(2 h)=0$ by Proposition~\ref{Prop:HL}.
When $\epsilon_{i} \neq \epsilon_{j}$,
let us choose $Q$ with the {\em sink-source} orientation 
such that $i$ is a source and $j$ is a sink. 
Namely, we choose $Q$ and its height function $\xi$ 
so that we have $\xi_{i} = \xi_{j} + 1$, and 
$\xi_{k} = \xi_{i}$ (resp.~$\xi_{k} = \xi_{j}$) 
if $\epsilon_{k} = \epsilon_{i}$ (resp.~$\epsilon_{k} = \epsilon_{j}$).
With such a choice, we have $\gamma_{i} = \alpha_{i}$ and 
$2h + \xi_{i} - \xi_{j} -1 = 2 h$.
Therefore, by Proposition~\ref{Prop:HL}, we get
$$
\tc_{ij}(2 h) = 
( 
\tau^{2 h/2}(\gamma_{i}), \varpi_{j}
)
=
(\alpha_{i}, \varpi_{j}) = 0.
$$

Let us prove \eqref{Lem:tC:positive}.
Assume $\tc_{ij}(\ell) \neq 0$. 
First we consider the case $\epsilon_{i} = \epsilon_{j}$. 
In this case, Proposition~\ref{Prop:HL}
implies that $\ell$ is odd. 
Let us take a Dynkin quiver $Q$ with a sink-source orientation.
In particular, we have $\xi_{i} = \xi_{j}$.
By the description of the Auslander-Reiten quiver of $\dD_{Q}$ 
in \cite[Proposition 6.5]{Gabriel80}, we see that 
the set
$$
\{ \tau^{(\ell - 1)/2}(\gamma_{i}) \mid 1 \le \ell \le h-1, \; \text{$\ell$ is odd} \}
= \{ \gamma_{i}, \tau(\gamma_{i}), \ldots, \tau^{\lfloor (h-2)/2 \rfloor}(\gamma_{i}) \}
$$ 
consists of positive roots. Therefore we have
$\tc_{ij}(\ell) = \left( \tau^{(\ell - 1)/2}(\gamma_{i}), \varpi_{j} \right) \ge 0$
for any $1 \le \ell \le h-1$.
For the other case $\epsilon_{i} \neq \epsilon_{j}$, 
Proposition~\ref{Prop:HL}
implies that $\ell$ is even. 
Let us take $Q$ with a sink-source orientation
with $i$ being a sink. Then $j$ is a source 
and hence $\ell + \xi_{i} - \xi_{j} -1 = \ell -2$.
Again we can see that the set
$$
\{ \tau^{(\ell - 2)/2}(\gamma_{i}) \mid 1 \le \ell \le h-1, \; \text{$\ell$ is even} \}
= \{ \gamma_{i}, \tau(\gamma_{i}), \ldots, \tau^{\lfloor (h-3)/2 \rfloor}(\gamma_{i}) \}
$$ 
consists of positive roots.
Therefore we get
$\tc_{ij}(\ell) = \left( \tau^{(\ell - 2)/2}(\gamma_{i}), \varpi_{j} \right) \ge 0$.

The last item \eqref{Lem:tC:negative} follows from \eqref{Lem:tC:ARdual} and \eqref{Lem:tC:positive}.
\end{proof}

Thanks to the above lemma,
we can recover all the integers $\{ \tc_{ij}(\ell) \mid \ell \ge 1\}$ for each $(i,j) \in I^{2}$
from the first $h-1$ integers $\{ \tc_{ij}(\ell) \mid 1 \le \ell \le h-1\}$,
for which we have the following simple representation-theoretic interpretation.

\begin{Prop} 
\label{Prop:Ext=tC}
Let $Q$ be a Dynkin quiver of type $\g$.
For any $(j, r), (i, p) \in \Delta_{0}$, we have
$$
\dim \Ext_{\dD_{Q}}^{1}(\cH_{Q}(j, r), \cH_{Q}(i,p)) = 
\begin{cases}
\tc_{ij}(r-p-1) & \text{if $1 \le r - p -1 \le h-1$}; \\
0 & \text{otherwise}.
\end{cases}
$$
\end{Prop}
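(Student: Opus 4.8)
The plan is to express $\Ext^{1}_{\dD_{Q}}(\cH_{Q}(j,r),\cH_{Q}(i,p))$ in two ways — tautologically and via Auslander--Reiten duality on $\dD_{Q}$ — and to exploit that, through Happel's equivalence $\cH_{Q}\colon\C(\Delta)\simeq\ind(\dD_{Q})$ (Theorem~\ref{Thm:Happel}), every morphism space $\Hom_{\dD_{Q}}(\cH_{Q}(a,s),\cH_{Q}(b,t))$ is computed in the mesh category $\C(\Delta)$ and is therefore spanned by paths in $\Delta$ from $(a,s)$ to $(b,t)$; since each arrow of $\Delta$ raises the second coordinate by one, such a space vanishes whenever $t<s$. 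I will also use $\tau\cH_{Q}(a,s)=\cH_{Q}(a,s-2)$ (immediate from $\cH_{Q}(a,s)=\tau^{(\xi_{a}-s)/2}(I_{a})$) and the shift identity \eqref{Eq:Nakayama}.

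First I would dispose of the two ``out of range'' cases. Tautologically,
$$
\Ext^{1}_{\dD_{Q}}(\cH_{Q}(j,r),\cH_{Q}(i,p))=\Hom_{\dD_{Q}}(\cH_{Q}(j,r),\cH_{Q}(i,p)[1])=\Hom_{\dD_{Q}}(\cH_{Q}(j,r),\cH_{Q}(i^{*},p+h))
$$
by \eqref{Eq:Nakayama}, and this vanishes when $r-p-1>h-1$, since then $p+h<r$. On the other hand, Auslander--Reiten duality $\dim\Ext^{1}_{\dD_{Q}}(X,Y)=\dim\Hom_{\dD_{Q}}(Y,\tau X)$ gives
$$
\dim\Ext^{1}_{\dD_{Q}}(\cH_{Q}(j,r),\cH_{Q}(i,p))=\dim\Hom_{\dD_{Q}}(\cH_{Q}(i,p),\cH_{Q}(j,r-2)),
$$
which vanishes when $r-p-1\le 0$, since then $r-2<p$. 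Together these give the ``$0$ otherwise'' part of the statement.

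It then remains to treat the window $1\le r-p-1\le h-1$, i.e.\ $p\le r-2$. Here I claim that $\Ext^{k}_{\dD_{Q}}(\cH_{Q}(i,p),\cH_{Q}(j,r-2))=0$ for every $k\ne 0$. Indeed, the tautological expression $\Hom_{\dD_{Q}}(\cH_{Q}(i,p),\cH_{Q}(j^{(k)},r-2+kh))$ (where $j^{(k)}\in\{j,j^{*}\}$ according to the parity of $k$, by \eqref{Eq:Nakayama}) has second coordinate $<p$ for $k\le -1$, using $r-p-1\le h-1$; dually, Auslander--Reiten duality and \eqref{Eq:Nakayama} identify $\dim\Ext^{k}_{\dD_{Q}}(\cH_{Q}(i,p),\cH_{Q}(j,r-2))$ with $\dim\Hom_{\dD_{Q}}(\cH_{Q}(j,r-2),\cH_{Q}(i^{(1-k)},p-2+(1-k)h))$, whose second coordinate is $<r-2$ for $k\ge 1$, using $r-2\ge p$. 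In either case the group vanishes by the path-counting principle above. Consequently $\dim\Hom_{\dD_{Q}}(\cH_{Q}(i,p),\cH_{Q}(j,r-2))$ equals the Euler--Poincar\'e characteristic $\langle\cH_{Q}(i,p),\cH_{Q}(j,r-2)\rangle$, and by Corollary~\ref{Cor:HL} applied to $(i,p),(j,r-2)\in\Delta_{0}$ with $r-2\ge p$ this is $\tc_{ij}((r-2)-p+1)=\tc_{ij}(r-p-1)$. Combined with the Auslander--Reiten rewriting of the previous paragraph, this yields the asserted formula.

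The step demanding the most care is the vanishing of $\Ext^{k}_{\dD_{Q}}(\cH_{Q}(i,p),\cH_{Q}(j,r-2))$ for all $k\ne 0$ throughout the window $1\le r-p-1\le h-1$: it is a short but attentive bookkeeping of the second coordinates of the shifted objects $\cH_{Q}(j,r-2)[k]$ and $(\tau\cH_{Q}(i,p))[1-k]$ via \eqref{Eq:Nakayama}, which must be checked to remain strict at the boundary values $r-p-1\in\{1,h-1\}$. Everything else is a formal consequence of Auslander--Reiten duality, the directedness of $\Delta$, and Corollary~\ref{Cor:HL}.
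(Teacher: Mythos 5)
Your argument is correct, and its skeleton matches the paper's: you rewrite $\Ext^{1}$ once via \eqref{Eq:Nakayama} and once via Auslander--Reiten duality, read off the vanishing outside the window $1\le r-p-1\le h-1$ from the directedness of $\Delta$ (the paper does this contrapositively, you do it directly -- same content), and finish with Corollary~\ref{Cor:HL}. The genuine difference is in how you justify $\dim\Hom_{\dD_{Q}}(\cH_{Q}(i,p),\cH_{Q}(j,r-2))=\langle\cH_{Q}(i,p),\cH_{Q}(j,r-2)\rangle$ inside the window. The paper rewrites this Hom space as $\Hom_{\dD_{Q}}(\tau^{m}(I_{i}),I_{j})$, invokes \eqref{Eq:QQ} to pass to a sink--source orientation with $i$ a source so that $\tau^{m}(I_{i})$ stays in $\C Q\modcat$, and then uses injectivity of $I_{j}$ to kill the $\Ext^{1}$ term in the Euler form. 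You instead keep $Q$ arbitrary and show $\Ext^{k}_{\dD_{Q}}(\cH_{Q}(i,p),\cH_{Q}(j,r-2))=0$ for every $k\neq 0$ by bookkeeping of second coordinates, using $\cH_{Q}(x)[1]=\cH_{Q}(x^{*}\text{-shift})$ for $k\le -1$ and the Serre functor $\tau[1]$ (i.e.\ $\Ext^{k}(X,Y)\cong D\Hom(Y,\tau X[1-k])$) for $k\ge 1$; I checked the inequalities, including the boundary values $r-p-1\in\{1,h-1\}$, and they are strict as you claim. Your route buys uniformity (no change of orientation, no appeal to projectivity/injectivity of particular modules, all nonzero degrees handled at once), at the cost of slightly heavier degree bookkeeping; the paper's route is shorter once one is willing to exploit the orientation-independence \eqref{Eq:QQ} and the concrete module-theoretic fact that $I_{j}$ is injective.
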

\begin{proof}
Using \eqref{Eq:Nakayama}, we have 
\begin{align} \label{Eq:Ext=tC1}
\dim \Ext_{\dD_{Q}}^{1}(\cH_{Q}(j, r), \cH_{Q}(i, p))
&= \dim \Hom_{\dD_{Q}}(\cH_{Q}(j, r), \cH_{Q}(i, p)[1]) \\
&= \dim \Hom_{\dD_{Q}}(\cH_{Q}(j, r), \cH_{Q}(i^{*}, p+h)). \nonumber
\end{align}
On the other hand, using the Auslander-Reiten duality, we have
\begin{align} \label{Eq:Ext=tC2}
\dim \Ext_{\dD_{Q}}^{1}(\cH_{Q}(j, r), \cH_{Q}(i, p))
&= \dim \Hom_{\dD_{Q}}(\cH_{Q}(i, p), \tau \cH_{Q}(j, r))  \\
&= \dim \Hom_{\dD_{Q}}(\cH_{Q}(i, p), \cH_{Q}(j, r-2)). \nonumber
\end{align}
Now we assume that $\Ext_{\dD_{Q}}^{1}(\cH_{Q}(j, r), \cH_{Q}(i, p)) \neq 0$.
Then 
we have
$$
\Hom_{\dD_{Q}}(\cH_{Q}(j, r), \cH_{Q}(i^{*}, p+h)) \neq 0, \quad
\Hom_{\dD_{Q}}(\cH_{Q}(i, p), \cH_{Q}(j, r-2)) \neq 0
$$
by the above equations \eqref{Eq:Ext=tC1} and \eqref{Eq:Ext=tC2} respectively.
In view of Theorem~\ref{Thm:Happel}, 
they imply that $r \le p+h$ and $p \le r-2$,
or equivalently $1 \le r-p-1 \le h-1$.

Conversely, let us assume $1 \le r-p-1 \le h-1$. 
We continue \eqref{Eq:Ext=tC2} as:
\begin{align*}
\dim \Hom_{\dD_{Q}}(\cH_{Q}(i, p), \cH_{Q}(j, r-2)) 
&= \dim \Hom_{\dD_{Q}}(\tau^{(\xi_{i} - p)/2}(I_{i}), \tau^{(\xi_{j} - r + 2)/2}(I_{j})) \\
&= \dim \Hom_{\dD_{Q}}(\tau^{((r-p-1)+\xi_{i}-\xi_{j}-1)/2}(I_{i}), I_{j}). 
\end{align*}
Because of \eqref{Eq:QQ}, 
we may assume that our Dynkin quiver $Q$ has the sink-source orientation with 
the vertex $i$ being a source.
Then the object $\tau^{((r-p-1)+\xi_{i}-\xi_{j}-1)/2}(I_{i})$
remains inside $\C Q \modcat \subset \dD_{Q}$.
Therefore we have
$$
\dim \Hom_{\dD_{Q}}(\tau^{((r-p-1)+\xi_{i}-\xi_{j}-1)/2}(I_{i}), I_{j})
= \left\langle \tau^{((r-p-1)+\xi_{i}-\xi_{j}-1)/2}(I_{i}), I_{j} \right\rangle.
$$
The RHS is equal to $\tc_{ij}(r-p-1)$ by Corollary~\ref{Cor:HL}. 
\end{proof}

\subsection{Quiver interpretation of the denominator formula}
\label{Ssec:qinterpret}

Thanks to Theorem~\ref{Thm:CK} and Proposition~\ref{Prop:Ext=tC}, we see that
the following assertion is equivalent to our main theorem (=Theorem~\ref{Thm:main}).

\begin{Thm}
\label{Thm:main2}
Let $Q$ be a Dynkin quiver of type $\g$.
For any $(i, p), (j, r) \in \Delta_{0}$, 
the pole order of the normalized $R$-matrix $R_{ij}$ 
at $z_{2}/z_{1} = q^{r}/q^{p}$
(i.e.~the zero order of $d_{ij}(u)$
at $u = q^{r}/q^{p}$)
 is equal to 
$
\dim \Ext_{\dD_{Q}}^{1}(\cH_{Q}(j,r), \cH_{Q}(i,p)).
$
\end{Thm}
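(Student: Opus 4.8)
The plan is to transport the assertion into the geometry of the graded quiver varieties, where it becomes the computation of the dimension of a single affine space, the latter being supplied by Keller--Scherotzke's theorem. Fix $(i,p),(j,r)\in\Delta_{0}$ and put $u_{0}\seq q^{r}/q^{p}=q^{r-p}$. If $r\le p$, then $u_{0}$ is not of the form $q^{k}$ with $k\in\Z_{>0}$, so $d_{ij}(u_{0})\neq0$ by Theorem~\ref{Thm:CK}, whereas $\dim\Ext_{\dD_{Q}}^{1}(\cH_{Q}(j,r),\cH_{Q}(i,p))=0$ by Proposition~\ref{Prop:Ext=tC}; both sides vanish and there is nothing to prove. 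So we may assume $p<r$, and we must show that the order of vanishing of $d_{ij}(u)$ at $u_{0}$ equals $d\seq\dim\Ext_{\dD_{Q}}^{1}(\cH_{Q}(j,r),\cH_{Q}(i,p))$ (which, by Proposition~\ref{Prop:Ext=tC}, is $\tc_{ij}(r-p-1)$ when $r-p-1\le h-1$ and $0$ otherwise). Combined with Theorem~\ref{Thm:CK}, which already confines the zeros of $d_{ij}$ to points $u_{0}$ of this form, this yields the full product formula of Theorem~\ref{Thm:main}.

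Next I would set up the geometric model. Let $W=W_{(i,p)}\oplus W_{(j,r)}$ with $\dim W_{(i,p)}=\dim W_{(j,r)}=1$, and consider the graded quiver varieties $\pi\colon\Mg(W)\to\Mg_{0}(W)$ reviewed in Section~\ref{Sec:Nakajima}, acted on by $G_{W}$ together with the auxiliary torus $\T$ whose equivariant coordinate is the ratio $u=z_{2}/z_{1}$ of the spectral parameters. Using Nakajima's theory~\cite{Nakajima01}, one realizes the affinized tensor products $V_{i}[z_{1}^{\pm1}]\otimes V_{j}[z_{2}^{\pm1}]$ and $V_{j}[z_{2}^{\pm1}]\otimes V_{i}[z_{1}^{\pm1}]$ on the localized $G_{W}\times\T$-equivariant $K$-theory of a Lagrangian inside $\Mg(W)$; the normalized $R$-matrix $R_{ij}$ is then obtained by comparing the two geometric realizations of the tensor product attached to the orderings $W=W_{(i,p)}\oplus W_{(j,r)}=W_{(j,r)}\oplus W_{(i,p)}$ and rescaling so as to send $v_{i}\otimes v_{j}$ to $v_{j}\otimes v_{i}$, so that the denominator $d_{ij}(u)$ records the failure of the underlying geometric map to remain invertible. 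A localization argument at the most degenerate $G_{W}\times\T$-fixed point $0\in\Mg_{0}(W)$ --- the point whose corresponding dominant monomial is $Y_{i,q^{p}}Y_{j,q^{r}}$ and on whose fixed-point class the highest-weight vectors are supported --- identifies the order of vanishing of $d_{ij}(u)$ at $u_{0}$ with the number of $\T$-weights of the tangent space $T_{0}\Mg_{0}(W)$ that vanish at $u=u_{0}$; since $\Mg_{0}(W)$ is a smooth affine space with linear $\T$-action (see the next paragraph), this number is $\dim\Mg_{0}(W)$.

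It remains to compute $\dim\Mg_{0}(W)$, and here Keller--Scherotzke's theorem does the work: in the present case the closed embedding $\Mg_{0}(W)\hookrightarrow\rep_{W}(\Gamma)$ of~\eqref{Eq:iKS} is an isomorphism. An indecomposable object of $\dD_{Q}$ is a brick without self-extensions, so $\Gamma$ has no loops at $(i,p)$ or $(j,r)$; and for $p<r$ there are no arrows $(i,p)\to(j,r)$ in $\Gamma$, since $\dim\Ext_{\dD_{Q}}^{1}(\cH_{Q}(i,p),\cH_{Q}(j,r))=0$ by Proposition~\ref{Prop:Ext=tC}. Hence $\Gamma$ has, between the two relevant vertices, exactly $d$ arrows, all directed $(j,r)\to(i,p)$, whence $\Mg_{0}(W)\cong\rep_{W}(\Gamma)\cong\Hom(W_{(j,r)},W_{(i,p)})^{\oplus d}\cong\mathbb{A}^{d}$ as a $\T$-variety; since all $d$ arrows carry the same $\Z$-degree, the $\T$-action is linear with all $d$ coordinate weights equal, and a short check identifies this common weight as the one that vanishes precisely at $u=u_{0}$. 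Therefore the order of vanishing of $d_{ij}(u)$ at $u_{0}$ equals $\dim\Mg_{0}(W)=d$, as required.

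The step I expect to be the main obstacle is the one carried out in the second paragraph: making precise the dictionary between the representation-theoretic normalized $R$-matrix and Nakajima's geometric data, and showing that the order of vanishing of $d_{ij}(u)$ at $u_{0}$ is genuinely computed by the $\T$-weights of $T_{0}\Mg_{0}(W)$. This demands careful bookkeeping of which torus factor carries the spectral parameter and of the $\Z$-grading conventions on $\Gamma$, together with a verification that the tangent directions of $\Mg(W)$ at $0$, and the deeper strata of $\Mg_{0}(W)$, contribute no further zeros or poles near $u_{0}$. By contrast, the reductions, the vanishing of self-extensions of indecomposables, and the identification $\Mg_{0}(W)\cong\mathbb{A}^{d}$ are routine once Keller--Scherotzke's theorem is in hand.
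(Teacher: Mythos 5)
Your overall strategy is the paper's: reduce to $r>p$ via Theorem~\ref{Thm:CK} and Proposition~\ref{Prop:Ext=tC}, take $W=W_{(i,p)}\oplus W_{(j,r)}$ one-dimensional in each slot, use Corollary~\ref{Cor:KS} to identify $\Mg_{0}(W)$ with an affine space of dimension $d=\dim\Ext^{1}_{\dD_{Q}}(\cH_{Q}(j,r),\cH_{Q}(i,p))$, and read the pole order off the equivariant geometry at the origin. But the step you yourself flag as the ``main obstacle'' is not a bookkeeping issue: it is the entire content of the proof, and as written your second paragraph asserts rather than proves it. Concretely, the claim that ``the order of vanishing of $d_{ij}(u)$ at $u_{0}$ equals the number of torus weights of $T_{0}\Mg_{0}(W)$ vanishing at $u_{0}$'' needs (a) a geometric realization of \emph{both} orderings of the completed tensor product, (b) a geometric construction of an intertwiner between them whose relation to $\widehat{R}_{x,y}$ can be pinned down, and (c) an argument that the resulting factor $(u-u_{0})^{d}$ is sharp, i.e.\ that the pole order is not strictly smaller than $d$. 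In the paper these are: Lemma~\ref{Lem:Grea}, which identifies $\hV(x,y)$ and $\hV(y,x)$ with the completed $K$-groups of $\Mg(W)\times_{E}E$ and $\Mg(W)\times_{E}\{0\}$ (the $T_{W}$-fixed loci of Nakajima's tensor product varieties for the two opposite one-parameter subgroups); then the computation of convolution with $[\Oo_{\{0\}}]$ by the Koszul resolution, together with the one-dimensionality \eqref{Eq:uniqueness} of the $\KK$-Hom space, which gives $\mathbf{r}_{\KK}=(1-u/u_{0})^{d}\widehat{R}_{x,y}$ up to scalar and hence the upper bound \eqref{Eq:Property1}.

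The piece genuinely absent from your proposal is the lower bound \eqref{Eq:Property2}: nothing in a fixed-point/tangent-weight count rules out that the specialized map $V(x,y)\to V(y,x)$ vanishes, in which case the true pole order would be strictly less than $d$. The paper proves non-vanishing of $\bar{\mathbf{r}}$ by a perverse-sheaf argument: since $E=\Mg_{0}(W)$ is an affine space, the $IC$ sheaf of the open dense stratum is a shift of the constant sheaf $\ud{\kk}_{E}$, which by Theorem~\ref{Thm:stratification}~\eqref{Thm:stratification:IC} occurs as a direct summand of $\pi^{\bullet}_{*}\ud{\kk}_{\Mg(W)}$; transporting $(-)*[\Oo_{\{0\}}]$ to a Yoneda product with the adjunction morphism $\eta\colon\iota_{!}\iota^{!}\ud{\kk}_{E}\to\ud{\kk}_{E}$ then shows $\bar{\mathbf{r}}\neq 0$ because $\mathrm{id}_{\ud{\kk}_{E}}\circ\eta=\eta\neq0$. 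Your remark that one must check that ``the tangent directions of $\Mg(W)$ at $0$, and the deeper strata of $\Mg_{0}(W)$, contribute no further zeros or poles'' gestures at this, but no mechanism is offered; so the proposal establishes (modulo also making (a) and (b) precise) at best that the pole order is $\le d$, not the stated equality. To complete the argument you would need something equivalent to Lemma~\ref{Lem:Grea} and to the two lemmas following it in Section~\ref{Ssec:pf}.
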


A proof of Theorem~\ref{Thm:main2} is given in Section~\ref{Ssec:pf} below. 

In what follows,
for each $(i,p) \in I \times \Z$,
we simplify the notation by setting
$$Y_{(i,p)} \seq Y_{i, q^{p}}.$$  
Recall that we have an identification $L(Y_{(i,p)}) = V_{i}(q^{p})$
for each $(i,p) \in I \times \Z$. 

\begin{Cor}
\label{Cor:main2}
Let $Q$ be a Dynkin quiver of type $\g$. 
For any $x, y \in \Delta_{0}$, the following conditions are mutually equivalent:
\begin{itemize}
\item The tensor product $L(Y_{x}) \otimes L(Y_{y})$ is irreducible;
\item $L(Y_{x}) \otimes L(Y_{y}) \cong L(Y_{y}) \otimes L(Y_{x})$ as $U_{q}(L\g)$-modules;
\item $\Ext_{\dD_{Q}}^{1}(\cH_{Q}(x), \cH_{Q}(y)) = 0$ and 
$\Ext_{\dD_{Q}}^{1}(\cH_{Q}(y), \cH_{Q}(x)) = 0$.
\end{itemize}
\end{Cor}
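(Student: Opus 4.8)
The statement is Corollary~\ref{Cor:main2}, which I would deduce by combining Theorem~\ref{Thm:tensor} with Theorem~\ref{Thm:main2}. The plan is as follows. First, I would recall that by Theorem~\ref{Thm:tensor}, for $x = (i,p)$ and $y = (j,r)$ in $\Delta_0$, the three conditions ``$L(Y_x) \otimes L(Y_y)$ is irreducible'', ``$L(Y_x) \otimes L(Y_y) \cong L(Y_y) \otimes L(Y_x)$'', and ``$d_{ij}(q^{r}/q^{p}) \neq 0$ and $d_{ji}(q^{p}/q^{r}) \neq 0$'' are mutually equivalent (here $a = q^{p}$, $b = q^{r}$, and $L(Y_x) = V_i(q^p)$, $L(Y_y) = V_j(q^r)$ by the identification fixed just before the corollary). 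So it remains only to show that the last of these conditions is equivalent to the vanishing of the two $\Ext^1$ groups.

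\textbf{Translating denominators into $\Ext^1$.} By Theorem~\ref{Thm:main2}, the zero order of $d_{ij}(u)$ at $u = q^{r}/q^{p}$ equals $\dim \Ext^1_{\dD_Q}(\cH_Q(j,r), \cH_Q(i,p)) = \dim \Ext^1_{\dD_Q}(\cH_Q(y), \cH_Q(x))$; in particular $d_{ij}(q^{r}/q^{p}) \neq 0$ if and only if $\Ext^1_{\dD_Q}(\cH_Q(y),\cH_Q(x)) = 0$. Symmetrically, applying Theorem~\ref{Thm:main2} with the roles of $(i,p)$ and $(j,r)$ exchanged, the zero order of $d_{ji}(u)$ at $u = q^{p}/q^{r}$ equals $\dim \Ext^1_{\dD_Q}(\cH_Q(i,p),\cH_Q(j,r)) = \dim \Ext^1_{\dD_Q}(\cH_Q(x),\cH_Q(y))$, so $d_{ji}(q^{p}/q^{r}) \neq 0$ if and only if $\Ext^1_{\dD_Q}(\cH_Q(x),\cH_Q(y)) = 0$. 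Combining these two equivalences, the condition ``$d_{ij}(q^{r}/q^{p}) \neq 0$ and $d_{ji}(q^{p}/q^{r}) \neq 0$'' is exactly ``$\Ext^1_{\dD_Q}(\cH_Q(x),\cH_Q(y)) = 0$ and $\Ext^1_{\dD_Q}(\cH_Q(y),\cH_Q(x)) = 0$'', which is the third bullet. Chaining this with the equivalences from Theorem~\ref{Thm:tensor} completes the argument.

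\textbf{Where the work is.} There is essentially no obstacle at the level of Corollary~\ref{Cor:main2} itself — it is a formal consequence of Theorem~\ref{Thm:tensor} and Theorem~\ref{Thm:main2}. The only point requiring a little care is that Theorem~\ref{Thm:main2} computes pole orders of $R_{ij}$ as an $\Ext^1$ between the Happel objects indexed in the reversed order $(j,r)$ then $(i,p)$, so one must be attentive to which group corresponds to which denominator when applying it twice (once to $d_{ij}$ and once to $d_{ji}$); but since we end up asserting the vanishing of \emph{both} $\Ext^1$ groups, the asymmetry washes out. The genuine content, of course, is hidden in Theorem~\ref{Thm:main2}, whose proof via the graded quiver varieties is deferred to Section~\ref{Ssec:pf}; the corollary is merely the payoff, phrased so as to make the representation-theoretic meaning of the denominator formula transparent.
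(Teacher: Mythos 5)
Your proposal is correct and follows exactly the paper's route: the paper also deduces Corollary~\ref{Cor:main2} directly from Theorem~\ref{Thm:tensor} combined with Theorem~\ref{Thm:main2}, with the only (minor) content being the bookkeeping of which denominator $d_{ij}$ versus $d_{ji}$ corresponds to which $\Ext^{1}$ group, which you handle correctly.
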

\begin{proof}
This follows from Theorem~\ref{Thm:tensor} and Theorem~\ref{Thm:main2}.
\end{proof}

\section{Graded quiver varieties}
\label{Sec:Nakajima}

In this section, we collect some known facts 
about the graded quiver varieties 
which we need in this paper.
We keep the notation from the previous sections.

\subsection{Notation on the equivariant $K$-theory}
\label{Ssec:Ktheory}
Let $G$ be a complex linear algebraic group.
In the present paper, 
a $G$-variety $X$ always means a quasi-projective 
complex algebraic variety
equipped with an algebraic action of the group $G$.  
We set $\mathrm{pt} \seq \mathop{\mathrm{Spec}} \C$ 
with the trivial $G$-action.
The equivariant $K$-group $K^{G}(X)$ is 
defined to be the Grothendieck group 
of the abelian category
of $G$-equivariant coherent sheaves on $X$,
which is a module over the
representation ring
$R(G)=K^{G}(\mathrm{pt})$.

Let $\mathbb{F}$ be a field of characteristic zero.
We put
$$K^{G}(X)_{\mathbb{F}} \seq K^{G}(X)\otimes_{\Z}\mathbb{F}, \quad
R(G)_{\mathbb{F}} \seq R(G)\otimes_{\Z}\mathbb{F}.$$
Let $\mathfrak{a} \subset R(G)_{\mathbb{F}}$ be the augmentation ideal,
i.e.~the ideal 
consisting of
virtual representations of dimension $0$.
We define the $\mathfrak{a}$-adic completions by
$$
\hK^{G}(X)_{\mathbb{F}} \seq 
\varprojlim_{k} K^{G}(X)_{\mathbb{F}}/\mathfrak{a}^{k} K^{G}(X)_{\mathbb{F}}, 
\quad
\hR(G)_{\mathbb{F}}\seq \varprojlim_{k} R(G)_{\mathbb{F}} / \mathfrak{a}^{k}.
$$ 
The completed 
$K$-group $\hK^{G}(X)_{\mathbb{F}}$
is a module over the algebra $\hR(G)_{\mathbb{F}}$.

\subsection{Convolution product}
\label{Ssec:convolution}

We recall the definition of the {\em convolution product}
for the equivariant $K$-groups (see~\cite[Chapter 5]{CG97} and 
\cite[Section 6, 8]{Nakajima01} for details).
Let $M_i$ be a non-singular $G$-variety for $i=1,2,3$.
We denote by $p_{ij} \colon M_1 \times M_2 \times M_3
\to M_i \times M_j$ the natural projection
for $(i,j) = (1,2), (2,3), (1,3)$.
Let $Z_{12} \subset M_{1} \times M_{2}$ and
$Z_{23} \subset M_{2} \times M_{3}$ be 
$G$-stable closed subvarieties such that
the morphism 
$
p_{13} \colon p_{12}^{-1}(Z_{12}) \cap p_{23}^{-1}(Z_{23}) 
\to Z_{13} 
\seq p_{13}(p_{12}^{-1}(Z_{12}) \cap p_{23}^{-1}(Z_{23}) )
$
is proper.
Then we define the convolution product
$
* \colon K^{G}(Z_{12}) \otimes_{R(G)} K^{G}(Z_{23}) 
\to K^{G}(Z_{13})
$
relative to $M_{1}, M_{2}, M_{3}$ by
\begin{equation}
\label{Eq:convolution}
\zeta * \eta \seq p_{13*} (p_{12}^{*}\zeta
\otimes^{\mathbb{L}}_{M_{1} \times M_{2} \times M_{3}} p_{23}^{*}\eta ),
\end{equation}
where $\zeta \in K^{G}(Z_{12}), \eta \in K^{G}(Z_{23})$.
This naturally induces the convolution product
on the completed $G$-equivariant $K$-groups
$\hK^{G}(Z_{12})_{\mathbb{F}} \otimes_{\hR(G)_{\mathbb{F}}}
\hK^{G}(Z_{23})_{\mathbb{F}} \to \hK^{G}(Z_{13})_{\mathbb{F}}
$.
Note that the convolution product $*$ depends on the ambient smooth spaces $M_{1}, M_{2}, M_{3}$.

\subsection{Quiver varieties}
\label{Ssec:QV}
In this subsection, 
we recall the definition of the (usual) Nakajima quiver varieties.
A basic reference is~\cite{Nakajima01}.
  
We fix $I$-graded finite-dimensional complex vector spaces
$
\bar{V} = \bigoplus_{i \in I} \bar{V}_{i},
\bar{W} = \bigoplus_{i \in I} \bar{W}_{i}
$
and consider the following space of linear maps:
$$
\mathbf{M}(\bar{V}, \bar{W}) \seq
\left(
\bigoplus_{i \sim j} \Hom (\bar{V}_{i}, \bar{V}_{j}) \right)
\oplus
\left( \bigoplus_{i \in I} \Hom (\bar{W}_{i}, \bar{V}_{i}) \right)
\oplus
\left(
\bigoplus_{i \in I} \Hom (\bar{V}_{i}, \bar{W}_{i}) \right)
$$
On the $\C$-vector space $\mathbf{M}(\bar{V}, \bar{W})$,
the groups
$G_{\bar{V}} \seq \prod_{i \in I} \mathop{GL}(\bar{V}_{i})$,
$G_{\bar{W}} \seq \prod_{i \in I} \mathop{GL}(\bar{W}_{i})$ 
act by conjugation and the $1$-dimensional torus
$\C^{\times}$ acts by the scalar multiplication. 
We write an element of $\mathbf{M}(\bar{V}, \bar{W})$ 
as a triple
$(B,a,b)$ of linear maps $B = \bigoplus B_{ij}$,
$a = \bigoplus a_{i}$ and $b = \bigoplus b_{i}$.
Let $\mu = \bigoplus_{i \in I} \mu_{i} 
\colon \mathbf{M}(\bar{V}, \bar{W}) 
\to \bigoplus_{i \in I} \mathfrak{gl}(\bar{V}_{i})$
be the map given by 
\begin{equation}
\label{Eq:moment}
\mu_{i}(B, a, b)
= a_{i} b_{i}
+\sum_{j \sim i} B_{ij}B_{ji}.
\nonumber
\end{equation}
A point $(B, a, b) \in \mu^{-1}(0)$ is said to be stable
if there exists no non-zero $I$-graded subspace 
$\bar{V}^{\prime} \subset \bar{V}$ such that 
$B(\bar{V}^{\prime}) \subset \bar{V}^{\prime}$ and 
$\bar{V}^{\prime} \subset \Ker b$.
Let $\mu^{-1}(0)^{\mathrm{st}}$ be 
the set of stable points, on which $G_{\bar{V}}$ acts freely. 
Then we consider a set-theoretic quotient
$$\M(\bar{V}, \bar{W}) \seq \mu^{-1}(0)^{\mathrm{st}} / G_{\bar{V}}.$$
It is known that this quotient has a structure of
a non-singular quasi-projective variety which 
is isomorphic to a quotient in the geometric invariant theory.
We also consider the categorical
quotient
$$\M_{0}(\bar{V}, \bar{W}) \seq \mu^{-1}(0)/\!/G_{\bar{V}}
= \mathrm{Spec}\, \C[\mu^{-1}(0)]^{G_{\bar{V}}},$$
together with a canonical projective morphism
$\M(\bar{V}, \bar{W}) \to \M_{0}(\bar{V}, \bar{W})$.
These quotients $\M(\bar{V}, \bar{W})$, $\M_{0}(\bar{V}, \bar{W})$
naturally inherit the actions of the group 
$\G_{\bar{W}} \seq G_{\bar{W}} \times \C^{\times}$,
which makes the canonical projective morphism
$\G_{\bar{W}}$-equivariant.  

For any two $I$-graded vector spaces $\bar{V}, \bar{V}^{\prime}$ such that
$\dim \bar{V}_{i} \le \dim \bar{V}^{\prime}_{i}$ for each $i \in I$,  
there is a natural closed embedding
$\M_{0} (\bar{V}, \bar{W})
\hookrightarrow 
\M_{0}(\bar{V}^{\prime}, \bar{W}).
$
With respect to these embeddings, 
the family $\{ \M_{0}(\bar{V}, \bar{W})\}_{\bar{V}}$
forms an inductive system, 
which stabilizes at some large $\bar{V}$.
We consider the union (inductive limit) and 
obtain the following combined 
$\G_{\bar{W}}$-equivariant morphism: 
$$
\pi \colon \M(\bar{W}) 
\seq \bigsqcup_{\bar{V}} \M(\bar{V}, \bar{W})
\to 
\M_{0}(\bar{W}) 
\seq \bigcup_{\bar{V}} \M_{0}(\bar{V}, \bar{W}).
$$  
We refer to these varieties $\M(\bar{W}), \M_{0}(\bar{W})$
as the {\em quiver varieties}.
Note that the component $\M(0, \bar{W})$
consists of a single point,
which we denote by $\hat{0}$.
We call its image $\pi(\hat{0}) = 0$ the origin of $\M_{0}(\bar{W})$.  

\subsection{Graded quiver varieties}
\label{Ssec:GQV}

Next we define the graded quiver varieties. 
Recall the infinite set
$
\Delta_{0} = \{(i,p) \in I \times \Z \mid p- \epsilon_{i} \in 2\Z \}
$
in Definition~\ref{Def:Delta}.
We fix a $\Delta_{0}$-graded finite-dimensional complex vector space
$W = \bigoplus_{(i,p) \in \Delta_{0}} W_{(i,p)}$.
Let $\bar{W} = \bigoplus_{i \in I} \bar{W}_{i}$ 
be the underlying $I$-graded vector space of $W$, 
i.e.~$\bar{W}_{i} \seq \bigoplus_{p} W_{(i,p)}$ for each $i \in I$. 
We define a $1$-dimensional algebraic subtorus $T_{W} \subset \G_{\bar{W}}$
by
\begin{equation}
\label{Eq:defT}
T_{W} \seq \left\{ \left(\mathop{\oplus}_{(i, p) \in \Delta_{0}} t^{p} \mathrm{id}_{W_{(i,p)}}, t \right) \in \G_{\bar{W}}
\; \middle| \;
t \in \C^{\times} \right\}.
\end{equation}
Note that the centralizer of $T_{W}$ inside $\G_{\bar{W}}$
is $\G_{W} \seq G_{W} \times \C^{\times}$, where 
$G_{W} \seq \prod_{(i,p) \in \Delta_{0}} GL(W_{(i,p)}) \subset G_{\bar{W}}$.
We consider the $T_{W}$-fixed loci:
$$
\pi^{\bullet} \seq \pi^{T_{W}} \colon \Mg(W) \seq \M(\bar{W})^{T_{W}} 
\to \Mg_{0}(W) \seq \M_{0}(\bar{W})^{T_{W}},
$$
and refer to these varieties 
$\Mg(W), \Mg_{0}(W)$ as the {\em graded quiver varieties}.
The centralizer $\G_{W}$
 naturally acts 
on the varieties $\Mg(W)$, $\Mg_{0}(W)$
and the proper morphism $\pi^{\bullet}$ is $\G_{W}$-equivariant. 
\subsection{Nakajima's homomorphism}
\label{Ssec:Nakajimahom}

Take a finite-dimensional $I$-graded $\C$-vector space $\bar{W}$
and consider the quiver varieties $\pi \colon \M(\bar{W}) \to \M_{0}(\bar{W})$.
We define
$$Z(\bar{W}) \seq \M(\bar{W}) \times_{\M_{0}(\bar{W})} \M(\bar{W}), 
\quad \mathfrak{L}(\bar{W}) \seq \pi^{-1}(0) = \M(\bar{W}) \times_{\M_{0}(\bar{W})} \{ 0 \}.$$
Applying the convolution construction in Section~\ref{Ssec:convolution},
we obtain an $R(\G_{\bar{W}})$-algebra 
$K^{\G_{\bar{W}}}(
Z(\bar{W}))$ and
a left module $K^{\G_{\bar{W}}}(\mathfrak{L}(\bar{W}))$ over it.

We set $A \seq R(\C^{\times})$ and 
regard $K^{\G_{\bar{W}}}\!(-)$ as an $A$-module
via the inclusion $A=R(\C^{\times}) \hookrightarrow R(\G_{\bar{W}})$
arising from the second projection 
$\G_{\bar{W}}=G_{\bar{W}} \times \C^{\times} \to \C^{\times}$.
Also, we  
regard the field $\kk=\overline{\Q(q)}$ as an $A$-algebra
via the homomorphism $A=R(\C^{\times}) \to \kk$ sending 
the class of the $1$-dimensional natural $\C^{\times}$-module to the parameter $q$.
After the base change, 
we obtain the
$\kk$-algebra
$K^{\G_{\bar{W}}}(
Z(\bar{W})) \otimes_{A} \kk$
and the left module $K^{\G_{\bar{W}}}(\mathfrak{L}(\bar{W}))\otimes_{A}\kk$ over it.

\begin{Thm}[Nakajima~\cite{Nakajima01}]
\label{Thm:Nakajima}
There exists a $\kk$-algebra homomorphism
$$
\Psi_{\bar{W}} \colon U_{q}(L\g) \to K^{\G_{\bar{W}}}(Z(\bar{W})) \otimes_{A} \kk
$$
via which the $K^{\G_{\bar{W}}}(Z(\bar{W})) \otimes_{A}\kk$-module $K^{\G(\bar{W})}(\mathfrak{L}(\bar{W}))\otimes_{A}\kk$
is regarded as a $U_{q}(L\g)$-module 
and isomorphic to the level-zero extremal weight module of extremal weight $\sum_{i \in I} (\dim \bar{W}_{i}) \varpi_{i}$
in the sense of Kashiwara ~\cite{Kashiwara94}.
\end{Thm}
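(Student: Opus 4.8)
The plan is to construct the homomorphism $\Psi_{\bar{W}}$ by specifying explicit classes in the convolution algebra $K^{\G_{\bar{W}}}(Z(\bar{W})) \otimes_{A} \kk$ that will serve as the images of the Drinfeld generators in Definition~\ref{Def:qloop}, and then to verify that these classes satisfy all the defining relations. For the construction I would use the tautological objects on the quiver varieties: over each $\M(\bar{V}, \bar{W})$ one has the tautological $\G_{\bar{W}}$-equivariant bundles $\bar{V}_{i}$ (and the constant bundles $\bar{W}_{i}$), out of which one forms, for every $i \in I$, the three-term tautological complex $C_{i}$ with middle term $\bigoplus_{j \sim i} \bar{V}_{j} \oplus \bar{W}_{i}$ and outer terms $\bar{V}_{i}$. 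The generators $q^{y}$ act through the $\C^{\times}$-weight (sent to $q$) together with the $\bar{V}$-grading; the series $\phi_{i}^{\pm}(z)$ are defined from a $z$-twisted $K$-theoretic exterior-power class of $C_{i}$, expanded at $z^{\mp 1} = 0$ and pushed to the diagonal in $Z(\bar{W})$; and for the raising/lowering operators one introduces the Hecke correspondences. Concretely, for $\dim \bar{V}' = \dim \bar{V} + \mathbf{e}_{i}$ let $\mathfrak{P}_{i}$ be the locus in $\M(\bar{V}, \bar{W}) \times \M(\bar{V}', \bar{W})$ of pairs of stable data related by an injection $\bar{V} \hookrightarrow \bar{V}'$ intertwining all structure maps with one-dimensional cokernel supported at vertex $i$; this cokernel is a tautological line bundle $\mathcal{L}_{i}$ on $\mathfrak{P}_{i}$. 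One then sets $x_{i}^{\pm}(z) \seq \sum_{r} z^{\mp r}$ times convolution with $[\mathcal{O}_{\mathfrak{P}_{i}}] \otimes \mathcal{L}_{i}^{\otimes r}$, suitably twisted by determinants of tautological bundles so that the relations hold exactly and not merely up to scalars; fixing these normalizing twists is already a nontrivial bookkeeping matter.

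The technical heart is then to check the relations of Definition~\ref{Def:qloop} as identities in $K^{\G_{\bar{W}}}(Z(\bar{W})) \otimes_{A} \kk$. The $q^{y}$-relations and the bilinear relations of the form $(z - q^{\pm c_{ij}} w)(\cdots) = (q^{\pm c_{ij}} z - w)(\cdots)$ reduce to manipulations of tautological line bundles on a single $\M(\bar{V}, \bar{W})$ and its Hecke correspondences and are essentially formal. The genuinely hard ones are the commutation relation for $[x_{i}^{+}(z), x_{j}^{-}(w)]$ — which must produce exactly the $\delta$-supported $\phi_{i}^{\pm}$ terms, requiring one to compute the two compositions of Hecke correspondences $\mathfrak{P}_{i}^{+} * \mathfrak{P}_{i}^{-}$ and $\mathfrak{P}_{i}^{-} * \mathfrak{P}_{i}^{+}$ and to show that their difference is supported on the diagonal and equals the Cartan class — and the Serre relations. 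Following Nakajima (and the parallel arguments of Varagnolo-Vasserot), I would prove these by reduction to low rank: by restricting, along transversal intersections inside $Z(\bar{W})$ and the factorization of quiver varieties, to sub-quiver-varieties of type $\mathsf{A}_{1}$ and of type $\mathsf{A}_{1} \times \mathsf{A}_{1}$ or $\mathsf{A}_{2}$, where $\M(\bar{V}, \bar{W})$ is (a product built from) a cotangent bundle of a Grassmannian and the convolution $K$-theory is classically understood, then transporting the identities back to general $\bar{W}$. Establishing the transversality and self-intersection statements that power this reduction, and doing so compatibly with the determinant normalizations across all $\bar{V}$, is the step I expect to be the main obstacle — this is precisely where Nakajima's original argument is most intricate.

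It remains to identify the resulting $U_{q}(L\g)$-module $N \seq K^{\G_{\bar{W}}}(\mathfrak{L}(\bar{W})) \otimes_{A} \kk$ with the level-zero extremal weight module of extremal weight $\Lambda \seq \sum_{i \in I}(\dim \bar{W}_{i})\varpi_{i}$. The point $\hat{0} = \M(0, \bar{W})$ lies in $\mathfrak{L}(\bar{W})$, and after normalization the class $[\mathcal{O}_{\hat{0}}]$ satisfies $x_{i}^{+}(z) \cdot [\mathcal{O}_{\hat{0}}] = 0$ and is a common eigenvector for the $\phi_{i}^{\pm}(z)$, with eigenvalue read off from $C_{i}|_{\hat{0}}$; thus $[\mathcal{O}_{\hat{0}}]$ is a highest weight vector in the sense of the Chari-Pressley theorem, with Drinfeld data matching $\prod_{i}Y_{i,\ast}^{\dim \bar{W}_{i}}$, and one checks that it generates $N$ over $U_{q}(L\g)$. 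I would then match $N$ with the extremal weight module either by computing the $\G_{\bar{W}}$-equivariant (hence bigraded) character of $N$ via localization to the torus-fixed points of $\mathfrak{L}(\bar{W})$ and comparing with Kashiwara's character of $V(\Lambda)$, or, more robustly, by invoking the universal property characterizing extremal weight modules together with Nakajima's construction of a canonical basis on this $K$-group, which simultaneously shows that $N$ carries the expected crystal. For the present paper this last identification may simply be quoted from the cited references; the substantive work lies in the geometric construction and the relation check above.
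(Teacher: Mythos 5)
The paper does not prove this statement itself: its ``proof'' consists of the citations to \cite[Section 9]{Nakajima01} for the construction of $\Psi_{\bar{W}}$ and the relation check, and to \cite[Theorem 2]{Nakajima04} for the identification of $K^{\G_{\bar{W}}}(\mathfrak{L}(\bar{W}))\otimes_{A}\kk$ with the level-zero extremal weight module. Your outline is a faithful sketch of exactly those cited arguments (tautological complexes and Hecke correspondences for the Drinfeld generators, rank-reduction for the hard relations, and the extremal-weight identification, which you correctly note can simply be quoted), so it follows the same route the paper takes by reference.
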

\begin{proof}
See~\cite[Section 9]{Nakajima01} and \cite[Theorem 2]{Nakajima04}.
\end{proof}

Let us describe some more details in the special case when $\bar{W} = \bar{W}_{i}$
and $\dim \bar{W}_{i} = 1$ for some $i \in I$.
In this case, the extremal weight module of extremal weight $\varpi_{i}$
is isomorphic to the affinized fundamental module $V_{i}[z^{\pm1}]$.
By Theorem~\ref{Thm:Nakajima}, we have a $U_{q}(L\g)$-isomorphism
\begin{equation}
\label{Eq:greaVi}
K^{\G_{\bar{W}}}(\mathfrak{L}(\bar{W}))\otimes_{A}\kk
\cong V_{i}[z^{\pm 1}].
\end{equation}
Under this isomorphism, the vector $v_{i} \in V_{i}[z^{\pm 1}]$
corresponds to the class $[\Oo_{\{ \hat{0} \}}]$
of the structure sheaf of $\M(0, \bar{W}) = \{ \hat{0} \}$.  
The action of $R(\G_{\bar{W}})_{\kk}$ on the LHS is identified with
the action of $\kk[z^{\pm 1}]$ on the RHS 
via the isomorphism $\psi_{i} \colon R(\G_{\bar{W}})\otimes_{A}{\kk} \to \kk[z^{\pm 1}]$
which sends the class $[\bar{W}_{i}]$ of the natural representation 
given by the first projection $\G_{\bar{W}} = GL(\bar{W}_{i}) \times \C^{\times} \to GL(\bar{W}_{i})$
to the formal parameter $z$. 

\subsection{Completion}
\label{Ssec:completion}

As in Section~\ref{Ssec:GQV},
we fix
a finite-dimensional $\Delta_{0}$-graded vector space
$W = \bigoplus_{x \in\Delta_{0}} W_{x}$
and denote by
$\bar{W} = \bigoplus_{i \in I} \bar{W}_{i}$ its
underlying $I$-graded vector space.
Recall the $1$-dimensional subtorus $T_{W} \subset \G_{\bar{W}}$ 
and its centralizer $\G_{W} = G_{W} \times \C^{\times} \subset \G_{\bar{W}}$.
Note that the multiplication induces a group isomorphism
\begin{equation}
\label{Eq:GT}
G_{W} \times T_{W} \cong \G_{W}, \quad (g,t) \mapsto gt.
\end{equation}  
Let $\rr_{W}$ be 
the kernel of the restriction
$R(\G_{W})\otimes_{A} \kk
\to R(T_{W}) \otimes_{A}\kk = \kk$.
Note that the decomposition \eqref{Eq:GT}
yields an isomorphism 
$$
K^{\G_{W}}(X)\otimes_{A} \kk 
\cong K^{G_{W}}(X)_{\kk}
$$
for any $\G_{W}$-variety $X$ with
a trivial $T_W$-action.
In particular, we have an isomorphism 
$R(\G_{W})\otimes_{A} \kk
\cong R(G_{W})_{\kk}$ of $\kk$-algebras,
via which the maximal ideal $\rr_{W}
\subset R(\G_{W})\otimes_{A} \kk$
corresponds to the augmentation ideal $\mathfrak{a} \subset 
R(G_{W})_{\kk}$.  
Therefore we have an isomorphism 
\begin{equation}
\label{Eq:complG}
\left[
K^{\G_{W}}(X)\otimes_{A} \kk 
\right]_{\rr_{W}}^{\wedge}
\cong 
\hK^{G_{W}}(X)_{\kk},
\end{equation}
where $[ - ]_{\rr_{W}}^{\wedge}$
denotes the $\rr_{W}$-adic completion. 

We consider the fiber product
$$\Zg(W) \seq \Mg(W) \times_{\Mg_{0}(W)} \Mg(W) 
= Z(\bar{W})^{T_{W}}.$$
The completed equivariant $K$-group $\hK^{G_{W}}(\Zg(W))_{\kk}$ 
becomes a $\kk$-algebra via the convolution product.
We define the $\kk$-algebra homomorphism
$\widehat{\Psi}_{W} \colon U_{q}(L\g) \to \hK^{G_{W}}
(Z^{\bullet}(W))_{\kk}$ to be the following composition:
\begin{align*}
U_{q}(L\g) 
& \to
K^{\G_{\bar{W}}}(Z(\bar{W})) \otimes_{A} \kk
&& \text{(Nakajima's homomorphism $\Psi_{\bar{W}}$)}
\\
&\to
K^{\G_{W}}(Z(\bar{W})) \otimes_{A} \kk
&& \text{(restriction to $\G_{W} \subset \G_{\bar{W}}$)}
\\
& \to
\left[
K^{\G_{W}}(Z(\bar{W}))\otimes_{A} \kk 
\right]_{\rr_{W}}^{\wedge}
&&
\text{($\rr_{W}$-adic completion)}
\\
&\cong
\left[
K^{\G_{W}}(\Zg(W))\otimes_{A} \kk 
\right]_{\rr_{W}}^{\wedge}
&&
\text{(localization theorem)}
\\
&\cong 
\hK^{G_{W}}(\Zg(W))_{\kk}.
&& \text{(isomorphism \eqref{Eq:complG})}
\end{align*}
We refer to the homomorphism $\widehat{\Psi}_{W}$ 
as the {\em completed Nakajima homomorphism}.

Let us describe the special case when $W = W_{x}$ and $\dim W_{x} = 1$ for some $x = (i,p) \in \Delta_{0}$. 
In this case, we have $\Mg_{0}(W) = \{0\}$ and hence $\Mg(W)= \mathfrak{L}(\bar{W})^{T_W}$.
Note that the composition of
$R(G_{W})_{\kk} 
\cong R(\G_{W})\otimes_{A} \kk = R(\G_{\bar{W}})\otimes_{A} \kk$
arising from \eqref{Eq:GT}
and $\psi_{i}$
in the previous subsection 
yields an isomorphism 
$\psi_x \colon
R(G_{W})_{\kk} \cong \kk[z^{\pm 1}]. 
$
Since the group homomorphism $\G_W \cong G_W \times T_W \to G_W$ obtained by composing the inverse of \eqref{Eq:GT} and the natural projection is given by $(g,t) \mapsto gt^{-p}$ for $(g,t) \in \G_W = (\C^\times)^2$, the isomorphism $\psi_x$ sends the class $[W_{x}]$ of 
the natural representation of $G_{W}$
to the element $q^{-p}z$.
After the completion, we get an isomorphism
$\widehat{\psi}_x \colon \widehat{R}(G_{W})_{\kk} \cong \kk[\![ z-q^{p} ]\!]$.  
In the sequel, we identify them via $\widehat{\psi}_x$.
By completing the isomorphism \eqref{Eq:greaVi} $\rr_{W}$-adically,
we obtain the following.

\begin{Lem}
\label{Lem:greaVx}
We have an isomorphism of $U_{q}(L\g) \otimes_{\kk} \kk[\![ z-q^{p} ]\!]$-modules
$$
\hK^{G_{W}}(\Mg(W))_{\kk} \cong V_{i}[z^{\pm 1}] \otimes_{\kk[z^{\pm 1}]} \kk[\![ z-q^{p} ]\!],
$$
under which the vector $v_{i} \otimes 1$ in the RHS corresponds 
to the class $[\Oo_{\{ \hat{0} \}}]$ of the structure sheaf of $\{ \hat{0} \}$
in the LHS.  
\end{Lem}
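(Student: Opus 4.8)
The plan is to obtain the lemma by $\rr_W$-adically completing the isomorphism \eqref{Eq:greaVi} and tracking all the structures along the chain of maps defining $\widehat{\Psi}_W$ in Section~\ref{Ssec:completion}, now applied to $\mathfrak{L}(\bar{W})$ in place of $Z(\bar{W})$. First I would record the simplifications special to the case $W = W_x$, $x = (i,p)$, $\dim W_x = 1$: the underlying space $\bar{W}$ is concentrated in degree $i$ with $\dim \bar{W}_i = 1$, so $G_W = GL(W_x) = G_{\bar{W}}$ and, by \eqref{Eq:GT}, $\G_W = \G_{\bar{W}}$, whence the restriction step in the construction of $\widehat{\Psi}_W$ is the identity; and $\Mg_{0}(W) = \{0\}$ forces $\Mg(W) = \mathfrak{L}(\bar{W})^{T_W}$, on which $T_W$ acts trivially. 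Thus the module analogue of the chain in Section~\ref{Ssec:completion} reads
\[
K^{\G_{\bar{W}}}(\mathfrak{L}(\bar{W})) \otimes_A \kk
\;\to\; \bigl[ K^{\G_W}(\mathfrak{L}(\bar{W})) \otimes_A \kk \bigr]^{\wedge}_{\rr_W}
\;\cong\; \bigl[ K^{\G_W}(\Mg(W)) \otimes_A \kk \bigr]^{\wedge}_{\rr_W}
\;\cong\; \hK^{G_W}(\Mg(W))_{\kk},
\]
where the middle isomorphism is the equivariant localization (concentration) theorem for the $T_W$-fixed locus and the last is \eqref{Eq:complG}.

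Next I would argue that, by functoriality of the convolution construction, the entire chain is $R(\G_{\bar{W}})$-linear and intertwines the $U_q(L\g)$-module structures defined via $\Psi_{\bar{W}}$ and $\widehat{\Psi}_W$ respectively; hence it exhibits $\hK^{G_W}(\Mg(W))_{\kk}$ as the $\rr_W$-adic completion of the $U_q(L\g)\otimes_\kk R(\G_{\bar{W}})_\kk$-module $K^{\G_{\bar{W}}}(\mathfrak{L}(\bar{W})) \otimes_A \kk$. Substituting \eqref{Eq:greaVi}, this module is $V_i[z^{\pm 1}]$ with the $R(\G_{\bar{W}})_\kk$-action transported to the $\kk[z^{\pm1}]$-action via $\psi_i$. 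Since $T_W$ acts on $W_x$ through the character $t \mapsto t^p$, the class $[\bar{W}_i] \mapsto z$ restricts to $q^p$ on $T_W$, so $\psi_i$ carries $\rr_W$ to the maximal ideal $(z - q^p)$; as $V_i[z^{\pm1}] = L(Y_{i,1}) \otimes_\kk \kk[z^{\pm1}]$ is free over $\kk[z^{\pm1}]$ and the $U_q(L\g)$-action commutes with multiplication by $z$, its $\rr_W$-adic completion is $V_i[z^{\pm1}] \otimes_{\kk[z^{\pm1}]} \kk[\![z - q^p]\!]$ with the natural $U_q(L\g) \otimes_\kk \kk[\![z-q^p]\!]$-module structure. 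Under the identification $\widehat{\psi}_x \colon \widehat{R}(G_W)_{\kk} \cong \kk[\![z - q^p]\!]$ fixed in Section~\ref{Ssec:completion}, this is exactly the asserted isomorphism.

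For the distinguished vector, I would use that $v_i$ corresponds to $[\Oo_{\{\hat{0}\}}]$ under \eqref{Eq:greaVi} and that $\hat{0}$ is a $T_W$-fixed point of $\mathfrak{L}(\bar{W})$; since pushforward along a closed embedding sends a skyscraper sheaf to a skyscraper sheaf, pushforward along $\Mg(W) = \mathfrak{L}(\bar{W})^{T_W} \hookrightarrow \mathfrak{L}(\bar{W})$ carries the class of the structure sheaf of $\hat{0}$ on $\Mg(W)$ to $[\Oo_{\{\hat{0}\}}]$ on $\mathfrak{L}(\bar{W})$. Hence the (inverse of the) localization isomorphism and the subsequent completion send $[\Oo_{\{\hat{0}\}}]$ to $v_i \otimes 1$, as required.

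The one step that requires genuine care is the middle isomorphism of the displayed chain: it rests on the equivariant localization theorem for the (possibly singular) closed subvariety $\mathfrak{L}(\bar{W}) \subset \M(\bar{W})$ endowed with its $T_W$-action, in precisely the form established and used by Nakajima~\cite{Nakajima01}. Everything else is formal --- functoriality of equivariant $K$-theory under the relevant pullbacks and pushforwards, together with exactness of $\rr_W$-adic completion on finitely generated modules over the Noetherian ring $R(\G_W) \otimes_A \kk$.
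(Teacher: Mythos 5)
Your proposal is correct and follows essentially the same route as the paper: the paper's proof is literally the one-line instruction to complete the isomorphism \eqref{Eq:greaVi} $\rr_{W}$-adically, after identifying $\widehat{R}(G_{W})_{\kk}\cong\kk[\![z-q^{p}]\!]$ via $\widehat{\psi}_{x}$, with the concentration (localization) theorem and the compatibility of convolution structures left implicit. Your write-up simply makes those implicit steps explicit --- the passage from $\mathfrak{L}(\bar{W})$ to its $T_{W}$-fixed locus, the identification of $\rr_{W}$ with the ideal $(z-q^{p})$, and the tracking of $[\Oo_{\{\hat{0}\}}]$ --- all of which are accurate.
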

\subsection{Keller-Scherotzke's theorem}
\label{Ssec:KS}
In this subsection, 
we recall a description of the affine graded quiver variety $\Mg_{0}(W)$
due to Keller-Scherotzke~\cite{KS16},
which plays a crucial role in this paper.
Recall the notation on quivers 
in Section~\ref{Ssec:ConventionQ}.  

\begin{Def}
We define an infinite quiver $\tDel=(\tDel_{0}, \tDel_{1})$ whose
set of vertex is $\tDel_{0} \seq I \times \Z$. 
Let $\Delta_{0}^{+}$ denote the complement of 
the subset $\Delta_{0}$ of $\tDel_{0}$:
$$
\tDel_{0} = \Delta_{0} \sqcup \Delta_{0}\!\!{}^{+}, \quad
\Delta_{0}^{+} = \{ (i,p+1) \in I \times \Z \mid (i,p) \in \Delta_{0} \}.
$$
The arrow set $\tDel_{1}$ consists of the following three kinds of arrows:
\begin{itemize}
\item
${a}_{i}(p) \colon (i, p) \to (i,p-1)$ for each $(i,p) \in \Delta_{0}$;
\item 
${b}_{i}(p) \colon (i, p) \to (i, p-1)$ for each $(i,p) \in \Delta_{0}^{+}$;
\item   
${B}_{ji}(p) \colon (i, p) \to (j, p-1)$ for each $(i,p) \in \Delta_{0}^{+}$ and $j \in I$ with $j \sim i$. 
\end{itemize}
Let $\mathfrak{I}$ be a two sided ideal of the path algebra $\C\tDel$
generated by the elements
$$
{a}_{i}(p-1) {b}_{i}(p)
+ \sum_{j \sim i}
{B}_{ij}(p-1) {B}_{ji}(p)  
$$
for $(i,p) \in \Delta_{0}^{+}$.
Then, we define the (non-unital) $\C$-algebras $\tLam$ and $\Lambda$ by 
$$
\tLam \seq \C \tDel / \mathfrak{I},\quad
\Lambda \seq \bigoplus_{x, y \in \Delta_{0}} e_{x} \tLam e_{y}.
$$
\end{Def}
For a $\Delta_{0}$-graded finite-dimensional $\C$-vector space 
$W = \bigoplus_{x \in \Delta_{0}}W_{x}$, we can consider the variety 
$\rep_{W}(\Lambda)$
of representations of the algebra $\Lambda$ realized on $W$.
We have the natural conjugation action of the group $G_{W}$ on the variety $\rep_{W}(\Lambda)$. 

\begin{Prop}[Leclerc-Plamondon~\cite{LP13}]
\label{Prop:LP}
There is an isomorphism of $G_{W}$-varieties:
$$
\Mg_{0}(W)
\cong
\rep_{W}(\Lambda).
$$
\end{Prop}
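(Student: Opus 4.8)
The plan is to unwind the left-hand side $\Mg_{0}(W)=\M_{0}(\bar{W})^{T_{W}}=\bigl(\bigcup_{\bar{V}}\mu^{-1}(0)/\!/G_{\bar{V}}\bigr)^{T_{W}}$ as a space of torus-fixed points and to identify it directly with the representation variety of $\Lambda$; this is essentially the content of \cite{LP13}, building on Nakajima's analysis in \cite{Nakajima01}. First I would run the standard torus-fixed-point analysis of \cite[Section 4]{Nakajima01}. A point of $\Mg_{0}(W)$ is represented by a triple $(B,a,b)\in\mu^{-1}(0)\subset\mathbf{M}(\bar{V},\bar{W})$ that is $T_{W}$-fixed up to the $G_{\bar{V}}$-action; equivalently there is a cocharacter of $G_{\bar{V}}$ putting an $I\times\Z$-grading on $\bar{V}$ supported on the vertices $\Delta_{0}^{+}$ (while $\bar{W}$ is supported on $\Delta_{0}$), with respect to which $a$, $b$, $B$ become homogeneous maps lowering the $\Z$-coordinate by one, i.e.\ decompose exactly according to the arrows $a_{i}(p)$, $b_{i}(p)$, $B_{ji}(p)$ of $\tDel$. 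Under this bookkeeping the graded components of the moment map equations $\mu_{i}=0$ are precisely the generators of the ideal $\mathfrak{I}$. Hence the $T_{W}$-fixed part of $\mu^{-1}(0)$ is canonically the variety $\rep_{\bar{V}\oplus W}(\tLam)$ of $\tLam$-modules on $\bar{V}\oplus W$ of the prescribed graded dimension, with $G_{\bar{V}}$ acting by change of basis of $\bar{V}$, so that $\Mg_{0}(W)=\bigcup_{\bar{V}}\rep_{\bar{V}\oplus W}(\tLam)/\!/G_{\bar{V}}$.

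Next I would construct the comparison morphism using the full idempotent $e=\sum_{x\in\Delta_{0}}e_{x}$. The condensation functor $M\mapsto eM$ turns a $\tLam$-module on $\bar{V}\oplus W$ into a $\Lambda=e\tLam e$-module structure on $W$ whose structure maps are the composites along the paths of $\tDel$ that begin and end on $\Delta_{0}$ and otherwise run through $\Delta_{0}^{+}$; as functions of $(B,a,b)$ these composites are $G_{\bar{V}}$-invariant, since each such path traverses the $\bar{V}$-part along a loop on which a change of basis of $\bar{V}$ cancels. The resulting morphism $\mu^{-1}(0)^{T_{W}}\to\rep_{W}(\Lambda)$ therefore descends, after passing to the union over $\bar{V}$, to a $G_{W}$-equivariant morphism $\bar{q}\colon\Mg_{0}(W)\to\rep_{W}(\Lambda)$. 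To invert $\bar{q}$ I would note that any $\Lambda$-module $X$ on $W$ admits a functorial extension to a $\tLam$-module on $\widehat{X}\oplus W$ for a suitable $\Delta_{0}^{+}$-graded space $\widehat{X}$ (a minimal one, coming from a projective presentation of $X$ over $\Lambda$ induced up along $e$); any two choices differ by a $G_{\bar{V}}$-isomorphism and by adding direct summands supported on $\Delta_{0}^{+}$, and the latter are precisely what the stabilization embeddings $\M_{0}(\bar{V},\bar{W})\hookrightarrow\M_{0}(\bar{V}',\bar{W})$ identify, so the resulting point of $\Mg_{0}(W)$ is well defined. This yields a two-sided inverse of $\bar{q}$ on $\C$-points, so that $\bar{q}$ is a $G_{W}$-equivariant bijection.

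The hard part will be upgrading this bijection to an isomorphism of affine varieties, i.e.\ showing that $\bar{q}^{*}$ identifies $\C[\rep_{W}(\Lambda)]$ with the invariant ring $\C[\mu^{-1}(0)^{T_{W}}]^{G_{\bar{V}}}$ in the inductive limit over $\bar{V}$. One inclusion is clear; for the other I would use the description of the invariants of a framed quiver representation (following Le Bruyn--Procesi) --- generated by traces of oriented cycles supported on $\bar{V}$ and by matrix coefficients of oriented paths between framing vertices --- together with the preprojective-type relations packaged in $\mathfrak{I}$, which let one rewrite every $\bar{V}$-cycle in terms of oriented paths from $\bar{W}$ to $\bar{W}$, i.e.\ in terms of the coordinate functions pulled back from $\rep_{W}(\Lambda)$; passing to the limit over $\bar{V}$ makes the rank truncations vacuous. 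Alternatively one can bypass this computation by quoting Nakajima's explicit equations for $\C[\M_{0}(\bar{W})]$ from \cite{Nakajima01} and checking that on the $T_{W}$-fixed locus they cut out exactly $\rep_{W}(\Lambda)$ inside $\rep_{W}(\tDel)$. The infiniteness of $\Delta_{0}$ is harmless, since only finitely many vertices meet the supports of $W$ and of the relevant $\bar{V}$.
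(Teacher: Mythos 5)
The paper offers no argument for this proposition at all: its ``proof'' is the citation of \cite[Theorem 2.4]{LP13}, together with the remark (with a pointer to \cite[Section 4.1]{Nakajima01}) that the graded quiver variety used there agrees with the one defined here as the $T_{W}$-fixed locus of $\M_{0}(\bar{W})$. So what you have written is a reconstruction of the cited proof, and in outline it follows the same route as Leclerc--Plamondon: the fixed-point/grading analysis that places $\bar{V}$ in the degrees $\Delta_{0}^{+}$ and $W$ in $\Delta_{0}$, the observation that the graded components of the moment map equations are exactly the generators of $\mathfrak{I}$, and an invariant-theoretic identification of the coordinate ring with polynomials in the $W$-to-$W$ path coefficients, i.e.\ with $\C[\rep_{W}(\Lambda)]$. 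For your ``hard part'' the input actually used in \cite{LP13} is Lusztig's theorem that $\C[\M_{0}(\bar{V},\bar{W})]$ is generated by matrix coefficients of paths from the framing to the framing; in the graded situation, where $\tDel$ has no oriented cycles, this plays precisely the role of your Le Bruyn--Procesi step, so that part of your plan is sound (though not something you could wave through without either that theorem or Nakajima's explicit equations).

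One sub-step is wrong as stated: the well-definedness of your inverse. Two $\tLam$-modules $M_{1},M_{2}$ with $eM_{1}\cong eM_{2}\cong X$ need not ``differ by a $G_{\bar{V}}$-isomorphism and by adding direct summands supported on $\Delta_{0}^{+}$'': the induced module $\tLam e\otimes_{\Lambda}X$ and a quotient of it by a nonzero submodule killed by $e$ have the same $e$-part but are not related by adding summands unless the corresponding extension splits. What rescues the construction is that a point of $\M_{0}(\bar{W})$ only records the closed $G_{\bar{V}}$-orbit, i.e.\ the semisimplification of the framed representation; every simple $\tLam$-module killed by $e$ is a vertex simple at a $\Delta_{0}^{+}$-vertex, and those are exactly what the stabilization embeddings absorb, so any two extensions of $X$ do give the same point --- but the argument must pass through semisimplification (equivalently, through the fact that the $W$-to-$W$ invariants cannot separate such $M_{1},M_{2}$), not through a direct-sum decomposition. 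With that repair, and with the coordinate-ring identification supplied as above, your outline does amount to a proof of the proposition rather than the paper's appeal to \cite{LP13}.
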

\begin{proof}
This is~\cite[Theorem 2.4]{LP13}. Note that 
the graded quiver variety $\Mg_{0}(W)$ defined therein 
is naturally isomorphic to our graded quiver variety defined as the $T_{W}$-fixed locus of
$\M_{0}(\bar{W})$ (see~\cite[Section 4.1]{Nakajima01} for details).  
\end{proof}

Let $C$ be a subset of the vertex set $\Delta_{0}$. 
We denote by $\Lambda_{C}$
the quotient of the algebra $\Lambda$
by the ideal generated by all the idempotents $e_{x}$ with $x \not \in C$. 
We consider the following condition $\mathrm{(R)}$ on the subset $C$:
\begin{itemize}
\item[$\mathrm{(R)}$] For each vertex $x \in \Delta_{0}$, there is a vertex $c \in C$
such that the space $\Hom_{\C(\Delta)}(x, c)$ of morphisms in the category $\C(\Delta)$
does not vanish.
\end{itemize}

\begin{Thm}[{Keller-Scherotzke~\cite[Corollary 3.10]{KS16}}]
\label{Thm:KS}
Assume that our subset $C \subset \Delta_{0}$ satisfies the above condition $\mathrm{(R)}$.
Then we have a canonical isomorphism 
$$
\Ext_{\Lambda_{C}}^{k}\left(S_{x}, S_{y}\right) \cong \Ext_{\dD_{Q}}^{k}(\cH_{Q}(x), \cH_{Q}(y))
$$
for any Dynkin quiver $Q$ of type $\g$, vertices $x, y \in C$ and $k \in \Z_{\ge 1}$.
\end{Thm}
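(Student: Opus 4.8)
The plan is to deduce the statement directly from Keller--Scherotzke~\cite[Corollary~3.10]{KS16}; what has to be supplied is the dictionary between our combinatorial data and their categorical setup, and a verification that our hypothesis $\mathrm{(R)}$ is their genericity assumption. First I would identify the algebra: the repetitive algebra $\widehat{\C Q}$ is self-injective, by Happel's theorem~\cite{Happel88} its stable category $\ud{\mathsf{mod}}\,\widehat{\C Q}$ is triangle-equivalent to $\dD_{Q}$, and the Auslander--Reiten quiver of $\mathsf{mod}\,\widehat{\C Q}$ is a copy of $\tDel$ in which the vertices of $\Delta_{0}$ index the non-projective indecomposables (matching $\ind(\dD_{Q})$ through $\cH_{Q}$, cf.~Theorem~\ref{Thm:Happel}) and those of $\Delta_{0}^{+}$ index the projective-injective ones. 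One then checks that $\tLam = \C\tDel/\mathfrak{I}$ is the basic Auslander algebra of $\mathsf{mod}\,\widehat{\C Q}$ --- the relations generating $\mathfrak{I}$ being exactly its mesh relations --- and that $\Lambda$, $\Lambda_{C}$ are the truncations of $\tLam$ to $\Delta_{0}$ and to $C$; part of this is already available through Proposition~\ref{Prop:LP}, which identifies $\rep_{W}(\Lambda)$ with $\Mg_{0}(W)$, the variety studied in~\cite{KS16}, with $S_{x}$ corresponding to $\cH_{Q}(x)$.

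The isomorphism itself is then an instance of the classical link between Auslander algebras of self-injective algebras and their stable categories. A minimal projective resolution of the simple module $S_{x}$ over $\Lambda$ (or $\Lambda_{C}$) is controlled by the Auslander--Reiten theory of $\mathsf{mod}\,\widehat{\C Q}$: its terms are the indecomposable projectives $e_{y}\Lambda$ indexed by the supports of the almost split sequences emanating from $\cH_{Q}(x)$, and the differentials encode the mesh relations. Since $\Lambda$ remembers $\mathsf{mod}\,\widehat{\C Q}$ and not merely its stable part, this resolution necessarily ``winds around'' the projective-injective vertices, and applying $\Hom_{\Lambda}(-, S_{y})$ extracts, in each positive cohomological degree $k$, exactly $\Ext_{\dD_{Q}}^{k}(\cH_{Q}(x), \cH_{Q}(y))$ via Happel's equivalence and the Serre functor of $\dD_{Q}$. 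This also explains the restriction to $k \ge 1$: in degree $0$ one sees only $\Hom_{\Lambda_{C}}(S_{x}, S_{y}) = \delta_{xy}\,\C$, whereas $\Hom_{\dD_{Q}}(\cH_{Q}(x), \cH_{Q}(y))$ is generally larger, so the identification is genuinely a positive-degree phenomenon.

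The hard part, which I would leave to~\cite{KS16}, is that the truncation from the infinite algebra $\tLam$ to $\Lambda_{C}$ is harmless: for $x, y \in C$ the minimal projective resolution of $S_{x}$ over $\Lambda_{C}$ must agree with the one over $\Lambda$ throughout the range needed to compute $\Ext^{\bullet}(S_{x}, S_{y})$. This is precisely what condition $\mathrm{(R)}$ is designed to secure --- it guarantees that, for every vertex $x \in \Delta_{0}$, enough targets survive in $C$ that no term of the relevant resolutions is lost --- and it is the place where a usable choice of $C$ is distinguished from a naive one. The accompanying homological bookkeeping, in which the lengths and supports of the resolutions are bounded in terms of the Coxeter number $h$, is the technical core of~\cite{KS16}; granting it, the verifications above are routine.
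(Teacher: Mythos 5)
Your overall move --- state the result as an instance of Keller--Scherotzke~\cite[Corollary 3.10]{KS16} and only supply the translation into the paper's notation --- is exactly what the paper does: it gives no independent proof, and its definitions of $\tLam$, $\Lambda$, $\Lambda_C$ and condition $\mathrm{(R)}$ are set up so that the citation applies. The problem is that the dictionary you sketch is not the right one. In $\tDel$ each vertex of $\Delta_0$ has exactly one incoming arrow (a $b$) and one outgoing arrow (an $a$), the $B$-arrows join the vertices of $\Delta_0^{+}$ to each other in a copy of $\Z Q$, and the generators of $\mathfrak{I}$ are mesh-type sums of paths between $\Delta_0^{+}$-vertices; so in the repetitive-algebra picture it is $\Delta_0$ that plays the role of the frozen, projective-injective vertices and $\Delta_0^{+}$ that plays the role of the stable part --- the opposite of what you wrote. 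Consequently $\Lambda=\bigoplus_{x,y\in\Delta_0}e_x\tLam e_y$ is not a truncation to the non-projective part but is (Morita equivalent to) the repetitive algebra $\widehat{\C Q}$ itself; this is what underlies Proposition~\ref{Prop:LP} and \cite{LP13}, and it is what makes your own (correct) remark that ``$\Lambda$ remembers $\mathsf{mod}\,\widehat{\C Q}$ and not merely its stable part'' true. The bijection between $\Delta_0$ and $\ind(\dD_Q)$ is realized by $x\mapsto S_x\mapsto \Phi_Q(S_x)\cong\cH_Q(x)$ (cf.~Theorem~\ref{Thm:KS2}), not by $\Delta_0$ sitting inside the stable Auslander--Reiten quiver, and your description of the projective resolutions of the $S_x$ rests on the reversed identification.

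With the dictionary corrected, the case $C=\Delta_0$ requires no resolution bookkeeping at all: $\Lambda\cong\widehat{\C Q}$ is self-injective, so for $k\ge 1$ one has $\Ext^k_{\Lambda}(S_x,S_y)\cong \ud{\Hom}(\Omega^k S_x,S_y)$, and Happel's triangle equivalence (under which $\Omega$ becomes $[-1]$ and $S_x$ becomes $\cH_Q(x)$) turns this into $\Ext^k_{\dD_Q}(\cH_Q(x),\cH_Q(y))$. The genuinely nontrivial content, which both you and the paper delegate to~\cite{KS16}, is the passage from $\Lambda$ to $\Lambda_C$ for a proper subset $C$ satisfying $\mathrm{(R)}$; note here that $\Lambda_C$ is by definition the quotient of $\Lambda$ by the two-sided ideal generated by the idempotents $e_x$ with $x\notin C$, not the idempotent truncation $\bigoplus_{x,y\in C}e_x\tLam e_y$ that you describe, and these are different operations in general. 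Showing that this quotient does not change the positive-degree $\Ext$'s between the surviving simples is precisely where condition $\mathrm{(R)}$ is used in~\cite{KS16}. Since neither you nor the paper reproves that step, your write-up is acceptable as a citation, matching the paper's treatment, but the identifications above need to be fixed before the sketch can be called a verification that the cited corollary applies.
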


\begin{Rem}
The condition $\mathrm{(R)}$
 is obviously satisfied when $C = \Delta_{0}$. 
\end{Rem}

\begin{Def}
\label{Def:Gamma}
We define an infinite quiver $\Gamma$ with $\Gamma_{0}=\Delta_{0}$
whose arrow set $\Gamma_{1}$ is determined by the following condition:
$$
\# \{ a \in \Gamma_{1} \mid a^{\prime} = x, a^{\pprime} = y \} 
= \dim \Ext_{\dD_{Q}}^{1}(\cH_{Q}(x), \cH_{Q}(y)) \quad \text{for each $x, y \in \Delta_{0}$},
$$
where $Q$ is a Dynkin quiver of type $\g$.
\end{Def}
Note that,
by Proposition~\ref{Prop:Ext=tC}, 
there is no arrow from $(i,p)$ to $(j,r)$ in the quiver $\Gamma$
unless $p>r$. 
In particular,  the quiver $\Gamma$ has neither loops nor oriented cycles.

\begin{Cor}
\label{Cor:KS}
For any $\Delta_{0}$-graded finite-dimensional complex vector space $W$,
there is a $G_{W}$-equivariant closed embedding 
$$
\Mg_{0}(W) \hookrightarrow \rep_{W}(\Gamma).
$$
Moreover, if $W$ is supported on just two vertices $(i,p), (j,r) \in \Delta_{0}$, 
i.e.~$W = W_{(i,p)} \oplus W_{(j,r)}$ with $r \ge p$, 
the graded quiver variety $\Mg_{0}(W)$
is $G_{W}$-equivariantly isomorphic to the affine space
$$
\Hom_{\C}(W_{(j,r)}, W_{(i,p)}) \otimes \Ext_{\dD_{Q}}^{1}(\cH_{Q}(j,r), \cH_{Q}(i,p)),
$$
where $G_{W}$ acts trivially on the second tensor factor.
\end{Cor}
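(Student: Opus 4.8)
The plan is to deduce everything from Leclerc--Plamondon's identification $\Mg_{0}(W) \cong \rep_{W}(\Lambda)$ (Proposition~\ref{Prop:LP}) together with Keller--Scherotzke's Theorem~\ref{Thm:KS}, the latter being used to recognize the quiver $\Gamma$ of Definition~\ref{Def:Gamma} as the Gabriel quiver of the algebra $\Lambda$. First I would apply Theorem~\ref{Thm:KS} with $C = \Delta_{0}$: this subset trivially satisfies condition $\mathrm{(R)}$, and since $\Lambda$ carries idempotents $e_{x}$ only for $x \in \Delta_{0}$ we have $\Lambda_{\Delta_{0}} = \Lambda$, so the theorem gives $\dim \Ext_{\Lambda}^{1}(S_{x}, S_{y}) = \dim \Ext_{\dD_{Q}}^{1}(\cH_{Q}(x), \cH_{Q}(y))$ for all $x, y \in \Delta_{0}$. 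By Definition~\ref{Def:Gamma} this is precisely the number of arrows $x \to y$ in $\Gamma$, so $\Gamma$ is the Gabriel quiver of $\Lambda$ and hence $\Lambda \cong \C\Gamma / \mathcal{I}$ for some admissible ideal $\mathcal{I}$ (contained in the ideal generated by paths of length $\ge 2$). By the definition of $\rep_{W}(\Lambda)$ recalled in Section~\ref{Ssec:ConventionQ}, $\rep_{W}(\Lambda)$ is then the closed subvariety of $\rep_{W}(\Gamma)$ cut out by the relations in $\mathcal{I}$; composing with the $G_{W}$-equivariant isomorphism of Proposition~\ref{Prop:LP} produces the asserted $G_{W}$-equivariant closed embedding $\Mg_{0}(W) \hookrightarrow \rep_{W}(\Gamma)$.

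For the second assertion, put $S = \{(i,p), (j,r)\}$ with $r \ge p$ and $W = W_{(i,p)} \oplus W_{(j,r)}$. The key point is that every arrow of $\Gamma$ strictly decreases the second coordinate (the remark following Definition~\ref{Def:Gamma}, itself a consequence of Proposition~\ref{Prop:Ext=tC}); hence any path of length $\ge 2$ in $\Gamma$ visits at least three distinct values of the second coordinate, so it cannot have both endpoints in $S$ with all its intermediate vertices in $S$. Therefore every generator of $\mathcal{I}$, being a combination of paths of length $\ge 2$, induces the zero map on $\rep_{W}(\Gamma)$, and the closed embedding above becomes an equality $\Mg_{0}(W) \cong \rep_{W}(\Gamma)$. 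It then remains to identify $\rep_{W}(\Gamma)$: the only arrows of $\Gamma$ with both endpoints in $S$ go from $(j,r)$ to $(i,p)$ — there are none from $(i,p)$ to $(j,r)$ because $r \ge p$, and $\Gamma$ has no loops — and by Definition~\ref{Def:Gamma} these arrows form a basis of $\Ext_{\dD_{Q}}^{1}(\cH_{Q}(j,r), \cH_{Q}(i,p))$. Thus $\rep_{W}(\Gamma) = \Hom_{\C}(W_{(j,r)}, W_{(i,p)})^{\oplus N}$ with $N = \dim \Ext_{\dD_{Q}}^{1}(\cH_{Q}(j,r), \cH_{Q}(i,p))$, which is canonically $\Hom_{\C}(W_{(j,r)}, W_{(i,p)}) \otimes \Ext_{\dD_{Q}}^{1}(\cH_{Q}(j,r), \cH_{Q}(i,p))$, with $G_{W} = GL(W_{(i,p)}) \times GL(W_{(j,r)})$ acting by $(g,h) \cdot \phi = g \phi h^{-1}$ on the first tensor factor and trivially on the second. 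The boundary case $r = p$ (where necessarily $i \ne j$) is consistent: $S$ involves a single value of the second coordinate, so $\Gamma$ has no arrow within $S$ and both sides reduce to a point, matching $\Ext_{\dD_{Q}}^{1}(\cH_{Q}(j,r), \cH_{Q}(i,p)) = 0$.

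The only genuinely substantial input is Keller--Scherotzke's Theorem~\ref{Thm:KS}, which I would quote as a black box; granting it, the argument is essentially bookkeeping. The step I expect to require the most care is the passage from the closed embedding to an isomorphism in the two-vertex case: it rests crucially on the one-directionality of $\Gamma$ (no arrow $(i,p) \to (j,r)$ unless $p > r$), i.e. on Proposition~\ref{Prop:Ext=tC}, together with the observation that on a two-element vertex support the defining relations of $\Lambda$ become vacuous. One must also keep track throughout of the $G_{W}$-equivariance, which holds because every identification used — Proposition~\ref{Prop:LP}, the quiver presentation of $\Lambda$, and the decomposition of $\rep_{W}(\Gamma)$ — is manifestly compatible with change of basis on $W$.
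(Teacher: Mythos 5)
Your proposal is correct and follows essentially the same route as the paper: identify $\Mg_{0}(W)$ with $\rep_{W}(\Lambda)$ via Proposition~\ref{Prop:LP}, use Theorem~\ref{Thm:KS} with $C=\Delta_{0}$ to recognize $\Gamma$ as the Gabriel quiver of $\Lambda$ with an admissible ideal of relations in path-length $\ge 2$, and then observe that on a two-vertex support (with all arrows going from $(j,r)$ to $(i,p)$ by Proposition~\ref{Prop:Ext=tC}) these relations are vacuous, so the embedding is an isomorphism onto the stated affine space. Your spelled-out argument that length-$\ge 2$ paths cannot stay within the two-vertex support is just a more explicit version of the paper's one-line justification.
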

\begin{proof}
By a general theory (see~\cite[Theorem 3.7]{ASS06} for example),
the algebra $\Lambda$ can be written as a quotient of the path algebra 
of a quiver $Q_{\Lambda}$ by an admissible ideal $\mathfrak{J} \subset \C Q_{\Lambda}$. 
By Theorem~\ref{Thm:KS} (in the case $C=\Delta_{0}$), we have $Q_{\Lambda} = \Gamma$ and hence
the variety 
$\rep_{W}(\Lambda)$
is a closed subvariety of the affine space
\begin{align*}
\rep_{W}(\Gamma) & = \prod_{a \in \Gamma_{1}} \Hom_{\C}(W_{a^{\prime}}, W_{a^{\pprime}}) \\
&= \prod_{(i,p), (j,r) \in \Delta_{0}, r>p} \Hom_{\C}(W_{(j,r)}, W_{(i,p)}) \otimes_{\C} \Ext_{\dD_{Q}}^{1}(\cH_{Q}(j,r), \cH_{Q}(i,p))
\end{align*}
Combining with Proposition~\ref{Prop:LP},  
we obtain a $G_{W}$-equivariant closed embedding $\Mg_{0}(W) = \rep_{W}(\Lambda) \hookrightarrow \rep_{W}(\Gamma)$.

If $W = W_{(i,p)} \oplus W_{(j,r)}$ with $r \ge p$, we have
$$
\rep_{W}(\Gamma) = \Hom_{\C}(W_{(j,r)}, W_{(i,p)}) \otimes \Ext_{\dD_{Q}}^{1}(\cH_{Q}(j,r), \cH_{Q}(i,p)).
$$
In addition, 
all the polynomials corresponding to elements of $\mathfrak{J}$ vanish
because $\mathfrak{J} \subset \bigoplus_{d \ge 2} (\C \Gamma_{1})^{\otimes d} \subset \C \Gamma$.
Therefore the closed embedding is an isomorphism in this case.
\end{proof}

\begin{Rem}
\label{Rem:KS}
By the same argument, we can show the following more general assertion.  
If a $\Delta_{0}$-graded vector space $W$ is supported on a subset $C \subset \Delta_{0}$
satisfying the condition $\mathrm{(R)}$, the graded quiver variety $\Mg_{0}(W)$
is identical to the space of representations of the full subquiver $\Gamma|_{C} \subset \Gamma$
satisfying some relations corresponding to elements of an admissible ideal $\mathfrak{J}_{C}$ of the path algebra $\C \Gamma|_{C}$.  
\end{Rem}
\subsection{Stratification}
\label{Ssec:strat}

Let $W$ be a $\Delta_{0}$-graded vector space as above and 
$V$ be a $\Delta_{0}^{+}$-graded vector space. 
We set
$$Y^{W} \seq \prod_{x \in \Delta_{0}} Y_{x}^{\dim W_{x}},
\quad A^{-V} \seq \prod_{y \in \Delta_{0}^{+}} A_{y}^{-\dim V_{y}},
$$
where $Y_{(i, p)} \seq Y_{i, q^{p}}, A_{(i,p)} \seq A_{i, q^{p}} \in \Mm$ for $(i,p) \in I \times \Z$.

Let $\rep_{W \oplus V}(\tLam)$ be the space of representations of the 
algebra $\tLam$ on the $\tDel_{0}$-graded vector space $W \oplus V$. 
We have a natural $G_{W}$-equivariant forgetful morphism
$\rep_{W \oplus V}(\tLam) \to \rep_{W}(\Lambda) = \Mg_{0}(W)$. 
We consider the subvariety of $\rep_{W \oplus V}(\tLam)$
consisting of modules $M$
whose stabilizer in the group $G_{V}$
is trivial,
and denote its image 
under the forgetful morphism by 
$\Mreg(V, W) \subset \Mg_{0}(W)$. 
Note that $\Mreg(V, W)$ can be empty.

\begin{Thm}[Nakajima~\cite{Nakajima01}]
\label{Thm:stratification}
The collection $\{ \Mreg(V, W)\}_{V}$
of locally closed smooth $G_{W}$-subvarieties  
gives a stratification of the variety $\Mg_{0}(W)$  
with finitely many (non-empty) strata, satisfying the following properties:
\begin{enumerate}
\item \label{Thm:stratification:nonempty}
$\Mreg(V, W) \neq \varnothing$ if and only if 
$Y^{W}A^{-V} \in \Mm^{+}$ and $c(Y^{W}, Y^{W}A^{-V}) \neq 0$
(see Theorem~\ref{Thm:FR}~\eqref{Thm:FR:2} for the notation).
If this is the case, we have 
$$ c(Y^{W}, Y^{W}A^{-V}) = \dim \iota^{!} IC(\Mreg(V, W), \kk),$$
where $IC(\Mreg(V, W), \kk)$ denotes the intersection cohomology complex associated with
the constant $\kk$-sheaf on $\Mreg(V, W)$ and $\iota \colon \{ 0 \} \hookrightarrow \Mg_{0}(W)$
denotes the inclusion of the origin; 
\item \label{Thm:stratification:IC}
Let $\mathscr{L}_{W}$ be the (derived) push-forward
of the constant sheaf $\underline{\kk}_{\Mg(W)}$ 
along the proper morphism $\pi^{\bullet} \colon \Mg(W) \to \Mg_{0}(W)$. 
Then it has a decomposition:
$$
\mathscr{L}_{W} \cong 
\bigoplus_{V} IC(\Mreg(V, W), \kk) \otimes_{\kk} L^{\bullet}_{V}, 
$$ 
where 
$L^{\bullet}_{V} \in D^{b}(\kk \modcat)$ is a finite-dimensional $\Z$-graded $\kk$-vector space.
Moreover, $L^{\bullet}_{V} \neq 0$ if and only if $\Mreg(V, W) \neq \varnothing$;
\item \label{Thm:stratification:ordering}
The closure inclusion $\Mreg(V, W) \subset \overline{\Mreg(V^{\prime}, W)}$
between non-empty strata implies
$Y^{W}A^{-V} \ge Y^{W}A^{-V^{\prime}}$.
\end{enumerate}
\begin{proof}
See \cite[Section 14.3]{Nakajima01} for \eqref{Thm:stratification:nonempty}, \eqref{Thm:stratification:IC},
and \cite[Section 3.3]{Nakajima01} for \eqref{Thm:stratification:ordering}. 
\end{proof}
\end{Thm}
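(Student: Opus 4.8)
The plan is to obtain Theorem~\ref{Thm:stratification} by specializing Nakajima's general theory of quiver varieties~\cite{Nakajima01} to the $T_{W}$-fixed loci. First I would recall the ungraded stratification $\pi\colon\M(\bar W)\to\M_{0}(\bar W)=\bigsqcup_{\bar V}\M_{0}^{\mathrm{reg}}(\bar V,\bar W)$, where $\M_{0}^{\mathrm{reg}}(\bar V,\bar W)$ is the image under $\pi$ of the open locus in $\mu^{-1}(0)$ of points that are simultaneously stable and costable when realized on $\bar V$: each stratum is a smooth locally closed subvariety, the corresponding locus in $\mu^{-1}(0)$ carries a free $G_{\bar V}$-action, and in type $\mathsf{ADE}$ only finitely many $\bar V$ occur because $\M_{0}(\bar W)$ stabilizes. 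Passing to $T_{W}$-fixed points, a fixed subspace $\bar V$ acquires an $I\times\Z$-grading which, by compatibility with the arrows and the moment map, is supported on $\Delta_{0}^{+}$, and one gets the strata $\Mreg(V,W)$ indexed by $\Delta_{0}^{+}$-graded spaces $V$, each still smooth (a torus-fixed locus in a smooth variety), locally closed, and finite in number. So the existence of the stratification is essentially formal, and the work is concentrated in (1)--(3).

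For (2) I would apply the Beilinson--Bernstein--Deligne--Gabber decomposition theorem: $\Mg(W)=\M(\bar W)^{T_{W}}$ is smooth and $\pi^{\bullet}$ is proper, so $\mathscr{L}_{W}=R\pi^{\bullet}_{*}\,\underline{\kk}$ is a finite direct sum of shifts of simple perverse sheaves, each supported on the closure of a stratum. The decisive geometric input, which I would borrow from Nakajima, is the transversal-slice description of $\pi^{\bullet}$: \'{e}tale-locally along $\Mreg(V,W)$, the morphism $\pi^{\bullet}$ is isomorphic to the product of the identity on $\Mreg(V,W)$ with the analogous morphism $\pi^{\bullet}_{0}\colon\Mg(W_{0})\to\Mg_{0}(W_{0})$ attached to a smaller graded quiver variety. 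This local triviality forces every simple summand of $\mathscr{L}_{W}$ to be $IC(\overline{\Mreg(V,W)},\kk)$ for one of our strata, giving $\mathscr{L}_{W}\cong\bigoplus_{V}IC(\Mreg(V,W),\kk)\otimes_{\kk}L^{\bullet}_{V}$ with finite-dimensional graded multiplicity spaces $L^{\bullet}_{V}$. That $L^{\bullet}_{V}\neq 0$ exactly when $\Mreg(V,W)\neq\varnothing$ follows by working in a neighbourhood of $\Mg_{0}(W)$ in which $\Mreg(V,W)$ is the deepest stratum: there $\pi^{\bullet}$ surjects onto $\Mreg(V,W)$ with non-empty fibres (central fibres of smaller quiver varieties by the transversal-slice model), so the constant sheaf of $\Mreg(V,W)$, up to a shift, must appear in $\mathscr{L}_{W}$; the reverse implication is trivial since the $IC$ sheaf of an empty set vanishes.

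Part (1) is where I would bring in Nakajima's geometric realization of the standard and simple modules of $\Cc$ through the sheaves $\mathscr{L}_{W}$ and $IC(\overline{\Mreg(V,W)},\kk)$. By Theorem~\ref{Thm:FR}\,(\ref{Thm:FR:2}), the product $\prod_{x}[L(Y_{x})]^{\dim W_{x}}$ expands as $[L(Y^{W})]+\sum_{m'\lneq Y^{W}}c(Y^{W},m')[L(m')]$; Nakajima realizes this standard module through $\Mg(W)$ (cf.\ Theorem~\ref{Thm:Nakajima}), and under the decomposition of the previous paragraph the coefficient $c(Y^{W},Y^{W}A^{-V})$ of the simple class $[L(Y^{W}A^{-V})]$ equals $\dim\iota^{!}IC(\Mreg(V,W),\kk)$, the costalk at the origin $\iota\colon\{0\}\hookrightarrow\Mg_{0}(W)$ -- here one uses that $\{0\}$ is the unique closed stratum and that a point of $\Mreg(V,W)$ carries the $q$-character monomial $Y^{W}A^{-V}$, so that reading off the stalks of $\mathscr{L}_{W}$ along the stratification recovers the expansion into simples. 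Since $c\ge 0$, non-emptiness of $\Mreg(V,W)$ is then equivalent to $c(Y^{W},Y^{W}A^{-V})\neq 0$, i.e.\ to $Y^{W}A^{-V}$ being a dominant monomial occurring in the standard module, which is the stated criterion.

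For (3), since $\M_{0}(\bar V,\bar W)\hookrightarrow\M_{0}(\bar V',\bar W)$ is a closed embedding whenever $\dim\bar V_{i}\le\dim\bar V'_{i}$, the closure $\overline{\Mreg(V',W)}$ is contained in $\M_{0}(\bar V',\bar W)^{T_{W}}$, where $\bar V'$ is the underlying $I$-graded space of $V'$. A point $x$ of $\Mreg(V,W)$ lying in this closure is therefore realized on $\bar V'$; but $\bar V$ is the minimal dimension vector on which $x$ can be realized, because $x$ lies in the regular stratum, so $\dim V_{y}\le\dim V'_{y}$ for all $y\in\Delta_{0}^{+}$. Hence $(Y^{W}A^{-V})(Y^{W}A^{-V'})^{-1}=\prod_{y}A_{y}^{\dim V'_{y}-\dim V_{y}}$ lies in the submonoid generated by the $A_{y}$, which is precisely $Y^{W}A^{-V}\ge Y^{W}A^{-V'}$. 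The main obstacle throughout this program is the transversal-slice package that feeds (1) and (2): proving that, \'{e}tale-locally along each $\Mreg(V,W)$, both $\pi^{\bullet}$ and its intersection cohomology sheaf split off a product with a smaller quiver variety, and that the resulting $IC$-costalk at the origin computes the graded multiplicity space. This is the technical heart of Nakajima's construction, and it is what turns the clean criterion of (1) and the decomposition of (2) into theorems rather than bookkeeping.
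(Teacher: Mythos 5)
Your plan is the same as the paper's: the paper gives no argument of its own for Theorem~\ref{Thm:stratification}, it simply cites \cite[Sections 14.3 and 3.3]{Nakajima01}, and your sketch is an outline of precisely the arguments behind that citation (torus-fixed loci of the ungraded stratification, the decomposition theorem constrained by transversal slices, identification of the multiplicities with coefficients of simples in the standard module, and closure ordering via minimal realizations), with the technical heart explicitly deferred to Nakajima, exactly as the paper does.

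Two steps of your sketch, however, would not stand on their own as written. First, in part (2) the non-vanishing $L^{\bullet}_{V}\neq 0$ for a non-empty stratum does not follow from ``non-empty fibres over the deepest stratum of a neighbourhood'': for a general proper map, non-vanishing of the pushforward on a stratum can be accounted for entirely by $IC$'s of larger strata (a resolution of singularities has non-empty fibres over every point, yet no skyscraper summand need occur). The input one actually needs is that the component of $\Mg(W)$ with the matching graded dimension vector maps isomorphically onto $\Mreg(V,W)$ over that stratum (equivalently, in the slice model the point component $\M(0,\bar{W}_{0})=\{\hat{0}\}$ contributes a skyscraper direct summand at the origin), which forces $IC(\Mreg(V,W),\kk)$ to appear. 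Second, in part (3) your argument passes through the ungraded embeddings $\M_{0}(\bar{V},\bar{W})\hookrightarrow\M_{0}(\bar{V}',\bar{W})$ and minimality of $\bar{V}$, which only yields $\dim\bar{V}_{i}\le\dim\bar{V}'_{i}$; the conclusion $Y^{W}A^{-V}\ge Y^{W}A^{-V'}$ requires the componentwise inequality of the $\Delta_{0}^{+}$-graded spaces, so the degeneration and minimality argument has to be run $T_{W}$-equivariantly (or inside the model $\Mg_{0}(W)\cong\rep_{W}(\Lambda)$, where semicontinuity of the graded dimension vector of the minimal realization is immediate). Neither point changes the verdict that your route coincides with the paper's, namely to quote Nakajima, but these two steps should either be repaired as above or likewise attributed to \cite{Nakajima01}.
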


The next theorem describes the stratification $\{ \Mreg(V, W) \}_{V}$
in terms of the algebra $\Lambda$. 

\begin{Thm}[Keller-Scherotzke~\cite{KS16}]
\label{Thm:KS2}
Let $Q$ be a Dynkin quiver of type $\g$.
There exists a canonical $\delta$-functor $\Phi_{Q} \colon \Lambda \modcat \to \dD_{Q}$
such that $\Phi_{Q}(S_{x}) \cong \cH_{Q}(x) \in \dD_{Q}$ 
for each $x \in \Delta_{0}$
and satisfies the following properties:
\begin{enumerate}
\item \label{Thm:KS2:1}
Two representations $M_{1}, M_{2} \in \rep_{W}(\Lambda) = \Mg_{0}(W)$
belong to a common stratum $\Mreg(V, W)$ if and only if 
we have $\Phi_{Q}(M_{1}) \cong \Phi_{Q}(M_{2})$;
\item \label{Thm:KS2:2}
If we write $\Phi_{Q}(M) \cong \bigoplus_{x \in \Delta_{0}} \cH_{Q}(x)^{\oplus m_{x}}$
for $M \in \Mreg(V,W)$, we have
$Y^{W} A^{- V} = \prod_{x \in \Delta_{0}} Y_{x}^{m_x} \in \Mm^{+}$.
\end{enumerate}
\end{Thm}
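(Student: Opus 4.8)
The plan is to deduce the statement from Keller--Scherotzke's categorical analysis of the algebra $\tLam$ in~\cite{KS16}, combined with Nakajima's geometric description of the stratification (Theorem~\ref{Thm:stratification}), after carefully matching the two sets of conventions. First I would recall the construction of $\Phi_{Q}$. Keller--Scherotzke identify $\tLam$ with the graded Nakajima category attached to $\Delta$ and show that its idempotent truncation $\Lambda=\bigoplus_{x,y\in\Delta_{0}}e_{x}\tLam e_{y}$ is self-injective and that its stable module category $\Lambda\udmod$ is triangle-equivalent to $\dD_{Q}$, via an equivalence sending the simple module $S_{x}$ to Happel's indecomposable $\cH_{Q}(x)$ for each $x\in\Delta_{0}$ (Theorem~\ref{Thm:Happel}). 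One then sets $\Phi_{Q}$ to be the composite of the canonical functor $\Lambda\modcat\to\Lambda\udmod$ with this equivalence. Since the canonical functor to the stable module category of a self-injective algebra carries every short exact sequence to a distinguished triangle (the shift being the cosyzygy), $\Phi_{Q}$ is a $\delta$-functor; the normalization $\Phi_{Q}(S_{x})\cong\cH_{Q}(x)$ is built in, and it is consistent with Theorem~\ref{Thm:KS} (taken for $C=\Delta_{0}$, where $\Lambda_{C}=\Lambda$), which records that $\Phi_{Q}$ induces isomorphisms on positive Ext-groups between the simples.

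For part~\eqref{Thm:KS2:1} I would compare the two descriptions of a stratum. By Nakajima's definition, $M\in\rep_{W}(\Lambda)=\Mg_{0}(W)$ (Proposition~\ref{Prop:LP}) lies in $\Mreg(V,W)$ precisely when $M$ extends to a $\tLam$-module structure on $W\oplus V$ with trivial $G_{V}$-stabilizer. Keller--Scherotzke show that for each $M$ there is a canonical such extension (a minimal $\Lambda$-copresentation in their sense), that $\Phi_{Q}(M)$ is computed from it, and that $M\in\Mreg(V,W)$ exactly when the $\Delta_{0}^{+}$-part of this canonical extension is $V$. Hence the isomorphism class of $\Phi_{Q}(M)$ in $\dD_{Q}$ is an invariant of the stratum of $M$ and, conversely, determines $V$ — so $M_{1}$ and $M_{2}$ lie in a common stratum if and only if $\Phi_{Q}(M_{1})\cong\Phi_{Q}(M_{2})$. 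The delicate direction here is that $\Phi_{Q}$ is genuinely constant on each $\Mreg(V,W)$, i.e.\ that the isomorphism type of the canonical $\tLam$-extension cannot jump inside a stratum; this is exactly what Keller--Scherotzke's homological description of the strata provides.

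For part~\eqref{Thm:KS2:2} I would argue that both monomials equal the dominant monomial that Nakajima attaches to the stratum. Write $w_{x}=\dim W_{x}$, $v_{y}=\dim V_{y}$, and $\Phi_{Q}(M)\cong\bigoplus_{x\in\Delta_{0}}\cH_{Q}(x)^{\oplus m_{x}}$ (there are no shifted summands, since the canonical $\tLam$-extension meets $\Delta_{0}$ only in the module $M$ itself). The homological dictionary above yields $m_{(i,p)}=w_{(i,p)}-v_{(i,p-1)}-v_{(i,p+1)}+\sum_{j\sim i}v_{(j,p)}$, and unwinding $A_{(i,p)}=Y_{(i,p+1)}Y_{(i,p-1)}\prod_{j\sim i}Y_{(j,p)}^{-1}$ shows that this is precisely the exponent of $Y_{(i,p)}$ in $Y^{W}A^{-V}$; hence $Y^{W}A^{-V}=\prod_{x}Y_{x}^{m_{x}}$, which lies in $\Mm^{+}$ since all $m_{x}\ge0$, consistently with Theorem~\ref{Thm:stratification}\eqref{Thm:stratification:nonempty}. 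Alternatively one may test the identity against each $Y_{(j,r)}$-exponent and identify both sides with suitable $\Hom$ and $\Ext^{1}$ dimensions between $\Phi_{Q}(M)$ and the objects $\cH_{Q}(j,r)$ in the hereditary category $\dD_{Q}$. I expect the main obstacle to be the honest verification — which is really the substance of~\cite{KS16} — that Nakajima's defect space $V$, defined through stability of points of $\rep_{W\oplus V}(\tLam)$, coincides with the module-theoretic invariant (the minimal $\Lambda$-copresentation data) that computes $\Phi_{Q}$; granting this dictionary, parts~\eqref{Thm:KS2:1} and~\eqref{Thm:KS2:2} become bookkeeping.
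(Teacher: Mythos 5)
Your proposal ultimately rests on the same input as the paper, which does not reprove this statement but cites \cite[Theorem 2.7]{KS16} for part~\eqref{Thm:KS2:1}, \cite[Lemma 4.14]{KS16} for part~\eqref{Thm:KS2:2}, and \cite[Sections 4 and 5]{KS16} for the construction of $\Phi_{Q}$; your treatment of both parts likewise defers the substance (the dictionary between Nakajima's strata $\Mreg(V,W)$ and the module-theoretic data computing $\Phi_{Q}$) to Keller--Scherotzke. The bookkeeping you add is correct as far as it goes: the exponent of $Y_{(i,p)}$ in $Y^{W}A^{-V}$ is indeed $w_{(i,p)}-v_{(i,p-1)}-v_{(i,p+1)}+\sum_{j\sim i}v_{(j,p)}$, but the companion claim that this equals $m_{(i,p)}$ is exactly the assertion of part~\eqref{Thm:KS2:2}, so granting "the homological dictionary" you have not proved anything beyond what the citation already gives. (Also, your parenthetical "there are no shifted summands" is vacuous and suggests a misreading: $\cH_{Q}$ is a bijection from $\Delta_{0}$ onto \emph{all} indecomposables of $\dD_{Q}$, shifts included, so the decomposition $\Phi_{Q}(M)\cong\bigoplus_{x}\cH_{Q}(x)^{\oplus m_{x}}$ is automatic by Krull--Schmidt.)

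The genuine gap is in the one place where you commit to a concrete mechanism: the claim that $\Lambda$ is self-injective and that $\Phi_{Q}$ is the stable quotient $\Lambda\modcat\to\Lambda\udmod$ followed by a triangle equivalence $\Lambda\udmod\simeq\dD_{Q}$. This is false: $\Lambda$ is not even locally bounded, so there is no Frobenius structure to invoke. Already in type $\mathsf{A}_{2}$ one has $e_{(1,-2k)}\Lambda e_{(1,0)}\neq 0$ for every $k\ge 1$: the zigzag path $b_{1}\,B_{12}B_{21}\cdots B_{12}B_{21}\,a_{1}$ from $(1,0)$ down to $(1,-2k)$ acts nontrivially on the $\tLam$-module which places one-dimensional spaces along the zigzag of non-frozen vertices and at the intermediate frozen vertices, with the $a$- and $b$-maps through those frozen vertices chosen so that each relation $a_{i}(p-1)b_{i}(p)+\sum_{j\sim i}B_{ij}(p-1)B_{ji}(p)=0$ is satisfied while all the $B$-entries along the zigzag remain equal to $1$. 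Hence the indecomposable projective $\Lambda e_{(1,0)}$ is infinite-dimensional, projectives are not injective, and the composite "stable quotient $\circ$ Happel-type equivalence" that your construction of $\Phi_{Q}$ requires does not exist. (The $\mathsf{A}_{1}$ case, and the self-injective repetitive algebra $A^{\prime}$ appearing in Section~\ref{Ssec:gnilp} for a \emph{subset} of $\Delta_{0}$, are misleading special cases.) Keller--Scherotzke's actual construction of $\Phi_{Q}$ proceeds differently, through the passage between $\tLam$-modules and $\Lambda$-modules (the canonical extensions/Kan extensions you allude to when discussing part~\eqref{Thm:KS2:1}), and it is exactly that machinery -- not a self-injectivity shortcut -- that one must either cite, as the paper does, or reproduce to obtain both the stratification statement and the multiplicity formula.
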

\begin{proof}
See \cite[Theorem 2.7]{KS16} for \eqref{Thm:KS2:1}, and 
\cite[Lemma 4.14]{KS16} for \eqref{Thm:KS2:2}. 
\end{proof}
We refer to the above $\delta$-functor $\Phi_{Q} \colon \Lambda \modcat \to \dD_{Q}$ 
as the {\em stratification functor}.
For a concrete construction of $\Phi_{Q}$,
see~\cite[Sections 4 and 5]{KS16}.

\section{A geometric proof of the denominator formula}
\label{Sec:proof}

In this section, we give a proof of Theorem~\ref{Thm:main2},
which is equivalent to our main theorem (= Theorem~\ref{Thm:main}).
We also describe a structure of 
the tensor product module $V_{i}(a) \otimes V_{j}(b)$ when $R_{ij}$ has a simple pole
at $z_{2} / z_{1} = b/a$ in Section~\ref{Ssec:simplepole} .  

\subsection{Proof of Theorem~\ref{Thm:main2}}
\label{Ssec:pf}

For $x= (i,p), y=(j,r) \in \Delta_{0}$, we set
\begin{align*}
V(x,y) &\seq V_{i}(q^{p}) \otimes V_{j}(q^{r}) = L(Y_{x}) \otimes L(Y_{y}), \\
\hV(x,y) &\seq \OO \otimes_{\kk[z_{1}^{\pm 1}, z_{2}^{\pm1}]} \left(V_{i}[z_{1}^{\pm 1}] \otimes V_{j}[z_{2}^{\pm 1}] \right), 
\end{align*}
where $\OO \seq \kk[\![ z_{1}-q^{p}, z_{2}-q^{r} ]\!]$.
We have $V(x,y) \cong \hV(x,y)/\mm \hV(x,y)$ as $U_{q}(L\g)$-modules, 
where $\mm \subset \OO$ is the maximal ideal. 
Since the module $V_{i}[z_{1}^{\pm 1}] \otimes V_{j}[z_{2}^{\pm 1}]$ is free over $\kk[z_{1}^{\pm 1}, z_{2}^{\pm1}]$,
the module $\hV(x,y)$ is free over $\OO$.
We denote the image of the vector $v_{i} \otimes v_{j}$ under the natural homomorphism
$V_{i}[z_{1}^{\pm 1}]\otimes V_{j}[z_{2}^{\pm 1}] \to V(x,y)$ (resp.~$V_{i}[z_{1}^{\pm 1}]\otimes V_{j}[z_{2}^{\pm 1}] \to \hV(x,y)$)
by $v_{x,y}$ (resp.~$\hat{v}_{x,y}$).

Let $\KK$ be the fraction field of $\OO$.
We set 
$$\hV(x,y)_{\KK} \seq \KK \otimes_{\OO} \hV(x,y) = \KK \otimes_{\kk[z_{1}^{\pm 1}, z_{2}^{\pm 1}]} (V_{i}[z_{1}^{\pm 1}] \otimes V_{j}[z_{2}^{\pm 1}]).$$
We can naturally regard $\hV(x,y)$ as a submodule of $\hV(x,y)_{\KK}$.
The normalized $R$-matrix $R_{ij} \colon V_{i}[z_{1}^{\pm 1}] \otimes V_{j}[z_{2}^{\pm 1}] 
\to \kk(z_{2}/z_{1})\otimes_{\kk[(z_{2}/z_{1})^{\pm 1}]} \left(V_{j}[z_{2}^{\pm 1}] \otimes V_{i}[z_{1}^{\pm 1}]\right)$ induces 
a unique $U_{q}(L\g)\otimes \KK$-homomorphism
$$
\widehat{R}_{x,y} \colon \hV(x,y)_{\KK} \to \hV(y,x)_{\KK}
$$
characterized by the property $\widehat{R}_{x,y} (\hat{v}_{x,y}) = \hat{v}_{y,x}$. 
Since the $U_{q}(L\g)\otimes \KK$-modules $\hV(x,y)_{\KK}$
and $\hV(y,x)_{\KK}$ are irreducible (see~\cite[Lemma 8.1]{Kashiwara02} for example), the homomorphism $\widehat{R}_{x,y}$
is an isomorphism, and we have
\begin{equation}
\label{Eq:uniqueness}
\Hom_{U_{q}(L\g)\otimes \KK}\left(\hV(x,y)_{\KK}, \hV(y,x)_{\KK} \right)
= \KK \widehat{R}_{x,y}. 
\end{equation}

Let $d_{x,y} \seq \dim \Ext^{1}_{\dD_{Q}}(\cH_{Q}(y), \cH_{Q}(x))$, where $Q$ is a Dynkin quiver 
of type $\g$. 
If $r \le p$, we have $d_{x,y} = 0$ by Proposition~\ref{Prop:Ext=tC}. 
On the other hand, we know that $d_{ij}(q^{r}/q^{p}) \neq 0$ for $r \le p$ by Theorem~\ref{Thm:CK}.   
Therefore, to prove Theorem~\ref{Thm:main2}, we may assume that $r > p$.
Then, it suffices to verify the following two properties:
\begin{align}
(z_{2}/z_{1} -q^{r}/q^{p})^{d_{x,y}} \widehat{R}_{x,y}\left(\hV(x,y)\right) & \subset \hV(y,x), \label{Eq:Property1} \\
(z_{2}/z_{1} -q^{r}/q^{p})^{d_{x,y}} \widehat{R}_{x,y}\left(\hV(x,y)\right) & \not \subset \mm \hV(y,x). \label{Eq:Property2}
\end{align}
We prove these properties by using geometry of the graded quiver varieties. 

Let $W = W_{x} \oplus W_{y}$ be the $\Delta_{0}$-graded vector space 
supported on $\{ x, y \} \subset \Delta_{0}$
satisfying $\dim W_{x} = \dim W_{y} = 1$.
In this case, we have $G_{W} = GL(W_{x}) \times GL(W_{y}) = (\C^{\times})^{2}$.
In what follows, we identify the completed representation ring $\widehat{R}(G_{W})_{\kk}$
with the ring $\OO$ by the isomorphism
\begin{equation}
\label{Eq:RO}
\widehat{R}(G_{W})_{\kk} \cong \OO, \quad [W_{x}] \leftrightarrow q^{-p}z_{1}, \; [W_{y}] \leftrightarrow q^{-r}z_{2},
\end{equation}
where $[W_{x}]$ and $[W_{y}]$ denote the classes of the natural $1$-dimensional representations 
of $G_{W}=GL(W_{x}) \times GL(W_{y})$.
By Corollary~\ref{Cor:KS}, the graded quiver variety  
$\Mg_{0}(W)$ is identified with the 
affine space $E = \C^{d_{x,y}}$ of dimension $d_{x,y}$ as a $G_{W}$-variety.
Here the action of the group $G_{W}=(\C^{\times})^{2}$ on $E$ is given by 
$(s_{1}, s_{2}) \cdot e = s_{1}s_{2}^{-1} e$, 
where $(s_{1}, s_{2}) \in (\C^{\times})^{2}$ and
$e \in E$. Let $\iota \colon \{ 0 \} \hookrightarrow E$ denote
the inclusion of the origin. 
From the morphisms $\pi^{\bullet} \colon \Mg(W) \to \Mg_{0}(W) = E$, 
$\mathrm{id} \colon E \to E$ and $\iota \colon \{ 0 \} \to E$, 
we make the fiber products $\Mg(W) \times_{E} E \subset \Mg(W) \times E$
and $\Mg(W) \times_{E} \{ 0 \} \subset \Mg(W) \times \{ 0 \}$.
The convolution product makes the completed equivariant $K$-groups
$\hK^{G_{W}}(\Mg(W) \times_{E} E)_{\kk}$ and 
$\hK^{G_{W}}(\Mg(W) \times_{E} \{ 0 \})_{\kk}$
into left $\hK^{G_{W}}(\Zg(W))_{\kk}$-modules.
Via the completed Nakajima homomorphism 
$\widehat{\Psi}_{W} \colon U_{q}(L\g) \to \hK^{G_{W}}(\Zg(W))_{\kk}$,
they are regarded as left $U_{q}(L\g)$-modules.

\begin{Lem}
\label{Lem:Grea}
There are isomorphisms of $U_{q}(L\g) \otimes \OO$-modules
\begin{align}
\hK^{G_{W}}(\Mg(W) \times_{E} E)_{\kk} &\cong \hV(x,y), \label{Eq:KV1}  \\
\hK^{G_{W}}(\Mg(W) \times_{E} \{ 0 \})_{\kk} &\cong \hV(y,x), \label{Eq:KV2}
\end{align}
under which the class $[\Oo_{\{\hat{0}\}}]$ of the structure sheaf of $\{ \hat{0} \} \subset \Mg(W)$ 
corresponds to 
the vectors $\hat{v}_{x,y}$ and $\hat{v}_{y,x}$ respectively.
\end{Lem}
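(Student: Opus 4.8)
The plan is to identify both completed $K$-groups with the corresponding completed tensor products by means of the geometric realization of (ordered) tensor products of fundamental modules on Nakajima's quiver varieties, worked out in the $\rr_{W}$-adically completed setting; the single-vertex case is Lemma~\ref{Lem:greaVx}. Throughout I identify $\widehat{R}(G_{W})_{\kk}$ with $\OO$ via \eqref{Eq:RO}, and write $\bar W = \bar W^{x} \oplus \bar W^{y}$ for the underlying $I$-graded space, with $W_{x} \subset \bar W^{x}$ placed in degree $i$ and $W_{y} \subset \bar W^{y}$ in degree $j$.

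First I would unwind the two fiber products. Since $E = \Mg_{0}(W)$, the projection $\pi^{\bullet} \times \mathrm{id}$ identifies $\Mg(W) \times_{E} E$ with $\Mg(W)$, while $\Mg(W) \times_{E} \{0\}$ is $(\pi^{\bullet})^{-1}(0) = \LL(\bar W)^{T_{W}}$, the origin of $\Mg_{0}(W)$ being the image of the origin of $\M_{0}(\bar W)$. So the problem becomes: identify $\hK^{G_{W}}(\Mg(W))_{\kk}$ with $\hV(x,y)$ and $\hK^{G_{W}}(\LL(\bar W)^{T_{W}})_{\kk}$ with $\hV(y,x)$ as $U_{q}(L\g) \otimes \OO$-modules, each sending the class $[\Oo_{\{\hat 0\}}]$ of the structure sheaf of the point $\hat 0 = \M(0, \bar W)$ to $\hat v_{x,y}$ resp.\ $\hat v_{y,x}$.

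The key step is the factorization. Because $p < r$, the torus $T_{W}$ acts with distinct weights on $\bar W^{x}$ and $\bar W^{y}$, so it ``separates'' the decomposition $\bar W = \bar W^{x} \oplus \bar W^{y}$ in the sense of \cite[\S 9]{Nakajima01}; let $\rho$ be the associated splitting one-parameter subgroup, with sign fixed by $p<r$. Following Nakajima's description of the standard module $M(W)$ and the factorization theorem of Varagnolo--Vasserot \cite{VV02} (cf.~the $K$-theoretic/graded form in \cite{Fujita18}), one analyzes the attracting set of $\rho$ inside $\LL(\bar W)$: its $\rho$-fixed locus is, up to the appropriate grading shift, the product of the single-vertex graded quiver varieties $\Mg(W_{y})$ and $\Mg(W_{x})$; Thomason's localization theorem then identifies $\hK^{G_{W}}(\LL(\bar W)^{T_{W}})_{\kk}$ with the completed tensor product of the corresponding $K$-groups, and the convolution action of $U_{q}(L\g)$ (via the completed Nakajima homomorphism $\widehat{\Psi}_{W}$) goes over to the coproduct $\triangle$. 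By Lemma~\ref{Lem:greaVx} the two factors are $V_{j}[z_{2}^{\pm 1}] \otimes_{\kk[z_{2}^{\pm 1}]} \kk[\![z_{2}-q^{r}]\!]$ and $V_{i}[z_{1}^{\pm 1}] \otimes_{\kk[z_{1}^{\pm 1}]} \kk[\![z_{1}-q^{p}]\!]$ (with $z_{1} \leftrightarrow q^{p}[W_{x}]$, $z_{2} \leftrightarrow q^{r}[W_{y}]$), and $[\Oo_{\{\hat 0\}}]$ corresponds to $v_{j} \otimes v_{i} = \hat v_{y,x}$ because $\hat 0$ is the product of the two base points. This gives \eqref{Eq:KV2}, and \eqref{Eq:KV1} follows by the same argument run on the ambient space $\Mg(W) = \M(\bar W)^{T_{W}}$ in place of its central fiber: passing from $\LL(\bar W)$ to $\M(\bar W)$ interchanges the attracting and repelling sets of $\rho$ and so reverses the order of the tensor factors, producing $\hV(x,y)$ with distinguished vector $\hat v_{x,y}$.

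I expect the factorization step to be the main obstacle: carrying out the hyperbolic-localization analysis in the completed, $T_{W}$-fixed setting, and bookkeeping correctly which ordering of the two tensor factors and which spectral parameters $q^{p}, q^{r}$ occur --- in particular checking that it is $\Mg(W)$ that produces $\hV(x,y)$ and $\LL(\bar W)^{T_{W}}$ that produces $\hV(y,x)$, not the reverse. As a consistency check one can note that both sides are free $\OO$-modules of rank $\dim V_{i} \cdot \dim V_{j}$ --- for the geometric side by Nakajima's cellular description of the graded quiver varieties and the identity $[M(W)] = [L(Y_{x})][L(Y_{y})]$ in $K(\Cc)$ --- and that at the generic point $\KK = \mathrm{Frac}(\OO)$ the geometric module is the irreducible $U_{q}(L\g) \otimes \KK$-module with Drinfeld polynomial $Y_{i,z_{1}}Y_{j,z_{2}}$, which is precisely the irreducible module $\hV(x,y)_{\KK} \cong \hV(y,x)_{\KK}$ (their irreducibility being \cite[Lemma~8.1]{Kashiwara02}), so no genericity obstruction occurs.
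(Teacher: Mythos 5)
Your overall route is the paper's: both arguments reduce the lemma to a factorization of the completed equivariant $K$-group of a tensor-product-type variety into a completed tensor product of the two single-vertex $K$-groups, followed by Lemma~\ref{Lem:greaVx} for each factor. The paper does this by recognizing the two fiber products as the $T_{W}$-fixed loci $\tZ(W_{x};W_{y})^{T_{W}}$ and $\tZ(W_{y};W_{x})^{T_{W}}$ of Nakajima's tensor product varieties and re-running the proof of \cite[Theorem 6.12]{Nakajima01t} in the $\rr_{W}$-adically completed setting; your appeal to \cite{VV02} and \cite{Fujita18} is the same device, and you correctly flag the completed factorization as the main technical burden.

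However, the step by which you decide which space gives which ordering is flawed. Passing from $\LL(\bar{W})$ to $\M(\bar{W})$ does \emph{not} ``interchange the attracting and repelling sets of $\rho$'': for a fixed cocharacter these sets are determined once and for all, and restricting attention to the central fibre does not swap them, so your argument supplies no reason why $\Mg(W)\times_{E}E\cong\Mg(W)$ should yield the ordering $V_{i}\otimes V_{j}$ and $\Mg(W)\times_{E}\{0\}$ the ordering $V_{j}\otimes V_{i}$ rather than the reverse. The mechanism actually used in the paper is to take two \emph{different} cocharacters of $G_{W}=(\C^{\times})^{2}$, namely $\lambda_{x,y}(t)=(t,1)$ and $\lambda_{y,x}(t)=(1,t)$, which act on the base $E$ by $t$ and $t^{-1}$ respectively; hence
$\Mg(W)\times_{E}E=\{m\mid\lim_{t\to0}\lambda_{x,y}(t)\pi^{\bullet}(m)=0\}=\tZ(W_{x};W_{y})^{T_{W}}$ while
$\Mg(W)\times_{E}\{0\}=\{m\mid\lim_{t\to0}\lambda_{y,x}(t)\pi^{\bullet}(m)=0\}=\tZ(W_{y};W_{x})^{T_{W}}$,
and the two opposite tensor orderings come from applying the tensor-product-variety theorem to these two differently ordered decompositions (the order of factors of the fixed locus as a product of varieties carries no information; the ordering of the module factors is dictated by the attracting direction). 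Your closing consistency check cannot repair this: after base change to $\KK$ one has $\hV(x,y)_{\KK}\cong\hV(y,x)_{\KK}$, so rank and generic irreducibility are blind to precisely the point at issue, namely which $\OO$-lattice (which ordering) each completed $K$-group realizes. Finally, note that Thomason localization alone does not give the factorization; one needs the attracting-stratification argument of \cite[Theorem 6.12]{Nakajima01t}, including the compatibility of convolution with the coproduct, carried out after $\rr_{W}$-adic completion.
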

\begin{proof}
Define the $1$-parameter subgroups 
$\lambda_{x,y} \colon \C^{\times} \to G_{W}$
and $\lambda_{y,x} \colon \C^{\times} \to G_{W}$
by $\lambda_{x,y}(t) \seq (t,1)$ and $\lambda_{y,x}(t) \seq (1,t)$ respectively.
Since
$\lambda_{x,y}(t) \cdot e = t e$ and $\lambda_{y,x}(t) \cdot e = t^{-1} e$
for any point $e \in E=\Mg_{0}(W)$, we have
\begin{align*}
\Mg(W) \times_{E} E &= \left\{ m \in \Mg(W) \; \middle| \; \lim_{t \to 0} \lambda_{x,y} (t) \pi^{\bullet}(m) = 0 \right\}, \\
\Mg(W) \times_{E} \{ 0 \} &= \left\{ m \in \Mg(W) \; \middle| \; \lim_{t \to 0} \lambda_{y,x} (t) \pi^{\bullet}(m) = 0 \right\}.
\end{align*}
From these descriptions, we see that 
they coincide with the $T_{W}$-fixed loci of 
the tensor product varieties introduced by Nakajima~\cite{Nakajima01t}.
More precisely, they are $\tZ(W_{x};W_{y})^{T_{W}}$
and $\tZ(W_{y};W_{x})^{T_{W}}$ respectively in the notation loc.~cit. 
Therefore, we can proceed the similar argument as the proof of~\cite[Theorem 6.12]{Nakajima01t}
in the $\rr_{W}$-adically completed setting to obtain 
the following isomorphisms of $U_{q}(L\g) \otimes \OO$-modules:
\begin{align*}
\hK^{G_{W}}(\Mg(W) \times_{E} E)_{\kk} & 
\cong \hK^{G_{W_{x}}}(\Mg(W_{x}))_{\kk} \hat{\otimes} \hK^{G_{W_{y}}}(\Mg(W_{y}))_{\kk},  \\
\hK^{G_{W}}(\Mg(W) \times_{E} \{0\})_{\kk} & \cong
\hK^{G_{W_{y}}}(\Mg(W_{y}))_{\kk} \hat{\otimes} \hK^{G_{W_{x}}}(\Mg(W_{x}))_{\kk}, 
\end{align*}
where $K\hat{\otimes}K^{\prime}$ denotes the completion
of $K\otimes_{\kk}K^{\prime}$.  
On the other hand, there are isomorphisms
$\hK^{G_{W_{x}}}(\Mg(W_{x}))_{\kk} \cong V_{i}[z_{1}^{\pm 1}] \otimes_{\kk[z_{1}^{\pm 1}]}\kk[\![z_{1}-q^{p}]\!]$
and $\hK^{G_{W_{y}}}(\Mg(W_{y}))_{\kk} \cong V_{j}[z_{2}^{\pm 1}]\otimes_{\kk[z_{2}^{\pm 1}]}\kk[\![ z_{2} - q^{r}]\!]$
by Lemma~\ref{Lem:greaVx}.
Thus we obtain the desired isomorphisms \eqref{Eq:KV1} and \eqref{Eq:KV2}. 
\end{proof} 

Now we consider the completed equivariant $K$-group
$\hK^{G_{W}}(E \times_{E} \{ 0 \})_{\kk}$.
Since $E \times_{E} \{ 0 \} = \{ 0 \}$, 
this is a free $\OO$-module of rank $1$ 
generated by the class $[\Oo_{\{ 0 \}}]$ of the structure sheaf.  
The convolution product with the class $[\Oo_{\{ 0 \}}]$ from the right 
$$
(-) * [\Oo_{\{ 0 \}}] \colon \hK^{G_{W}}(\Mg(W) \times_{E} E)_{\kk} \to \hK^{G_{W}}(\Mg(W) \times_{E} \{ 0 \})_{\kk} 
$$
is identified via the isomorphisms \eqref{Eq:KV1} and \eqref{Eq:KV2} with a $U_{q}(L\g) \otimes \OO$-homomorphism
$$
\mathbf{r} \colon \hV(x,y) \to \hV(y,x). 
$$ 
By the base change $\OO \to \KK$ (resp.~$\OO \to \kk$), 
the homomorphism $\mathbf{r}$ gives rise to
$\mathbf{r}_{\KK} \in \Hom_{U_{q}(L\g) \otimes \KK}(\hV(x,y)_{\KK}, \hV(y,x)_{\KK})$
(resp.~$\bar{\mathbf{r}} \in \Hom_{U_{q}(L\g)}(V(x,y), V(y,x))$).

The following two lemmas prove the properties \eqref{Eq:Property1} and \eqref{Eq:Property2} respectively, 
and hence complete the proof of Theorem~\ref{Thm:main2}. 

\begin{Lem}
Up to $\kk^{\times}$-multiplication, the homomorphism $\mathbf{r}_{\KK}$ is equal to 
the homomorphism $(z_{2}/z_{1} -q^{r}/q^{p})^{d_{x,y}} \widehat{R}_{x,y}$.
In particular, the property \eqref{Eq:Property1} holds.
\end{Lem}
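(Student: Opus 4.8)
The plan is to pin down the factor relating $\mathbf{r}_\KK$ and $\widehat R_{x,y}$ by evaluating both homomorphisms on the cyclic highest weight vector $\hat v_{x,y}$. By the uniqueness statement \eqref{Eq:uniqueness}, once $\mathbf{r}_\KK \neq 0$ there is a unique $c \in \KK^\times$ with $\mathbf{r}_\KK = c\,\widehat R_{x,y}$; since $\widehat R_{x,y}(\hat v_{x,y}) = \hat v_{y,x} \neq 0$, it suffices to compute $\mathbf{r}(\hat v_{x,y}) \in \hV(y,x)$ and to check that it is a $\kk^\times$-multiple of $(z_2/z_1 - q^r/q^p)^{d_{x,y}}\,\hat v_{y,x}$.

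First I would translate this into $K$-theory. By Lemma~\ref{Lem:Grea}, under the identifications \eqref{Eq:KV1} and \eqref{Eq:KV2} both $\hat v_{x,y}$ and $\hat v_{y,x}$ correspond to the class $[\Oo_{\{\hat 0\}}]$, so $\mathbf{r}(\hat v_{x,y})$ is the convolution $[\Oo_{\{\hat 0\}}] * [\Oo_{\{0\}}]$, where on the first factor $\{\hat 0\}$ is viewed inside $\Mg(W) \times_E E$ and hence lies over the origin of $E = \Mg_0(W)$. Since $\{0\} \subset E$ is the origin of an affine space of dimension $d_{x,y}$, the class $[\Oo_{\{0\}}]$ is resolved by the $G_W$-equivariant Koszul complex $\bigwedge^\bullet E^\ast \otimes \Oo_E$; pulling it back along the relevant projection and taking the derived tensor product with $[\Oo_{\{\hat 0\}}]$, whose support lies over the origin, kills all Koszul differentials, and after the (trivial, point-to-point) pushforward one obtains
$$
\mathbf{r}(\hat v_{x,y}) = \Big(\sum_{k \ge 0}(-1)^k\,\big[{\textstyle\bigwedge^k E^\ast}\big]\Big)\cdot \hat v_{y,x}
$$
in $\hV(y,x)$; this is just the statement that convolution with the structure sheaf of the fibre over $0$ multiplies by the $K$-theoretic Euler class of the normal bundle $E$ of $\{0\}$ in $E$ (compare \cite[Chapter 5]{CG97}). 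Now $G_W = GL(W_x) \times GL(W_y) = (\C^\times)^2$ acts on $E$ through the character $(s_1, s_2) \mapsto s_1 s_2^{-1}$, so $E \cong ([W_x][W_y]^{-1})^{\oplus d_{x,y}}$ as a $G_W$-module and the Euler class equals $(1 - [W_x]^{-1}[W_y])^{d_{x,y}}$; under \eqref{Eq:RO} it becomes $(1 - q^{p-r}z_2/z_1)^{d_{x,y}} = (-q^{p-r})^{d_{x,y}}(z_2/z_1 - q^r/q^p)^{d_{x,y}}$ with $(-q^{p-r})^{d_{x,y}} \in \kk^\times$, which is exactly the desired description of $\mathbf{r}(\hat v_{x,y})$.

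Putting the pieces together: $\mathbf{r}_\KK \neq 0$, so $\mathbf{r}_\KK = c\,\widehat R_{x,y}$ with $c \in \KK^\times$, and comparing values at $\hat v_{x,y}$ forces $c$ to be the above $\kk^\times$-multiple of $(z_2/z_1 - q^r/q^p)^{d_{x,y}}$ — this proves the first assertion. Property \eqref{Eq:Property1} then follows at once, since up to $\kk^\times$ we have $(z_2/z_1 - q^r/q^p)^{d_{x,y}}\widehat R_{x,y}(\hV(x,y)) = \mathbf{r}_\KK(\hV(x,y)) = \mathbf{r}(\hV(x,y)) \subset \hV(y,x)$, because $\mathbf{r}$ is by construction an $\OO$-linear map $\hV(x,y) \to \hV(y,x)$ and $\mathbf{r}_\KK$ is its base change to $\KK$. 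I expect the only genuinely delicate step to be the bookkeeping in the Euler-class computation: verifying that $[\Oo_{\{\hat 0\}}] * [\Oo_{\{0\}}]$ really produces this Euler class, with $E^\ast$ carrying the correct $G_W$-weight and no spurious twist from the pushforward, and that the two occurrences of $[\Oo_{\{\hat 0\}}]$ match the cyclic vectors on the two sides as in Lemma~\ref{Lem:Grea}. Everything else is forced by \eqref{Eq:uniqueness} and the explicit torus weights \eqref{Eq:RO}.
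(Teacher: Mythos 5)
Your argument is correct and is essentially the paper's own proof: identify $\hV(x,y)$, $\hV(y,x)$ with the completed $K$-groups via Lemma~\ref{Lem:Grea}, compute $(-)*[\Oo_{\{0\}}]$ using the $G_W$-equivariant Koszul resolution so that the equivariant Euler class $(1-q^{p-r}z_2/z_1)^{d_{x,y}}$ of $E$ appears (the same factor, written in the paper as $(1-q^{-r}z_2/q^{-p}z_1)^{d_{x,y}}$, which differs from $(z_2/z_1-q^r/q^p)^{d_{x,y}}$ only by the unit $(-q^{p-r})^{d_{x,y}}$), evaluate on $\hat v_{x,y}=[\Oo_{\{\hat 0\}}]$, and pin down the scalar via \eqref{Eq:uniqueness}. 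The only difference is organizational (you restrict the Koszul complex to the point, while the paper first rewrites $[\Oo_{\{0\}}]$ as the Euler class times $[\Oo_E]$ and then pushes forward), which does not change the substance.
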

\begin{proof}
In this proof, we identify $\hV(x,y)$ and $\hV(y,x)$ with the completed equivariant $K$-groups via the isomorphisms \eqref{Eq:KV1} and \eqref{Eq:KV2} respectively. 
Let us compute the operator $\mathbf{r} = (-) * [\Oo_{\{ 0 \}}]$
following the definition of the convolution product \eqref{Eq:convolution}.
For any $\zeta \in \hK^{G_{W}}(\Mg(W) \times_{E} E)$, we have
\begin{align*}
\mathbf{r}(\zeta) &= p_{13*}(p_{12}^{*}\zeta \otimes^{\mathbb{L}}_{\Mg(W) \times E \times \{ 0 \}} p_{23}^{*} [\Oo_{\{0\}}]) \\
&= p^{\prime}_{1*}(\zeta \otimes^{\mathbb{L}}_{\Mg(W) \times E} p^{\prime*}_{2}[\Oo_{\{0\}}]),
\end{align*}
where $p^{\prime}_{1} \colon \Mg(W) \times E \to \Mg(W)$ and
$p^{\prime}_{2} \colon \Mg(W) \times E \to E$ are the natural projections.
By the Koszul resolution, we have 
$$
[\Oo_{\{ 0 \}}] = \left(\sum_{k=1}^{d_{x,y}} (-1)^{k}\left[\wedge^{k}T^{*}_{0}E\right]\right) [\Oo_{E}]
= \left(1-q^{-r}z_{2}/q^{-p}z_{1}\right)^{d_{x,y}} [\Oo_{E}],
$$
where we regard $[\wedge^{k}T^{*}_{0}E] = [\wedge^{k}E^*] \in R(G_{W})$ and used the identification \eqref{Eq:RO}. 
Thus we obtain  
$\mathbf{r}(\zeta) = \left(1-q^{-r}z_{2}/q^{-p}z_{1} \right)^{d_{x,y}} p^{\prime}_{1*}(\zeta).$
In the special case $\zeta = [\Oo_{\{\hat{0} \}}]= \hat{v}_{x,y}$, 
we have 
$p_{1*}^{\prime} [\Oo_{\{ \hat{0} \}}] = [\Oo_{\{ \hat{0} \}}]$
and hence 
$\mathbf{r}(\hat{v}_{x,y}) = \left(1-q^{-r}z_{2}/q^{-p}z_{1} \right)^{d_{x,y}} \hat{v}_{y,x}$. 
Thanks to \eqref{Eq:uniqueness},
we conclude that $\mathbf{r}_{\KK} = \left(1-q^{-r}z_{2}/q^{-p}z_{1} \right)^{d_{x,y}} \widehat{R}_{x,y}$. 
\end{proof}

\begin{Lem}
The homomorphism $\bar{\mathbf{r}}$ is non-zero. 
In particular, the property \eqref{Eq:Property2} holds.
\end{Lem}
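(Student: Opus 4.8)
The plan is to exhibit a single class in $\hK^{G_W}(\Mg(W))_\kk$ whose image under $\mathbf{r}$ escapes $\mm\hV(y,x)$, namely the class $[\Oo_{\Mg(W)}]$ of the structure sheaf of the whole graded quiver variety, viewed inside $\hK^{G_W}(\Mg(W)\times_E E)_\kk\cong\hV(x,y)$ via \eqref{Eq:KV1}. Write $\pi^{\bullet-1}(0)=\Mg(W)\times_E\{0\}$ for the central fibre; it is a non-empty projective variety (it contains $\hat 0$, and $\pi^\bullet$ is proper). First I would unwind the convolution \eqref{Eq:convolution} defining $\mathbf{r}=(-)*[\Oo_{\{0\}}]$ on this class while tracking supports — keeping the skyscraper genuinely supported at the origin, rather than replacing it by its Koszul class on all of $E$ as in the proof of the previous lemma, so that the output really lands in the $K$-theory of the central fibre — which gives
$$
\mathbf{r}\big([\Oo_{\Mg(W)}]\big)\;=\;\big[L(\pi^\bullet)^{*}\Oo_{\{0\}}\big]\;\in\;\hK^{G_W}\!\big(\pi^{\bullet-1}(0)\big)_\kk\cong\hV(y,x),
$$
the class of the derived pull-back of the skyscraper sheaf at the origin along $\pi^\bullet\colon\Mg(W)\to\Mg_0(W)=E$ (equivalently, the class of the Koszul complex attached to the $d_{x,y}$ coordinate functions of $E$ pulled back to $\Mg(W)$). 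It then remains to show that this class is non-zero modulo $\mm$, i.e.\ defines a non-zero element of $V(y,x)=\hV(y,x)/\mm\hV(y,x)$.

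To detect this I would push forward to a point. Since $\pi^{\bullet-1}(0)$ is proper there is an induced $\kk$-linear Euler-characteristic map $\chi\colon V(y,x)\to\kk$, and the projection formula gives
$$
\chi\Big(\,\overline{\mathbf{r}\big([\Oo_{\Mg(W)}]\big)}\,\Big)\;=\;\chi\Big((R\pi^\bullet_{*}\Oo_{\Mg(W)})\otimes^{\mathbb{L}}_{\Oo_E}\kappa(0)\Big),
$$
the Euler characteristic of the derived stalk of $R\pi^\bullet_{*}\Oo_{\Mg(W)}$ at $0\in E$. Decomposing $\Mg(W)=\bigsqcup_V\Mg(V,W)$: for a component whose image in $E$ has dimension strictly less than $\dim E=d_{x,y}$, Serre's vanishing theorem forces its contribution to the right-hand side to vanish; for a component surjecting onto $E$, the contribution is the Euler characteristic $\chi(\Oo_F)$ of a general fibre $F$ of $\pi^\bullet$ restricted to it (derived fibrewise Euler characteristics over a smooth base being locally constant), and this is a positive integer because the fibres of $\pi^\bullet$ admit affine pavings, so $H^{>0}(F,\Oo_F)=0$ and $\chi(\Oo_F)$ just counts the connected components of $F$. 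As $\pi^\bullet$ is surjective at least one such component occurs, so $\chi(\overline{\mathbf{r}([\Oo_{\Mg(W)}])})\ge 1\ne 0$; hence $\overline{\mathbf{r}([\Oo_{\Mg(W)}])}\ne 0$, and therefore $\bar{\mathbf{r}}\ne 0$, which is exactly property~\eqref{Eq:Property2}.

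The step I expect to be the main obstacle is the geometric input in the second paragraph: one must know that $\pi^\bullet\colon\Mg(W)\to\Mg_0(W)=E$ is proper and surjective — so that some component of $\Mg(W)$ dominates the smooth affine space $\C^{d_{x,y}}$ (which is $\Mg_0(W)$ here by Corollary~\ref{Cor:KS}) — and that its fibres are paved by affine spaces, guaranteeing $H^{>0}(F,\Oo_F)=0$ and $\chi(\Oo_F)\ge 1$. These are standard properties of graded quiver varieties of type $\mathsf{ADE}$ coming from Nakajima's work \cite{Nakajima01} (semismallness of $\pi^\bullet$ and the torus-fixed-point / attracting-cell decompositions of $\Mg(W)$ and of the fibres), but making them explicit in the present two-vertex situation is what needs care. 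A secondary, routine point is the identification — via the localization theorem and \eqref{Eq:complG} — of the reduction modulo $\mm$ of $\hK^{G_W}(\pi^{\bullet-1}(0))_\kk$ with the non-equivariant $K$-group of $\pi^{\bullet-1}(0)$, compatibly with push-forward to a point, which is used implicitly when passing from the equivariant to the numerical Euler characteristic.
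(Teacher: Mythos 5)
Your reduction of the problem to a coherent Euler-characteristic computation is mostly sound: the identification $\mathbf{r}([\Oo_{\Mg(W)}])=[L\pi^{\bullet*}\Oo_{\{0\}}]$, the projection-formula step, and the vanishing of the contribution of any component of $\Mg(W)$ whose image is a proper closed subset of $E$ (this is just the statement that the Euler characteristic of the derived fibres of a perfect complex on the connected space $E$ is locally constant, so you do not even need Serre's vanishing) are all fine, as is surjectivity of $\pi^{\bullet}$, which follows for instance from Theorem~\ref{Thm:stratification}~\eqref{Thm:stratification:IC}. The genuine gap is the positivity step for the dominating components: you claim $H^{>0}(F,\Oo_F)=0$ because the fibres $F$ of $\pi^{\bullet}$ admit affine pavings. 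That implication is only valid for \emph{smooth} projective varieties, where one argues through Hodge theory ($H^{0,i}\subset H^{i}(F,\C)$); for singular varieties an affine paving controls Betti cohomology but says nothing about coherent cohomology of $\Oo_F$. Here the generic fibres of the components other than the one mapping birationally onto $E$ are (by the transversal slice theorem) central fibres of smaller graded quiver varieties, which are typically singular Lagrangian unions of components, so neither semismallness nor the attracting-cell decompositions you invoke yield $\Oo$-acyclicity. Without it, those components contribute integers of uncontrolled sign to $\chi\bigl((R\pi^{\bullet}_{*}\Oo_{\Mg(W)})\otimes^{\mathbb{L}}_{\Oo_E}\kappa(0)\bigr)$, and a priori they could cancel the $+1$ coming from the birational component, so the non-vanishing of $\bar{\mathbf{r}}$ does not follow.

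It is worth noting that this is exactly the point where the paper's proof takes a different, and safer, route: instead of coherent sheaves it transports $\bar{\mathbf{r}}=(-)*[\Oo_{\{0\}}]$ through the (modified) Chern character into the constructible setting, where $[\Oo_{\{0\}}]$ becomes the adjunction map $\eta\colon\iota_{!}\iota^{!}\ud{\kk}_{E}\to\ud{\kk}_{E}$ and $\bar{\mathbf{r}}$ becomes Yoneda composition with $\eta$ on $\Ext^{*}(\ud{\kk}_{E},\mathscr{L}^{\bullet}_{W})$. There the decomposition theorem (Theorem~\ref{Thm:stratification}~\eqref{Thm:stratification:IC}), together with the fact that $E$ is an affine space so that the IC sheaf of the open stratum is a shifted constant sheaf, supplies precisely the ``no cancellation'' input your coherent argument is missing: $\mathscr{L}^{\bullet}_{W}$ contains a shift of $\ud{\kk}_{E}$ as a direct summand, and composing $\mathrm{id}_{\ud{\kk}_{E}}$ with $\eta$ is visibly non-zero. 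If you want to keep your approach, you would have to prove $H^{>0}(F,\Oo_F)=0$ (or at least $\chi(\Oo_F)>0$) for the relevant singular fibres, which is a nontrivial statement requiring a separate argument.
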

\begin{proof}
Applying the base change $\OO \to \kk$ to the isomorphisms 
 \eqref{Eq:KV1} and \eqref{Eq:KV2} in Lemma~\ref{Lem:Grea},
we obtain
$$
K(\Mg(W) \times_{E} E)_{\kk} \cong V(x,y), \quad
K(\Mg(W) \times_{E} \{ 0 \})_{\kk} \cong V(y, x).  
$$
Here we used the freeness of the equivariant $K$-groups of the quiver varieties (see~\cite[Theorem 7.3.5]{Nakajima01}). 
Under these isomorphisms, the homomorphism $\bar{\mathbf{r}} \colon V(x,y) \to V(y,x)$ is identified with 
the convolution operation
$$
(-)* [\Oo_{\{0\}}] \colon K(\Mg(W) \times_{E} E)_{\kk} \to K(\Mg(W) \times_{E} \{ 0 \})_{\kk}.
$$
Here the class $[\Oo_{\{ 0 \}}] \neq 0$ is regarded as an element of 
$K(E \times_{E} \{ 0 \})_{\kk} = K(\mathrm{pt})_{\kk}$.

Let $D^{b}_{c}(E)$ denote the bounded derived category of constructible complexes of $\kk$-vector spaces on $E$.
For $\mathscr{F}, \mathscr{G} \in D^{b}_{c}(E)$, we denote by
$\Ext^{*}(\mathscr{F}, \mathscr{G})$ the direct sum of 
the spaces $\Hom_{D^{b}_{c}(E)}(\mathscr{F}, \mathscr{G}[k])$ over $k \in \Z$.   
By the Chern character map (with a certain modification, see~\cite[Section 5.11]{CG97}) and 
a standard isomorphism explained in~\cite[Section 8.6]{CG97}, we obtain the following commutative diagram:
$$
\xy
\xymatrix{
K(\Mg(W) \times_{E} E)_{\kk} \otimes K(E \times_{E} \{0\})_{\kk}
\ar[r]^-{*}
\ar[d]^{\cong}
&
K(\Mg(W) \times_{E} \{ 0 \})_{\kk}
\ar[d]^{\cong}
\\
\Ext^{*}(\ud{\kk}_{E}, \mathscr{L}^{\bullet}_{W})
\otimes
\Ext^{*}(\ud{\kk}_{\{0\}}, \ud{\kk}_{E})
\ar[r]^-{\circ} 
&
\Ext^{*}(\ud{\kk}_{\{0\}}, \mathscr{L}^{\bullet}_{W}),
}
\endxy
$$ 
where $\mathscr{L}^{\bullet}_{W}$ denotes the (derived) proper push-forward along $\pi^{\bullet}$
of the constant sheaf on $\Mg(W)$.
The lower horizontal arrow denotes the Yoneda product 
$(\varphi_{1}, \varphi_{2}) \mapsto \varphi_{1} \circ \varphi_{2}$.
The vertical arrows are isomorphisms thanks to~\cite[Theorem 7.4.1]{Nakajima01}.
Note that the $\kk$-vector space 
$\Ext^{*}(\ud{\kk}_{\{0\}}, \ud{\kk}_{E}) = \Hom_{D^{b}_{c}(E)}\left(\iota_{!}\iota^{!}\ud{\kk}_{E}, \ud{\kk}_{E}\right)$
is $1$-dimensional and spanned by the adjoint morphism $\eta \colon \iota_{!}\iota^{!}\ud{\kk}_{E} \to \ud{\kk}_{E}$.
Under the vertical isomorphism 
$K(E \times_{E} \{0\})_{\kk} \cong \Ext^{*}(\ud{\kk}_{\{0\}}, \ud{\kk}_{E})$,
the class $[\Oo_{\{ 0 \}}]$ corresponds to the adjoint morphism $\eta$ up to $\kk^{\times}$. 
Since $E = \Mg_{0}(W)$ is an affine space, there exists a unique open dense stratum $\Mreg(V, W) \subset E$
and the corresponding intersection cohomology complex $IC(\Mreg(V, W), \kk)$ is just a shift of the constant sheaf $\ud{\kk}_{E}$. 
By Theorem~\ref{Thm:stratification}~\eqref{Thm:stratification:IC}, 
we see that $\mathscr{L}^{\bullet}_{W}$ contains 
a certain shift of the constant sheaf $\ud{\kk}_{E}$ as a direct summand. 
This implies that the Yoneda product
$$
(-) \circ \eta \colon \Ext^{*}\left(\ud{\kk}_{E}, \mathscr{L}^{\bullet}_{W}\right) 
\to \Ext^{*}(\ud{\kk}_{\{0\}}, \mathscr{L}^{\bullet}_{W}) 
$$  
is non-zero because $\mathrm{id}_{\ud{\kk}_{E}} \circ \eta = \eta \neq 0$. Thus the homomorphism $\bar{\mathbf{r}} = (-) * [\Oo_{\{ 0 \}}]$
is also non-zero.
\end{proof}

\subsection{A remark on the case of simple pole} 
\label{Ssec:simplepole}

Let $x = (i,p), y = (j, r) \in \Delta_{0}$ and
assume that the normalized $R$-matrix $R_{ij}$
has a simple pole at $z_{2}/z_{1} = q^{r}/q^{p}$.
By Theorem~\ref{Thm:main2}, this assumption is equivalent to the condition  
$\dim \Ext^{1}_{\dD_{Q}}(\cH_{Q}(y), \cH_{Q}(x)) = 1$
for a Dynkin quiver $Q$ of type $\g$.
Therefore there exists the following non-split exact triangle in $\dD_{Q}$:
$$
\cH_{Q}(x) \to \bigoplus_{w \in \Delta_{0}} \cH_{Q}(w)^{\oplus \mu_w} \to \cH_{Q}(y) \xrightarrow{+1},
$$
where the multiplicities $(\mu_{w})_{w \in \Delta_{0}}$ 
are uniquely determined by the pair $(x, y)$ and independent from the choice of $Q$.
Then we define a dominant monomial
$
m_{[y,x]} \in \Mm^{+} 
$
by
\begin{equation}
\label{Eq:m[y,x]}
m_{[y,x]} \seq \prod_{w \in \Delta_{0}} Y_{w}^{\mu_{w}}.
\end{equation}

\begin{Prop}
\label{Prop:Dorey}
Let $x = (i,p), y = (j, r) \in \Delta_{0}$ and
assume that the normalized $R$-matrix $R_{ij}$
has a simple pole at $z_{2}/z_{1} = q^{r}/q^{p}$.
With the above notation, we have the following non-split short exact sequences
in $\Cc$:
\begin{align*}
0 \to L(Y_{x}Y_{y}) \to L(Y_{x}) \otimes L(Y_{y}) \to L(m_{[y,x]}) \to 0, \\
0 \to L(m_{[y, x]}) \to L(Y_{y}) \otimes L(Y_{x}) \to L(Y_{x}Y_{y}) \to 0. 
\end{align*}
\end{Prop}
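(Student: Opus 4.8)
The plan is to use the geometry of the graded quiver variety attached to the two-dimensional graded space $W = W_x \oplus W_y$ with $\dim W_x = \dim W_y = 1$ to show that both $L(Y_x)\otimes L(Y_y)$ and $L(Y_y)\otimes L(Y_x)$ have length two, with composition factors $L(Y_xY_y)$ and $L(m_{[y,x]})$ each occurring once, and then to read off from Theorem~\ref{Thm:tensor} which of these is the submodule and which the quotient in each case. First I would unwind the hypothesis: since $R_{ij}$ has a simple pole at $u = q^r/q^p$, Theorem~\ref{Thm:main2} gives $\dim\Ext^1_{\dD_Q}(\cH_Q(y),\cH_Q(x)) = 1$, and Proposition~\ref{Prop:Ext=tC} then forces $r > p$; consequently $d_{ij}(q^r/q^p) = 0$, while $d_{ji}(q^p/q^r) \neq 0$ by Theorem~\ref{Thm:CK} (as $p - r < 0$), and $L(Y_x)\otimes L(Y_y)$ is reducible by Corollary~\ref{Cor:main2}.

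For the length-two claim I would argue as follows. Write $d_{x,y} \seq \dim\Ext^1_{\dD_Q}(\cH_Q(y),\cH_Q(x)) = 1$. By Corollary~\ref{Cor:KS}, $\Mg_{0}(W)$ is the affine line $E = \C$, on which $G_{W} = (\C^{\times})^{2}$ acts through $(s_1,s_2)\cdot e = s_1 s_2^{-1}e$, hence with exactly two orbits $\{0\}$ and $E\setminus\{0\}$. Since the strata $\Mreg(V,W)$ are locally closed $G_{W}$-stable subvarieties that partition $\Mg_{0}(W)$ (Theorem~\ref{Thm:stratification}), there are at most two of them, and a single stratum is impossible, as it would force $[L(Y_x)][L(Y_y)] = [L(Y_xY_y)]$ via Theorems~\ref{Thm:stratification} and~\ref{Thm:KS2}~\eqref{Thm:KS2:2}, contradicting the reducibility just established; so the strata are exactly $\{0\}$ and $E\setminus\{0\}$. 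The origin is the zero representation $S_x\oplus S_y$, so $\Phi_Q$ sends it to $\cH_Q(x)\oplus\cH_Q(y)$ and Theorem~\ref{Thm:KS2}~\eqref{Thm:KS2:2} attaches to $\{0\}$ the monomial $Y_xY_y$, with $c(Y_xY_y,Y_xY_y) = \dim\iota^! IC(\{0\},\kk) = 1$. A nonzero point of $E$ is, as a $\Lambda$-module, the non-split extension $0\to S_x\to M\to S_y\to 0$ corresponding to the unique arrow $y\to x$ of $\Gamma$; applying the $\delta$-functor $\Phi_Q$ gives a triangle $\cH_Q(x)\to\Phi_Q(M)\to\cH_Q(y)\xrightarrow{+1}$ that must be non-split, for otherwise $\Phi_Q(M)\cong\cH_Q(x)\oplus\cH_Q(y) = \Phi_Q(0)$ and $M$ would lie in the stratum of the origin by Theorem~\ref{Thm:KS2}~\eqref{Thm:KS2:1}. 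As $\Ext^1_{\dD_Q}(\cH_Q(y),\cH_Q(x))$ is one-dimensional, this triangle is, up to isomorphism, the one defining $m_{[y,x]}$, so $\Phi_Q(M)\cong\bigoplus_w\cH_Q(w)^{\oplus\mu_w}$ and Theorem~\ref{Thm:KS2}~\eqref{Thm:KS2:2} attaches to $E\setminus\{0\}$ the monomial $m_{[y,x]}$, with $c(Y_xY_y,m_{[y,x]}) = \dim\iota^! IC(E\setminus\{0\},\kk) = 1$ because $IC(E\setminus\{0\},\kk)$ is a cohomological shift of the constant sheaf on the smooth line $E$. By Theorem~\ref{Thm:FR}~\eqref{Thm:FR:2} this yields $[L(Y_x)\otimes L(Y_y)] = [L(Y_xY_y)] + [L(m_{[y,x]})]$, and hence the same identity for $[L(Y_y)\otimes L(Y_x)]$ by commutativity of $K(\Cc)$.

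It then remains to order the factors. For $L(Y_x)\otimes L(Y_y) = V_i(q^p)\otimes V_j(q^r)$: since $d_{ij}(q^r/q^p) = 0$, Theorem~\ref{Thm:tensor}~\eqref{Thm:tensor1} shows this module is not generated by $v_{x,y} \seq v_i(q^p)\otimes v_j(q^r)$, so $N \seq \langle v_{x,y}\rangle$ is a proper submodule; since $d_{ji}(q^p/q^r)\neq 0$, Theorem~\ref{Thm:tensor}~\eqref{Thm:tensor2} shows every nonzero submodule contains $v_{x,y}$, whence $N$ is the simple socle, and being a simple highest weight module of highest monomial $Y_xY_y$ it is $L(Y_xY_y)$. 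Because the module has length two, the quotient $(L(Y_x)\otimes L(Y_y))/N$ is $L(m_{[y,x]})$, which gives the first exact sequence; it does not split because the socle $N = L(Y_xY_y)$ is simple. For $L(Y_y)\otimes L(Y_x) = V_j(q^r)\otimes V_i(q^p)$, the same two parts of Theorem~\ref{Thm:tensor} now show respectively that this module is generated by $v_{y,x} \seq v_j(q^r)\otimes v_i(q^p)$ with simple head $L(Y_xY_y)$, and that it has a nonzero proper submodule; by the length-two property that submodule is $L(m_{[y,x]})$, which gives the second exact sequence, non-split because the head is simple.

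The main obstacle I anticipate is the middle step: establishing that $\Mg_{0}(W)$ has exactly the two strata $\{0\}$ and $E\setminus\{0\}$ and, in particular, that the open stratum carries precisely the monomial $m_{[y,x]}$. This rests on combining Keller--Scherotzke's stratification functor $\Phi_Q$ with Nakajima's intersection-cohomology description of the composition multiplicities, and on identifying a generic representation of $\Lambda$ on $W$ with the non-split extension behind $m_{[y,x]}$; once the Jordan--H\"older data is in hand, the socle/head bookkeeping is routine.
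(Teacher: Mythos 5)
Your proposal is correct and follows essentially the same route as the paper: identify $\Mg_0(W)$ for $W=W_x\oplus W_y$ with the affine line via Corollary~\ref{Cor:KS}, use the two $G_W$-orbits together with Theorem~\ref{Thm:stratification} and Theorem~\ref{Thm:KS2} (applied to the non-split extension $0\to S_x\to M\to S_y\to 0$ at a nonzero point) to get $[L(Y_x)\otimes L(Y_y)]=[L(Y_xY_y)]+[L(m_{[y,x]})]$, and then order socle and head by Theorem~\ref{Thm:tensor}. The only minor deviation is that you exclude the possibility of a single stratum by a reducibility argument in $K(\Cc)$, whereas the paper simply invokes the standard fact that the origin is the stratum $\Mreg(0,W)$; both are fine.
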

\begin{proof}
As in Section~\ref{Ssec:pf} above, we consider  
the graded quiver variety $E\seq\Mg_{0}(W)$ associated with 
a $\Delta_{0}$-graded vector space 
$W = W_{x} \oplus W_{y}$ such that $\dim W_{x} = \dim W_{y} = 1$.
By Corollary~\ref{Cor:KS}, our assumption implies 
that $E$ is just a $1$-dimensional affine space.  
Since the action of $(s_{1}, s_{2}) \in G_{W} = (\C^{\times})^{2}$ on $E$ 
is given by the multiplication of $s_{1}s_{2}^{-1} \in \C^{\times}$,
there are only two $G_{W}$-orbits $\{0\}$ and $E \setminus \{ 0 \}$. 
On the other hand, $E = \Mg_{0}(W)$ is stratified 
by the $G_{W}$-stable subvarieties 
$\Mreg(V, W)$ 
and we know that $\{ 0 \}$ coincides with the stratum $\Mreg(0, W)$. 
Therefore there exists a unique $V$ such that $E \setminus \{ 0 \} = \Mreg(V, W)$. 
Since $IC(\Mreg(V, W), \kk) = \ud{\kk}_{E}[1]$,
we have $c(Y_{x}Y_{y}, Y_{x}Y_{y}A^{-V}) = \dim \iota^{!} (\ud{\kk}_{E}[1]) = 1$
by Theorem~\ref{Thm:stratification}~\eqref{Thm:stratification:nonempty}
and hence
$$
[L(Y_{x}) \otimes L(Y_{y})] = [L(Y_{x}Y_{y})] + [L(Y_{x}Y_{y}A^{-V})]
$$
in the Grothendieck ring $K(\Cc)$. Then, in view of Theorem~\ref{Thm:tensor}, we have
\begin{align*}
0 \to L(Y_{x}Y_{y}) \to L(Y_{x}) \otimes L(Y_{y}) \to L(Y_{x}Y_{y}A^{-V}) \to 0, \\
0 \to L(Y_{x}Y_{y}A^{-V}) \to L(Y_{y}) \otimes L(Y_{x}) \to L(Y_{x}Y_{y}) \to 0,
\end{align*}
which are exact and non-split.
It remains to show $Y_{x}Y_{y}A^{-V} = m_{[y,x]}$.
Recall that $E = \rep_{W}(\Lambda)$ and pick 
a $\Lambda$-module $M$ corresponding to a point of $E \setminus \{0\}$.
Then there exists a non-split short exact sequence in $\Lambda \modcat$:
$$
0 \to S_{x} \to M \to S_{y} \to 0.
$$
By applying the stratifying functor $\Phi_{Q} \colon \Lambda \modcat \to \dD_{Q}$ 
in Theorem~\ref{Thm:KS2},
we obtain an exact triangle in $\dD_{Q}$:
\begin{equation}
\label{Eq:xMy}
\cH_{Q}(x) \to \Phi_{Q}(M) \to \cH_{Q}(y) \xrightarrow{+1}.
\end{equation}
By Theorem~\ref{Thm:KS2}~\eqref{Thm:KS2:1},  
we see that the isomorphism class of $\Phi_{Q}(M)$ 
does not depend on the choice of $M \in E \setminus \{ 0 \}$ and the exact triangle \eqref{Eq:xMy} 
does not split. 
Thus we get $Y_{x}Y_{y}A^{-V} = m_{[y,x]}$ by Theorem~\ref{Thm:KS2}~\eqref{Thm:KS2:2} and the definition \eqref{Eq:m[y,x]}.
\end{proof}
\section{Generalized quantum affine Schur-Weyl duality}
\label{Sec:SW}

In this section, as an application of our discussion so far,
we give a geometric interpretation
of the generalized quantum affine Schur-Weyl duality functor
when it arises from a family of fundamental modules.   

\subsection{KKK-functors}
\label{Ssec:KKK}
First, we shall outline the original construction by Kang-Kashiwara-Kim.
Let $\{ (\sV_{j}, \sa_{j}) \}_{j \in J}$ be a family indexed by an arbitrary set $J$,
consisting of pairs of a real simple module $\sV_{j} \in \Cc$
(i.e.~a simple object in $\Cc$ whose tensor square remains simple)
and a non-zero scalar $\sa_{j} \in \kk^{\times}$.  
Recall that we have the normalized $R$-matrix $R_{\sV_{i}, \sV_{j}}$
and its denominator $d_{\sV_{i}, \sV_{j}}(u) \in \kk[u]$
for each $(i,j) \in J^{2}$ (cf.~Remark~\ref{Rem:generalR}).  

\begin{Def}
\label{Def:GammaJ}
Given a family $\{ (\sV_{j}, \sa_{j}) \}_{j \in J}$ as above,
we define a quiver $\Gamma_{J}$ with
$(\Gamma_{J})_{0} \seq J$
whose arrow set $(\Gamma_{J})_{1}$ is determined by the following condition:
$$
\# \{ a \in (\Gamma_{J})_{1} \mid a^{\prime} = j, a^{\pprime} = i \}
= (\text{zero order of $d_{\sV_{i}, \sV_{j}}(u)$ at $u = \sa_{j}/\sa_{i}$}),
$$
for each $(i,j) \in J^{2}$.
\end{Def}

The quiver $\Gamma_{J}$ has no loops since each $\sV_j$ is real.
Let $\g_{J}$ be the Kac-Moody algebra associated with the underlying graph of $\Gamma_{J}$ 
and $\{ \alpha^{J}_{j} \}_{j \in J}$ its set of simple roots. 
We put $\cQ_{J}^{+} \seq \sum_{j \in J} \Z_{\ge 0} \alpha^{J}_{j}$. 
For each $\beta \in \cQ_{J}^{+}$,  
we denote by $H_{J}(\beta)$
the corresponding quiver Hecke algebra.
This is a $\Z$-graded $\kk$-algebra
defined by generators and relations
(see~\cite[Section 1.2]{KKK18} for instance).
Let $H_{J}(\beta) \gmod$ denote the category
of finite-dimensional graded $H_{J}(\beta)$-modules.
The direct sum 
$H_{J} \gmod \seq \bigoplus_{\beta \in \cQ_{J}^{+}} H_{J}(\beta) \gmod$
carries a structure of a $\kk$-linear monoidal category 
with respect to the so-called \textit{convolution product},
which is an analogue of the parabolic induction 
for the affine Hecke algebras.

In the above setting, 
Kang-Kashiwara-Kim~\cite{KKK18}
constructed a bimodule
\begin{equation}
\label{Eq:KKK_bimodule}
 U_{q}(L\g) \quad \curvearrowright 
\quad \widehat{\sV}^{\otimes \beta}
\quad \curvearrowleft 
\quad  \widehat{H}_{J}(\beta)
\end{equation}
with some good properties.
Here $\widehat{H}_{J}(\beta)$ denotes the completion of $H_{J}(\beta)$
along the $\Z$-grading.
As a left $U_{q}(L\g)$-module, $\widehat{\sV}^{\otimes \beta}$
is a direct sum of suitable tensor products of the completed modules
$\sV_{j}[z^{\pm 1}] \otimes_{\kk[z^{\pm 1}]} \kk[\![ z - \sa_{j} ]\!]$ 
for various $j \in J$.
The right action of $\widehat{H}_{J}(\beta)$ is given by an explicit formula 
involving the normalized $R$-matrices 
$R_{\sV_{i}, \sV_{j}}$.
See~\cite[Section 3]{KKK18} for details.
The assignment 
$M \mapsto \widehat{\sV}^{\otimes \beta} \otimes_{\widehat{H}_{J}(\beta)} M$
combined with the forgetful functor $H_{J}(\beta) \gmod \to \widehat{H}_{J}(\beta) \modcat$
yields a $\kk$-linear functor $H_{J}(\beta) \gmod \to \Cc$.
Summing up over $\beta \in \cQ_{J}^{+}$, we obtain a $\kk$-linear monoidal functor 
$$
\mathscr{F}_{J} \colon H_{J} \gmod \to \Cc,
$$
which we refer to as the \textit{generalized quantum affine Schur-Weyl duality functor},
or simply the \textit{KKK-functor} associated with the family $\{ (\sV_{j}, \sa_{j}) \}_{j \in J}$. 
 
\subsection{A geometric interpretation}
\label{Ssec:ginterpret}
In this subsection,
we give a geometric interpretation 
of the KKK-functor when it arises from a family of fundamental modules.
Namely, we restrict ourselves to the case when 
$\sV_{j} \in \{ V_{i}(1) \mid i \in I \}$ for every $j \in J$.
Furthermore, we focus on the case when
the associated quiver $\Gamma_{J}$ is connected.
Then, in view of our denominator formula \eqref{Eq:denominatorK},
we may assume that there exists
an injective map 
$
\sx \colon J \hookrightarrow \Delta_{0}
$
which determines the family $\{ (\sV_{j}, \sa_{j}) \}_{j \in J}$ by 
$$ (\sV_{j}, \sa_{j}) = (V_{\sx_{1}(j)}(1), q^{\sx_{2}(j)})$$
for every $j \in J$,
where we write $\sx(j) = (\sx_{1}(j), \sx_{2}(j)) \in I \times \Z$.

The next lemma is a key for our construction. 
Recall the quiver $\Gamma$ from Definition~\ref{Def:Gamma}.

\begin{Lem}
\label{Lem:key}
Under the above assumption, 
the quiver $\Gamma_{J}$ in Definition~\ref{Def:GammaJ}
is identical to the full subquiver $\Gamma |_{\sx(J)}$
of the quiver $\Gamma$ 
whose vertex set is the image $\sx(J)$ 
of the injective map $\sx \colon J \hookrightarrow \Delta_{0}= \Gamma_{0}$.
\end{Lem}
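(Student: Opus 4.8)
The plan is to unwind both of the purely combinatorial definitions involved and to observe that the arrow multiplicities on the two sides are computed by one and the same $\Ext^{1}$-dimension, the bridge between them being Theorem~\ref{Thm:main2}. First I would record that, under the running hypothesis of this subsection, $\sV_{j} = V_{\sx_{1}(j)}(1) = L(Y_{\sx_{1}(j),1})$ is literally the $\sx_{1}(j)$-th fundamental module in the sense of Section~\ref{Ssec:fundamental}; consequently, for $i, j \in J$ the normalized $R$-matrix $R_{\sV_{i}, \sV_{j}}$ coincides with $R_{\sx_{1}(i), \sx_{1}(j)}$ and its denominator with $d_{\sx_{1}(i), \sx_{1}(j)}(u)$. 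In particular no sign twist of the type discussed in Remark~\ref{Rem:convention2} intervenes here, since we work throughout with the paper's own normalization of the fundamental modules (spectral parameter $1$, then affinized).

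Next I would compute the arrow multiplicity from $j$ to $i$ in $\Gamma_{J}$ directly from Definition~\ref{Def:GammaJ}: it equals the zero order of $d_{\sx_{1}(i), \sx_{1}(j)}(u)$ at $u = \sa_{j}/\sa_{i} = q^{\sx_{2}(j)}/q^{\sx_{2}(i)}$. Since $q$ is transcendental, this evaluation point is unambiguously of the shape $q^{r}/q^{p}$ with $p = \sx_{2}(i)$ and $r = \sx_{2}(j)$, and both $(\sx_{1}(i), p)$ and $(\sx_{1}(j), r)$ lie in $\Delta_{0}$ (they are just $\sx(i)$ and $\sx(j)$). Applying Theorem~\ref{Thm:main2} with these data identifies this zero order with $\dim \Ext^{1}_{\dD_{Q}}(\cH_{Q}(\sx(j)), \cH_{Q}(\sx(i)))$ for any Dynkin quiver $Q$ of type $\g$. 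On the other hand, Definition~\ref{Def:Gamma} says that the same integer is exactly the number of arrows from $\sx(j)$ to $\sx(i)$ in $\Gamma$, hence (since we are passing to a full subquiver) in $\Gamma|_{\sx(J)}$. Because $\sx$ is injective, the induced bijection $J \xrightarrow{\sim} \sx(J)$ therefore carries the arrows $j \to i$ of $\Gamma_{J}$ bijectively onto the arrows $\sx(j) \to \sx(i)$ of $\Gamma|_{\sx(J)}$, for all $i, j \in J$, which is precisely the asserted identification of quivers.

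I do not expect a genuine obstacle here: the lemma is essentially a bookkeeping consequence of Theorem~\ref{Thm:main2}. The only point that calls for a little care is the matching of conventions, namely checking that the ``from $j$ to $i$'' direction in Definition~\ref{Def:GammaJ}, with the denominator indexed by $(\sV_{i}, \sV_{j})$ and evaluated at $\sa_{j}/\sa_{i}$, is correctly aligned with the asymmetric statement of Theorem~\ref{Thm:main2} (where $d_{ij}(u)$ is evaluated at $u = q^{r}/q^{p} = z_{2}/z_{1}$, with $q^{p}$ attached to the first factor $V_{i}$ and $q^{r}$ to the second factor $V_{j}$, and the resulting $\Ext^{1}$ runs from $\cH_{Q}(j, r)$ to $\cH_{Q}(i, p)$). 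Once this is pinned down, the arrow directions in $\Gamma_{J}$ and in $\Gamma$ genuinely correspond under $\sx$ rather than being reversed, and the proof is complete.
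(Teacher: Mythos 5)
Your proof is correct and follows the same route as the paper, whose entire proof of this lemma is the single observation that it is a consequence of Theorem~\ref{Thm:main2}; your verification that the direction conventions of Definition~\ref{Def:GammaJ} (arrows $j\to i$ counted by the zero order of $d_{\sV_i,\sV_j}$ at $\sa_j/\sa_i$) match those of Definition~\ref{Def:Gamma} via Theorem~\ref{Thm:main2} is exactly the bookkeeping the paper leaves implicit.
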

\begin{proof}
This is a consequence of Theorem~\ref{Thm:main2}.
\end{proof}

In what follows, we often identify
a $J$-graded vector space $D = \bigoplus_{j \in J} D_{j}$ with 
the $\Delta_{0}$-graded vector space $\bigoplus_{x \in \Delta_{0}}D_{x}$
defined by
$$
D_{x} \seq \begin{cases}
D_{j} & \text{if $x = \sx(j)$ with $j \in J$}; \\
0 & \text{if $x \not \in \sx(J)$}. 
\end{cases}
$$
Under this convention, we have
$
\rep_{D}(\Gamma) = \rep_{D}(\Gamma_{J})
$
by Lemma~\ref{Lem:key} above. 

Now we fix $\beta = \sum_{j \in J} d_{j} \alpha_{j}^{J} \in \cQ_{J}^{+}$ and
a $J$-graded complex vector space $D_{\beta} = \bigoplus_{j \in J} (D_{\beta})_{j}$ such that
$\dim (D_{\beta})_{j} = d_{j}$ for each $j \in J$.
To simplify the notation,
we set $G_{\beta} \seq G_{D_{\beta}}$
and $E_{\beta} \seq \rep_{D_{\beta}}(\Gamma_{J})$.   
Let us consider the following two non-singular $G_{\beta}$-varieties:
\begin{align*}
\B_{\beta} &= 
\{F^{\bullet} = (D_{\beta}=F^{0} \supsetneq F^{1} \supsetneq \cdots 
\supsetneq F^{d}=0) \mid \text{$F^{k}$ is a $J$-graded 
subspace of $D_{\beta}$} \}, \\
\F_{\beta} &=
\{(F^{\bullet}, X) \in \B_{\beta} \times E_{\beta}
\mid X(F^{k}) \subset F^{k} \; \text{for any $1\le k \le d$} \},
\end{align*}
where $d \seq \sum_{j \in J} d_{j} = \dim D_{\beta}$.
We denote by 
$\mu \colon \F_{\beta} \to E_{\beta}$ 
the second projection $\mu(F^{\bullet}, X) = X$.
This is 
a $G_{\beta}$-equivariant proper morphism since 
$\B_{\beta}$ is a projective variety.
Combined with Corollary~\ref{Cor:KS},
we have obtained the following diagram
\begin{equation}
\label{Eq:geom_diagram}
\Mg(D_{\beta}) \xrightarrow{\pi^{\bullet}} \Mg_{0}(D_{\beta}) \hookrightarrow 
E_{\beta} \xleftarrow{\mu} \F_{\beta}
\end{equation}
consisting of $G_{\beta}$-equivariant proper morphisms.
Applying the convolution construction,
we get a bimodule
\begin{equation}
\label{Eq:geom_bimodule}
K^{G_{\beta}}(Z^{\bullet}(D_{\beta}))_{\kk} 
\quad \curvearrowright
\quad K^{G_{\beta}}(\Mg(D_{\beta}) \times_{E_{\beta}} \F_{\beta})_{\kk}
\quad \curvearrowleft 
\quad K^{G_{\beta}}(\mathcal{Z}_{\beta})_{\kk}, 
\end{equation}
where we set
$Z^{\bullet}(D_{\beta}) \seq \Mg(D_{\beta}) \times_{E_{\beta}} \Mg(D_{\beta})$
and $\mathcal{Z}_{\beta} \seq \F_{\beta} \times_{E_{\beta}} \F_{\beta}$.

For each $\beta \in \cQ_{J}^{+}$,
we denote by
$\Ext_{G_{\beta}}^{k}(\mathscr{F}, \mathscr{G})$
the $k$-th $\Ext$-space in the $G_{\beta}$-equivariant bounded derived category
of complexes of $\kk$-sheaves on $E_{\beta}$
and define
$$\Ext_{G_{\beta}}^{*}(-, -) \seq \bigoplus_{k \in \Z}\Ext_{G_{\beta}}^{k}(-, -),
\quad
\Ext_{G_{\beta}}^{*}(-, -)^{\wedge} \seq \prod_{k \in \Z}\Ext_{G_{\beta}}^{k}(-, -).
$$
Let $\mathscr{L}_{\beta}$ denote
the push-forward of the constant perverse $\kk$-sheaf on 
the smooth variety $\F_{\beta}$
along the $G_{\beta}$-equivariant proper morphism
$\mu_{\beta} \colon \F_{\beta} \to E_{\beta}$.
The following theorem establishes a geometric realization of
the quiver Hecke algebra $H_{J}(\beta)$.

\begin{Thm}[{Varagnolo-Vasserot~\cite[Theorem 3.6]{VV11}}]
There is an isomorphism
\begin{equation}
\label{Eq:VV}
H_{J}(\beta) \cong \Ext^{*}_{G_{\beta}}(\mathscr{L}_{\beta}, \mathscr{L}_{\beta})
\end{equation}
of $\Z$-graded $\kk$-algebras.
\end{Thm}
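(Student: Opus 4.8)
The plan is to deduce this from the geometric realization of quiver Hecke algebras due to Varagnolo--Vasserot~\cite{VV11}, so the substance of the argument is to match the geometric data $(E_{\beta}, G_{\beta}, \F_{\beta}, \mu_{\beta})$ introduced above with the one appearing in their paper. First I would record the dictionary: for the quiver $\Gamma_{J}$ and the dimension vector $\beta = \sum_{j} d_{j}\alpha_{j}^{J}$, Varagnolo--Vasserot consider precisely the affine representation space $E_{\beta} = \rep_{D_{\beta}}(\Gamma_{J})$ with its $G_{\beta}$-conjugation action, together with the variety of pairs $(F^{\bullet}, X)$ consisting of a representation $X$ and a compatible complete flag $F^{\bullet}$ of $J$-graded subspaces --- which is exactly our $\F_{\beta}$ --- and the proper projection $\mu_{\beta} \colon \F_{\beta} \to E_{\beta}$. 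Since each $\sV_{j}$ is real the quiver $\Gamma_{J}$ has no loops, and by Lemma~\ref{Lem:key} together with Proposition~\ref{Prop:Ext=tC} it has no oriented cycles either; hence the hypotheses of their construction are satisfied and their polynomial $Q_{ij}(u,v)$ is the one read off from the arrows of $\Gamma_{J}$.

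The heart of the matter is then the computation of $\Ext^{*}_{G_{\beta}}(\mathscr{L}_{\beta}, \mathscr{L}_{\beta})$ by convolution on the Steinberg-type variety $\mathcal{Z}_{\beta} = \F_{\beta}\times_{E_{\beta}}\F_{\beta}$, following the pattern of Chriss--Ginzburg~\cite{CG97} in the affine Hecke case. One stratifies $\mathcal{Z}_{\beta}$ by the relative position of the two flags along a common representation: the diagonal-type open piece, where the two flags agree, contributes the ``polynomial part'' $\bigoplus_{\nu}\kk[x_{1},\dots,x_{d}]\,e(\nu)$, the idempotents $e(\nu)$ corresponding to the connected components of the graded-flag-type locus (i.e.\ $J$-compositions $\nu$ of $\beta$) and the $x_{i}$ arising as $G_{\beta}$-equivariant Chern classes of the tautological line subquotients on $\F_{\beta}$; the codimension-one pieces, where the flags differ in a single step, contribute the intertwiners $\tau_{1},\dots,\tau_{d-1}$. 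I would then verify geometrically that these elements satisfy the defining relations of $H_{J}(\beta)$ --- the commutation of $\tau_{i}$ with the $x_{j}$, the nilHecke-type braid relations, and the quadratic relation $\tau_{i}^{2}e(\nu) = Q_{\nu_{i},\nu_{i+1}}(x_{i},x_{i+1})\,e(\nu)$ --- by reducing to the rank-one and rank-two incidence loci, exactly as in~\cite{VV11}.

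The enabling homological input, and the place requiring care, is the $G_{\beta}$-equivariant formality of $\F_{\beta}$: the fibres of $\F_{\beta}\to\B_{\beta}$ are affine spaces, so $H^{*}_{G_{\beta}}(\F_{\beta}) \cong H^{*}_{G_{\beta}}(\B_{\beta})$ is free over $H^{*}_{G_{\beta}}(\mathrm{pt})$, and similarly $\mathscr{L}_{\beta}$ is a pure complex; this makes the stratification spectral sequence for $\Ext^{*}_{G_{\beta}}$ degenerate and the strata of $\mathcal{Z}_{\beta}$ contribute additively. One must also fix the perverse normalization $\mathscr{L}_{\beta} = \mu_{\beta*}\underline{\kk}_{\F_{\beta}}[\dim\F_{\beta}]$ (or with an appropriate $G_{\beta}$-equivariant shift) so that the cohomological grading on the Ext-algebra matches the $\Z$-grading on $H_{J}(\beta)$. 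As all of this is carried out in~\cite{VV11}, in the write-up I would state the above dictionary and then invoke~\cite[Theorem~3.6]{VV11}, remarking only that our orientation convention for $\Gamma_{J}$ and our flag convention agree with theirs up to a harmless passage to the opposite algebra.
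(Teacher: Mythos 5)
Your proposal is correct and matches the paper's treatment: the paper gives no independent proof of this statement, but simply identifies the data $(\Gamma_{J}, G_{\beta}, E_{\beta}, \F_{\beta}, \mu_{\beta}, \mathscr{L}_{\beta})$ with the setting of Varagnolo--Vasserot and cites \cite[Theorem 3.6]{VV11}, exactly as you do. Your additional sketch of the convolution-algebra computation (polynomial part, intertwiners, quadratic relations, formality) is a recap of the argument internal to \cite{VV11} rather than a different route, and your dictionary and normalization remarks are the only content the paper itself supplies.
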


After completing the above isomorphism \eqref{Eq:VV}, 
we obtain 
\begin{equation}
\label{Eq:cVV}
\hH_{J}(\beta) \cong \Ext^{*}_{G_{\beta}}(\mathscr{L}_{\beta}, \mathscr{L}_{\beta})^{\wedge} 
\cong \hK^{G_{\beta}}(\mathcal{Z}_{\beta})_{\kk},
\end{equation}
where the second isomorphism is given by 
the equivariant Chern character map (see~\cite[Corollary 3.9]{Fujita18} for details).

Now we give a geometric interpretation of the bimodule $\widehat{\sV}^{\otimes \beta}$.
\begin{Thm}
\label{Thm:geom}
With the above notation,
there exists an isomorphism
$$
\widehat{\sV}^{\otimes \beta} \cong \hK^{G_{\beta}}(\Mg(D_{\beta})\times_{E_{\beta}}\F_{\beta})_{\kk}
$$
which makes the following diagram commute
$$
\xy
\xymatrix{
U_{q}(L\g) 
\ar[r]
\ar[d]^-{\widehat{\Psi}_{D_{\beta}}}
&
\End\left(\widehat{\sV}^{\otimes \beta}\right)
\ar[d]^-{\cong}
&
\widehat{H}_{J}(\beta)^{\mathrm{op}}
\ar[l]
\ar[d]^-{\cong}_-{\eqref{Eq:cVV}}
\\
\hK^{G_{\beta}}(Z^{\bullet}(D_{\beta}))_{\kk}
\ar[r]
&
\End \left( \hK^{G_{\beta}}(\Mg(D_{\beta}) \times_{E_{\beta}} 
\F_{\beta})_{\kk}
\right)
&
\hK^{G_{\beta}}(\mathcal{Z}_{\beta})_{\kk}^{\mathrm{op}},
\ar[l]
}
\endxy
$$
where the first and second rows
denote the structure homomorphisms of the bimodules \eqref{Eq:KKK_bimodule} and \eqref{Eq:geom_bimodule} respectively.  
\end{Thm}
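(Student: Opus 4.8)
The plan is to build the isomorphism in three stages: first as left $U_{q}(L\g)$-modules, then checking that it intertwines the two right actions, and finally observing that the commutativity of the square is formal. For the left-module part I would decompose the flag variety $\B_{\beta}$ into its connected components, indexed by the words $\nu=(\nu_{1},\dots,\nu_{d})$ in $J$ with $\sum_{k=1}^{d}\alpha^{J}_{\nu_{k}}=\beta$: the component $\B_{\beta,\nu}$ consists of those flags whose $k$-th subquotient $F^{k-1}/F^{k}$ is concentrated at the vertex $\sx(\nu_{k})$. This induces decompositions $\F_{\beta}=\bigsqcup_{\nu}\F_{\beta,\nu}$ and $\Mg(D_{\beta})\times_{E_{\beta}}\F_{\beta}=\bigsqcup_{\nu}\Mg(D_{\beta})\times_{E_{\beta}}\F_{\beta,\nu}$. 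Using Corollary~\ref{Cor:KS} together with Lemma~\ref{Lem:key} to identify $E_{\beta}=\rep_{D_{\beta}}(\Gamma_{J})=\rep_{D_{\beta}}(\Gamma)$ as the ambient affine space containing the closed subvariety $\Mg_{0}(D_{\beta})$, one checks that $\Mg(D_{\beta})\times_{E_{\beta}}\F_{\beta,\nu}$ is the $T_{W}$-fixed locus of one of Nakajima's tensor product varieties~\cite{Nakajima01t}, attached to the ordered $d$-tuple of one-dimensional spaces prescribed by $\nu$. Then, exactly as in the proof of Lemma~\ref{Lem:Grea} (i.e.\ running the argument of~\cite[Theorem~6.12]{Nakajima01t} $T_{W}$-equivariantly and $\rr_{W}$-adically), the completed equivariant $K$-group $\hK^{G_{\beta}}(\Mg(D_{\beta})\times_{E_{\beta}}\F_{\beta,\nu})_{\kk}$ becomes an iterated completed tensor product of the rank-one modules of Lemma~\ref{Lem:greaVx}, hence a completed tensor product of the completed affinized modules $\sV_{\nu_{k}}[z^{\pm1}]\otimes_{\kk[z^{\pm1}]}\kk[\![z-\sa_{\nu_{k}}]\!]$. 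Summing over $\nu$ reproduces the left $U_{q}(L\g)$-module underlying $\widehat{\sV}^{\otimes\beta}$ of~\cite[Section~3]{KKK18}, compatibly with the completed Nakajima homomorphism $\widehat{\Psi}_{D_{\beta}}$; this gives the left square of the diagram.

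Next I would match the right actions. By the Varagnolo--Vasserot realization~\eqref{Eq:VV}--\eqref{Eq:cVV}, the algebra $\hK^{G_{\beta}}(\cZ_{\beta})_{\kk}\cong\hH_{J}(\beta)$ is generated by three kinds of classes: the diagonal classes over the flag components (the idempotents $e(\nu)$), the tautological line bundles on $\F_{\beta}$ (the polynomial generators), and the adjacent-transposition correspondences $Z_{k}\subset\F_{\beta}\times_{E_{\beta}}\F_{\beta}$ (the intertwiner generators $\tau_{k}$). The idempotents $e(\nu)$ are visibly the projections onto the word-components constructed above; the tautological line bundles act, through the identifications of Lemma~\ref{Lem:greaVx}, as the formal spectral parameters on the successive tensor factors, matching the polynomial generators of $H_{J}(\beta)$; and convolution by the structure sheaf of $Z_{k}$ acts only on the two adjacent tensor factors it touches, where, by a $G_{W}$-equivariant $\rr_{W}$-adic reduction to the rank-two graded quiver variety $\Mg_{0}(W_{x}\oplus W_{y})$ (with $\{x,y\}=\{\sx(\nu_{k}),\sx(\nu_{k+1})\}$ suitably ordered) -- precisely the situation of Corollary~\ref{Cor:KS} and of the computation in Section~\ref{Ssec:pf} -- it equals a normalized $R$-matrix between the corresponding fundamental modules multiplied by the Koszul factor $(1-q^{-r}z_{2}/q^{-p}z_{1})^{d_{x,y}}$, where $d_{x,y}=\dim\Mg_{0}(W_{x}\oplus W_{y})$. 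Since, by Theorem~\ref{Thm:main2} and Lemma~\ref{Lem:key}, $d_{x,y}$ is exactly the pole order of that $R$-matrix at the relevant value, this is precisely the renormalization defining the KKK generator $\tau_{k}$ in~\cite[Section~3]{KKK18}. Hence the isomorphism from the first step intertwines the two right actions, giving the right square; this part parallels~\cite{Fujita18} and the treatment of the classical case in~\cite{GRV94}.

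The commutativity of the square is then formal. On $\hK^{G_{\beta}}(\Mg(D_{\beta})\times_{E_{\beta}}\F_{\beta})_{\kk}$ the left action of $\hK^{G_{\beta}}(\Zg(D_{\beta}))_{\kk}$ (through $\widehat{\Psi}_{D_{\beta}}$) and the right action of $\hK^{G_{\beta}}(\cZ_{\beta})_{\kk}$ are both given by convolution products relative to suitable smooth ambient spaces, built respectively from the first and the last projections of $\Mg(D_{\beta})\times\Mg(D_{\beta})\times\F_{\beta}$ and of $\Mg(D_{\beta})\times\F_{\beta}\times\F_{\beta}$; associativity of the convolution product then forces these two actions to commute, which mirrors the fact that $\widehat{\sV}^{\otimes\beta}$ is a genuine $(U_{q}(L\g),\widehat{H}_{J}(\beta))$-bimodule. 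Combining the three steps yields the asserted commutative diagram.

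\textbf{Expected main obstacle.}
The technical heart is the generator-by-generator matching of the right actions, in particular the assertion that convolution by $[\Oo_{Z_{k}}]$ coincides with the KKK generator $\tau_{k}$. This needs, first, a \emph{locality} statement reducing the computation to the two adjacent factors $\nu_{k},\nu_{k+1}$ and hence -- after restriction to the centralizer $G_{W}$ and $\rr_{W}$-adic completion -- to the rank-two computation already carried out in Section~\ref{Ssec:pf}; and second, a careful reconciliation of normalizations, since the geometry produces the Koszul class $(1-q^{-r}z_{2}/q^{-p}z_{1})^{d_{x,y}}$ while the quiver Hecke side expects the factor built from the denominator $d_{\sV_{\nu_{k+1}},\sV_{\nu_{k}}}(u)$, the two being identified exactly by Theorem~\ref{Thm:main2}. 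Getting the bookkeeping of spectral parameters, the shifts by $q^{p}$ and $q^{r}$, and the orientation of the arrows of $\Gamma_{J}$ to agree with the conventions of~\cite{KKK18} is the delicate point; the remaining ingredients are a $T_{W}$-equivariant, $\rr_{W}$-adically completed transcription of the standard convolution-algebra formalism.
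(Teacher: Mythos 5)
The paper gives no written proof of this theorem: it is explicitly omitted, with the remark that the argument runs along the same lines as the special case proved in \cite[Theorem 1.1]{Fujita18} (cf.\ Example~\ref{Ex:CQ}), which in turn follows the pattern of \cite{GRV94}. Your three-step plan --- identifying the left $U_{q}(L\g)$-module via the word components of $\F_{\beta}$ and $T_{W}$-fixed tensor-product varieties as in Lemma~\ref{Lem:Grea}, matching the right $\widehat{H}_{J}(\beta)$-action generator by generator through the Varagnolo--Vasserot realization \eqref{Eq:cVV}, and getting the commutativity of the square formally from the convolution formalism --- is exactly that intended route, so in strategy you agree with the paper.

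There is, however, one concrete gap in your matching of the intertwiner generators. You reduce the action of $\tau_{k}$ on the component with adjacent letters $\nu_{k},\nu_{k+1}$ to the two-vertex graded quiver variety $\Mg_{0}(W_{x}\oplus W_{y})$ with $\{x,y\}=\{\sx(\nu_{k}),\sx(\nu_{k+1})\}$, i.e.\ to Corollary~\ref{Cor:KS} and the Koszul computation of Section~\ref{Ssec:pf}, and assert that convolution with the correspondence equals the normalized $R$-matrix times $(1-q^{-r}z_{2}/q^{-p}z_{1})^{d_{x,y}}$. This only makes sense when $\sx(\nu_{k})\neq\sx(\nu_{k+1})$. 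As soon as some coefficient $d_{j}$ of $\beta$ is at least $2$, components with $\nu_{k}=\nu_{k+1}$ occur; there $x=y$, $d_{x,y}=0$, and your formula would make $\tau_{k}e(\nu)$ act as the normalized $R$-matrix itself, which cannot be right: that operator is invertible (unitarity of normalized $R$-matrices), whereas the quiver Hecke relations force $\tau_{k}^{2}e(\nu)=0$ since $Q_{jj}=0$. In the construction of \cite[Section 3]{KKK18} the action of $\tau_{k}$ on such components is a divided-difference-type operator built from the difference of the normalized $R$-matrix and the identity divided by $z_{k+1}-z_{k}$ (up to normalization), and geometrically the relevant correspondence is the $\mathbb{P}^{1}$-step Steinberg variety at a single vertex of $\Gamma_{J}$, not a two-vertex graded quiver variety; this equal-letter case is precisely where the proof in \cite{Fujita18} (following \cite{GRV94}) does genuine work, so your "locality plus rank-two computation" needs this case supplied before the right-module comparison is complete. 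Apart from that, the normalization issues you flag (Koszul exponent versus pole order, reconciled by Theorem~\ref{Thm:main2} and Lemma~\ref{Lem:key}, and unit ambiguities absorbed by modifying the isomorphism \eqref{Eq:cVV}) are indeed the expected bookkeeping.
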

We omit a proof because
one can prove the assertion along the same lines as 
the proof of~\cite[Theorem 1.1]{Fujita18}, 
which is a spacial case (see Example~\ref{Ex:CQ} below). 

\begin{Ex}
\label{Ex:CQ}
Let $Q$ be a Dynkin quiver of type $\g$.
We take $J = I$ and  
define an injective map $\sx \colon J=I \hookrightarrow \Delta_{0}$
by 
$
\sx(i) \seq \cH_{Q}^{-1}(S_{i})
$
for each $i \in I$.
This is the case
Kang-Kashiwara-Kim considered in~\cite{KKK15}.
Then, we have $\Gamma_{J} = Q$ and
hence $\g_{J} = \g$.
Moreover
the embedding $\Mg_{0}(D) \hookrightarrow \rep_{D}(Q)$
in Corollary~\ref{Cor:KS}
becomes an isomorphism for any $I$-graded vector space $D$
(see~\cite[Theorem 9.11]{HL15}). 
Theorem~\ref{Thm:geom} for this special case
was established in~\cite{Fujita18}.
In this case,
we can further prove (see~\cite{Fujita17, Fujita18}) that the corresponding KKK-functor 
$\mathscr{F}_{J}$
induces an equivalence of monoidal categories
$
\bigoplus_{\beta \in \cQ^{+}}\widehat{H}_{J}(\beta) \modcat \simeq \Cc_{Q}, 
$
where $\Cc_{Q}$ is the monoidal full subcategory of $\Cc$
introduced by Hernandez-Leclerc~\cite{HL15},
consisting of modules whose composition factors 
are isomorphic to $L(m)$ for some dominant monomial $m$
in variables $Y_{x}$ labeled by $x \in \Delta_{0}$ such that $\cH_{Q}(x) \in \C Q \modcat \subset \dD_{Q}$.
\end{Ex}

\subsection{Type $\mathsf{A}$ subquivers and graded nilpotent orbits}
\label{Ssec:gnilp}

In this subsection, we study some examples of the KKK-functors
when the corresponding graded quiver varieties $\Mg_{0}(D_{\beta})$
are isomorphic to graded nilpotent orbits of type $\mathsf{A}$. 

Let $Q$ be a Dynkin quiver of type $\g$ and 
fix an integer $N$ such that $1 \le N-1 \le n$.
We assume that the full subquiver $Q^{\prime}$ of $Q$
supported on the subset
$
I^{\prime} \seq \{ 1, 2, \ldots, N-1\} \subset I=\{1, 2, \ldots, n\}
$
is of type $\mathsf{A}_{N-1}$ with a monotone orientation, i.e.
$$
Q^{\prime}=
\left(
\begin{xy}
\def\objectstyle{\scriptstyle}
\ar@{->} *+!D{1} *\cir<2pt>{};
(7,0) *+!D{2} *\cir<2pt>{}="A", 
\ar@{->} "A"; (14,0) *+!D{3} *\cir<2pt>{}="B",
\ar@{->} "B"; (20,0),
\ar@{.} (21,0); (23,0),
\ar@{->} (24,0); (30,0) *+!D{N-1} *\cir<2pt>{}
\end{xy}
\hspace{6pt}
\right)
\subset Q.
$$
Note that we have a natural fully faithful embedding
$\varepsilon \colon \dD_{Q^{\prime}} \hookrightarrow \dD_{Q}$
of triangulated categories.
We fix a height function $\xi$ as in Section~\ref{Ssec:Dynkin}.
The restriction of $\xi$ to the subset $I^{\prime}$
gives a height function for $Q^{\prime}$.
With these choices, we have the equivalences
$\cH_{Q} \colon \C(\Delta) \simeq \ind(\dD_{Q})$ 
and $\cH_{Q^{\prime}} \colon \C(\Delta^{\prime}) \simeq \ind(\dD_{Q^{\prime}})$
in Theorem~\ref{Thm:Happel},
where $\Delta^{\prime}$ denotes the counterpart of $\Delta$ for the subquiver $Q^{\prime}$.

Let $J \seq \Z$. With the above notation, 
we define an injective map $\sx \colon J \hookrightarrow \Delta_{0}$ by
$\sx(j) \seq (\cH_{Q}^{-1} \circ \varepsilon \circ \cH_{Q^{\prime}}) (1, \xi_{1} - 2j +2)$
for each $j \in J$, or equivalently, we define
\begin{equation}
\label{Eq:defx}
\sx(j) \seq \begin{cases}
\cH_{Q}^{-1} \left( S_{i} [-2k] \right) & \text{if $j = i + kN, 1 \le i < N, k\in \Z$};\\
\cH_{Q}^{-1} \left( M_{\theta} [-2k+1] \right) & \text{if $j = kN, k \in \Z$},
\end{cases}  
\end{equation} 
where $\theta \seq \sum_{i=1}^{N-1} \alpha_{i} \in \cR^{+}$.

\begin{Lem}
\label{Lem:Ainfty}
The quiver $\Gamma_{J}$ associated with the injective map $\sx \colon J \hookrightarrow \Delta_{0}$
given by \eqref{Eq:defx} is equal to the quiver of type $\mathsf{A}_{\infty}$ with a monotone orientation, 
namely
$$
\Gamma_{J} = \left(
\begin{xy}
\def\objectstyle{\scriptstyle}
\ar@{->} (0,0) *+!D{-2} *\cir<2pt>{}="A";(7,0) *+!D{-1} *\cir<2pt>{}="B",
\ar@{->} "B";(14,0) *+!D{0} *\cir<2pt>{}="C",
\ar@{->} "C";(21,0) *+!D{1} *\cir<2pt>{}="D",
\ar@{->} "D";(28,0) *+!D{2} *\cir<2pt>{}="E",
\ar@{->} (-6,0); "A",
\ar@{.} (-9,0);(-7,0),
\ar@{->} "E";(34,0),
\ar@{.} (35,0);(37,0)
\end{xy}
\hspace{6pt}
\right).
$$
\end{Lem}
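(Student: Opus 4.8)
The plan is to show that the arrows of $\Gamma_J$ are governed entirely by the $\Ext^1$-groups in $\dD_{Q}$ between the indecomposable objects $\cH_Q(\sx(j))$, via Lemma~\ref{Lem:key}. By that lemma we have $\Gamma_J = \Gamma|_{\sx(J)}$, so the number of arrows from $j$ to $j'$ equals $\dim\Ext^1_{\dD_Q}(\cH_Q(\sx(j)), \cH_Q(\sx(j')))$. Since the fully faithful embedding $\varepsilon\colon \dD_{Q'}\hookrightarrow \dD_{Q}$ identifies $\cH_Q(\sx(j))$ (up to shift) with the image of an indecomposable of $\dD_{Q'}$, and $\varepsilon$ preserves $\Ext$-groups, it suffices to compute $\dim\Ext^1_{\dD_{Q'}}$ between the relevant indecomposable objects of the type $\mathsf{A}_{N-1}$ derived category. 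Concretely, writing $X_j \seq \cH_{Q'}^{-1}(\text{the object in \eqref{Eq:defx} before applying } \varepsilon)$, one has $X_j = S_i[-2k]$ when $j = i+kN$ with $1\le i<N$, and $X_j = M_\theta[-2k+1]$ when $j = kN$; these run through a distinguished ``slice'' of $\ind(\dD_{Q'})$ under the Coxeter/AR translation.

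First I would reduce everything to $\dD_{Q'} \simeq D^b(\C Q' \modcat)$ with $Q'$ of type $\mathsf{A}_{N-1}$ and monotone orientation, where the AR quiver is completely explicit: it is the repetition quiver $\Z\mathsf{A}_{N-1}$, with the objects $S_i[0]$, $i=1,\dots,N-1$, and $M_\theta[-1] = I_1[-1]$ sitting on it in a known pattern, and the Nakayama/shift relation \eqref{Eq:Nakayama} telling us $[-1]$ acts by a shift by $h' = N$ in the $p$-coordinate (the Coxeter number of $\mathsf{A}_{N-1}$ is $N$). The sequence $\{X_j\}_{j\in\Z}$ is then a $\tau$-compatible enumeration of a complete set of representatives, one from each ``$\tau$-orbit slice'' so that consecutive $X_j$, $X_{j+1}$ are related by a single mesh step. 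The key computation is: $\dim\Ext^1_{\dD_{Q'}}(X_j, X_{j'})$ equals $1$ if $j' = j+1$ and $0$ otherwise (for $j\ne j'$; and there are no loops since each $\sV_j$ is real, already noted after Definition~\ref{Def:GammaJ}). I would verify this by the same kind of argument used in Proposition~\ref{Prop:Ext=tC}: use \eqref{Eq:Nakayama} to rewrite $\Ext^1(X_j, X_{j'}) \cong \Hom_{\dD_{Q'}}(X_j, X_{j'}[1])$, move to a sink-source orientation so that the relevant objects lie in $\C Q'\modcat$, and then read off $\Hom$-dimensions from the AR quiver of type $\mathsf{A}_{N-1}$, which are all $0$ or $1$ with the $1$'s occurring exactly for the ``next'' object in the chain. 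The three cases to handle are: both $X_j, X_{j'}$ of the form $S_i[-2k]$; one of them $M_\theta[-2k+1]$; and the shifted pairs $M_\theta[-2k+1]$ vs.\ $S_{N-1}[-2k]$ resp.\ $S_1[-2k-2]$, which are the spots where the chain passes through the ``$M_\theta$ vertex.''

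The main obstacle will be bookkeeping the indexing in \eqref{Eq:defx} carefully enough to see that $X_{j+1}$ is always exactly the AR successor of $X_j$ in $\dD_{Q'}$ — i.e.\ that the hinge cases $j = kN-1 \to kN \to kN+1$ (passing $S_{N-1}[-2k]\to M_\theta[-2k+1]\to S_1[-2k-2]$) are genuine mesh/almost-split steps and not, say, a double step or a gap. This is precisely where the choice $\theta = \alpha_1+\cdots+\alpha_{N-1} = \gamma_1$ (the dimension vector of $I_1$, an injective) enters: in the monotone $\mathsf{A}_{N-1}$ quiver there is a non-split triangle $S_{N-1} \to M_\theta \to M_{\theta - \alpha_{N-1}} \xrightarrow{+1}$, and iterating / shifting one checks the successor relation. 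I would make this precise by picking the sink-source orientation adapted to each hinge, using \cite[Proposition 6.5]{Gabriel80} (already cited in the proof of Lemma~\ref{Lem:tC}) to locate the objects on the AR quiver, and confirming $\dim\Hom = 1$ exactly between consecutive terms. Once the $\Ext^1$-dimensions are pinned down, Lemma~\ref{Lem:key} immediately gives that $\Gamma_J$ has vertex set $\Z$ and exactly one arrow $j\to j+1$ for each $j$, with no other arrows — that is, $\Gamma_J$ is the monotone $\mathsf{A}_\infty$ quiver as claimed.
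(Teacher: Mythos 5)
Your route is essentially the paper's: Lemma~\ref{Lem:key} reduces the claim to showing $\dim\Ext^{1}_{\dD_{Q}}(\cH_{Q}(\sx(j)),\cH_{Q}(\sx(j')))=\delta_{j',j+1}$, the fully faithful embedding $\varepsilon$ reduces this to the case $Q=Q'$ of type $\mathsf{A}_{N-1}$ with monotone orientation, and the remaining check is the explicit type-$\mathsf{A}$ computation, which the paper dispatches in one line from the fact that $M_{\theta}$ is simultaneously the projective cover of $S_{1}$ and the injective hull of $S_{N-1}$ (this gives $\Hom(S_{N-1},M_{\theta})\neq 0$ and $\Hom(M_{\theta},S_{1})\neq 0$ at the two hinges, while hereditariness together with the parity of the shifts kills all other $\Ext^{1}$'s, so no sink--source reorientation or AR-quiver bookkeeping is really needed). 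Two small slips to fix in your write-up: under the paper's conventions $\theta=\gamma_{N-1}$, so $M_{\theta}=I_{N-1}=P_{1}$ (not $I_{1}$, which is $S_{1}$), and the hinge triple should read $S_{N-1}[-2k+2]\to M_{\theta}[-2k+1]\to S_{1}[-2k]$; with these corrected, your verification goes through and agrees with the paper's argument.
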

\begin{proof}
By Lemma~\ref{Lem:key}, it suffices to prove that
\begin{equation}
\label{Eq:whattoprove}
\dim \Ext^{1}_{\dD_{Q}}\left(\cH_{Q}(\sx(j)), \cH_{Q}(\sx(j^{\prime}))\right) = 
\begin{cases}
1 & \text{if $j^{\prime} = j+1$}; \\
0 & \text{otherwise}.
\end{cases}
\end{equation}
Using the fully faithful embedding $\varepsilon \colon \dD_{Q^{\prime}} \hookrightarrow \dD_{Q}$,
we can reduce the situation to the special case $Q^{\prime} = Q$. 
In this case, 
we can easily check \eqref{Eq:whattoprove}
since 
the $\C Q$-module $M_{\theta}$
is a projective cover of the simple module $S_{1}$ and 
at the same time it is an injective hull of the simple module $S_{N-1}$.
\end{proof}

By Lemma~\ref{Lem:Ainfty},
the Kac-Moody algebra $\g_{J}$ is of type $\mathsf{A}_{\infty}$.
Let $\cR_{J}^{+}$ denote the set of positive roots of $\g_{J}$,
which is given by 
$$
\cR_{J}^{+} = \{ \alpha(j; \ell) \in \cQ_{J}^{+} \mid
j \in J, \ell \in \Z_{\ge 1}\}, 
\quad \text{where $\alpha(j; \ell) \seq \sum_{k=0}^{\ell -1} \alpha^{J}_{j+k}$}.
$$ 
We also consider the subsets  
$$
\cR_{J, N}^{+}\seq \{ \alpha(j; N) \mid j \in J\}, 
\quad
\cR_{J, \le N}^{+} \seq \{ \alpha(j; \ell) \mid
j \in J, 1 \le \ell \le N \}.
$$
For a fixed element $\beta = \sum_{j \in J} d_{j} \alpha_{j}^{J} \in \cQ_{J}^{+}$, we define 
a finite set 
$$
\KP(\beta) \seq\left\{ \nu = (\nu_{\alpha}) \in (\Z_{\ge 0})^{\cR_{J}^{+}} \; \middle| \; 
\textstyle \sum_{\alpha \in \cR_{J}^{+}} \nu_{\alpha} \alpha = \beta \right\}.
$$
An element $\nu$ of $\KP(\beta)$
is called a {\em Kostant partition} of $\beta$.
We also consider the subset 
$$
\KP_{\le N}(\beta) \seq \{ \nu \in \KP(\beta) \mid \text{$\nu_{\alpha} =0$ unless $\alpha \in \cR_{J, \le N}^{+}$}\}.
$$
Let $D_{\beta} = \bigoplus_{j \in J} (D_{\beta})_{j}$ be a $J$-graded vector space
such that $\dim (D_{\beta})_{j} = d_{j}$ for each $j\in J$ as in the previous subsection.
We set 
$$ E_{\beta} \seq \rep_{D_{\beta}}(\Gamma_{J}) = \prod_{j \in J}\Hom_{\C}((D_{\beta})_{j}, (D_{\beta})_{j+1})$$
and regard it as a $G_{\beta}$-stable closed subvariety of $\mathfrak{gl}(D_{\beta})$. 
A $G_{\beta}$-orbit in $E_{\beta}$ 
can be realized as a component of a certain $\C^{\times}$-fixed locus of a nilpotent orbit of $\mathfrak{gl}(D_{\beta})$ 
and hence called a {\em graded nilpotent orbit}.
By Gabriel's theorem~\cite{Gabriel72}, the set of $G_{\beta}$-orbits in $E_{\beta}$
is in bijection with the set $\KP(\beta)$.
For an element $\nu \in \KP(\beta)$, 
the corresponding $G_{\beta}$-orbit $\fO_{\nu}$ 
contains the $\C \Gamma_{J}$-module
$\bigoplus_{\alpha \in \cR_{J}^{+}} (M^{J}_{\alpha})^{\oplus \nu_{\alpha}}$,
where
$M^{J}_{\alpha}$ denotes the unique indecomposable $\C \Gamma_{J}$-module
of dimension vector $\alpha \in \cR_{J}^{+}$.  

Let $A^{\prime}$ be the quotient of the path algebra 
$\C \Gamma_{J}$ by the ideal generated by all the paths of length $\ge N$.
We consider the $G_{\beta}$-stable closed subvariety of $E_{\beta}$ 
$$
E_{\beta}^{\prime} \seq \rep_{D_{\beta}}(A^{\prime}) = \{ X \in E_{\beta} \mid X^{N} = 0 \}. 
$$
We can naturally regard the category $A^{\prime} \modcat$ as the full subcategory 
of $\C \Gamma_{J} \modcat$ consisting of modules isomorphic to 
direct sums of $M^{J}_{\alpha}$ for various $\alpha \in \cR^{+}_{J, \le N}$.
Thus the variety $E_{\beta}^{\prime}$ is a union of $G_{\beta}$-orbits
$\fO_{\nu}$ with $\nu \in \KP_{\le N}(\beta)$.

As explained in~\cite[Chapter II.2.6(a)]{Happel88}, 
the algebra $A^{\prime}$ coincides with the repetitive algebra of $\C Q^{\prime}$.
Since it is self-injective, the category $A^{\prime} \modcat$ is a Frobenius category 
and the set $\{ M^{J}_{\alpha} \mid \alpha \in \cR^{+}_{N} \}$ forms 
a complete collection of indecomposable projective $A^{\prime}$-modules.
We denote by $A^{\prime}\udmod$
the stable category of $A^{\prime} \modcat$.

\begin{Thm}[Happel \cite{Happel88}]
There exists a $\delta$-functor 
$\Phi^{\prime} \colon A^{\prime} \modcat \to \dD_{Q^{\prime}}$
which satisfies
$$
\Phi^{\prime}(M^{J}_{\alpha(j ; \ell)}) \cong 
\begin{cases}
\cH_{Q^{\prime}}(\ell, \xi_{\ell} - 2j +2 ) & \text{if $1 \le \ell < N$};\\
0 & \text{if $\ell = N$}
\end{cases}
$$
for each $j \in J$ and $1\le \ell \le N$, 
and induces a triangle equivalence $A^{\prime}\udmod \simeq \dD_{Q^{\prime}}$. 
\end{Thm}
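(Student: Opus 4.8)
The plan is to deduce this statement from Happel's theorem on repetitive algebras \cite{Happel88}. As recalled just before the statement, $A^{\prime}$ is the repetitive algebra of the path algebra $\C Q^{\prime}$, and since $Q^{\prime}$ is a Dynkin quiver the algebra $\C Q^{\prime}$ is hereditary, hence of finite global dimension. I would then invoke \cite[Chapter II, Section~4]{Happel88}: the repetitive algebra $A^{\prime}$ is self-injective, and there is a triangle equivalence $A^{\prime}\udmod \simeq D^{b}(\C Q^{\prime}\modcat) = \dD_{Q^{\prime}}$ which intertwines the stalk embedding $\C Q^{\prime}\modcat \hookrightarrow A^{\prime}\modcat$ (regarding a $\C Q^{\prime}$-module as an $A^{\prime}$-module supported on a fixed fundamental copy of $\C Q^{\prime}$ inside $A^{\prime}$) with the canonical inclusion $\C Q^{\prime}\modcat \hookrightarrow \dD_{Q^{\prime}}$ of complexes concentrated in cohomological degree $0$. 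Since $A^{\prime}$ is self-injective, the category $A^{\prime}\modcat$ is Frobenius and its canonical projection to the stable category $A^{\prime}\udmod$ carries short exact sequences to distinguished triangles; composing this projection with the above equivalence yields the desired $\delta$-functor $\Phi^{\prime} \colon A^{\prime}\modcat \to \dD_{Q^{\prime}}$, which by construction induces a triangle equivalence $A^{\prime}\udmod \simeq \dD_{Q^{\prime}}$.

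It then remains to compute $\Phi^{\prime}$ on the indecomposables $M^{J}_{\alpha(j;\ell)}$. For $\ell = N$ the module $M^{J}_{\alpha(j;N)}$ is, as noted in the discussion preceding the statement, one of the indecomposable projective-injective $A^{\prime}$-modules, so it becomes zero in the stable category and $\Phi^{\prime}(M^{J}_{\alpha(j;N)}) = 0$. For $1 \le \ell < N$, the modules $M^{J}_{\alpha(j;\ell)}$ exhaust up to isomorphism the non-projective indecomposables of $A^{\prime}\modcat$, and those whose support lies inside a single fundamental copy of $Q^{\prime}$ are stalk modules, so by the compatibility above they are sent by $\Phi^{\prime}$ to the corresponding indecomposable $\C Q^{\prime}$-modules, i.e.\ to objects of the form $\cH_{Q^{\prime}}(\ell, \xi_{\ell} - 2j + 2)$ lying in $\C Q^{\prime}\modcat \subset \dD_{Q^{\prime}}$ for $j$ in a suitable range. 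I would then reach the remaining values of $j$ by applying the Nakayama automorphism of $A^{\prime}$, which shifts the fundamental copies of $\C Q^{\prime}$ and, being a triangulated autoequivalence, corresponds under $\Phi^{\prime}$ to a fixed autoequivalence of $\dD_{Q^{\prime}}$ built from the cohomological shift and the Auslander--Reiten translation $\tau$ --- essentially the symmetry \eqref{Eq:Nakayama}, with $N$ in the role of the Coxeter number of type $\mathsf{A}_{N-1}$. Since $\Phi^{\prime}$ is compatible with $\tau$ and, by Theorem~\ref{Thm:Happel}, $\ind(\dD_{Q^{\prime}})$ is the mesh category $\C(\Delta^{\prime})$, the assignment on objects is then determined by its restriction to one fundamental domain together with this periodicity, which after comparison with the normalization of $\cH_{Q^{\prime}}$ gives the stated formula.

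The hard part will be this last identification. Once Happel's theorem gives that $\Phi^{\prime}$ is a triangle equivalence $A^{\prime}\udmod \simeq \dD_{Q^{\prime}}$, both the stable Auslander--Reiten quiver of $A^{\prime}$ and $\ind(\dD_{Q^{\prime}})$ are the mesh category $\C(\Delta^{\prime})$, so on objects $\Phi^{\prime}$ is described by an automorphism of the strip $\Z\Delta^{\prime}$; what must be checked is that this automorphism is precisely the one sending $M^{J}_{\alpha(j;\ell)}$ to $\cH_{Q^{\prime}}(\ell, \xi_{\ell} - 2j + 2)$, rather than some further shift of it. This is pinned down by locating where a single fundamental copy $\C Q^{\prime}\modcat \subset A^{\prime}\modcat$ lands --- Happel's construction forces it into $\C Q^{\prime}\modcat \subset \dD_{Q^{\prime}}$ compatibly with the restriction of the height function $\xi$ to $I^{\prime}$ --- and then propagating by the copy shift, using \eqref{Eq:Nakayama} to express the latter as a cohomological shift on $\dD_{Q^{\prime}}$. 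The verification is bookkeeping; the genuine content is Happel's triangle equivalence.
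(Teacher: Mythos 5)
Your proposal is correct and takes essentially the same route as the paper: the paper's entire proof consists of invoking Happel's general theory of repetitive algebras (\cite[Theorem II.4.9]{Happel88}), which is precisely what you do, with your additional bookkeeping (projectives of length $N$ dying in the stable category, stalk modules of a fundamental copy landing in $\C Q^{\prime} \modcat \subset \dD_{Q^{\prime}}$, and propagation by the copy shift via \eqref{Eq:Nakayama}) simply making explicit the identification of indecomposables that the paper leaves to the cited reference.
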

\begin{proof}
Apply the general theory \cite[Theorem II.4.9]{Happel88}
of the repetitive algebras.
\end{proof}

To each $\alpha \in \cR_{J, \le N}^{+}$, we assign 
an element $\sx(\alpha) \in \Delta_{0} \sqcup \{ 0 \}$ by
$$
\sx(\alpha) \seq (\cH_{Q}^{-1} \circ \varepsilon \circ \Phi^{\prime})(M^{J}_{\alpha}).
$$
Note that we have
$\sx(\alpha^{J}_{j}) = \sx(j)$ for each $j \in J$ by definition.

Now we state the main theorem of this subsection.
\begin{Thm}
\label{Thm:gnilp}
For any $\beta \in \cQ_{J}^{+}$,
the following assertions hold:
\begin{enumerate}
\item \label{Thm:gnilp:isom}
The closed embedding $\Mg_{0}(D_{\beta}) \hookrightarrow E_{\beta}$
of Corollary~\ref{Cor:KS} 
induces an isomorphism of $G_{\beta}$-varieties
\begin{equation}
\label{Eq:gnilp}
\Mg_{0}(D_{\beta}) \cong E_{\beta}^{\prime}.
\end{equation}
\item \label{Thm:gnilp:strata}
Under the isomorphism~\eqref{Eq:gnilp},
each non-empty stratum $\Mreg(V, D_{\beta})$
coincides with a single $G_{\beta}$-orbit $\fO_{\nu}$
associated with the Kostant partition $\nu \in \KP_{\le N}(\beta)$
determined by the relation
\begin{equation}
\label{Eq:YA=mnu}
Y^{D_{\beta}} A^{-V} = m_{\nu} \seq \prod_{\alpha \in \cR_{J, \le N}^{+}} Y_{\sx(\alpha)}^{\nu_{\alpha}},
\end{equation}
where we set $Y_{\sx(\alpha)} = Y_{0} \seq 1$ for $\alpha \in \cR_{J, N}^{+}$.
\end{enumerate}
\end{Thm}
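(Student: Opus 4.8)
The plan is to prove assertions~\eqref{Thm:gnilp:isom} and~\eqref{Thm:gnilp:strata} by translating both into statements about the algebras $\Lambda$ and $A^{\prime}$, combining Leclerc--Plamondon's identification $\Mg_{0}(D_{\beta})\cong\rep_{D_{\beta}}(\Lambda)$ (Proposition~\ref{Prop:LP}), the Keller--Scherotzke embedding and stratification functor (Corollary~\ref{Cor:KS} and Theorems~\ref{Thm:KS},~\ref{Thm:KS2}), Nakajima's stratification (Theorem~\ref{Thm:stratification}), and Happel's theorem on the repetitive algebra. I would begin by recording, straight from the definition of $\sx$ and Theorem~\ref{Thm:Happel}, the two basic facts $\cH_{Q}(\sx(j))=\varepsilon(\tau_{Q^{\prime}}^{\,j-1}I_{1}^{Q^{\prime}})$ for every $j\in\Z$ (where $I_{1}^{Q^{\prime}}$ is the injective hull of $S_{1}$ in $\C Q^{\prime}\modcat$) and $\cH_{Q}(\sx(\alpha))=\varepsilon(\Phi^{\prime}(M^{J}_{\alpha}))$ for $\alpha\in\cR^{+}_{J,\le N}$, using Happel's formula for $\Phi^{\prime}$.

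For~\eqref{Thm:gnilp:isom}: since $D_{\beta}$ is supported on $\sx(J)$, Proposition~\ref{Prop:LP} gives $\Mg_{0}(D_{\beta})=\rep_{D_{\beta}}(\Lambda_{\sx(J)})$ with $\Lambda_{\sx(J)}\seq\Lambda/(e_{z}\mid z\notin\sx(J))$, realized via Corollary~\ref{Cor:KS} and Lemma~\ref{Lem:key} as a $G_{\beta}$-stable closed subvariety of $E_{\beta}=\rep_{D_{\beta}}(\Gamma_{J})$, so the task is to prove $\Lambda_{\sx(J)}\cong A^{\prime}$. The surjection $\C\Gamma_{J}\twoheadrightarrow\Lambda_{\sx(J)}$ has quiver part $\Gamma_{J}$, of type $\mathsf{A}_{\infty}$ with monotone orientation (Lemmas~\ref{Lem:key},~\ref{Lem:Ainfty}); since a linear quiver has a unique path between any two vertices, it then suffices to decide, for each $j$ and $\ell\ge1$, whether the length-$\ell$ path $\sx(j)\to\sx(j+\ell)$ survives, equivalently to compute the relation space $\Ext^{2}_{\dD_{Q}}(\cH_{Q}(\sx(j)),\cH_{Q}(\sx(j+\ell)))$. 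Using fullness of $\varepsilon$ and the identity $\tau_{Q^{\prime}}^{\,N}\cong[-2]$ in $\dD_{Q^{\prime}}$ (a consequence of Theorem~\ref{Thm:Happel} and~\eqref{Eq:Nakayama}, the Coxeter number of $\mathsf{A}_{N-1}$ being $N$), this group is isomorphic to $\Ext^{2}_{\dD_{Q^{\prime}}}(I_{1}^{Q^{\prime}},\tau_{Q^{\prime}}^{\,\ell}I_{1}^{Q^{\prime}})\cong\Hom_{\dD_{Q^{\prime}}}(I_{1}^{Q^{\prime}},\tau_{Q^{\prime}}^{\,\ell-N}I_{1}^{Q^{\prime}})$, which one checks is one-dimensional for $\ell=N$ and zero otherwise. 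Hence the only relations are the length-$N$ paths, so $\Lambda_{\sx(J)}\cong\C\Gamma_{J}/(\text{paths of length}\ge N)=A^{\prime}$ and $\Mg_{0}(D_{\beta})=\rep_{D_{\beta}}(A^{\prime})=E^{\prime}_{\beta}$.

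For~\eqref{Thm:gnilp:strata}: by the previous step the stratification functor $\Phi_{Q}$ restricts to a $\delta$-functor $A^{\prime}\modcat\to\dD_{Q}$ that agrees on simples with $\varepsilon\circ\Phi^{\prime}$; using that $\Phi^{\prime}$ induces the equivalence $A^{\prime}\udmod\simeq\dD_{Q^{\prime}}$ (Happel), I would conclude $\Phi_{Q}|_{A^{\prime}\modcat}\cong\varepsilon\circ\Phi^{\prime}$. For $M$ in the $G_{\beta}$-orbit $\fO_{\nu}$, Gabriel's theorem gives $M\cong\bigoplus_{\alpha}(M^{J}_{\alpha})^{\oplus\nu_{\alpha}}$ with $\nu\in\KP_{\le N}(\beta)$, and since the length-$N$ indecomposables are $A^{\prime}$-projective-injective and so killed by $\Phi^{\prime}$, we get $\Phi_{Q}(M)\cong\bigoplus_{\alpha\in\cR^{+}_{J,\le N},\,\ell(\alpha)<N}\cH_{Q}(\sx(\alpha))^{\oplus\nu_{\alpha}}$. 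Because $\beta$ is fixed and the roots $\{\alpha(j;N)\}_{j\in J}$ are linearly independent, distinct $\nu\in\KP_{\le N}(\beta)$ yield non-isomorphic objects $\Phi_{Q}(M)$; hence by Theorem~\ref{Thm:KS2}~\eqref{Thm:KS2:1} each non-empty stratum $\Mreg(V,D_{\beta})$ is exactly one orbit $\fO_{\nu}$, and by Theorem~\ref{Thm:KS2}~\eqref{Thm:KS2:2} together with Theorem~\ref{Thm:stratification} its associated monomial is $Y^{D_{\beta}}A^{-V}=\prod_{\alpha:\ell(\alpha)<N}Y_{\sx(\alpha)}^{\nu_{\alpha}}=\prod_{\alpha\in\cR^{+}_{J,\le N}}Y_{\sx(\alpha)}^{\nu_{\alpha}}=m_{\nu}$ under the convention $Y_{\sx(\alpha)}=1$ for $\ell(\alpha)=N$, which is~\eqref{Eq:YA=mnu}.

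The main obstacle I anticipate lies in the relation computation in step~\eqref{Thm:gnilp:isom}: the set $\sx(J)$ does not satisfy Keller--Scherotzke's hypothesis $\mathrm{(R)}$ (already a vertex $x$ with $\cH_{Q}(x)$ a simple $\C Q$-module supported at a vertex of $I\setminus I^{\prime}$ not adjacent to $I^{\prime}$ admits no non-zero $\C(\Delta)$-morphism into $\sx(J)$), so Theorem~\ref{Thm:KS} cannot be applied directly with $C=\sx(J)$; the $\Ext^{2}$-comparison must instead be routed through $C=\Delta_{0}$ together with a verification that $\Gamma_{J}=\Gamma|_{\sx(J)}$ is convex in $\Gamma$ (so that these $\Ext$ groups are unaffected by the passage to $\Lambda_{\sx(J)}$), or else by an explicit analysis of the subquotient $\bigoplus_{x,y\in\sx(J)}e_{x}\tLam e_{y}$ of Keller--Scherotzke's category. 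A secondary technical point is the compatibility $\Phi_{Q}|_{A^{\prime}\modcat}\cong\varepsilon\circ\Phi^{\prime}$, which I would deduce from the fact that both functors arise from the same derived-equivalence recipe applied to $\dD_{Q^{\prime}}\subset\dD_{Q}$.
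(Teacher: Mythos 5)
Your proposal for part~\eqref{Thm:gnilp:isom} correctly isolates the crux but does not close it, and the remedy you sketch would fail. The relations of $\Lambda_{\sx(J)}\seq\Lambda/(e_{z}\mid z\notin\sx(J))$ are governed by $\Ext^{2}$ over $\Lambda_{\sx(J)}$ itself, and since $\sx(J)$ violates condition $\mathrm{(R)}$ you cannot identify these with $\Ext^{2}_{\dD_{Q}}(\cH_{Q}(\sx(j)),\cH_{Q}(\sx(j+\ell)))$; routing through $C=\Delta_{0}$ only controls the relations of $\Lambda$, and a minimal relation of $\Lambda$ from $\sx(j)$ to $\sx(j+N)$ is a combination of \emph{all} paths in $\Gamma$ between these vertices, most of which leave $\sx(J)$, so its truncation to $\Gamma_{J}$ could a priori have zero coefficient on the length-$N$ path, leaving no relation at all in $\Lambda_{\sx(J)}$. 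Your fallback, convexity of $\Gamma_{J}$ in $\Gamma$, is false: already for $\g$ of type $\mathsf{A}_{2}$, $N=3$, $Q^{\prime}=Q$, $\xi_{1}=-2$, one has $\sx(j)=(1,-2j)$ while $\tc_{12}(2)=1$ produces arrows $(1,0)\to(2,-3)\to(1,-6)$ in $\Gamma$, a path between two vertices of $\sx(J)$ through a vertex outside it. The paper's actual resolution is Lemma~\ref{Lem:technical}: enlarge $\sx(J)$ to $C=\sx(J)\sqcup\{\cH_{Q}^{-1}(S_{i}[k])\mid i\in I\setminus I^{\prime},k\in\Z\}$, verify that $C$ satisfies $\mathrm{(R)}$ \emph{and} that every path space $e_{\sx(i)}\cdot\C\Gamma|_{C}\cdot e_{\sx(j)}$ is at most one-dimensional; only then do Theorem~\ref{Thm:KS} and Remark~\ref{Rem:KS} applied to $\Lambda_{C}$, together with the $\Ext^{2}$ computations (your computation does match \eqref{Eq:Ext2N}--\eqref{Eq:Ext2i}), force the relations to be exactly the length-$N$ paths. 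That lemma is the substantive content of part~\eqref{Thm:gnilp:isom} and is absent from your proposal.

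For part~\eqref{Thm:gnilp:strata} your route is genuinely different from the paper's but hinges on the unproved assertion $\Phi_{Q}|_{A^{\prime}\modcat}\simeq\varepsilon\circ\Phi^{\prime}$. Agreement on simple objects does not determine a $\delta$-functor valued in a triangulated category (the values on non-semisimple modules depend on connecting morphisms), and ``the same derived-equivalence recipe'' is not an argument; the paper deliberately avoids this comparison. Instead it proves strata $=$ orbits by a counting argument: the number of non-empty strata is at most $\#\KP_{\le N}(\beta)$, and repeated use of Lemma~\ref{Lem:DoreyA} (built on the simple-pole result, Proposition~\ref{Prop:Dorey}) gives $c(Y^{D_{\beta}},m_{\nu})\neq0$ for every $\nu\in\KP_{\le N}(\beta)$, whence at least $\#\KP_{\le N}(\beta)$ strata by Theorem~\ref{Thm:stratification}~\eqref{Thm:stratification:nonempty}. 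The identity \eqref{Eq:YA=mnu} is then reduced via Theorem~\ref{Thm:KS2}~\eqref{Thm:KS2:2} and additivity to $\beta=\alpha$, $\nu=\delta(\alpha)$, which is settled geometrically: minimality of $Y_{\sx(\alpha)}$ in $\Mm^{+}$, Theorem~\ref{Thm:stratification}~\eqref{Thm:stratification:ordering}, and the fact that $\fO_{\delta(\alpha)}$ is the unique open dense orbit of the affine space $E^{\prime}_{\alpha}$ identify the stratum, in effect establishing exactly the instances of your compatibility claim that are needed. If you wish to keep your cleaner derivation of both statements directly from Theorem~\ref{Thm:KS2}, you must actually prove the compatibility of the Keller--Scherotzke stratification functor with Happel's functor for the repetitive algebra, which is neither done in your proposal nor obviously easy; as written, both halves of the theorem rest on missing arguments.
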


For a proof of Theorem~\ref{Thm:gnilp}~\eqref{Thm:gnilp:isom},
we need the following lemma. 

\begin{Lem}
\label{Lem:technical}
We define a subset $C \subset \Delta_{0}$ by
\begin{equation}
\label{Eq:defC}
C \seq \sx(J) \sqcup \{ \cH_{Q}^{-1}(S_{i}[k]) \mid i \in I \setminus I^{\prime}, k \in \Z \}.
\end{equation}
Then the following assertions hold.
\begin{enumerate}
\item \label{Lem:technical:C}
The subset $C$ satisfies the condition $\mathrm{(R)}$ in Theorem~\ref{Thm:KS}. 
\item \label{Lem:technical:path}
For any $i, j \in J$, we have
\begin{equation}
\label{Eq:path}
\dim \left(e_{\sx(i)} \cdot \C \Gamma|_{C} \cdot e_{\sx(j)} \right) =
\begin{cases}
1 & i \ge j; \\
0 & i < j. 
\end{cases} 
\end{equation}
\end{enumerate}
\end{Lem}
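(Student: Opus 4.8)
The plan is to read everything through Happel's equivalence $\cH_Q\colon\C(\Delta)\simeq\ind(\dD_Q)$ of Theorem~\ref{Thm:Happel} and the fully faithful embedding $\varepsilon\colon\dD_{Q^{\prime}}\hookrightarrow\dD_Q$, using as homological input the hereditary formulas $\Hom_{\C Q}(M,S_b)^{\ast}\cong\mathrm{top}_b(M)$ and $\Ext^{1}_{\C Q}(M,S_b)^{\ast}\cong\ker\bigl(\textstyle\bigoplus_{a\colon a^{\pprime}=b}M_{a^{\prime}}\to M_b\bigr)$ (and their duals for $\Ext^{\bullet}(S_b,-)$), together with the classical shape of the Auslander--Reiten quiver of $\dD_{Q^{\prime}}$ for the monotone $\mathsf{A}_{N-1}$-quiver. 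Two observations will be used throughout. (a) Under $\cH_Q$ the set $\sx(J)$ corresponds to $\varepsilon$ of the $\tau$-orbit of the injective $I_1^{Q^{\prime}}=S_1^{Q^{\prime}}$ in $\dD_{Q^{\prime}}$; since every indecomposable $Z$ of $\dD_{Q^{\prime}}$ is of the form $\tau^{m}(I_i^{Q^{\prime}})$ and $I_i^{Q^{\prime}}$ (the interval module on $\{1,\dots,i\}$) has simple top $S_1^{Q^{\prime}}$, we get $\Hom_{\dD_{Q^{\prime}}}(Z,\tau^{m}(I_1^{Q^{\prime}}))=\Hom_{\dD_{Q^{\prime}}}(I_i^{Q^{\prime}},I_1^{Q^{\prime}})\neq 0$. (b) From the explicit $\tau$-orbits, the cohomological degree of $\tau^{m}(I_1^{Q^{\prime}})$ exceeds that of $\tau^{m}(I_i^{Q^{\prime}})$ by $0$ or $1$.

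For \eqref{Lem:technical:C}, by Theorem~\ref{Thm:Happel} it suffices to produce, for every indecomposable $X=M[l]$ of $\dD_Q$, a nonzero morphism $X\to\cH_Q(c)$ with $c\in C$. If $\mathrm{supp}(M)\subseteq I^{\prime}$ then $X\in\varepsilon(\dD_{Q^{\prime}})$, and (a) plus full faithfulness of $\varepsilon$ supplies a nonzero map into $\sx(J)$. If $\mathrm{supp}(M)\not\subseteq I^{\prime}$, let $K\subseteq M$ be the largest submodule with $\mathrm{supp}(K)\subseteq I\setminus I^{\prime}$ and put $N:=M/K$. Maximality of $K$ forces $N$ to have no $(I\setminus I^{\prime})$-vertex that is a sink of $Q|_{\mathrm{supp}(N)}$; since $\mathrm{supp}(N)$ is a subtree of $Q$ and $I^{\prime}$ is convex, a support meeting $I\setminus I^{\prime}$ must have a leaf there, and by the previous sentence such a leaf is a source, so $S_b\in\mathrm{top}(N)$ for some $b\in I\setminus I^{\prime}$. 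Hence either $\mathrm{supp}(N)\subseteq I^{\prime}$, whereupon (a) applied to the summands of $N=\varepsilon(\widetilde N)$ gives a nonzero $g\colon N[l]\to\cH_Q(c)$ with $c\in\sx(J)$, or there is a nonzero $g\colon N[l]\to S_b^{Q}[l]$ with $c=\cH_Q^{-1}(S_b^{Q}[l])\in C$. In either case I claim the composite $M[l]\to N[l]\xrightarrow{g}\cH_Q(c)$ is nonzero: a factorisation through the connecting morphism $N[l]\to K[l+1]$ of the triangle $K\to M\to N\xrightarrow{+1}$ would live in $\Hom_{\dD_Q}(K[l+1],\cH_Q(c))$, which by (b) is concentrated in non-positive cohomological degree, hence is either zero or $\Hom_{\C Q}(K,-)$ between modules with disjoint supports, which again vanishes. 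This proves \eqref{Lem:technical:C}.

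For \eqref{Lem:technical:path}, $\dim_{\C}(e_{\sx(i)}\cdot\C\Gamma|_{C}\cdot e_{\sx(j)})$ is the number of paths from $\sx(j)$ to $\sx(i)$ in $\Gamma|_{C}$ (there are no relations). By Lemma~\ref{Lem:key} and Lemma~\ref{Lem:Ainfty} the full subquiver $\Gamma|_{\sx(J)}=\Gamma_J$ is of type $\mathsf{A}_{\infty}$ with monotone orientation, which has a unique path $\sx(j)\to\sx(i)$ for $i\ge j$ and none for $i<j$; so it suffices to show no path between two vertices of $\sx(J)$ ever leaves $\sx(J)$. Writing $D:=C\setminus\sx(J)=\{\cH_Q^{-1}(S_b^{Q}[k])\mid b\in I\setminus I^{\prime},\,k\in\Z\}$, I would first use that $Q$ is a tree and $I^{\prime}$ is convex to see that each connected component $D_{\alpha}$ of $Q|_{I\setminus I^{\prime}}$ is joined to $I^{\prime}$ by a single edge $e_{\alpha}$ (two joining edges would create a cycle). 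The $\Ext^{1}$-formula then shows: there is no arrow of $\Gamma$ between $S_b^{Q}[\bullet]$ and $S_{b^{\prime}}^{Q}[\bullet]$ for $b,b^{\prime}$ in distinct components; and across $e_{\alpha}$ the arrows of $\Gamma$ joining $\sx(J)$ to the vertices of $D$ over $D_{\alpha}$ all go one way—from $\sx(J)$ into $D$ if $e_{\alpha}$ points away from $I^{\prime}$, and from $D$ into $\sx(J)$ otherwise. Hence a path starting in $\sx(J)$ either stays in $\sx(J)$ or at some step falls into a component of $D$ of the first type, from which it can neither return to $\sx(J)$ nor reach any other component of $D$; in particular it cannot end at $\sx(i)\in\sx(J)$. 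So every path $\sx(j)\to\sx(i)$ lies in $\Gamma|_{\sx(J)}$, and \eqref{Lem:technical:path} follows.

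The main obstacle is the argument in \eqref{Lem:technical:C}: an indecomposable $M$ whose support meets $I\setminus I^{\prime}$ only in ``redundant'' sink leaves of $Q|_{\mathrm{supp}(M)}$ (those at which the structure map is an isomorphism) admits no nonzero morphism to any $S_b^{Q}[\bullet]$, so one is forced to pass to $N=M/K$ and then to keep $M[l]\to N[l]\to\cH_Q(c)$ from dying—and this is exactly where the cohomological-degree bookkeeping of (b), ensuring the connecting map lands in a vanishing $\Ext$-group, is essential, and what makes the choice $N=M/K$ (rather than an ad hoc submodule) the right one. A subsidiary point needing care is checking the one-directionality of the interface arrows in \eqref{Lem:technical:path} uniformly over the types $\mathsf{ADE}$, but this reduces cleanly to the single-joining-edge fact plus the local $\Ext^{1}$ computation.
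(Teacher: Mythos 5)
Your part~(2) is in substance the paper's own proof: the paper likewise identifies the dimension with the number of paths, uses $\Gamma_J=\Gamma|_{\sx(J)}$, and proves one-directionality of the interface arrows by listing the arrow types of $\Gamma|_{C}$ and observing that a path can leave $\sx(J)$ only through a (shifted) simple over a component attached by an arrow pointing out of $I'$, and can re-enter only through one attached by an arrow pointing into $I'$; your ``single joining edge plus local $\Ext^1$ computation at the attaching vertex'' is exactly that verification, so carrying it out is routine. Part~(1) is where you genuinely diverge. The paper exploits the evenness of the shifts in $\cH_Q(C)$ to reduce to $M_\alpha$ and $M_\alpha[1]$ and then argues case by case ($I^{\circ}$ versus $I^{\bullet}$, $M_\theta$ as injective hull of $S_{N-1}$ in $\C Q|_{I'\sqcup I^{\bullet}}\modcat$, the extension realizing $\Ext^1_{\C Q}(M_\alpha,S_{i+1})\neq0$). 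You instead treat all shifts uniformly: for $\mathrm{supp}(M)\subseteq I'$ you use that every indecomposable of $\dD_{Q'}$ is $\tau^m(I_i^{Q'})$ and admits a nonzero map to $\tau^m(I_1^{Q'})$, which lies in $\cH_Q(\sx(J))$ by the very definition of $\sx$; otherwise you pass to $N=M/K$ and kill the obstruction $\Hom_{\dD_Q}(K[l+1],\cH_Q(c))$ by a degree count. This is a correct and arguably more uniform route; note that of observation (b) you only need the weak statement that $\tau^m(I_1^{Q'})$ is never a \emph{positive} shift of a module when $m\ge0$ (as stated, the degree gap can exceed $1$ for large $m$, but larger gaps only improve the vanishing, since the obstruction group is then an $\Ext$ in strictly negative degree).

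Two small repairs are needed in part~(1). First, if $\mathrm{supp}(M)\subseteq I\setminus I'$ then $K=M$ and $N=0$, and your dichotomy produces no morphism at all; this case must be handled directly, e.g.\ any simple quotient $S_b$ with $b\in I\setminus I'$ gives a nonzero map $M[l]\to S_b[l]$, and $S_b[l]\in\cH_Q(C)$ since all shifts of these simples belong to $C$. Second, $N$ need not be indecomposable, so $\mathrm{supp}(N)$ need not be a subtree; the leaf argument should be run on a connected component of $\mathrm{supp}(N)$ meeting $I\setminus I'$ (or on an indecomposable summand), after which the implication ``all leaves in $I'$ forces the component into $I'$'' does follow from convexity of $I'$ in the tree $Q$, as you indicate. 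With these adjustments your argument is complete and establishes both assertions.
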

\begin{proof}
Let $I \setminus I^{\prime} = I_{1} \sqcup \cdots \sqcup I_{b}$
$(b \in \Z_{\ge 0})$
be a decomposition such that 
the full subquiver $Q|_{I_{k}}$ is a connected component 
of $Q|_{I \setminus I^{\prime}}$ for each $1 \le k \le b$.
Since the Dynkin graph is a tree, 
there exist unique $i_{k} \in I^{\prime}$
and $j_{k} \in I_{k}$ satisfying $i_{k} \sim j_{k}$ for each $1 \le k \le b$.
After reordering if necessary, we may assume that 
there exists $0 \le b_{1} \le b$
such that we have $i_{k} \leftarrow j_{k}$ for $1 \le k \le b_{1}$
and $i_{k} \to j_{k}$ for $b_{1} < k \le b$.
We put $I^{\circ} \seq \bigsqcup_{1 \le k \le b_{1}} I_{k}$
and $I^{\bullet} \seq \bigsqcup_{b_{1} < k \le b} I_{k}$. 

To verify the assertion (1),
it suffices to prove that for any $K \in \ind \dD_{Q}$
there exists $L \in \cH_{Q}(C)$
such that $\Hom_{\dD_{Q}}(K, L) \neq 0$.
Since the set $\cH_{Q}(C)$ is stable under even degree shifts,
we may assume that $K \cong M_{\alpha}$ or $K \cong M_{\alpha}[1]$
for some $\alpha \in \cR^{+}$.
When $K \cong M_{\alpha}$, 
we just take a simple quotient $M_{\alpha} \twoheadrightarrow S_{i}$
and find $\Hom_{\dD_{Q}}(K, L) \neq 0$ with $L = S_{i} \in \cH_{Q}(C)$. 
When $K \cong M_{\alpha}[1]$, we encounter the following two cases.

{Case 1}: $(\alpha, \varpi_{j}) \neq 0$ for some $j \in I^{\circ}$.
In this case, the subspace 
$M^{\prime} \seq \bigoplus_{i \in I^{\prime} \sqcup I^{\bullet}} e_{i}M_{\alpha}$
is a submodule of $M_{\alpha}$ and the quotient $M_{\alpha} / M^{\prime}$
is non-zero. Therefore there is $j \in I^{\circ}$ such that
$\Hom_{\C Q}(M_{\alpha} / M^{\prime}, S_{j}) \neq 0$.
This implies that $\Hom_{\dD_{Q}}(K, L) \neq 0$ with $L = S_{j}[1] \in \cH_{Q}(C)$.

{Case 2}: $(\alpha, \varpi_{j}) = 0$ for all $j \in I^{\circ}$.
In this case, $M_{\alpha}$ can be regarded as a representation of 
the full subquiver $Q|_{I^{\prime} \sqcup I^{\bullet}}$.
First, we assume $(\alpha, \varpi_{N-1}) \neq 0$. 
Then we have
$\Hom_{\C Q}(M_{\alpha}, M_{\theta}) \neq 0$
because $M_{\theta}$ is an injective hull of 
the simple module $S_{N-1}$
in the category $(\C Q|_{I^{\prime} \sqcup I^{\bullet}}) \modcat$. 
Thus we obtain $\Hom_{\dD_{Q}}(K, L) \neq 0$ with $L = M_{\theta}[1] \in \cH_{Q}(C)$.
Next, we assume $(\alpha, \varpi_{N-1}) = 0$ and $(\alpha, \varpi_{i}) \neq 0$
for some $1 \le i < N-1$. Let us take such an $i$ as large as possible.
Then we see that $\alpha^{\prime} \seq \alpha + \alpha_{i+1}$ is a positive root and there is
 a non-trivial extension $0 \to S_{i+1} \to M_{\alpha^{\prime}} \to M_{\alpha} \to 0$.
Therefore we obtain 
$\Ext_{\C Q}^{1}(M_{\alpha}, S_{i+1}) \neq 0$. Noting that $i+1 \in I^{\prime}$,
we get
$\Hom_{\dD_{Q}}(K, L) \neq 0$ with $L = S_{i+1}[2] \in \cH_{Q}(C)$.
Finally, we assume $(\alpha, \varpi_{i}) = 0$ for all $i \in I^{\prime}$.
Then $M_{\alpha}$ is supported on $I^{\bullet}$
and hence $\Hom_{\dD_{Q}}(K, L) \neq 0$ with $L=S_{j}[1]$ for some $j \in I^{\bullet}$.
    
Next we shall prove the assertion (2).
We note that the LHS of \eqref{Eq:path}
is equal to the number of (oriented) paths from $\sx(j)$ to $\sx(i)$ 
in the quiver $\Gamma|_{C}$.
Since $\Gamma_{J} = \Gamma|_{\sx(J)} \subset \Gamma|_{C}$,
there is at least one path from $\sx(j)$ to $\sx(i)$ when
$i \ge j$.
On the other hand, we know that the quiver $\Gamma$  
has neither loops nor oriented cycles by Proposition~\ref{Prop:Ext=tC}.
In particular, there are no paths from $\sx(j)$ to $\sx(i)$ in $\Gamma|_{C}$ when $i < j$.
Thus, we only have to prove that there are no two different paths
from $\sx(j)$ to $\sx(i)$ when $j < i$.
To see this, we divide the arrows of the quiver $\Gamma |_{C}$ 
into the following seven types:  
\begin{itemize}
\item[(i)] the arrows of the subquiver $\Gamma_{J} = \Gamma|_{\sx(J)} \subset \Gamma|_{C}$;
\item[(ii)] $\cH_{Q}^{-1}(S_{j_k}[2 \ell]) \to \cH_{Q}^{-1}(S_{i_k}[2\ell])$ for any $1 \le k  \le b_{1}, \ell \in \Z$; 
\item[(iii)] $\cH_{Q}^{-1}(S_{i_k}[2\ell]) \to \cH_{Q}^{-1}(S_{j_k}[2\ell])$ for any $b_{1} < k \le b, \ell \in \Z$;
\item[(iv)] $\cH_{Q}^{-1}(S_{j}[\ell]) \to \cH_{Q}^{-1}(S_{j}[\ell - 1])$ for any $j \in I \setminus I^{\prime}, \ell \in \Z$;
\item[(v)] $\cH_{Q}^{-1}(S_{j}[\ell]) \to \cH_{Q}^{-1}(S_{j^{\prime}}[\ell])$ for some $j, j^{\prime} \in I_{k}, 1 \le k \le b, \ell \in \Z$; 
\item[(vi)] $\cH_{Q}^{-1}(S_{j_k}[2\ell + 1]) \to \cH_{Q}^{-1}(M_{\theta}[2 \ell +1])$ for any $1 \le k \le b_{1}, \ell \in \Z$;
\item[(vii)] $\cH_{Q}^{-1}(M_{\theta}[2\ell + 1]) \to \cH_{Q}^{-1}(S_{j_k}[2 \ell +1])$ for any $b_{1} < k \le b, \ell \in \Z$.
\end{itemize}
A path in $\Gamma|_{C}$ 
going out from $\sx(J)$ should contain an arrow of type (iii) or (vii),
and hence go through a vertex belonging to the set
$S^{\bullet} \seq \{ \cH_{Q}^{-1}(S_{j}[\ell]) \mid j \in I^{\bullet}, \ell \in \Z \}$.
On the other hand, a path in $\Gamma|_{C}$ coming into $\sx(J)$
should contain an arrow of type (ii) or (vi), 
and hence go through a vertex belonging to the set 
$S^{\circ} \seq \{ \cH_{Q}^{-1}(S_{j}[\ell]) \mid j \in I^{\circ}, \ell \in \Z \}$.    
However, there are no paths in $\Gamma|_C$ from a vertex of $S^{\bullet}$
to a vertex of $S^{\circ}$. Therefore
there are no paths in $\Gamma|_{C}$
connecting two different vertices of $\sx(J)$ other than the paths in $\Gamma_{J}$.  
\end{proof}

\begin{proof}[Proof of Theorem~\ref{Thm:gnilp}~\eqref{Thm:gnilp:isom}]
Thanks to Lemma~\ref{Lem:technical}~\eqref{Lem:technical:C},
we can apply Theorem~\ref{Thm:KS} and Remark~\ref{Rem:KS} to the algebra $\Lambda_{C}$
associated with the subset $C \subset \Delta_{0}$ given by \eqref{Eq:defC}.
Thus, there exists an admissible ideal $\mathfrak{J}_{C}$
of the path algebra $\C \Gamma|_{C}$
such that $\Lambda_{C} \cong (\C \Gamma|_{C}) / \mathfrak{J}_{C}$
and hence $\Mg_{0}(D_{\beta}) = \rep_{D_{\beta}}(\Lambda_{C}) \cong \rep_{D_{\beta}}( (\C \Gamma|_{C}) / \mathfrak{J}_{C} )$.
Now, we need to prove
$\rep_{D_{\beta}}( (\C \Gamma|_{C}) / \mathfrak{J}_{C} ) = E_{\beta}^{\prime}$.
Thanks to Lemma~\ref{Lem:technical}~\eqref{Lem:technical:path},
it suffices to show that 
$e_{\sx(j+N)} (\mathfrak{J}_{C}) e_{\sx(j)} \neq 0$
and 
$e_{\sx(j + i)} (\mathfrak{J}_{C}) e_{\sx(j)}  = 0$
for any $j \in J$ and $1 \le i < N$.
By Theorem~\ref{Thm:KS}, these conditions can be verified 
by checking the following two homological properties:
\begin{align}
\Ext_{\dD_{Q}}^{2}\left(\cH_{Q}(\sx(j)), \cH_{Q}(\sx(j+N))\right) & \neq 0, \label{Eq:Ext2N} \\
\Ext_{\dD_{Q}}^{2}(\cH_{Q}(\sx(j)), \cH_{Q}(\sx(j+i))) & = 0 \label{Eq:Ext2i}
\end{align} 
for all $j \in J$ and $1 \le i < N$.
The property \eqref{Eq:Ext2N} follows because we have
$\cH_{Q}(\sx(j+N)) = \cH_{Q}(\sx(j))[-2]$ by definition.
We can prove the property \eqref{Eq:Ext2i} easily 
by using the fact that $M_{\theta}$ is both projective and injective
in the subcategory $\C Q^{\prime} \modcat \subset \C Q \modcat$.
\end{proof}

For a proof of Theorem~\ref{Thm:gnilp}~\eqref{Thm:gnilp:strata},
we need the following lemma.

\begin{Lem}
\label{Lem:DoreyA}
Let $\ell^{\prime}, \ell^{\pprime}$ be two positive integers 
such that $\ell \seq \ell^{\prime} + \ell^{\pprime} \le N$.
Fix $j \in J$ and set 
$\alpha^{\prime} \seq \alpha(j; \ell^{\prime}), 
\alpha^{\pprime} \seq \alpha(j+\ell^{\prime}; \ell^{\pprime}),
\alpha \seq \alpha(j; \ell^{\prime} + \ell^{\pprime}) = \alpha^{\prime} + \alpha^{\pprime}$.
Then, there is an injective $U_{q}(L\g)$-homomorphism
$$
L(Y_{\sx(\alpha)}) \hookrightarrow L(Y_{\sx(\alpha^{\prime})}) \otimes L(Y_{\sx(\alpha^{\pprime})}).
$$
\end{Lem}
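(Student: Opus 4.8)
The plan is to deduce the embedding from Proposition~\ref{Prop:Dorey}, feeding it a Dorey-type exact triangle in $\dD_Q$ that comes from a ``gluing'' short exact sequence of representations of the quiver $\Gamma_J$.

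First I would produce that short exact sequence from the $\mathsf{A}_\infty$-combinatorics. By Lemma~\ref{Lem:Ainfty} the quiver $\Gamma_J$ is of type $\mathsf{A}_\infty$ with monotone orientation, so the indecomposable representation $M^{J}_{\alpha(m;\ell)}$ is supported on the interval $\{m,m+1,\dots,m+\ell-1\}$ with identity maps along the arrows, and its subrepresentations are exactly those supported on right-closed subintervals. Taking the subrepresentation supported on $\{j+\ell^{\prime},\dots,j+\ell-1\}$ yields a short exact sequence
$$0 \to M^{J}_{\alpha^{\pprime}} \to M^{J}_{\alpha} \to M^{J}_{\alpha^{\prime}} \to 0$$
in $\C\Gamma_J\modcat$, which is non-split because $M^{J}_{\alpha}$ is indecomposable. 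Since $\ell=\ell^{\prime}+\ell^{\pprime}\le N$, all three modules lie in $A^{\prime}\modcat$, so this is a short exact sequence in the Frobenius category $A^{\prime}\modcat$.

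Next I would transport it to $\dD_Q$. Put $x\seq\sx(\alpha^{\pprime})$ and $y\seq\sx(\alpha^{\prime})$; since $\ell^{\prime},\ell^{\pprime}<N$ both lie in $\Delta_0$, and say $x=(i,p)$, $y=(j,r)$. Applying the $\delta$-functor $\varepsilon\circ\Phi^{\prime}$ and identifying objects through $\cH_Q$, the sequence above becomes a non-split exact triangle
$$\cH_Q(x) \to T \to \cH_Q(y) \xrightarrow{+1}$$
in $\dD_Q$, where $T=(\varepsilon\circ\Phi^{\prime})(M^{J}_{\alpha})$ is the single indecomposable object $\cH_Q(\sx(\alpha))$ when $\ell<N$, and is $0$ when $\ell=N$. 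I then claim that the pole order of $R_{ij}$ at $z_2/z_1=q^{r}/q^{p}$ equals $1$. By Theorem~\ref{Thm:main2} this order is $\dim\Ext^{1}_{\dD_Q}(\cH_Q(y),\cH_Q(x))$; by full faithfulness of $\varepsilon$ it equals $\dim\Ext^{1}_{\dD_{Q^{\prime}}}(\Phi^{\prime}M^{J}_{\alpha^{\prime}},\Phi^{\prime}M^{J}_{\alpha^{\pprime}})$, which through the triangle equivalence $A^{\prime}\udmod\simeq\dD_{Q^{\prime}}$ induced by $\Phi^{\prime}$ and the standard identification $\Ext^{1}_{A^{\prime}}(U,V)\cong\underline{\Hom}_{A^{\prime}}(\Omega U,V)$ (applicable since $M^{J}_{\alpha^{\prime}}$ has no projective summand) equals $\dim\Ext^{1}_{A^{\prime}}(M^{J}_{\alpha^{\prime}},M^{J}_{\alpha^{\pprime}})$; finally, as $\C\Gamma_J$ is hereditary the latter embeds into $\Ext^{1}_{\C\Gamma_J}(M^{J}_{\alpha^{\prime}},M^{J}_{\alpha^{\pprime}})$, which is one-dimensional for the pair of adjacent intervals at hand (an Euler-form computation for $\mathsf{A}_\infty$), and is non-zero because of the extension built above. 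Hence the pole is simple; since then the triangle in Proposition~\ref{Prop:Dorey} is unique up to isomorphism it must coincide with the one displayed above, so $m_{[y,x]}=Y_{\sx(\alpha)}$ (reading $Y_0=1$ when $\ell=N$).

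Finally I would apply Proposition~\ref{Prop:Dorey} to the pair $(x,y)=(\sx(\alpha^{\pprime}),\sx(\alpha^{\prime}))$: its second short exact sequence reads
$$0 \to L(Y_{\sx(\alpha)}) \to L(Y_{\sx(\alpha^{\prime})})\otimes L(Y_{\sx(\alpha^{\pprime})}) \to L(Y_{\sx(\alpha^{\prime})}Y_{\sx(\alpha^{\pprime})}) \to 0,$$
and its first map is the desired injective $U_q(L\g)$-homomorphism. The step demanding the most care is the simple-pole claim: one must chase the identifications of $\Ext^{1}$-groups through $\varepsilon$, through the repetitive algebra $A^{\prime}$, and through its stable category. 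One should also check the mild point that Proposition~\ref{Prop:Dorey} and its proof go through unchanged in the degenerate case $\ell=N$, where the middle term $T$ of the triangle is $0$, so that $\sx(\alpha)=0$ and $L(Y_{\sx(\alpha)})$ is the trivial module.
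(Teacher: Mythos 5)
Your proposal is correct and takes essentially the same route as the paper's proof: it builds the non-split short exact sequence $0 \to M^{J}_{\alpha^{\pprime}} \to M^{J}_{\alpha} \to M^{J}_{\alpha^{\prime}} \to 0$ in $A^{\prime}\modcat$, transports it through $\varepsilon\circ\Phi^{\prime}$ to a non-split triangle with middle term $\cH_{Q}(\sx(\alpha))$ (zero when $\ell=N$), and applies Proposition~\ref{Prop:Dorey} to the pair $(\sx(\alpha^{\pprime}),\sx(\alpha^{\prime}))$. The only difference is that you supply the details behind the one-dimensionality of $\Ext^{1}_{\dD_{Q}}(\cH_{Q}(\sx(\alpha^{\prime})),\cH_{Q}(\sx(\alpha^{\pprime})))$ (via the stable category of $A^{\prime}$ and an Euler-form computation for the hereditary algebra $\C\Gamma_{J}$), which the paper states without proof.
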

\begin{proof}
Under the assumption, 
the functor $\varepsilon \circ \Phi^{\prime} \colon A^{\prime} \modcat \to \dD_{Q}$
sends a non-split short exact sequence
$
0 \to M^{J}_{\alpha^{\pprime}} \to M^{J}_{\alpha} \to M^{J}_{\alpha^{\prime}} \to 0
$
in $A^{\prime} \modcat$
to a non-split exact triangle 
$
\cH_{Q}(\sx(\alpha^{\pprime})) \to \cH_{Q}(\sx(\alpha)) \to \cH_{Q}(\sx(\alpha^{\prime})) \xrightarrow{+1}
$
in $\dD_{Q}$,
where we understand $\cH_{Q}(\sx(\alpha)) = 0$ when $\alpha \in \cR_{J, N}^{+}$,
or equivalently $\ell = N$.
Moreover, it induces an isomorphism of $1$-dimensional vector spaces:
$$
\Ext^{1}_{A^{\prime}}(M^{J}_{\alpha^{\prime}}, M^{J}_{\alpha^{\pprime}})  
\cong \Ext^{1}_{\dD_{Q}}(\cH_{Q}(\sx(\alpha^{\prime})), \cH_{Q}(\sx(\alpha^{\pprime}))).
$$
Applying Proposition~\ref{Prop:Dorey}, 
we obtain a short exact sequence in $\Cc$:
$$0 \to
L(m_{[\sx(\alpha^{\prime}), \sx(\alpha^{\pprime})]}) \to
L(Y_{\sx(\alpha^{\prime})}) \otimes L(Y_{\sx(\alpha^{\pprime})})
\to L(Y_{\sx(\alpha^{\prime})} Y_{\sx(\alpha^{\pprime})})
\to 0
$$
with $m_{[\sx(\alpha^{\prime}), \sx(\alpha^{\pprime})]}=Y_{\sx(\alpha)}$. 
\end{proof}

\begin{proof}[Proof of Theorem~\ref{Thm:gnilp}~\eqref{Thm:gnilp:strata}]
First, we note that the assignment
$\KP_{\le N}(\beta) \ni \nu \mapsto m_{\nu} \in \Mm^{+}$
is injective for each fixed element $\beta \in \cQ_{J}^{+}$.

Since a non-empty stratum $\Mreg(V, D_{\beta})$
is $G_{\beta}$-stable, 
it is a union of $G_{\beta}$-orbits.
In particular, the number of non-empty strata $\Mreg(V, D_{\beta})$
is less than or equal to 
the number of $G_{\beta}$-orbits in $E_{\beta}^{\prime}$,
which is
$\# \KP_{\le N}(\beta)$.
On the other hand,
we apply Lemma~\ref{Lem:DoreyA} repeatedly 
to find
that $c(Y^{D_{\beta}}, m_{\nu}) \neq 0$
for all $\nu \in \KP_{\le N}(\beta)$.   
By Theorem~\ref{Thm:stratification}~\eqref{Thm:stratification:nonempty},
this implies that the number of non-empty strata 
is not less than $\# \KP_{\le N}(\beta)$.
Therefore each non-empty stratum consists of a single $G_{\beta}$-orbit.

We shall prove the relation~\eqref{Eq:YA=mnu}.
Recall the stratifying functor 
$\Phi_{Q} \colon \Lambda \modcat \to \dD_{Q}$
in Theorem~\ref{Thm:KS2}.
Thanks to Theorem~\ref{Thm:gnilp}~\eqref{Thm:gnilp:isom},
we can identify the category $A^{\prime} \modcat$
with the full subcategory of $\Lambda \modcat$
consisting of modules supported on the subset 
$\sx(J) \subset \Delta_{0}$.
By Theorem~\ref{Thm:KS2}~\eqref{Thm:KS2:2},
the relation~\eqref{Eq:YA=mnu} holds if and only if
there is  an isomorphism
$$
\bigoplus_{\alpha \in \cR_{J, \le N}^{+}} \Phi_{Q}(M_{\alpha}^{J})^{\oplus \nu_{\alpha}}
\cong \bigoplus_{\alpha \in \cR_{J, \le N}^{+}}(\varepsilon \circ \Phi^{\prime})(M_{\alpha}^{J})^{\oplus \nu_{\alpha}}.
$$
Thus, it suffices to prove the relation~\eqref{Eq:YA=mnu}
for the special case when $\beta$ is a positive root $\alpha \in \cR_{J, \le N}^{+}$ and $\nu$
is the Kostant partition $\delta(\alpha) \in \KP_{\le N}(\alpha)$ 
given by $\delta(\alpha)_{\alpha^{\prime}} = \delta_{\alpha, \alpha^{\prime}}$ 
for each $\alpha^{\prime} \in \cR_{J, \le N}^{+}$.
Now we concentrate on this special case.
As in the previous paragraph, we have $c(Y^{D_{\alpha}}, Y_{\sx(\alpha)}) \neq 0$
and hence there is a non-empty stratum $\Mreg(V, D_{\alpha})$ such that
$Y^{D_{\alpha}}A^{-V} = Y_{\sx(\alpha)}$.
Note that $Y_{\sx(\alpha)}$ is a minimal element of $\Mm^{+}$
with respect to the partial ordering $\le$,
which implies that
$\Mreg(V, D_{\alpha})$ is a maximal stratum with respect to the closure ordering
by Theorem~\ref{Thm:stratification}~\eqref{Thm:stratification:ordering}.
On the other hand, 
$E^{\prime}_{\alpha}$ is an affine space
and 
$\fO_{\delta(\alpha)}$
is the unique open dense $G_{\alpha}$-orbit of $E^{\prime}_{\alpha}$.
Therefore we have
$\Mreg(V, D_{\alpha}) = \fO_{\delta(\alpha)}$.
\end{proof}

Let $\mathscr{L}_{\beta}$ be
the push-forward of the constant perverse $\kk$-sheaf
along the proper morphism
$\mu_{\beta} \colon \F_{\beta} \to E_{\beta}$
as in the previous subsection.
By the decomposition theorem, 
we have
\begin{equation}
\label{Eq:decF}
\mathscr{L}_{\beta} \cong \bigoplus_{\nu \in \KP(\beta)} IC(\fO_{\nu}, \kk) \otimes_{\kk} L_{\nu},
\end{equation}
where each $L_{\nu} \in D^{b}(\kk \modcat)$ 
is a finite-dimensional $\Z$-graded $\kk$-vector space   
which is self-dual.
Via the isomorphism \eqref{Eq:VV},
each vector space $L_{\nu}$ is equipped with a structure of
graded $H_{J}(\beta)$-module.
It is known that 
we have $L_{\nu} \neq 0$ for all $\nu \in \KP(\beta)$
and 
the set $\{ L_{\nu} \mid \nu \in \KP(\beta) \}$ 
forms a complete collection of self-dual simple objects
in the category $H_{J}(\beta) \gmod$
(see~\cite[Corollary 2.8]{Kato14} for instance). 

On the $U_{q}(L\g)$-side, we have the following homomorphisms of $\kk$-algebras
$$
U_{q}(L\g) \xrightarrow{\widehat{\Psi}_{D_{\beta}}}
\hK^{G_{\beta}}(Z^{\bullet}(D_{\beta}))_{\kk}
\cong 
\Ext_{G_{\beta}}^{*}(\mathscr{L}^{\bullet}_{\beta}, \mathscr{L}^{\bullet}_{\beta})^{\wedge},
$$
where $\mathscr{L}^{\bullet}_{\beta}$ is the push-forward of the constant $\kk$-sheaf on $\Mg(D_{\beta})$
along the $G_{\beta}$-equivariant proper morphism 
$\pi^{\bullet} \colon \Mg(D_{\beta}) \to \Mg_{0}(D_{\beta}) = E_{\beta}^{\prime}$
(see~\cite[Corollary 3.16]{Fujita18}).
By Theorem~\ref{Thm:stratification}~\eqref{Thm:stratification:IC}
and Theorem~\ref{Thm:gnilp}~\eqref{Thm:gnilp:strata},
the complex $\mathscr{L}^{\bullet}_{\beta}$ decomposes as: 
\begin{equation}
\label{Eq:decM}
\mathscr{L}^{\bullet}_{\beta} \cong \bigoplus_{\nu \in \KP_{\le N}(\beta)}
IC(\fO_{\nu}, \kk) \otimes_{\kk} L^{\bullet}_{\nu},
\end{equation}
where each $L^{\bullet}_{\nu}$ is a non-zero finite-dimensional 
$\Z$-graded $\kk$-vector space, 
which has a natural structure of 
a simple module over the algebra 
$\Ext_{G_{\beta}}^{*}(\mathscr{L}^{\bullet}_{\beta}, \mathscr{L}^{\bullet}_{\beta})^{\wedge}$.
Moreover, by~\cite[Theorem 14.3.2(3)]{Nakajima01} and the relation \eqref{Eq:YA=mnu},
we have $(\widehat{\Psi}_{D_{\beta}})^{*}(L^{\bullet}_{\nu}) \cong L(m_{\nu})$ as 
$U_{q}(L\g)$-modules.

Let us consider a natural bimodule given by the Yoneda products: 
\begin{equation}
\label{Eq:bimodExt}
\Ext_{G_{\beta}}^{*}(\mathscr{L}^{\bullet}_{\beta}, \mathscr{L}^{\bullet}_{\beta}) 
\quad \curvearrowright
\quad \Ext_{G_{\beta}}^{*}(\mathscr{L}_{\beta}, \mathscr{L}^{\bullet}_{\beta})
\quad \curvearrowleft
\quad \Ext_{G_{\beta}}^{*}(\mathscr{L}_{\beta}, \mathscr{L}_{\beta}).
\end{equation} 
Comparing the decompositions \eqref{Eq:decF} and \eqref{Eq:decM},
we see that the functor 
$$
\Ext_{G_{\beta}}^{*}(\mathscr{L}_{\beta}, \mathscr{L}_{\beta}) \gmod
\to 
\Ext_{G_{\beta}}^{*}(\mathscr{L}^{\bullet}_{\beta}, \mathscr{L}^{\bullet}_{\beta}) \gmod
$$
induced from the bimodule \eqref{Eq:bimodExt}
sends the simple module $L_{\nu}$ 
to the simple module $L^{\bullet}_{\nu}$ if $\nu \in \KP_{\le N}(\beta)$,
or zero otherwise
(see~\cite[Theorem 6.8]{GRV94} for a detailed explanation).
Moreover, after the completion,
the bimodule~\eqref{Eq:bimodExt} 
gets identified with the bimodule~\eqref{Eq:geom_bimodule},
i.e.~we have the following commutative diagram
$$
\xy
\xymatrix{
\hK^{G_{\beta}}(Z^{\bullet}(D_{\beta}))_{\kk}
\ar[r]
\ar[d]^-{\cong}
&
\End \left(
\hK^{G_{\beta}}(\Mg(D_{\beta}) \times_{E_{\beta}} \F_{\beta})_{\kk}
\right)
\ar[d]^-{\cong}
&
\hK^{G_{\beta}}(\cZ_{\beta})_{\kk}^{\mathrm{op}}
\ar[l]
\ar[d]^-{\cong}
\\
\Ext_{G_{\beta}}^{*}(\mathscr{L}_{\beta}^{\bullet},
\mathscr{L}_{\beta}^{\bullet})^{\wedge}
\ar[r]
&
\End \left(
\Ext_{G_{\beta}}^{*}(\mathscr{L}_{\beta},
\mathscr{L}^{\bullet}_{\beta})^{\wedge} \right)
&
\Ext_{G_{\beta}}^{*}(\mathscr{L}_{\beta},
\mathscr{L}_{\beta})^{\wedge \mathrm{op}}.
\ar[l]
}
\endxy
$$   
Combining the above discussion 
with Theorem~\ref{Thm:geom},
we obtain the following.

\begin{Cor}
\label{Cor:Fsimple}
The KKK-functor 
$\mathscr{F}_{J} \colon H_{J} \gmod \to \Cc$
associated with the injective map $\sx \colon J \hookrightarrow \Delta_{0}$
given by \eqref{Eq:defx}satisfies
$$
\mathscr{F}_{J}(L_{\nu}) \cong \begin{cases}
L(m_{\nu}) & \text{if $\nu \in \KP_{\le N}(\beta)$}; \\
0 & \text{otherwise},
\end{cases} 
$$
for each $\beta \in \cQ^{+}_{J}$ and $\nu \in \KP(\beta)$.
\end{Cor}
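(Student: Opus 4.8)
The plan is to assemble the statement from the geometric input collected above, by tracing a single self-dual simple module through the chain of identifications relating the KKK bimodule to the equivariant $K$-theory of the graded quiver varieties.

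First I would fix $\beta \in \cQ_{J}^{+}$ and recall that the restriction of $\mathscr{F}_{J}$ to the $\beta$-weight space of $H_{J}\gmod$ is the composite of the forgetful functor $H_{J}(\beta)\gmod \to \widehat{H}_{J}(\beta)\modcat$ with $\widehat{\sV}^{\otimes \beta} \otimes_{\widehat{H}_{J}(\beta)} (-)$; hence $\mathscr{F}_{J}(L_{\nu})$, for $\nu \in \KP(\beta)$, is obtained by applying $\widehat{\sV}^{\otimes \beta} \otimes_{\widehat{H}_{J}(\beta)} (-)$ to $L_{\nu}$ regarded as an ungraded $\widehat{H}_{J}(\beta)$-module. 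Next I would invoke Theorem~\ref{Thm:geom}, which identifies $\widehat{\sV}^{\otimes \beta}$ with $\hK^{G_{\beta}}(\Mg(D_{\beta}) \times_{E_{\beta}} \F_{\beta})_{\kk}$ as a $(U_{q}(L\g), \widehat{H}_{J}(\beta))$-bimodule, where $\widehat{H}_{J}(\beta) \cong \hK^{G_{\beta}}(\cZ_{\beta})_{\kk}$ via \eqref{Eq:cVV} and $U_{q}(L\g)$ acts through $\widehat{\Psi}_{D_{\beta}}$. Then the commutative diagram displayed just before the statement identifies this completed geometric bimodule \eqref{Eq:geom_bimodule} with the completed Yoneda bimodule \eqref{Eq:bimodExt}: $\Ext_{G_{\beta}}^{*}(\mathscr{L}_{\beta}^{\bullet}, \mathscr{L}_{\beta}^{\bullet})^{\wedge}$ acting on the left of $\Ext_{G_{\beta}}^{*}(\mathscr{L}_{\beta}, \mathscr{L}_{\beta}^{\bullet})^{\wedge}$ with $\Ext_{G_{\beta}}^{*}(\mathscr{L}_{\beta}, \mathscr{L}_{\beta})^{\wedge}$ acting on the right, and $U_{q}(L\g)$ acting via $\widehat{\Psi}_{D_{\beta}}$ factored through $\Ext_{G_{\beta}}^{*}(\mathscr{L}_{\beta}^{\bullet}, \mathscr{L}_{\beta}^{\bullet})^{\wedge}$.

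With these identifications in place, $\mathscr{F}_{J}(L_{\nu})$ becomes the $U_{q}(L\g)$-module obtained by applying to $L_{\nu}$ the functor induced by the bimodule $\Ext_{G_{\beta}}^{*}(\mathscr{L}_{\beta}, \mathscr{L}_{\beta}^{\bullet})^{\wedge}$. I would then run the standard decomposition-theorem argument of \cite[Theorem 6.8]{GRV94}: comparing the decompositions \eqref{Eq:decF} of $\mathscr{L}_{\beta}$ and \eqref{Eq:decM} of $\mathscr{L}_{\beta}^{\bullet}$, this functor sends the simple object $L_{\nu}$ to $L_{\nu}^{\bullet}$ when $\nu \in \KP_{\le N}(\beta)$ (since then $IC(\fO_{\nu}, \kk)$ occurs as a summand of both complexes) and to $0$ otherwise (since then $IC(\fO_{\nu}, \kk)$ does not occur in $\mathscr{L}_{\beta}^{\bullet}$). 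Finally, for $\nu \in \KP_{\le N}(\beta)$ I would use $(\widehat{\Psi}_{D_{\beta}})^{*}(L_{\nu}^{\bullet}) \cong L(m_{\nu})$ as $U_{q}(L\g)$-modules, which was recorded above via \cite[Theorem 14.3.2(3)]{Nakajima01} together with the identification \eqref{Eq:YA=mnu} of the strata $\Mreg(V, D_{\beta})$ with the graded nilpotent orbits $\fO_{\nu}$ in Theorem~\ref{Thm:gnilp}~\eqref{Thm:gnilp:strata}. This yields the asserted formula.

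The hard part is really all the accumulated input rather than this assembly: Theorem~\ref{Thm:geom} (whose proof follows the lines of \cite{Fujita18}) and the precise matching, through the equivariant Chern character, of the completed geometric bimodule \eqref{Eq:geom_bimodule} with the Yoneda bimodule \eqref{Eq:bimodExt}. Granting those, the only remaining subtlety is the bookkeeping in the \cite{GRV94}-type argument carried out in the $G_{\beta}$-equivariant, completed setting — in particular verifying that passing from $\mathscr{L}_{\beta}$ to its direct summand $\mathscr{L}_{\beta}^{\bullet}$ implements exactly the truncation from $\KP(\beta)$ to $\KP_{\le N}(\beta)$ on simple modules — and this is precisely where Theorem~\ref{Thm:gnilp} enters.
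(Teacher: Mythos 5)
Your proposal is correct and follows essentially the same route as the paper: the paper's proof of Corollary~\ref{Cor:Fsimple} is exactly the assembly you describe, namely Theorem~\ref{Thm:geom} plus the identification of the completed bimodule \eqref{Eq:geom_bimodule} with the Yoneda bimodule \eqref{Eq:bimodExt}, the \cite{GRV94}-style comparison of the decompositions \eqref{Eq:decF} and \eqref{Eq:decM} (which rests on Theorem~\ref{Thm:gnilp}), and the identification $(\widehat{\Psi}_{D_{\beta}})^{*}(L^{\bullet}_{\nu}) \cong L(m_{\nu})$ via \eqref{Eq:YA=mnu} and Nakajima's result.
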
 

We define a category $\Cc_{\dD_{Q^{\prime}}}$
as the full subcategory of $\Cc$
consisting of modules whose composition factors 
are isomorphic to $L(m)$ for some dominant monomial $m$
in variables $Y_{x}$ labeled by $x \in \Delta_{0}$ such that
$\cH_{Q}(x) \in \dD_{Q^{\prime}} \subset \dD_{Q}$.
Note that the above KKK-functor $\mathscr{F}_{J}$ 
is exact by \cite[Theorem 3.8]{KKK18}.
By Corollary~\ref{Cor:Fsimple}, 
the image of the 
functor $\mathscr{F}_{J}$ 
is contained in the category $\Cc_{\dD_{Q^{\prime}}}$.

In~\cite[Section 4]{KKK18},
Kang-Kashiwara-Kim introduced
a certain localization $\mathcal{T}_{N}$
of the category $H_{J} \gmod$
for each $N \in \Z_{\ge 1}$.
This is a $\Z$-graded $\kk$-linear abelian monoidal category
equipped with a canonical quotient functor $\Omega \colon H_{J} \gmod \to \mathcal{T}_{N}$
characterized by the following universal property.
Suppose that
$\mathcal{A}$ is a $\kk$-linear abelian monoidal category 
and
$F \colon H_{J} \gmod \to \mathcal{A}$
is a $\kk$-linear exact monoidal functor 
such that
\begin{itemize}
\item[(i)] $F(L_{\nu}) = 0$ for any $\nu \in \KP(\beta) \setminus \KP_{\le N}(\beta)$;
\item[(ii)] $F(L_{\delta(\alpha)}) \cong \mathbf{1}_{\mathcal{A}}$ for any $\alpha \in \cR_{J, N}^{+}$.
Here $\mathbf{1}_{\mathcal{A}}$ denotes the unit object of $\mathcal{A}$;
\item[(iii)] $F$ satisfies a certain commutativity condition 
on the tensoring operations with the modules $\{ L_{\delta(\alpha)} \mid \alpha \in \cR_{J, N}^{+} \}$
as in~\cite[Proposition A.12]{KKK18}.
\end{itemize}  
Then there exists a unique $\kk$-linear exact monoidal functor
$\widetilde{F} \colon \mathcal{T}_{N} \to \mathcal{A}$
such that we have $F \simeq \widetilde{F} \circ \Omega$.

Since the category $\mathcal{T}_{N}$ is $\Z$-graded,
its Grothendieck ring $K(\mathcal{T}_{N})$
has a structure of $\Z[v^{\pm 1}]$-algebra,
where the multiplication of $v$ is given by
the grading shift functor. 
We denote by $K(\mathcal{T}_{N})|_{v=1}$ 
the specialization $K(\mathcal{T}_{N})/ (v-1)K(\mathcal{T}_{N})$.

\begin{Cor}
\label{Cor:Gringisom}
After a suitable modification of the isomorphism  
\eqref{Eq:cVV}
for each $\beta \in \cQ_{J}^{+}$,
the above KKK-functor 
$\mathscr{F}_{J} \colon H_{J} \gmod \to \Cc_{\dD_{Q^{\prime}}}$
factors through the 
localized category $\mathcal{T}_{N}$
and yields a ring isomorphism
$$
K(\mathcal{T}_{N})|_{v=1} \cong K(\Cc_{\dD_{Q^{\prime}}}). 
$$
\end{Cor}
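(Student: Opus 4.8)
The plan is to apply the universal property of the localized category $\mathcal{T}_{N}$ recalled above to the KKK-functor $\mathscr{F}_{J}$ attached to the injective map $\sx$ of \eqref{Eq:defx}. First I would record that $\mathscr{F}_{J}$ is $\kk$-linear, monoidal (by construction), exact (by \cite[Theorem 3.8]{KKK18}), and has image contained in $\Cc_{\dD_{Q^{\prime}}}$ by Corollary~\ref{Cor:Fsimple}; so the task reduces to checking, after the announced modification of the isomorphisms \eqref{Eq:cVV}, that $\mathscr{F}_{J}$ satisfies conditions (i), (ii), (iii) of the universal property with $\mathcal{A}=\Cc_{\dD_{Q^{\prime}}}$. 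Condition (i) is immediate from Corollary~\ref{Cor:Fsimple}. For condition (ii), Corollary~\ref{Cor:Fsimple} gives $\mathscr{F}_{J}(L_{\delta(\alpha)})\cong L(m_{\delta(\alpha)})$ for $\alpha\in\cR_{J,N}^{+}$, while $m_{\delta(\alpha)}=Y_{\sx(\alpha)}=1$ by the convention in \eqref{Eq:YA=mnu}, so $L(m_{\delta(\alpha)})=L(1)$ is the unit object of $\Cc_{\dD_{Q^{\prime}}}$.

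The hard part is condition (iii), the commutativity condition of \cite[Proposition A.12]{KKK18} governing the tensoring operations with the invertible objects $\{L_{\delta(\alpha)}\}_{\alpha\in\cR_{J,N}^{+}}$; this is exactly where the modification of \eqref{Eq:cVV} is needed. I would establish it from the geometric realization of Theorem~\ref{Thm:geom}: under the identification $\widehat{\sV}^{\otimes\beta}\cong\hK^{G_{\beta}}(\Mg(D_{\beta})\times_{E_{\beta}}\F_{\beta})_{\kk}$, tensoring with $L_{\delta(\alpha)}$ for $\alpha\in\cR_{J,N}^{+}$ corresponds to a convolution supported in the directions of $E_{\beta}$ indexed by $\alpha$, and these are precisely the directions that contribute trivially to $\Mg_{0}(D_{\beta})\cong E_{\beta}^{\prime}$ of Theorem~\ref{Thm:gnilp}~\eqref{Thm:gnilp:isom}, since $M_{\alpha}^{J}$ is projective-injective in $A^{\prime}\modcat$ (so $\sx(\alpha)=0$). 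Reading off the required commutation relations from this picture --- equivalently, matching the right $\widehat{H}_{J}(\beta)$-action of the KKK bimodule against the geometric convolution action and absorbing the residual discrepancy into a grading-preserving automorphism of $\widehat{H}_{J}(\beta)$, chosen coherently over $\beta\in\cQ_{J}^{+}$ --- yields (iii). I expect this step, rather than the subsequent bookkeeping, to be the main technical obstacle. Granting it, the universal property produces a $\kk$-linear exact monoidal functor $\widetilde{\mathscr{F}}_{J}\colon\mathcal{T}_{N}\to\Cc_{\dD_{Q^{\prime}}}$ with $\mathscr{F}_{J}\simeq\widetilde{\mathscr{F}}_{J}\circ\Omega$.

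Finally I would show that the induced $\Z[v^{\pm1}]$-algebra homomorphism on Grothendieck rings becomes an isomorphism after setting $v=1$. On the target side, the ring $K(\Cc_{\dD_{Q^{\prime}}})$ is generated by the classes $[L(Y_{x})]$ with $\cH_{Q}(x)\in\dD_{Q^{\prime}}$ and, being the Grothendieck ring of a Serre subcategory of $\Cc$, it has the $\Z$-basis $\{[L(m)]\}$ ranging over the dominant monomials $m$ in these variables (use Theorem~\ref{Thm:FR} and the definition of $\Cc_{\dD_{Q^{\prime}}}$). On the source side, $\Omega$ identifies the simple objects of $\mathcal{T}_{N}$, up to grading shift, with the classes $L_{\nu}$ for $\nu\in\bigsqcup_{\beta}\KP_{\le N}(\beta)$ modulo the identification $\nu\sim\nu+\delta(\alpha)$ for $\alpha\in\cR_{J,N}^{+}$ (each $\Omega(L_{\delta(\alpha)})$ being a grading shift of the unit), so that $K(\mathcal{T}_{N})|_{v=1}$ is $\Z$-free on these classes. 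By Corollary~\ref{Cor:Fsimple} the homomorphism carries $[\Omega(L_{\nu})]$ to $[L(m_{\nu})]$, and $m_{\nu}=\prod_{\alpha\in\cR_{J,\le N}^{+}}Y_{\sx(\alpha)}^{\nu_{\alpha}}$ depends only on the components $\nu_{\alpha}$ with $\alpha\in\cR_{J,\le N}^{+}\setminus\cR_{J,N}^{+}$. Since the map $\alpha\mapsto\sx(\alpha)=(\cH_{Q}^{-1}\circ\varepsilon\circ\Phi^{\prime})(M_{\alpha}^{J})$ restricts to a bijection from $\cR_{J,\le N}^{+}\setminus\cR_{J,N}^{+}$ onto $\{x\in\Delta_{0}\mid\cH_{Q}(x)\in\dD_{Q^{\prime}}\}$ (its image being the indecomposables of $\dD_{Q^{\prime}}$), the assignment $\nu\mapsto m_{\nu}$ descends to a bijection from the above quotient set onto all dominant monomials in the variables $Y_{x}$, $\cH_{Q}(x)\in\dD_{Q^{\prime}}$. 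Hence the ring homomorphism carries a $\Z$-basis bijectively to a $\Z$-basis, and is therefore an isomorphism, as claimed.
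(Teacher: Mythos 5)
Your overall route coincides with the paper's: check that $\mathscr{F}_{J}$ is exact, monoidal, lands in $\Cc_{\dD_{Q^{\prime}}}$, verify conditions (i) and (ii) from Corollary~\ref{Cor:Fsimple} (your observation that $m_{\delta(\alpha)}=Y_{\sx(\alpha)}=1$ for $\alpha\in\cR_{J,N}^{+}$ is exactly the point), invoke the universal property of $\mathcal{T}_{N}$, and then match the two $\Z$-bases of simple classes. Your final step is in fact a bit more self-contained than the paper's: you exhibit the bijection $\alpha\mapsto\sx(\alpha)$ between $\cR_{J,\le N}^{+}\setminus\cR_{J,N}^{+}$ and the indecomposables of $\dD_{Q^{\prime}}$ and deduce directly that $\nu\mapsto m_{\nu}$ matches the simples on both sides, whereas the paper states the two parametrizations and cites Corollary~\ref{Cor:Fsimple}. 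Do note that the description of the simple objects of $\mathcal{T}_{N}$ (your ``$\KP_{\le N}$ modulo $\nu\sim\nu+\delta(\alpha)$'', equivalently $\KP_{\le N-1}$) is not something you can simply assert; the paper cites \cite[Proposition 4.31]{KKK18} for it.

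The one genuine gap is condition (iii), which you yourself flag as the main obstacle but then only gesture at: ``matching the right $\widehat{H}_{J}(\beta)$-action against the geometric convolution action and absorbing the residual discrepancy into a grading-preserving automorphism, chosen coherently over $\beta$'' is a plan, not an argument --- in particular the coherence over all $\beta$ and compatibility with the convolution products (which is what makes the modified isomorphisms assemble into a monoidal statement) is precisely the nontrivial content, and the heuristic that the $\cR_{J,N}^{+}$-directions ``contribute trivially to $\Mg_{0}(D_{\beta})$'' does not by itself produce the commutativity data required by \cite[Proposition A.12]{KKK18}. The paper resolves this step by adapting the argument of \cite[Theorem 2.6.8]{KKO19}, which shows how to renormalize the isomorphisms \eqref{Eq:cVV} so that the R-matrix conventions on the two sides agree; you should either carry out your geometric matching in that level of detail or, as the paper does, reduce to that reference.
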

\begin{proof}[Sketch of proof]
The functor $\mathscr{F}_{J} \colon H_{J} \gmod \to \Cc_{\dD_{Q^{\prime}}}$
satisfies the above conditions (i) and (ii) by Corollary~\ref{Cor:Fsimple}.
By the similar argument as in~\cite[Theorem 2.6.8]{KKO19},
we can modify the isomorphism 
$\hH_{J}(\beta) \cong \hK^{G_{\beta}}(\mathcal{Z}_{\beta})_{\kk}$ 
for each $\beta \in \cQ_{J}^{+}$ to make the functor 
$\mathscr{F}_{J}$ satisfy the condition (iii) as well.
Thus, by the universal property,
the functor $\mathscr{F}_{J}$ factors through the localization $\mathcal{T}_{N}$. 
By~\cite[Proposition 4.31]{KKK18}, the set
$\{ \Omega(L_{\nu}) \mid \nu \in \KP_{\le N-1}(\beta), \beta \in \cQ_{J}^{+} \}$
forms a complete collection of self-dual simple objects of $\mathcal{T}_{N}$
and hence their classes 
give a $\Z$-basis of $K(\mathcal{T}_{N})|_{v=1}$.
On the other hand, the set $\{ L(m_{\nu}) \mid \nu \in \KP_{\le N-1}(\beta), \beta \in \cQ_{J}^{+} \}$
forms a complete collection of simple modules of $\Cc_{\dD_{Q^{\prime}}}$
and hence their classes give a $\Z$-basis of $K(\Cc_{\dD_{Q^{\prime}}})$. 
Now the desired ring isomorphism follows because the functor $\mathscr{F}_{J}$
induces a bijection between these two bases again by Corollary~\ref{Cor:Fsimple}.
\end{proof}

\begin{Rem}
In the recent paper~\cite{KKOP19} by Kashiwara-Kim-Oh-Park,
it was shown that the category $\mathcal{T}_{N}$
gives a monoidal categorification of a cluster algebra associated with
a certain infinite quiver.
Combined with Corollary~\ref{Cor:Gringisom},
we conclude that the category $\Cc_{\dD_{Q^{\prime}}}$
also gives a monoidal categorification of the same cluster algebra. 
\end{Rem}

We finish this subsection with exhibiting a couple of examples
of subquivers $Q^{\prime} \subset Q$ and 
the corresponding injective maps $\sx \colon J \hookrightarrow \Delta_{0}$.

\begin{Ex}[Type $\mathsf{A}_{n}$]
Consider the following case with $N=n+1$:
$$
Q^{\prime}=
\left(
\begin{xy}
\def\objectstyle{\scriptstyle}
\ar@{->} *+!D{1} *\cir<2pt>{};
(7,0) *+!D{2} *\cir<2pt>{}="A", 
\ar@{->} "A"; (14,0) *+!D{3} *\cir<2pt>{}="B",
\ar@{->} "B"; (20,0),
\ar@{.} (21,0); (23,0),
\ar@{->} (24,0); (30,0) *+!D{n} *\cir<2pt>{}
\end{xy}
\hspace{6pt}
\right)
=Q.
$$
For simplicity, 
we choose the height function $\xi$ with $\xi_{1}=-2$.
Then the corresponding injective map $\sx \colon J = \Z \hookrightarrow \Delta_{0}$ 
is explicitly given by
$$\sx(j)= (1, -2j) \quad  \text{for each $j \in \Z$}.$$
In this case, the associated functor $\mathscr{F}_{J}$ 
has been studied in detail by~\cite{KKK18, KKO19, KKOP19}.
Moreover, it can be seen as a suitable completion of 
the usual quantum affine Schur-Weyl duality
between the quantum loop algebra $U_{q}(L\mathfrak{sl}_{n+1})$
and the affine Hecke algebras of $GL$'s.
Moreover, our geometric interpretation 
can be obtained from
Ginzburg-Reshetikhin-Vasserot's geometric interpretation~\cite{GRV94}.
\end{Ex}

\begin{Ex}[Type $\mathsf{D}_{n}$]
Consider the following case with $N=n$:
$$
Q^{\prime}=
\left(
\begin{xy}
\def\objectstyle{\scriptstyle}
\ar@{->} *+!D{1} *\cir<2pt>{};
(7,0) *+!D{2} *\cir<2pt>{}="A", 
\ar@{->} "A"; (13,0),
\ar@{.} (14,0); (16,0),
\ar@{->} (17,0); (23,0) *+!D{n-2} *\cir<2pt>{}="B",
\ar@{->} "B"; (30,0) *+!D{n-1} *\cir<2pt>{}
\end{xy}
\hspace{6pt}
\right)
\subset Q
=\left(
\begin{xy}
\def\objectstyle{\scriptstyle}
\ar@{->} *+!D{1} *\cir<2pt>{};
(7,0) *+!D{2} *\cir<2pt>{}="A", 
\ar@{->} "A"; (13,0),
\ar@{.} (14,0); (16,0),
\ar@{->} (17,0); (23,0) *+!D{n-2} *\cir<2pt>{}="B",
\ar@{->} "B"; (29,2) *+!L{n-1} *\cir<2pt>{}
\ar@{->} "B"; (29,-2) *+!L{n} *\cir<2pt>{}
\end{xy}
\hspace{6pt}
\right).
$$
For simplicity, we choose the height function $\xi$ with $\xi_{1}=-2$.
Then the corresponding injective map $\sx \colon J=\Z \hookrightarrow \Delta_{0}$
is explicitly given by
\begin{align*}
\sx(i + kn) &= (1, -2i -2kh) \quad \text{if $1 \le i \le n-2$}, \\
\sx(n-1 + kn) &= ((n-1)^{*}, -3n+4-2kh), \\
\sx(kn) &=  ((n-1)^{*}, n-2 -2kh)
\end{align*}
where $k \in \Z$, and $h = 2n-2$ is the Coxeter number.
Note that $(n-1)^{*} = n-1$ 
when $n$ is even, 
and $(n-1)^{*} = n$ when $n$ is odd.
The associated functor $\mathscr{F}_{J}$ in this case
coincides with the one studied in~\cite[Section 6.2.4]{KKOP19}.
\end{Ex}

\begin{Ex}[Type $\mathsf{E}_{n}$]
For $n=6,7,8$, consider the following case with $N = n$:
$$
Q^{\prime}=
\left(
\begin{xy}
\def\objectstyle{\scriptstyle}
\ar@{->} *+!D{1} *\cir<2pt>{};
(7,0) *+!D{2} *\cir<2pt>{}="A", 
\ar@{->} "A"; (14,0) *+!D{3} *\cir<2pt>{}="B",
\ar@{->} "B"; (20,0),
\ar@{.} (21,0); (23,0),
\ar@{->} (24,0); (30,0) *+!D{n-1} *\cir<2pt>{}
\end{xy}
\hspace{6pt}
\right)
\subset Q
=\left(
\begin{xy}
\def\objectstyle{\scriptstyle}
\ar@{->} *+!D{1} *\cir<2pt>{};
(7,0) *+!D{2} *\cir<2pt>{}="A", 
\ar@{->} "A"; (14,0) *+!D{3} *\cir<2pt>{}="B",
\ar@{->} "B"; (20,0),
\ar@{.} (21,0); (23,0),
\ar@{->} (24,0); (30,0) *+!D{n-1} *\cir<2pt>{},
\ar@{->} "B"; (14,-5) *+!L{n} *\cir<2pt>{}
\end{xy}
\hspace{6pt}
\right).
$$
For simplicity, we choose the height function $\xi$ with $\xi_{1}=-2$.
Then the corresponding injective map $\sx \colon J=\Z \hookrightarrow \Delta_{0}$
is explicitly given by
$$
\sx(i + kn) = 
\begin{cases}
(1, -2i -2kh)  & \text{if $0 \le i \le 3$}; \\
((n-1)^{*}, n-h-2i-2kh) & \text{if $4 \le i \le n-1$,}
\end{cases}
$$
where $k \in \Z$, 
and
$h = 12, 18, 30$ is the Coxeter number
of type $\mathsf{E}_{6,7,8}$ respectively.
Note that $(n-1)^{*} = 1$ when $n=6$,
and $(n-1)^{*} = n-1$
when $n=7,8$.
\end{Ex}



\providecommand{\bysame}{\leavevmode\hbox to3em{\hrulefill}\thinspace}
\providecommand{\MR}{\relax\ifhmode\unskip\space\fi MR }
\providecommand{\MRhref}[2]{%
  \href{http://www.ams.org/mathscinet-getitem?mr=#1}{#2}
}
\providecommand{\href}[2]{#2}

\end{document}